\documentclass[11pt, reqno]{amsart}
\usepackage{amsmath}
\usepackage[margin=1.00in]{geometry}
\usepackage[foot]{amsaddr}
\usepackage{amssymb}
\usepackage{mathrsfs}
\usepackage{enumerate}         %
\usepackage{color}
\usepackage{cases}
\usepackage{url} 
\usepackage{amsthm}
\usepackage{hyperref}
\usepackage{bm}
\usepackage{xy}
\usepackage{enumitem}
\usepackage[gen]{eurosym}
\usepackage{graphicx}
\usepackage{comment}

\DeclareMathOperator*{\argmin}{arg\,min}

\usepackage[colorinlistoftodos]{todonotes}

\newcommand{\ind}[1]{\mathbf{1}_{\{#1\}}}
\newcommand{\llip}[1]{\mathrm{Lip}_{\mathrm{loc}}(#1)}

\theoremstyle{plain}
\newtheorem{theorem}{Theorem}[section]

\newtheorem{corollary}[theorem]{Corollary}

\newtheorem{lemma}[theorem]{Lemma}

\newtheorem{proposition}[theorem]{Proposition}

\newtheorem{assumptionalt}{Assumption}[theorem]
\newenvironment{assumptionp}[1]{
  \renewcommand\theassumptionalt{#1}
  \assumptionalt
}{\endassumptionalt}
\theoremstyle{remark}
\newtheorem{remark}[theorem]{Remark}
\newtheorem{example}[theorem]{Example}

\newlist{assumptionenum}{enumerate}{1}
\setlist[assumptionenum]{
    label=(\roman*),
    ref=\theassumptionalt.(\roman*)
}
\newlist{assumptioncases}{enumerate}{1}
\setlist[assumptioncases]{
    label=(\alph*),
    ref=\theassumptionalt.(\roman{assumptionenumi}).(\alph*)
}

\numberwithin{equation}{section}

\title[When should one stop the most exciting game?]{When should one stop the most exciting game?\\
{\tiny Sequential Inference for win-martingales}} 
\author{Steven Campbell$^a$ and Karl Kristian Engelund$^b$} %
\thanks{SC gratefully acknowledges support from an NSERC Postdoctoral Fellowship (PDF‑599675-2025) and a Columbia University CDFT Research Grant.}
\address{$^a$Dept.\ of Statistics, Columbia University, New York, NY, USA.}
\address{$^b$Dept.\ of Mathematical Sciences, University of Copenhagen, Copenhagen, Denmark.}
\email{kken@math.ku.dk}
\date{\today}

\begin{document}

\begin{abstract}

Prediction markets have become a prominent way of aggregating beliefs about binary future events, and their price processes are often interpreted as evolving win probabilities, or ``win-martingales.'' Motivated by this perspective and recent work on Aldous' ``most exciting game,'' we study when a decision maker should stop observing a win-martingale and make a decision about the outcome. In particular, we allow the true outcome to be revealed at a fixed finite horizon, as in a sports game or election. Under a general terminal loss and running cost, we reduce the Bayes risk to an optimal stopping problem for the win probability process. When the win-martingale is a diffusion and its volatility separates into a deterministic time factor and a state-dependent factor, a deterministic time change transforms the problem into one for a time-homogeneous diffusion with a generally time-inhomogeneous running cost.

Under explicit structural assumptions, we obtain a complete free-boundary characterization of the solution to the stopping problem in both finite and infinite horizons without discounting. We prove smooth-fit and $C^1$ regularity of the value function, $C^1$ regularity of the optimal stopping boundaries before the horizon, and derive a nonlinear integral equation that characterizes the boundaries uniquely. Taken together, these results yield a common decision-theoretic framework and solution theory for a broad class of posterior dynamics that includes the Aldous, Bass, and binary sequential-inference martingales as special cases. Our analysis requires no temporal monotonicity of the running cost and therefore accommodates highly nonmonotone stopping boundaries. In particular, we exhibit an example in which the optimal boundary has no limit as the calendar-time horizon is approached.
\end{abstract}
    
	\maketitle
	\vspace{-0.5cm}

	{\small\noindent \emph{Keywords:} sequential inference; win-martingales; prediction markets; optimal stopping; free-boundary problems; nonlinear integral equations}
	
	{\small\noindent \emph{AMS 2020 Subject Classification:} 60G40, 62L15, 62L10, 60G35, 35R35}

\section{Introduction}
Many high-stakes decisions reduce to a single question: \emph{when should one stop observing and commit to an action?} Examples range from calling a default, distinguishing noise from informed trading, halting an A/B test, and closing a position, to any setting in which information arrives sequentially and delaying a decision is costly. In each case the decision maker must balance the value of additional information against the cost of waiting.

This paper studies this question in a binary setting. There is an unknown outcome, denoted by $\theta\in\{0,1\}$, and the decision maker observes information over time. The natural state variable is the posterior probability that the outcome equals one. Since this posterior is a conditional expectation, it evolves as a bounded martingale taking values in the unit interval. We refer to such processes as posterior win-martingales.

A useful motivation comes from prediction markets. In a prediction market for a binary event, the quoted price changes as information arrives. Abstracting from market frictions, strategic effects, and risk premia, one may, as argued by Wolfers and Zitzewitz \cite{wolfers2004prediction}, view this price process as a dynamically updated belief about the terminal outcome. This perspective separates two questions. 
First, how should one model the dynamics of an evolving win probability in a way that captures persistent uncertainty? Second, once such a martingale has been chosen, when should a decision maker stop observing it and act?

The first question is closely connected to Aldous' problem of finding the ``\emph{most exciting game}'' \cite{aldous}. In Aldous' formulation, one observes a game with a binary terminal outcome, and its excitement is encoded by the evolution of the win probability. A game is exciting when this probability continues to fluctuate and does not quickly settle near zero or one. Recent work on \emph{specific} distances between continuous-time martingales has given this idea a precise variational formulation. Specific relative entropy, introduced by Gantert in \cite{gantert1991einige}, and more recent Wasserstein-type variants provide ways of comparing continuous-time martingales even when their path laws are mutually singular; see, for example, \cite{backhoff2024exciting,backhoff2024most,backhoff2024specific,guo2025randomness}. Within this framework, variational problems over martingale laws select canonical win-martingales, including the Aldous martingale, Bass-type martingales, and the classical binary sequential-inference posterior.

This paper addresses the second question.  We take posterior win-martingales as models for evolving beliefs and study the decision-theoretic problem of when to declare a winner, subject to a running cost and a terminal loss associated with the declaration. 
For fixed running cost, terminal loss, and initial belief, the value function is the minimal Bayes risk and gives a decision-theoretic measure of how costly the game is to call optimally. 
The difference between the loss from stopping immediately and the optimal value measures the benefit of continued observation. In this sense, it provides a decision-dependent way to compare how difficult different posterior win-martingales are to call, and can be viewed as a complementary decision-theoretic notion of ``excitement.''

The classical version of this problem appears in Bayesian sequential testing \cite{shiryaev1967two}. There, a decision maker observes a signal about an unknown binary state and chooses when to stop and make a terminal decision; see also \cite{campbell2025softclass,ekstrom2022multi,gapeev2004wiener,gapeev2013bayesian,gapeev2017sequential}. In the Gaussian case, the posterior probability process is a sufficient statistic for the optimal decision rule, and the associated stopping problem has been studied extensively. 
Our motivation leads us to take a different approach: we take the win-martingale as given and allow for general running costs and terminal penalties arising from different terminal decision rules.   The problem of embedding a given win-martingale as the Bayesian posterior of an explicit sequential experiment is studied in our companion work \cite{campbell2026embedding}. A related Bernoulli--Doob representation of bounded martingale diffusions is developed in \cite{brigovrin2026}. The class of win-martingales we study naturally accommodates sequential decision problems in which the posterior reveals the true binary state at a prescribed finite horizon, as in a sports game or election.  A specific model with this feature was studied by
Lisovskii \cite{lisovskii2019bayesian}, who considered Bayesian sequential
testing between two possible pinning points of a Brownian bridge. To the best of our knowledge, our analysis provides the first general free-boundary and regularity theory for Bayesian sequential decision problems in which the true state is revealed at a fixed finite time.

We focus on developing the theory for posterior win-martingales whose volatility separates into a product of deterministic time and state-dependent factors.  The deterministic time factor describes the rate at which information arrives, and the state-dependent factor describes how the posterior fluctuates at a given belief level. This covers the canonical examples arising from the ``most exciting game'' and martingale-distance literature, while remaining broad enough to cover more general posterior dynamics. 

The decision problem is solved by first reducing it to an associated optimal stopping problem for the win-martingale. A deterministic time change then removes the time-dependent factor from the posterior dynamics and transfers it to the running cost. The original calendar-time cost is thereby converted into an ``operational time'' cost for observing a time-homogeneous martingale. Consequently, even monotone costs in the original time scale can lead to nonmonotone transformed costs and highly nonmonotone optimal stopping boundaries.

The optimal rule is characterized by a free boundary. While the posterior remains sufficiently uncertain, the decision maker continues observing. Once the posterior becomes sufficiently close to certainty, stopping and making the terminal declaration becomes optimal. We study three settings. In the homogeneous infinite-horizon problem, the transformed running cost is constant and the continuation region is an interval with two constant boundaries. We then analyze the time-inhomogeneous problem on finite and infinite horizons without introducing discounting. Under smoothness, symmetry, and shape assumptions on the problem inputs, the time slices of the continuation region are intervals of the form $(1-b_T(t),b_T(t))$ for a free boundary $b_T$. We prove that the value function is $C^1$ globally and $C^{1,2}$ away from the boundary. We also prove that $b_T$ is $C^1$ before the terminal horizon and derive a nonlinear integral equation that characterizes it uniquely.

The time-inhomogeneous analysis does not assume that the running cost is monotone. This allows for shrinking, expanding, and nonmonotone continuation regions. It also permits the treatment of an oscillatory example in which the calendar-time boundary has no limit as the terminal time is reached. The finite-horizon regularity analysis builds on the probabilistic methods of De Angelis and Stabile \cite{de2019lipschitz} and De Angelis and Peskir \cite{de2020global}. After establishing local Lipschitz regularity and smooth fit in the present two-sided boundary setting, we verify the conditions of De Angelis and Lamberton \cite{de2024probabilistic} to lift the boundary regularity to $C^1$. Estimates formed uniformly in the horizon then allow us to pass to the infinite-horizon problem. We also obtain probabilistic representations of the derivatives. For the temporal derivative on finite horizons, we use a technique similar to that of Jaillet, Lamberton, and Lapeyre \cite{jaillet1990variational}.

The contributions of this paper are threefold. First, it places a large class of win-martingales, including those selected by recent martingale-distance problems, into a common sequential decision framework. For fixed running and terminal costs, this provides a decision-theoretic comparison of how difficult the corresponding games are to call. Second, it builds on modern regularity theory to develop a free-boundary theory for two-sided optimal stopping problems with nonmonotone time-inhomogeneous running costs. The analysis covers finite and infinite horizons without discounting and yields both boundary regularity and a unique nonlinear integral-equation characterization. Third, we extend related results in Bayesian sequential testing. Relative to Ekstr\"om and Vaicenavicius \cite{ekstrom2015bayesian}, we allow a broader class of smooth terminal decision costs, while relative to Campbell and Zhang \cite{campbell2025softclass}, we allow more general posterior martingales. In both cases, our framework permits time-inhomogeneous running costs without monotonicity assumptions.

The remainder of the paper is organized as follows. Section \ref{sec:motivation} introduces the motivating games, reduces the decision-theoretic problem to optimal stopping of the posterior, and performs the deterministic time change. Section \ref{sec:setup} states the transformed problem and its assumptions, introduces useful transformations, and provides a preliminary analysis. Section \ref{sec:main_results} presents the main structural and regularity results whose proofs are developed in the subsequent sections. Sections \ref{sec:homogeneous} and \ref{sec:inhomogeneous} treat the homogeneous and inhomogeneous problems. Section \ref{sec:integral_eqs} derives the integral-equation characterization. Section \ref{sec:examples} presents examples and extensions, and Section \ref{sec:conclusion} concludes.

\section{A class of motivating games}\label{sec:motivation}

We begin with a broad class of sequential prediction problems on a time horizon $S\in(0,\infty)$. Consider a filtered probability space $(\Omega,\mathcal{F},(\mathcal{G}_s)_{s\in[0,S]},\mathbb{P})$ satisfying the usual conditions.
A game produces a $\mathcal{G}_S$--measurable binary outcome $\theta\in\{0,1\}$, where $\theta=1$ indicates a win and $\theta=0$ a loss. 
As the game unfolds, the decision maker observes information over time and updates their belief about the eventual outcome. 
The resulting posterior process
\[
    \Pi_s:=\mathbb{P}(\theta=1\mid\mathcal{G}_s), \qquad s\in[0,S],
\]
will be referred to as the \emph{win-martingale}. 
Thus $\Pi_s$ represents the conditional win probability at time $s$, given the information available up to that time, as encoded in the filtration $(\mathcal{G}_s)_{s\in[0,S]}$. 

Our objective is to decide \emph{when} to stop observing the game and \emph{what} prediction to make at that time. 
This leads naturally to a sequential decision problem with two components: a stopping rule and a terminal declaration.
Formally, a decision rule is a pair $(\nu,d)$, where $\nu\leq S$ is a $(\mathcal{G}_s)_{s\in[0,S]}$--stopping time and $d$ is a $D$-valued $\mathcal{G}_\nu$--measurable declaration, 
where the declaration set is a compact set $D\subseteq[0,1]$. 
This includes both hard classification, $D=\{0,1\}$, and possibly soft classification, where one reports a probability level, e.g. $D=[0,1]$.

To quantify performance, let $f:\mathbb{R}_+\to(0,\infty)$ denote the measurable running cost of observation, and let
\[
    \ell:\{0,1\}\times D\to[0,\infty]
\]
be a measurable loss function penalizing the final declaration. 
The associated Bayes risk of a decision rule $(\nu,d)$ is
\begin{equation}\label{eq:motivation_risk}
    \mathbb{E}\!\left[\int_0^\nu f(s)\,ds+\ell(\theta,d)\right].
\end{equation}
The aim is to minimize \eqref{eq:motivation_risk} over all admissible decision rules for which the expectation is finite to obtain the Bayes rule.

The posterior structure allows the terminal decision to be reduced to a function of the current belief alone.
Indeed, if $\nu$ is fixed and $d$ is $\mathcal{G}_\nu$--measurable, then the tower property yields
\[
    \mathbb{E}\!\left[\ell(\theta,d)\right]
    =
    \mathbb{E}\!\left[h(\Pi_\nu,d)\right],
\]
where
\begin{equation}\label{eq:h_def}
    h(\pi,d):=\pi\,\ell(1,d)+(1-\pi)\,\ell(0,d),
\end{equation}
using the convention $0\cdot\infty=0$.
For each posterior level $\pi\in[0,1]$, the quantity $h(\pi,d)$ is the conditional expected loss of declaring $d$. 
If $d\mapsto h(\pi,d)$ is lower semicontinuous, then, since $\pi\mapsto h(\pi,d)$ is lower semicontinuous by \eqref{eq:h_def}, and hence, in particular, measurable, one may select a measurable (see e.g. \cite[Theorem 14.37]{rockafellar1998variational}) optimal declaration
\[
    j(\pi)\in\argmin_{d} (h(\pi,d)+\delta_{D}(d)),
\]
and a measurable optimal terminal cost
\begin{equation*}
    g(\pi):=\inf_{d} (h(\pi,d)+\delta_{D}(d))=h(\pi,j(\pi)),
\end{equation*}
where $\delta_{D}(d)=0$ if $d\in D$ and $\infty$ otherwise.
Consequently, the original sequential decision problem reduces to the optimal stopping problem
\begin{equation}\label{eq:motivation_OSP_pi}
    \inf_{(\nu,d)}\mathbb E\left[\int_0^\nu f(s)\,ds+\ell(\theta,d)\right]
=\inf_{\nu\leq S}\mathbb{E}\left[\int_0^\nu f(s)\,ds+g(\Pi_\nu)\right],
\end{equation}
where we seek an optimal stopping time $\nu^*$ realizing the minimum such that we obtain the Bayes rule $(\nu^*,j(\Pi_{\nu^*}))$ minimizing the Bayes risk in \eqref{eq:motivation_risk}. 

This formulation covers a range of standard choices. However, our attention will primarily be on the following examples.
Let $D=[0,1]$ and consider the squared loss,
\[
    \ell_{L^2}(\theta,d)=(\theta-d)^2.
\]
Then the optimal declaration is $j(\pi)=\pi$ and the optimal terminal cost becomes
\[
    g_{L^2}(\pi)=\pi(1-\pi).
\]
Similarly, if $D=[0,1]$ with cross-entropy loss,
\begin{equation*}
    \ell_{\text{CE}}(\theta,d)=-\theta\log (d)-(1-\theta)\log (1-d),
\end{equation*}
then the optimal declaration is $j(\pi)=\pi$ and
the optimal terminal cost becomes
\begin{equation*}
    g_{\text{CE}}(\pi)=-\pi\log(\pi)-(1-\pi)\log(1-\pi).
\end{equation*}
More generally, different terminal decision rules simply lead to different choices of the terminal cost $g$. One may, for example, restrict the declarations to soft classification rules as in \cite{campbell2025softclass}. We lay out the assumptions on $g$ in Section \ref{subsec:standing_assumptions}. 

Concrete examples of win-martingales include a family of canonical posterior martingales driven by Brownian noise and pinned to the binary outcome $\theta$ at time $S$, stemming from the literature on specific distances between continuous-time martingales. For instance, specific relative entropy selects the \emph{Aldous} martingale $\Pi^A$, whose dynamics are
\begin{equation*}
    d\Pi^A_s = \frac{\sin(\pi \Pi^A_s)}{\pi\sqrt{S-s}}\,dW_s,\qquad s\in[0,S),
\end{equation*}
while suitable choices of specific $p$--Wasserstein distances produce the \emph{Bass} martingale $\Pi^B$ \cite{backhoff2024gradient,backhoff2023bass} and the classical \emph{binary sequential-inference} martingale $\Pi^I$:
\begin{equation*}
    d\Pi_s^B=\frac{\varphi(\Phi^{-1}(\Pi_s^B))}{\sqrt{S-s}}\,dW_s,\qquad
    d\Pi_s^I=\frac{\Pi_s^I(1-\Pi_s^I)}{\sqrt{S-s}}\,dW_s.    
\end{equation*}
Here $W$ is a Brownian motion and $\varphi,\Phi$ are the standard normal pdf/cdf. The Aldous martingale is often presented as the win probability in the ``\emph{most exciting game},'' in the sense that it maximizes ``excitement'' (equivalently, uncertainty) about the binary outcome under an entropy metric. 

To connect this class of games with the analysis in the sequel, we now specialize to win-martingales with separable volatility of the form
\begin{equation}\label{eq:motivation_pi_sde}
    d\Pi_s=\rho(s)\sigma(\Pi_s)\,dW_s, \qquad \Pi_0=x\in(0,1),
\end{equation}
for a Brownian motion $W$, a continuous deterministic time factor $\rho:[0,S)\to(0,\infty)$, and a state-dependent volatility $\sigma$ such that \eqref{eq:motivation_pi_sde} admits a strong $[0,1]$--valued solution. 
At this stage we do not impose detailed assumptions on $\sigma$; these will be stated in the next section. 
The key point is that \eqref{eq:motivation_pi_sde} separates the time-inhomogeneous and state-dependent features of the posterior dynamics. The win-martingale $\Pi$ is the only state process and so we will, for simplicity, assume $(\mathcal{G}_s)_{s\in[0,S]}$ is the filtration generated by $\Pi$ augmented to satisfy the usual conditions.

This structure makes a time change natural.
Define
\begin{equation}\label{eq:A_motivation}
    A(s):=\int_0^s \rho^2(v)\,dv,
\end{equation}
which we assume to be finite for all $s\in[0,S)$, and let
\[
    T:=\lim_{s\nearrow S}A(s)\in(0,\infty].
\]
Since $A$ is strictly increasing, we denote its inverse by $\Gamma:[0,T)\to[0,S)$ and define the time-changed process
\begin{equation*}
    X_t:=\Pi_{\Gamma(t)}, \qquad t<T.
\end{equation*}
Under the standing assumptions introduced later, the process $X$ solves the
time-homogeneous SDE
\begin{equation}\label{eq:X_motivation}
    dX_t=\sigma(X_t)\,dB_t
\end{equation}
for a Brownian motion $B$ with respect to the operational filtration
$(\mathcal{F}_t)_{t<T}$, where
$\mathcal{F}_t=\mathcal{G}_{\Gamma(t)}$ with the usual augmentation. We use
the convention $\Gamma(T)=S$. Thus, the deterministic time factor has
been absorbed into the clock, while the state dependence remains in the
diffusion coefficient.

If $T<\infty$, the preterminal process $X$ has the continuous extension
\[
    X_T:=\lim_{t\uparrow T}X_t
    =\lim_{s\uparrow S}\Pi_s
    =\Pi_{S-}.
\]
In general, this left limit need not equal $\Pi_S=\theta$.

The objective in \eqref{eq:motivation_OSP_pi} transforms accordingly for
stopping rules that occur before the terminal time. The running cost
becomes
\begin{equation}\label{eq:c_motivation}
    c(t):=f(\Gamma(t))\Gamma'(t).
\end{equation}
If $\nu<S$ almost surely and $\tau=A(\nu)$, then $\tau<T$ almost surely.
Conversely, a stopping time $\tau<T$ corresponds to
$\nu=\Gamma(\tau)<S$. The time change therefore gives a one-to-one
correspondence between preterminal stopping rules and preserves their
costs.

For the canonical examples above, $T=\infty$ and
$X_\infty=\theta$ almost surely. With the conventions $A(S)=\infty$ and
$\Gamma(\infty)=S$, the correspondence then also includes terminal
stopping. The time-changed problem is
\begin{equation}\label{eqn:motivating.obj.X}
    \inf_{\tau\geq0}
    \mathbb{E}\left[
        \int_0^\tau c(u)\,du+g(X_\tau)
    \right],
\end{equation}
where the infimum is taken over extended
$(\mathcal{F}_t)_{t\geq0}$--stopping times for which the expectation is
finite.

We also study finite-horizon problems of the form \eqref{eqn:motivating.obj.X}.  Such a horizon may arise because $T<\infty$ or because an exogenous decision deadline is imposed before $T$. In either case, we formulate the problem using only the information contained in $X$. In particular, when the deadline coincides with the original terminal time $S$ and $T=A(S)<\infty$, we tacitly assume that no additional information is revealed at $S$, so that $\Pi_{S-}=\Pi_S$. We set aside the exceptional case in which additional information is revealed discontinuously at the terminal time, so that $\Pi_{S-}\neq\Pi_S$. Such a terminal information jump leads to a different decision problem that is not considered here.

More generally, by restarting the problem from time $t$ and utilizing the strong Markov property, one is led to the family
\begin{equation}\label{eq:motivation_OSP_X_shifted}
    V_T(t,x):=
    \inf_{\tau\leq T-t}\mathbb{E}\!\left[\int_0^\tau c(t+u)\,du+g(X_\tau^x)\right],
\end{equation}
where $X^x$ solves \eqref{eq:X_motivation} with $X^x_0=x$, and the infimum is taken over stopping times with respect to the usual augmentation of the filtration generated by $X^x$. The constraint $\tau \leq T-t$ is interpreted as $\tau\leq\infty$ when $T=\infty$, and we minimize over stopping times for which the expectation is finite.

Thus, sequential prediction in a game with binary outcome naturally leads to an optimal stopping problem for a posterior martingale, and for the class \eqref{eq:motivation_pi_sde} this problem can be rewritten, after a deterministic time change, as an optimal stopping problem for a time-homogeneous diffusion. 
The next section takes the transformed problem variables $(\sigma,c,g)$ as primitives and formalizes the assumptions under which the transformed problem is well posed.

\section{Setup and Preliminary Analysis}\label{sec:setup}

In this section we formalize the class of optimal stopping problems studied throughout the paper, either stemming from motivating problems such as those in Section \ref{sec:motivation} or in their own right. We first introduce the underlying state process and the associated infinite-horizon stopping problem. We then collect the assumptions and present different formulations of associated problems used in the subsequent analysis. Finally, we derive a number of preliminary analytical results. 

\subsection{Problem formulation}\label{subsec:problem_formulation}

Let $(\Omega,\mathcal{F},\mathbb{P})$ be a probability space supporting a standard Brownian motion $B=(B_t)_{t\ge0}$, and let $\mathbb{F}=(\mathcal{F}_t)_{t\ge0}$ denote the usual augmentation of the natural filtration of $B$.

For each $x\in(0,1)$, we consider a process $X^x=(X_t^x)_{t\ge0}$ solving the stochastic differential equation
\begin{equation}\label{eq:general_X_SDE}
    dX_t^x=\sigma(X_t^x)\,dB_t,\qquad X_0^x=x.
\end{equation}
We will impose assumptions below ensuring that \eqref{eq:general_X_SDE} admits a unique strong solution and that the boundary points $\{0,1\}$ are inaccessible in finite time. For $x\in\{0,1\}$ we extend the definition by letting the process remain constant, that is, $X_t^x=x$ for all $t\ge0$.

For $x\in(0,1)$, under these assumptions $X^x$ is a time-homogeneous martingale diffusion taking values in $[0,1]$ and absorbed at the endpoints. Since $\sigma$ depends on the state alone, in line with the terminology of \cite{campbell2026embedding}, we call $X^x$ an \emph{autonomous win-martingale}. However, since autonomy is assumed throughout the remainder of the paper, we suppress the qualifier ``autonomous'' in what follows.

We let $\mathcal{L}$ denote the infinitesimal generator associated with $X^x$, given by
\begin{equation*}
    \mathcal{L}=\frac{1}{2}\sigma^2(x)\partial_{xx}.
\end{equation*}

Let $c:\mathbb{R}_+\to(0,\infty)$ be a measurable running cost and let $g:[0,1]\to\mathbb{R}_+$ be a bounded measurable terminal cost. The infinite-horizon optimal stopping problem is
\begin{equation}\label{eq:inf_horizon_osp}
    V(t,x):=\inf_{\tau}\mathbb{E}\left[\int_0^\tau c(t+u)\,du+g(X_\tau^x)\right],
\end{equation}
where the infimum is taken over all $\mathbb{F}$--stopping times $\tau$ such that the expectation is finite. We denote this set by $\mathcal{T}^{(t)}$. In particular, since $g$ is bounded, every admissible stopping time $\tau\in\mathcal{T}^{(t)}$ satisfies
\begin{equation*}
    \mathbb{E}\left[\int_0^\tau c(t+u)\,du\right]<\infty.
\end{equation*}
Similarly, we also define $\mathcal{T}$ as the set of stopping times such that
\begin{equation*}
        \mathbb{E}\left[\int_0^\tau c(u)\,du\right]<\infty.    
\end{equation*}
It clearly holds that $\mathcal{T}^{(t)}\subseteq \mathcal{T}$ when $c$ is integrable on $[0,T_1]$ for any $T_1\geq0$.

We denote the state space by
\[
    \mathcal{S}:=\mathbb{R}_+\times(0,1).
\]
Furthermore, we define the continuation and stopping regions by
\begin{equation}\label{eq:stop_and_cont_sets}
    \mathcal{C}:=\{(t,x)\in\mathcal{S}:V(t,x)<g(x)\},\qquad
    \mathcal{D}:=\{(t,x)\in\mathcal{S}:V(t,x)=g(x)\}.
\end{equation}
For convenience we also define the extended state space by
\begin{equation*}
    \overline{\mathcal{S}}=\mathbb{R}_+\times[0,1].
\end{equation*}
However, we consider points $(t,x)$ to be in $\mathcal{S}$ unless explicitly stated.

\subsection{Assumptions}\label{subsec:standing_assumptions}
We now collect the assumptions used throughout the paper. 
We separate the baseline assumptions needed for the basic formulation and qualitative structure from the extended regularity assumptions needed for the finer analysis of the time-inhomogeneous problem.

\begin{assumptionp}{A}[Baseline assumptions]\label{asmp:baseline}
Assume the following.

\begin{assumptionenum}
    \item\label{asmp:state_dynamics}
    The function $\sigma:[0,1]\to\mathbb{R}_+$ is $C([0,1])$, $\llip{(0,1)}$, and satisfies $\sigma(x)=0$ if and only if $x\in\{0,1\}$.
    Moreover, $\sigma^{-2}\in L^1_{\mathrm{loc}}((0,1))$ and, for any $a\in(0,1)$,
    \begin{equation}\label{eq:feller_test}
        \int_a^x\int_a^y \frac{1}{\sigma^2(z)}\,dz\,dy\to\infty
        \qquad\text{as }x\to0\text{ and as }x\to1.
    \end{equation}

    \item\label{asmp:base_penalty:c}
    The running cost $c:\mathbb{R}_+\to(0,\infty)$ is $C^1([0,\infty))$ and there exists a constant $K>0$ such that
    \begin{equation}\label{eq:c_log_derivative_bound}
        \left|\frac{c'(t)}{c(t)}\right|\le K,\qquad t\ge0.
    \end{equation}

    \item\label{asmp:base_penalty:g}
    The terminal cost $g:[0,1]\to\mathbb{R}_+$ is $C([0,1])$, $C^2((0,1))$, concave, and $g(x)=0$ if and only if $x\in\{0,1\}$.

    \item\label{asmp:Lg:unimodal}
    There exists $x_0\in(0,1)$ such that $\mathcal{L}g$ is strictly decreasing on $(0,x_0)$ and strictly increasing on $(x_0,1)$. 

    \item\label{asmp:Lg:goodset_nonempty} For all $t\geq0$, it holds that
    \begin{equation*}
        \mathcal{L}g(x_0)+c(t)<0.
    \end{equation*}
    
\end{assumptionenum}
\end{assumptionp}

Assumption \ref{asmp:state_dynamics} fixes the class of posterior diffusions considered in the paper, namely, posterior diffusions with unique strong solutions such that the process stays in $(0,1)$ in finite time, see Proposition \ref{prop:state_process}. Assumptions \ref{asmp:base_penalty:c} and \ref{asmp:base_penalty:g} state the basic assumptions on the running cost and the terminal cost. The concavity of $g$ and its vanishing on the boundary in Assumption \ref{asmp:base_penalty:g} are natural in the processes arising from Section \ref{sec:motivation}. Indeed, since $\pi\mapsto h(\pi,d)$ is affine for every fixed $d$, the induced terminal cost $g$ is the pointwise minimum of affine functions, and hence concave.

The second part of Assumption \ref{asmp:base_penalty:c}
gives an exponential bound for the growth of the cost. Namely, we have $\lvert\partial_t \log(c(v))\rvert\leq K$ for $v\geq0$ and integration gives
\begin{equation*}
    \left\lvert \log\frac{c(v+u)}{c(v)}\right\rvert\leq \lvert u\rvert K
\end{equation*}
where $v+u\geq 0$. By exponentiation, we obtain the exponential growth condition 
\begin{equation}\label{eq:expGrowth}
    c(v)e^{-K\lvert u\rvert }\leq c(v+u)\leq c(v)e^{K\lvert u\rvert }.
\end{equation}
Thus, we get the local (in $u$) estimate
\begin{equation}\label{eq:cprime_helper}
    |c'(v+u)|\leq K c(v+u)\leq K'c(v),
\end{equation}
where for, say, $|u|\leq1$, $K':=Ke^K$.
Equation \eqref{eq:c_log_derivative_bound} enables control of the running cost useful for basic continuity results such as Proposition \ref{prop:continuity_fixed_stopping}.

Assumption \ref{asmp:Lg:unimodal} ensures a unimodal structure of the stopping problem. Namely, the closer the process is to the boundary $\{0,1\}$, the more likely it should be that we stop. This can be realized through the Lagrange formulation of the problem given in Proposition \ref{prop:lagrange}. Finally, Assumption \ref{asmp:Lg:goodset_nonempty} ensures a nonempty continuation set, while also ensuring a bounded running cost.

For the time-inhomogeneous problem we impose the following additional assumptions.

\begin{assumptionp}{B}[Extended assumptions]\label{asmp:extended}
In addition to Assumption \ref{asmp:baseline}, assume the following:

\begin{assumptionenum}
    \item\label{asmp:sigma_g}
    The coefficients satisfy $\sigma\in C^2((0,1))$ and $g\in C^3((0,1))$.

    \item\label{asmp:dxLg}
    For the point $x_0$ appearing in Assumption \ref{asmp:Lg:unimodal},
    \[
        \partial_x\mathcal{L}g(x)<0 \quad\text{on }(0,x_0),
        \qquad
        \partial_x\mathcal{L}g(x)>0 \quad\text{on }(x_0,1).
    \]

    \item\label{asmp:bounded}
    Either
    \begin{assumptioncases}
        \item\label{asmp:bounded:g'} $\|g'\|_\infty<\infty$, or
        \item\label{asmp:bounded:c} there exists $m>0$ such that $c(t)\ge m$ for all $t\ge0$.
    \end{assumptioncases}
    
    \item\label{asmp:sym}
    The functions $\sigma$, $g$, and $\mathcal{L}g$ are symmetric around $1/2$.

    \item\label{asmp:martingale}
    The stochastic flow derivative $(\partial_xX_t^x)_{t\in[0,T_1]}$, defined below in \eqref{eq:sde_partialx}, is a true martingale for every $x\in(0,1)$ and $T_1<\infty$.
\end{assumptionenum}
\end{assumptionp}

Assumption \ref{asmp:sigma_g} ensures enough differentiability to apply modern regularity techniques in optimal stopping. Assumption \ref{asmp:dxLg} strengthens Assumption \ref{asmp:Lg:unimodal}, removing stationary points. 
The assumptions in Assumption \ref{asmp:bounded} are used to make derivative estimates of the value function finite.

Our main interest lies in symmetric games, and Assumption \ref{asmp:sym} will simplify several later arguments. Most of the analysis can be directly extended to asymmetric problems, but doing so requires additional bookkeeping.
Finally, Assumption \ref{asmp:martingale} can be verified in several standard ways. For many examples, such as the classical sequential testing problem with $\sigma(x)=x(1-x)$, the condition $\|\sigma'\|_\infty<\infty$ is enough to verify Novikov's condition. For other examples, such as the Bass win-martingale, Novikov's condition may be less tractable. A useful alternative sufficient condition is linear growth of the Lamperti drift given in \eqref{eq:lamperti_drift},
\begin{equation*}
    |\mu(l)|\le k(1+|l|),
\end{equation*}
for some constant $k>0$, which implies the martingale property by a Bene\v{s}-type argument; see e.g. \cite[Theorem 4.1]{Klebaner2014}. This indeed holds for the Bass win-martingale.

\subsection{Basic consequences of the setup}\label{subsec:basic_consequences}

We first record the basic consequences of the setup and basic assumptions.

\begin{proposition}\label{prop:state_process}
Under Assumption \ref{asmp:state_dynamics}, for every $x\in(0,1)$ the SDE \eqref{eq:general_X_SDE} admits a unique strong solution. Moreover, the boundary points $\{0,1\}$ are inaccessible in finite time and 
\[
    X_\infty^x:=\lim_{t\to\infty}X_t^x\in\{0,1\}
\]
almost surely.
\end{proposition}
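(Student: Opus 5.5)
Existence and uniqueness come from local Lipschitz continuity plus a localization argument; inaccessibility of the endpoints comes from Feller's test; and the a.s. limit comes from martingale convergence together with a quadratic variation argument.

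\emph{Strong solution.} Fix $x\in(0,1)$ and take a sequence of compact intervals $[a_n,b_n]\uparrow(0,1)$ containing $x$. On each $[a_n,b_n]$, $\sigma$ is Lipschitz by Assumption \ref{asmp:state_dynamics}, so we extend $\sigma|_{[a_n,b_n]}$ to a globally Lipschitz function $\sigma_n$ on $\mathbb{R}$. The SDE driven by $\sigma_n$ then has a unique strong solution $X^{(n)}$. Let $\tau_n$ be its exit time from $(a_n,b_n)$. By pathwise uniqueness, the solutions for different $n$ agree up to $\tau_n\wedge\tau_m$. This defines a unique strong solution $X^x$ up to the explosion time $\zeta:=\lim_n\tau_n$, which is the first hitting time of $\{0,1\}$. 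On $\{\zeta<\infty\}$ we would continue the process constantly, but we will show this event is null.

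\emph{Inaccessibility.} Since there is no drift, the scale function is $s(x)=x$ and the speed measure is $m(dz)=2\sigma^{-2}(z)\,dz$, which is locally finite because $\sigma^{-2}\in L^1_{\mathrm{loc}}((0,1))$. Feller's test for explosions (e.g. Karatzas--Shreve, Thm.~5.5.29) gives $\mathbb{P}(\zeta=\infty)=1$ if and only if
\[
v(x)=\int_a^x (x-y)\,m(dy)=2\int_a^x\int_a^y\sigma^{-2}(z)\,dz\,dy
\]
tends to $\infty$ at both endpoints. This is exactly \eqref{eq:feller_test}. Hence $X^x_t\in(0,1)$ for all $t$ almost surely.

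\emph{Limit.} $X^x$ is a bounded continuous local martingale, hence a true martingale. By the martingale convergence theorem, $X^x_\infty$ exists almost surely. To show the limit lies in $\{0,1\}$, note that $\mathbb{E}\langle X^x\rangle_\infty=\mathbb{E}[(X^x_\infty-x)^2]\le1$, so $\int_0^\infty\sigma^2(X^x_t)\,dt<\infty$ almost surely. Consider the event $\{X^x_\infty=y\}$ with $y\in(0,1)$. On this event, continuity of $\sigma$ and $\sigma(y)>0$ give $\sigma^2(X^x_t)\ge\sigma^2(y)/2$ for all large $t$, which makes the integral infinite. So this event has probability zero, and therefore $X^x_\infty\in\{0,1\}$ almost surely.

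The main obstacle is only bookkeeping: verifying that the Feller test's hypotheses match the form of \eqref{eq:feller_test}. In particular, the condition is stated for one base point $a$ but is needed for every $a$; this is harmless because changing $a$ alters the double integral by terms that are at most affine in $x$ plus a constant.
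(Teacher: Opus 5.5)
Your proof is correct and follows the paper's outline: localization for existence and pathwise uniqueness, then Feller's test with $v(x)=2\int_a^x\int_a^y\sigma^{-2}(z)\,dz\,dy$, then martingale convergence. The only difference is the last step, where the paper invokes the natural-scale classification (the scale function $p(x)=x$ is finite at both endpoints, so the limit must be $0$ or $1$, as in Karatzas--Shreve, Ch.~5), whereas your bound $\mathbb{E}\int_0^\infty\sigma^2(X^x_t)\,dt\le 1$ proves the same fact directly, without that classification.

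Two small remarks on that step and on the assumption:
\begin{itemize}
\item Phrase the final step pathwise, as the inclusion $\{X^x_\infty\in(0,1)\}\subseteq\{\int_0^\infty\sigma^2(X^x_t)\,dt=\infty\}$. Concluding from the null events $\{X^x_\infty=y\}$ one $y$ at a time would require an uncountable union.
\item The paper's assumption is already stated for every base point $a$, so your base-point independence remark is correct but not needed.
\end{itemize}
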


\begin{proof}
Since $\sigma\in\llip{(0,1)}$, a standard localization argument yields existence and pathwise uniqueness of $X^x$ up to the first exit time from $(0,1)$. The Feller test \eqref{eq:feller_test} implies that the boundary points are inaccessible in finite time. Since $X^x$ is a bounded martingale in $[0,1]$, it converges almost surely to a random variable in $[0,1]$. Since the process is in natural scale, the scale function is finite at $0$ and $1$, so the limit must lie in $\{0,1\}$. See e.g. \cite[Chapter 5]{karatzas2012brownian}.
\end{proof}

As with the extension of $X^x$ to $x\in\{0,1\}$, we also extend the definition of the value function to the extended state space $\overline{\mathcal{S}}$ by setting
\begin{equation*}
    V(t,x)\equiv 0
\end{equation*}
on $(t,x)\in\overline{\mathcal{S}}\setminus \mathcal{S}$ under Assumption \ref{asmp:base_penalty:g} since $g(x)=0$ if $x\in\{0,1\}$.
Let $\mathbb{F}^x:=(\mathcal{F}_t^x)_{t\ge0}$ denote the usual augmentation of the natural filtration of $X^x$.

\begin{proposition}\label{prop:filtration}
Under Assumption \ref{asmp:state_dynamics}, for every $x\in(0,1)$, the filtrations $\mathbb{F}$ and $\mathbb{F}^x$ coincide.
\end{proposition}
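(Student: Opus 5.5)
The proof has two inclusions. The inclusion $\mathbb{F}^x\subseteq\mathbb{F}$ is immediate. By Proposition \ref{prop:state_process}, $X^x$ is the unique strong solution of \eqref{eq:general_X_SDE}, so $X^x_t$ is $\mathcal{F}_t$--measurable for every $t$. Since $\mathbb{F}$ satisfies the usual conditions, it contains the augmentation of the natural filtration of $X^x$.

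For the converse inclusion $\mathbb{F}\subseteq\mathbb{F}^x$, I will reconstruct $B$ from the path of $X^x$. Inaccessibility of $\{0,1\}$ in finite time (Proposition \ref{prop:state_process}) gives $X^x_t\in(0,1)$ for all $t<\infty$ almost surely. Assumption \ref{asmp:state_dynamics} gives $\sigma>0$ on $(0,1)$, so $\sigma(X^x_t)>0$ for all $t$ almost surely. Fix $n$ and let $\tau_n$ be the exit time of $X^x$ from $[1/n,1-1/n]$; this is an $\mathbb{F}^x$--stopping time. On $[1/n,1-1/n]$ the function $\sigma$ is continuous and positive, so $1/\sigma$ is bounded there. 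Hence
\[
    B_{t\wedge\tau_n}=\int_0^{t\wedge\tau_n}\frac{1}{\sigma(X^x_s)}\,dX^x_s
\]
is well defined as a stochastic integral of a bounded $\mathbb{F}^x$--predictable integrand against the continuous $\mathbb{F}^x$--martingale $X^x$. It is therefore $\mathcal{F}^x_t$--measurable, after choosing an adapted version, which the usual augmentation allows.

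The one point needing care is that the stochastic integral can be computed in $\mathbb{F}^x$ and agrees with the one computed in $\mathbb{F}$. I will handle this by noting that $X^x$ is a martingale with respect to both filtrations, and that for a predictable integrand with respect to the smaller filtration the integral can be realized as a limit in probability of Riemann sums. Those Riemann sums are $\mathcal{F}^x_t$--measurable, so the limit is $\mathcal{F}^x_t$--measurable up to null sets, which the augmentation absorbs.

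Finally, $\tau_n\uparrow\infty$ almost surely by inaccessibility. Letting $n\to\infty$ gives that $B_t$ is $\mathcal{F}^x_t$--measurable for each $t$. Therefore the natural filtration of $B$ is contained in $\mathbb{F}^x$. Since $\mathbb{F}^x$ is augmented, its usual augmentation $\mathbb{F}$ is also contained in $\mathbb{F}^x$, which completes the argument.
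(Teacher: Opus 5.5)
Your proof is correct and follows the paper's route. You recover $B_t=\int_0^t\sigma(X^x_s)^{-1}\,dX^x_s$ from the path of $X^x$, using $\sigma>0$ on $(0,1)$ and inaccessibility of the endpoints, and you use adaptedness of $X^x$ to $\mathbb{F}$ for the reverse inclusion. Your localization by $\tau_n$ and the Riemann-sum argument only make explicit what the paper's short proof leaves implicit. The Riemann-sum step is valid here because the integrand $\sigma(X^x_s)^{-1}\mathbf{1}_{\{s\le\tau_n\}}$ is left-continuous and adapted, not merely predictable.
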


\begin{proof}
Fix a $t<\infty$ and $x\in(0,1)$. Since $X_s^x\notin\{0,1\}$ for all $s\leq t$ almost surely, it follows that $\sigma(X_s^x)\neq0$, and thus
\[
    B_t=\int_0^t \frac{1}{\sigma(X_s^x)}\,dX_s^x
\]
by \eqref{eq:general_X_SDE}. Hence $B_t$ is $\mathcal{F}_t^x$--measurable. Conversely, $X^x$ is adapted to $\mathbb{F}$ by \eqref{eq:general_X_SDE}. Hence, the raw filtration generated by $B$ is contained in the raw filtration generated by $X^x$,
and therefore their usual augmentations coincide.
\end{proof}

\begin{proposition}\label{prop:joint_continuity}
    Under Assumption \ref{asmp:state_dynamics}, there exists a jointly measurable version of $(t,x,\omega)\mapsto X^x_t(\omega)$ such that on compacts of $\mathcal{S}$,
    \begin{equation*}
        (t,x)\mapsto X^x_t
    \end{equation*}
    is continuous almost surely.
\end{proposition}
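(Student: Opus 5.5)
The plan is to apply the Kolmogorov--Chentsov continuity theorem to the random field $(t,x)\mapsto X^x_t$ on compact subsets of $\mathcal{S}$, after localizing so that the coefficient is globally Lipschitz. Fix a compact $K\subset\mathcal{S}$; it is contained in $[0,T_1]\times[a,b]$ with $0<a<b<1$. For $n$ large, so that $[a,b]\subset(1/n,1-1/n)$, let $\sigma_n$ be a globally Lipschitz, bounded function on $\mathbb{R}$ that agrees with $\sigma$ on $[1/n,1-1/n]$; this is possible since $\sigma\in\llip{(0,1)}$. Let $X^{x,n}$ denote the unique strong solution of $dX=\sigma_n(X)\,dB$, $X_0=x$. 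By pathwise uniqueness (Proposition \ref{prop:state_process}), $X^{x,n}_t=X^x_t$ for $t\le\tau_n^x$, the exit time of $X^x$ from $(1/n,1-1/n)$.

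For the Lipschitz SDE, the standard moment estimates follow from the Burkholder--Davis--Gundy inequality and Gronwall's lemma. For $p\ge2$, $s,t\le T_1$ and $x,y\in[a,b]$,
\[
\mathbb{E}\big|X^{x,n}_t-X^{y,n}_s\big|^p\le C_{p,n,T_1}\big(|x-y|^p+|t-s|^{p/2}\big).
\]
Choosing $p>4$ puts us in the two-parameter Kolmogorov--Chentsov criterion, which requires an exponent exceeding the dimension $2$ with respect to the metric $|x-y|+|t-s|^{1/2}$. This yields a version of $(t,x)\mapsto X^{x,n}_t$ that is jointly continuous on $[0,T_1]\times[a,b]$ almost surely.

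Next, we remove the localization. Let $\Omega_n$ be the event that $\sup_{t\le T_1}X^{x,n}_t$ and $\inf_{t\le T_1}X^{x,n}_t$ stay in $(1/n,1-1/n)$ for all $x\in[a,b]\cap\mathbb{Q}$. On $\Omega_n$, the continuous field agrees with $X^x$ for rational $x$ and, by continuity, we can use it to define the version for all $x$. By the comparison theorem for one-dimensional SDEs (Yamada--Watanabe type), $x\mapsto X^{x,n}_t$ is monotone. Hence the field is controlled by its values at $x=a$ and $x=b$, and $\mathbb{P}(\Omega_n)\ge \mathbb{P}(\tau^a_n\wedge\tau^b_n>T_1)\to1$ as $n\to\infty$, because the boundaries are inaccessible (Proposition \ref{prop:state_process}). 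Consistency across $n$ follows from pathwise uniqueness, so the versions patch together on $\bigcup_n\Omega_n$, which has full probability.

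Finally, exhaust $\mathcal{S}$ by countably many compacts $[0,m]\times[1/m,1-1/m]$ to obtain a single version. Joint measurability in $(t,x,\omega)$ follows from continuity in $(t,x)$ together with measurability in $\omega$ for each fixed $(t,x)$ (Carathéodory); outside the full-measure set we define the version as $x$, for instance. I expect the main obstacle to be the delocalization step: ensuring uniformly in $x$ that paths started in $[a,b]$ do not leave $(1/n,1-1/n)$ before $T_1$. Monotonicity of the flow via the comparison theorem is the key tool I would use to reduce this to the two endpoint processes.
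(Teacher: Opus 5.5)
Your proof is correct, but it takes a different route from the paper. The paper's proof is essentially a citation. Local Lipschitz continuity of $\sigma$ gives a unique strong solution up to the exit time from $(0,1)$, and the Feller test (Proposition \ref{prop:state_process}) makes this exit time infinite. The general flow theorem \cite[Theorem V.37]{protter2005stochastic} for locally Lipschitz coefficients then gives continuity of $x\mapsto X^x_\cdot$ in the topology of uniform convergence on compacts, and hence joint continuity. Joint measurability follows from a Carath\'eodory argument, as in your last step.

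You instead prove the needed special case by hand, in three steps:
\begin{itemize}
\item you localize to a globally Lipschitz $\sigma_n$;
\item you apply Kolmogorov--Chentsov to the localized field;
\item you delocalize using the one-dimensional comparison theorem, which reduces the event that no path started in $[a,b]$ leaves $(1/n,1-1/n)$ before $T_1$ to the same event for the endpoint processes $X^a$ and $X^b$.
\end{itemize}
That delocalization step is exactly what the general flow theorem handles through lower semicontinuity of the explosion time. Your monotonicity device is a clean substitute, but it only works in dimension one. In exchange, your argument is self-contained and gives local H\"older continuity of the flow as a by-product.

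One small inaccuracy: with respect to the parabolic metric $|x-y|+|t-s|^{1/2}$, the index set has dimension $3$, not $2$. Your choice $p>4$ is still valid. On a bounded set, $|x-y|^p+|t-s|^{p/2}\le C\,|(t,x)-(s,y)|^{p/2}$ in the Euclidean metric, and $p/2>2$ is exactly the two-parameter Kolmogorov--Chentsov condition.
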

\begin{proof}
    Since $\sigma\in\llip{(0,1)}$, the SDE \eqref{eq:general_X_SDE} admits a unique strong solution up to the first exit time from $(0,1)$. Furthermore, the solution has continuous paths. By Proposition \ref{prop:state_process}, the exit time is infinite due to the Feller condition. Using \cite[Theorem V.37]{protter2005stochastic}, the flow $x\mapsto X^x_t$ is continuous in the topology of uniform convergence on compacts, almost surely. Thus, joint continuity holds almost surely. Since $(t,x,\omega)\mapsto X^x_t(\omega)$ is jointly almost surely continuous in the first two arguments and measurable in the last argument, a standard modification argument yields a Carathéodory function. Thus, it is jointly measurable in $(t,x,\omega)$.
\end{proof}

\begin{proposition}\label{prop:Stopping_set}
    Under Assumption \ref{asmp:base_penalty:c}, the set of admissible strategies obeys $\mathcal{T}^{(t)}=\mathcal{T}$.
\end{proposition}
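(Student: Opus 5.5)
The plan is to prove the two inclusions separately, each by comparing the shifted cost $u\mapsto c(t+u)$ with the unshifted cost $u\mapsto c(u)$ through the exponential growth bound \eqref{eq:expGrowth}. Since $g$ is bounded and nonnegative, a stopping time $\tau$ belongs to $\mathcal{T}^{(t)}$ if and only if $\mathbb{E}[\int_0^\tau c(t+u)\,du]<\infty$. So it suffices to show that, for fixed $t\ge0$, finiteness of $\mathbb{E}[\int_0^\tau c(t+u)\,du]$ is equivalent to finiteness of $\mathbb{E}[\int_0^\tau c(u)\,du]$.

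We apply \eqref{eq:expGrowth} pointwise in $u$. Taking $v=u\ge0$ and shift $t$ gives
\[
c(u)e^{-Kt}\le c(t+u)\le c(u)e^{Kt},\qquad u\ge0.
\]
Integrating over $[0,\tau(\omega)]$ for each $\omega$ then gives
\[
e^{-Kt}\int_0^\tau c(u)\,du\le\int_0^\tau c(t+u)\,du\le e^{Kt}\int_0^\tau c(u)\,du .
\]
All integrands are positive and measurable, so the pathwise integrals are well defined in $[0,\infty]$, including on $\{\tau=\infty\}$. Taking expectations preserves these inequalities. With the constants $e^{\pm Kt}$ finite, the two expectations are finite or infinite together. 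This yields $\mathcal{T}^{(t)}=\mathcal{T}$.

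The only point needing care is justifying \eqref{eq:expGrowth} itself. It comes from integrating $|(\log c)'|\le K$ from Assumption \ref{asmp:base_penalty:c}, which is legitimate because $c\in C^1$ and $c>0$. It is stated earlier for $v+u\ge0$ with $v\ge0$, which covers our choice. I do not expect a genuine obstacle. The argument also shows that no integrability of $c$ on compacts needs to be assumed separately, since $c$ is continuous on $[0,\infty)$.
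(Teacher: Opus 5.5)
Your proof is correct and matches the paper's: the inclusion $\mathcal{T}\subseteq\mathcal{T}^{(t)}$ is obtained exactly as in the paper from $c(t+u)\le e^{Kt}c(u)$ and monotonicity of the expectation. The only difference is that you derive $\mathcal{T}^{(t)}\subseteq\mathcal{T}$ from the lower half of \eqref{eq:expGrowth}. The paper instead takes that inclusion from its earlier remark, which shifts the integral and uses integrability of $c$ on $[0,t]$. Both routes are valid.
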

\begin{proof}
    We already have $\mathcal{T}^{(t)}\subseteq\mathcal{T}$. For the converse, take $\tau\in\mathcal{T}$ and any $t\geq0$. Then, by \eqref{eq:expGrowth}, we have $c(t+u)\leq e^{Kt}c(u)$ for any $u\geq0$. Hence,
    \begin{equation*}
        \mathbb{E}\left[\int_0^\tau c(t+u)\,du\right]\leq e^{Kt}\mathbb{E}\left[\int_0^\tau c(u)\,du\right]<\infty
    \end{equation*}
    by assumption.
\end{proof}

\begin{proposition}\label{prop:continuity_fixed_stopping}
    Under Assumptions \ref{asmp:state_dynamics}, \ref{asmp:base_penalty:c}, and \ref{asmp:base_penalty:g}, for every fixed admissible stopping time $\tau\in\mathcal{T}$, the map 
    \begin{equation*}
        (t,x)\mapsto \mathbb{E}\left[\int_0^\tau c(t+u)\,du+g(X_\tau^x)\right]
    \end{equation*}
    is continuous on $\overline{\mathcal{S}}$.
\end{proposition}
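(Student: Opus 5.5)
The map splits into a running-cost term and a terminal term. I would show each is continuous at a fixed point $(t_0,x_0)\in\overline{\mathcal S}$ by taking a sequence $(t_n,x_n)\to(t_0,x_0)$ in $\overline{\mathcal S}$ and using dominated convergence, with separate domination arguments for the two terms.

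\emph{Running-cost term.} The integrand $c(t+u)$ does not depend on $x$. Since $c$ is continuous by Assumption \ref{asmp:base_penalty:c}, $c(t_n+u)\to c(t_0+u)$ for every $u$. For domination, restrict to $|t_n-t_0|\le1$. By \eqref{eq:expGrowth},
\[
c(t_n+u)\le e^{K(t_0+1)}c(u),
\]
and $u\mapsto e^{K(t_0+1)}\int_0^\tau c(u)\,du$ is integrable because $\tau\in\mathcal T$. Dominated convergence, first in $u$ on $[0,\tau(\omega)]$ and then in $\omega$, gives convergence of $\mathbb E[\int_0^\tau c(t_n+u)\,du]$. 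Proposition \ref{prop:Stopping_set} guarantees that these expectations are finite.

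\emph{Terminal term.} Here $g$ is bounded and continuous on $[0,1]$, so by bounded convergence it suffices to show $X^{x_n}_\tau\to X^{x_0}_\tau$ almost surely on $\{\tau<\infty\}$. On $\{\tau=\infty\}$ we use $X^x_\infty$, which lies in $\{0,1\}$ by Proposition \ref{prop:state_process}, so $g(X^x_\infty)=0$ for every $x$ and that event contributes nothing.

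On $\{\tau<\infty\}$, work on $\{\tau\le N\}$ and let $N\to\infty$ at the end; the complements have probability tending to zero and $g$ is bounded. When $x_0\in(0,1)$, the $x_n$ eventually lie in a compact subinterval, and Proposition \ref{prop:joint_continuity} gives continuity of $(s,x)\mapsto X^x_s$ on compacts of $\mathcal S$. Hence $X^{x_n}_\tau\to X^{x_0}_\tau$ almost surely on $\{\tau\le N\}$.

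\emph{Main obstacle: the boundary $x_0\in\{0,1\}$.} Take $x_0=0$. Then $X^0\equiv0$ and the target is $g(0)=0$, and Proposition \ref{prop:joint_continuity} does not apply. I would use the martingale property instead. By the comparison theorem, $x\mapsto X^x$ is monotone, so $X^{x_n}_s\le X^{x}_s$ whenever $x_n\le x$. Alternatively, Doob's maximal inequality for the nonnegative martingale $X^{x_n}$ gives
\[
\mathbb P\Bigl(\sup_s X^{x_n}_s\ge\varepsilon\Bigr)\le x_n/\varepsilon\to0 .
\]
Thus $X^{x_n}_\tau\to0$ in probability, uniformly over $\tau$. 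By continuity of $g$ at $0$ with $g(0)=0$ and boundedness of $g$, $\mathbb E[g(X^{x_n}_\tau)]\to0$. The case $x_0=1$ follows in the same way by applying the argument to $1-X^{x_n}$.

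Combining the two terms gives joint continuity on $\overline{\mathcal S}$.
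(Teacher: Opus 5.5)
Your proposal is correct and follows essentially the same route as the paper. Both arguments separate the time and space terms, dominate $c(t_n+u)$ via \eqref{eq:expGrowth} and $\tau\in\mathcal{T}$, and obtain pathwise convergence from Proposition \ref{prop:joint_continuity}, using $g(X_\infty)=0$ on $\{\tau=\infty\}$. At $x_0\in\{0,1\}$ you use Doob's maximal inequality, which is uniform in $\tau$, whereas the paper applies Markov's inequality directly to $X^x_\tau$ using $\mathbb{E}[X^x_\tau]=x$; this difference and your localization to $\{\tau\le N\}$ are harmless but not needed.
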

\begin{proof}
    Since the expression is additively separable, it suffices to show continuity of 
    \begin{equation*}
        t\mapsto \mathbb{E}\left[\int_0^\tau c(t+u)\,du\right], \qquad\text{and}\qquad x\mapsto \mathbb{E}[g(X^x_\tau)].
    \end{equation*}
    For temporal continuity, if $t\in[0,T_1]$, then $c(t+u)\leq e^{KT_1}c(u)$ by \eqref{eq:expGrowth}. Thus, since $\tau\in\mathcal{T}$, we can apply dominated convergence to conclude temporal continuity. 
    
    For spatial continuity we first take $(x_n)\subset(0,1)$ such that $x_n\to x\in(0,1)$. Then, pathwise on $\{\tau<\infty\}$, we have
    \begin{equation*}
        X^{x_n}_{\tau(\omega)}(\omega)\to X^{x}_{\tau(\omega)}(\omega)
    \end{equation*}
    for almost all $\omega\in\Omega$ by Proposition \ref{prop:joint_continuity}. On $\{\tau=\infty\}$ we have $X^y_\tau=X^y_\infty\in\{0,1\}$ for all $y\in(0,1)$, hence
    \begin{equation*}
        g(X^{x_n}_\tau)=0=g(X^x_\tau)
    \end{equation*}
    for all $n$. Combining, we get for any stopping time that $g(X^{x_n}_\tau)\to g(X^x_\tau)$ almost surely. Since $g$ is bounded, dominated convergence yields the claim for $x\in(0,1)$. 
    
    To extend to the endpoints we note that by continuity of $g$ we can for every $\varepsilon>0$ find a $\delta>0$ such that $g(x)<\varepsilon$ for all $x\in[0,\delta]$. Since $X^x$ is a bounded martingale, optional sampling yields $\mathbb{E}[X^x_{\tau}]=x$ for possibly infinite stopping times $\tau$. Markov's inequality yields
    \begin{equation*}
        \mathbb{P}(X^x_\tau>\delta)\leq \frac{x}{\delta},
    \end{equation*}
    and hence 
    \begin{equation*}
        \mathbb{E}[g(X^x_\tau)]\leq \varepsilon \mathbb{P}(X^x_\tau\leq \delta) +\|g\|_\infty \mathbb{P}(X^x_\tau>\delta)\leq \varepsilon+\|g\|_\infty \frac{x}{\delta}.
    \end{equation*}
    Taking the limit we get
    \begin{equation*}
        0\leq \limsup_{x\searrow0} \mathbb{E}[g(X^x_\tau)]\leq \varepsilon.
    \end{equation*}
    Since $\varepsilon$ was arbitrary, we get spatial continuity at $0$. A symmetric argument at $1$ yields the claim. 
\end{proof}

\subsection{Transformations and subproblems}

Different transformations of the problem yield insights into the structure of the solutions. A standard transformation is the Lagrange formulation of the problem defined by 
\begin{equation*}
    W(t,x):=V(t,x)-g(x).
\end{equation*}
Then Dynkin's formula yields
\begin{equation}\label{eq:lagrange_formulation}
    W(t,x)=\inf_\tau \mathbb{E}\left[\int_0^\tau \big(c(t+u)+\mathcal{L}g(X_u^x)\big)\,du\right],
\end{equation}
by the following proposition.

\begin{proposition}\label{prop:lagrange}
Under Assumption \ref{asmp:baseline}, it holds that
\begin{equation}\label{eq:dynkin}
    \mathbb{E}[g(X^x_\tau)]-g(x)=\mathbb{E}\left[\int_0^\tau \mathcal{L}g(X^x_u)\,du\right],
\end{equation}
for any stopping time $\tau$ adapted to $\mathbb{F}$.
Specifically, the representation \eqref{eq:lagrange_formulation} holds.
\end{proposition}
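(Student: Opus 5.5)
We prove \eqref{eq:dynkin} by localizing Itô's formula away from the degenerate endpoints and then removing the localization. The sign information on $\mathcal{L}g$ from Assumption \ref{asmp:baseline}, together with monotone convergence, does the final step.

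\textbf{Localization and Itô.} Fix $x\in(0,1)$ and a possibly infinite stopping time $\tau$. For $n$ large, let
\[
\tau_n:=\tau\wedge n\wedge\inf\{t\ge0: X^x_t\notin(1/n,1-1/n)\}.
\]
On $[0,\tau_n]$ the process stays in a compact subset of $(0,1)$. There $g\in C^2$ and $g',\sigma$ are bounded, so Itô's formula applies and the stochastic integral $\int_0^{\tau_n} g'(X_u^x)\sigma(X_u^x)\,dB_u$ is a true martingale stopped at a bounded time. Taking expectations gives
\[
\mathbb{E}[g(X^x_{\tau_n})]-g(x)=\mathbb{E}\Big[\int_0^{\tau_n}\mathcal{L}g(X_u^x)\,du\Big].
\]

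\textbf{Left-hand side.} By Proposition \ref{prop:state_process}, the exit times from $(1/n,1-1/n)$ tend to infinity, so $\tau_n\to\tau$ almost surely. On $\{\tau<\infty\}$ we have $X^x_{\tau_n}\to X^x_\tau$ by path continuity. On $\{\tau=\infty\}$ we have $X^x_{\tau_n}\to X^x_\infty\in\{0,1\}$, where $g$ vanishes, consistent with $g(X^x_\infty)=0$. Since $g$ is bounded and continuous on $[0,1]$, dominated convergence gives $\mathbb{E}[g(X^x_{\tau_n})]\to\mathbb{E}[g(X^x_\tau)]$.

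\textbf{Right-hand side (main obstacle).} The integrand $\mathcal{L}g$ need not be bounded near the endpoints; for $g_{\mathrm{CE}}$ it can blow up. This is where concavity is used: since $g$ is concave, $g''\le0$, so $\mathcal{L}g=\tfrac12\sigma^2g''\le0$ everywhere. Then $-\mathcal{L}g(X^x_u)\ge0$ and $\tau_n\uparrow\tau$, so monotone convergence gives
\[
\mathbb{E}\Big[\int_0^{\tau_n}\mathcal{L}g(X^x_u)\,du\Big]\to\mathbb{E}\Big[\int_0^{\tau}\mathcal{L}g(X^x_u)\,du\Big].
\]
Because the left-hand limit is finite (bounded by $\|g\|_\infty$), this expectation is finite, indeed in $[-\|g\|_\infty,0]$, and \eqref{eq:dynkin} follows. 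The endpoints $x\in\{0,1\}$ are trivial, since both sides vanish: $X^x$ is constant and $\mathcal{L}g$ is evaluated only where $\sigma=0$, with the convention that the integrand is zero there.

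\textbf{The Lagrange representation.} For any admissible $\tau$, the expected running cost is finite, so subtracting $g(x)$ and using \eqref{eq:dynkin} gives
\[
\mathbb{E}\Big[\int_0^\tau c(t+u)\,du+g(X^x_\tau)\Big]-g(x)=\mathbb{E}\Big[\int_0^\tau \big(c(t+u)+\mathcal{L}g(X^x_u)\big)\,du\Big].
\]
Here linearity is justified because both integrals have finite expectation. Taking the infimum over $\tau$ yields \eqref{eq:lagrange_formulation}, with the infimum over the same admissible class.
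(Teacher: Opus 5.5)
Your proof is correct and follows essentially the paper's argument. You localize in space and time, apply Dynkin's formula at the localized time, pass to the limit on the left by dominated convergence (using $g(X^x_\infty)=0$ on $\{\tau=\infty\}$), and pass to the limit on the right by monotone convergence, since concavity gives $\mathcal{L}g\le 0$. The only cosmetic difference is that you use a single index $\tau\wedge n\wedge\eta_n$, whereas the paper removes the spatial and temporal localizations in two successive limits.
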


\begin{proof}
Fix $x\in(0,1)$ and an $\mathbb{F}$--stopping time $\tau$. Since $g'$ may be unbounded near
the endpoints and $\tau$ may be unbounded, we need to localize in space and time, respectively. For $m$ large enough that
$x\in(1/m,1-1/m)$, set
\[
    \eta_m:=\inf\{u\ge0:X_u^x\notin(1/m,1-1/m)\}.
\]
By Proposition \ref{prop:state_process}, $\eta_m\nearrow\infty$ almost surely. Since
$g\in C^2(1/m,1-1/m)$ with bounded derivatives on compact subintervals,
Dynkin's formula applied to $\tau\wedge n\wedge\eta_m$ gives
\[
    \mathbb E[g(X^x_{\tau\wedge n\wedge\eta_m})-g(x)]=\mathbb E\left[\int_0^{\tau\wedge n\wedge\eta_m}\mathcal L g(X_u^x)\,du\right].
\]
Letting $m\to\infty$, the left-hand side converges by dominated convergence,
because $g$ is bounded and continuous. On the right-hand side, concavity of
$g$ implies $\mathcal L g\le0$, so monotone convergence yields
\[
    \mathbb E[g(X^x_{\tau\wedge n})-g(x)]=\mathbb E\left[\int_0^{\tau\wedge n}\mathcal L g(X_u^x)\,du\right].
\]
Finally let $n\to\infty$. Again the left-hand side converges by dominated
convergence, using the convention $X^x_\tau=X^x_\infty$ on $\{\tau=\infty\}$,
while the right-hand side converges by monotone convergence since
$\mathcal Lg\le0$. Hence, we arrive at \eqref{eq:dynkin}.
Adding the time cost and taking the infimum over stopping times $\tau\in\mathcal{T}$, we arrive at \eqref{eq:lagrange_formulation}.
\end{proof}

Although the formulations in $(t,x)$--coordinates are natural, the state dependence of the diffusion coefficient complicates the analysis. To work with a unit diffusion coefficient, we introduce the Lamperti transformation. Fix a reference point $z\in(0,1)$, say $z=1/2$ under the symmetry assumption \ref{asmp:sym}, and define
\begin{equation*}
    \Psi(x):=\int_z^x \frac{1}{\sigma(y)}\,dy,
\end{equation*}
for $x\in(0,1)$.
Then $\Psi$ is strictly increasing, and hence invertible on its image. Writing $l=\Psi(x)$, define the transformed process
\[
    L_t^l:=\Psi(X_t^x).
\]
As in the original coordinates we formally extend the process and the mapping at the endpoints by $L^l_t\equiv l$ for all $t$ when $l=\Psi(x):=\lim_{y\to x}\Psi(y)$ for $x\in\{0,1\}$, where the limit may be infinite.
Under Assumption \ref{asmp:sigma_g} on $\sigma$, It\^o's formula shows that $L^l$ solves
\begin{equation}\label{eq:dynamics_L}
    dL_t^l=\mu(L_t^l)\,dt+dB_t,\qquad L_0^l=l,
\end{equation}
where
\begin{equation}\label{eq:lamperti_drift}
    \mu(l)=-\frac{\sigma'(\Psi^{-1}(l))}{2}.  
\end{equation}
The transformed state space is
\[
    \tilde{\mathcal{S}}:=\mathbb{R}_+\times \Psi((0,1)),
\]
and we note that the assumptions in Section \ref{subsec:standing_assumptions} do not impose any assumptions on the size of $\Psi((0,1))$, other than it being a nonempty interval.

Define the transformed value function
\begin{equation*}
    \tilde{V}(t,l):=\inf_{\tau}\mathbb{E}\left[\int_0^\tau c(t+u)\,du+\tilde{g}(L_\tau^l)\right]
    =V(t,\Psi^{-1}(l)),
\end{equation*}
where the infimum is taken over admissible $\mathbb{F}$ stopping times and $\tilde{g}(l):=g(\Psi^{-1}(l))$. Similarly, define
\begin{equation*}
    \tilde{W}(t,l):=\inf_{\tau}\mathbb{E}\left[\int_0^\tau \big(c(t+u)+\tilde{\mathcal{L}}\tilde{g}(L_u^l)\big)\,du\right]
    =W(t,\Psi^{-1}(l)),
\end{equation*}
where the infinitesimal generator of $L^l$ is
\begin{equation*}
    \tilde{\mathcal{L}}=\mu(l)\partial_l+\frac12\partial_{ll}.
\end{equation*}
Similarly, we also denote the Lamperti-transformed sets by $\tilde{\mathcal{C}}$, $\tilde{\mathcal{D}}$ and so on, meaning, e.g., $(t,l)\in\tilde{\mathcal{C}}$ if and only if $(t,\Psi^{-1}(l))\in\mathcal{C}$.

Through the Lamperti transformation, the derivative of the stochastic flow can easily be derived under Assumptions \ref{asmp:state_dynamics} and \ref{asmp:sigma_g} on $\sigma$. Differentiating the integrated version of \eqref{eq:dynamics_L} gives
\begin{equation*}
    \partial_lL_t^l=1+\int_0^t \mu'(L_u^l)\partial_lL_u^l\,du,
\end{equation*}
for $l\in\Psi((0,1))$, and hence
\begin{equation}\label{eq:partiall_explicit}
    \partial_lL_t^l=\exp\left(\int_0^t \mu'(L_u^l)\,du\right),
\end{equation}
up to the first time $L^l_t\in\Psi(\{0,1\})$, which does not happen in finite time.

By the chain rule this yields
\begin{equation}\label{eq:partialx_explicit}
    \partial_xX_t^x
    =
    \frac{\sigma(X_t^x)}{\sigma(x)}
    \exp\left(-\int_0^t \frac{\sigma''(X_u^x)\sigma(X_u^x)}{2}\,du\right),
\end{equation}
and equivalently
\begin{equation}\label{eq:sde_partialx}
    d(\partial_xX_t^x)=\sigma'(X_t^x)\partial_xX_t^x\,dB_t,
    \qquad \partial_xX_0^x=1.
\end{equation}
It should be clear from the explicit expression \eqref{eq:partialx_explicit} that joint continuity and measurability statements like those in Proposition \ref{prop:joint_continuity} can also be made for $(t,x,\omega)\mapsto \partial_xX_t^x(\omega)$.

Although the main motivation comes from exciting games that naturally lead to infinite-horizon problems, the regularity analysis is more conveniently carried out for finite-horizon problems. We therefore consider, for each $0<T<\infty$, the family
\begin{equation}\label{eq:OSP_T}
    V_T(t,x):=\inf_{\tau\in\mathcal{T}_{T-t}}\mathbb{E}\left[\int_0^\tau c(t+u)\,du+g(X_\tau^x)\right],
\end{equation}
where
\[
    \mathcal{T}_{T-t}:=\{\tau\in\mathcal{T}:\tau\le T-t\}.
\]
Likewise, we denote the finite-horizon versions of the infinite-horizon quantities by the $T$ subscript, e.g., $\mathcal{C}_T$ and $\tilde{W}_T$ for the finite-horizon continuation region and the finite-horizon Lamperti-transformed Lagrange formulation, respectively. Formally, we also write $V_\infty=V$, etc. for the infinite-horizon problem. Specifically, we denote the state space by 
\[
\mathcal{S}_T:=[0,T)\times(0,1),
\]
and the extended state space by
\begin{equation*}
    \overline{\mathcal{S}}_T:=[0,T]\times[0,1].
\end{equation*}
As for the infinite-horizon state space, we will consider points $(t,x)$ to be in the state space $\mathcal{S}_T$ unless explicitly stated. We formally set
\begin{equation*}
    V_T(t,x):=g(x)
\end{equation*}
for $(t,x)\in\overline{\mathcal{S}}_T\setminus\mathcal{S}_T$.

When no ambiguity arises, we suppress the dependence of the admissible class of stopping times on the horizon. We emphasize that the running cost $c$ remains defined on all of $\mathbb{R}_+$.

\subsection{Preliminary Analysis}

We next address initial properties of the value function and the optimal stopping rule. The following results hold for $T\leq\infty$ unless otherwise stated.

\begin{proposition}\label{prop:semicontinuity}
If Assumption \ref{asmp:baseline} and $T\leq \infty$ hold, then $V_T$ is upper semicontinuous, $\mathcal{C}_T$ is open, and $\mathcal{D}_T$ is closed. Furthermore, the stopping time
\begin{equation}\label{eq:definition_optimal_stopping_time}
    \tau^*_T(t,x):=\inf\{0\le s<T-t:(t+s,X_s^x)\in\mathcal{D}_T\}\wedge(T-t)
\end{equation}
is the smallest optimal stopping time for $T<\infty$.
\end{proposition}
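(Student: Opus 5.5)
Upper semicontinuity follows because $V_T$ is a pointwise infimum of continuous functions. By Proposition \ref{prop:continuity_fixed_stopping}, for each fixed admissible $\tau$ the map $(t,x)\mapsto \mathbb{E}[\int_0^\tau c(t+u)\,du+g(X^x_\tau)]$ is continuous on $\overline{\mathcal{S}}$. The admissible class is independent of $t$ by Proposition \ref{prop:Stopping_set}, but the horizon constraint $\tau\le T-t$ does depend on $t$ when $T<\infty$. To handle this, I would fix $(t_0,x_0)$ and $\varepsilon>0$, take an $\varepsilon$-optimal $\tau\le T-t_0$, and for nearby $t$ use the truncated time $\tau\wedge(T-t)$. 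Then I would show that
\[
    (t,x)\mapsto \mathbb{E}\Big[\int_0^{\tau\wedge(T-t)} c(t+u)\,du+g\big(X^x_{\tau\wedge(T-t)}\big)\Big]
\]
is continuous at $(t_0,x_0)$. This holds by dominated convergence, using \eqref{eq:expGrowth}, joint continuity of $(t,x)\mapsto X^x_t$ (Proposition \ref{prop:joint_continuity}), and continuity and boundedness of $g$. Hence $\limsup V_T(t,x)\le V_T(t_0,x_0)+\varepsilon$. For $T=\infty$ no truncation is needed. Since $g$ is continuous, $V_T-g$ is upper semicontinuous, so $\mathcal{C}_T=\{V_T-g<0\}$ is open and $\mathcal{D}_T$ is its relatively closed complement.

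For optimality of $\tau^*_T$ when $T<\infty$, I would work with the Lagrange formulation \eqref{eq:lagrange_formulation} (Proposition \ref{prop:lagrange}) and invoke the general theory of optimal stopping for Markov processes (e.g.\ Peskir--Shiryaev, Theorem 2.4 / Corollary 2.9). This requires the space-time process $(t+s,X^x_s)$ to be strong Markov with continuous paths, which holds by Proposition \ref{prop:state_process}. The gain function must be sufficiently regular: $g$ is continuous and bounded, and the running cost is bounded on $[0,T]$ because $c$ is continuous. With those conditions checked, the standard result gives that the first entry time into $\{V_T=g\}$ is optimal whenever $V_T$ is upper semicontinuous and $g$ is continuous. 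The horizon is finite, so $\tau^*_T\le T-t$ is automatically admissible, and at $s=T-t$ we have $V_T=g$ by convention, so the terminal time is included.

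Minimality is the classical argument. Suppose $\sigma$ is an optimal time with $\mathbb{P}(\sigma<\tau^*_T)>0$. On that event the process is in $\mathcal{C}_T$, where $V_T<g$. By the supermartingale property of $V_T(t+s,X_s)+\int_0^s c$, one has $\mathbb{E}[\int_0^\sigma c+g(X_\sigma)]>\mathbb{E}[\int_0^\sigma c+V_T(t+\sigma,X_\sigma)]\ge V_T(t,x)$, which contradicts optimality.

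I expect the main obstacle to be the martingale/supermartingale characterization needed for the optimality claim. The standard route gives it for lower semicontinuous $V$ with usc gain in a reward-maximization setting, so I would rewrite the problem as maximizing $-(\text{cost})$. The sign conventions must then match: $-g$ continuous, and the value $-V_T$ lower semicontinuous, which is what we proved. To stay within bounded-gain hypotheses, I would also handle the integrated running cost by adding the additive functional $\int_0^s c(t+u)\,du$ as a state coordinate.
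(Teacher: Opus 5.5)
Your proposal is correct and follows essentially the same route as the paper. You obtain upper semicontinuity as an infimum of the continuous functionals from Proposition \ref{prop:continuity_fixed_stopping}, using the truncation $\tau\wedge(T-t)$ when $T<\infty$ (the paper uses reverse Fatou where you use dominated convergence), and then get optimality and minimality of $\tau^*_T$ from Peskir--Shiryaev's Corollary 2.9. One small slip: in this minimization setting $V_T(t+s,X^x_s)+\int_0^s c(t+u)\,du$ is a submartingale, not a supermartingale, although the inequality $\mathbb{E}\bigl[\int_0^\sigma c(t+u)\,du+V_T(t+\sigma,X^x_\sigma)\bigr]\ge V_T(t,x)$ that you actually use is the correct one.
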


\begin{proof}
If $T=\infty$, taking the infimum over continuous functions indexed by a family $\tau\in\mathcal{T}$ induces an upper semicontinuous function $V_T$. For $T<\infty$, fix $(t,x)\in\mathcal S_T$ and $\varepsilon>0$. Choose an $\varepsilon$--optimal time $\tau\in\mathcal T_{T-t}$ such that
\begin{equation*}
    V_T(t,x)+\varepsilon >
    \mathbb E\!\left[\int_0^\tau c(t+u)\,du + g(X_\tau^x)\right].
\end{equation*}
Let $(t_n,x_n)\to(t,x)$ and define $\tau_n:=\tau\wedge (T-t_n)$, for which $\tau_n\to\tau$ almost surely. 
Reverse Fatou's lemma and continuity yield
\begin{equation*}
    \limsup_{n\to\infty}\mathbb E\!\left[\int_0^{\tau_n} c(t_n+u)\,du + g(X_{\tau_n}^{x_n})\right]
    \leq
    \mathbb E\!\left[\int_0^\tau c(t+u)\,du + g(X_\tau^x)\right],
\end{equation*}
where we use that on a finite horizon the integrand can easily be dominated.
Since $V_T(t_n,x_n)$ is the optimal value, we have
\begin{equation*}
    \limsup_{n\to\infty} V_T(t_n,x_n)
    \le
    V_T(t,x)+\varepsilon.
\end{equation*}
Since $\varepsilon>0$ was arbitrary, $V_T$ is upper semicontinuous. Since $g$ is continuous, it holds that $V_T-g$ is upper semicontinuous and thus $\mathcal{C}_T$ is open and $\mathcal{D}_T$ is closed, with respect to $\mathcal{S}_T$.

The second claim follows from standard optimal stopping arguments, see \cite[Corollary 2.9]{peskir2006optimal}, if $\tau^*_T(t,x)<\infty$ almost surely, which holds in the finite-horizon problem. 
\end{proof}

If $\int_0^\infty c(t)\,dt<\infty$, then stopping at infinity may also be optimal for the infinite-horizon problem. We later show that $\tau^*_T(t,x)$ remains optimal in the infinite-horizon problem.

Next, we analyze the structure of the state space.
Introduce the set
\begin{equation*}
    \mathcal{U}:=\{(t,x)\in\mathcal{S}:\mathcal{L}g(x)<-c(t)\}.
\end{equation*}

\begin{proposition}\label{prop:U_subset_C}
Under Assumption \ref{asmp:baseline}, it holds that $\mathcal{U}\cap\mathcal{S}_T\subseteq\mathcal{C}_T$.
\end{proposition}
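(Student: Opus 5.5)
The plan is to use the Lagrange formulation \eqref{eq:lagrange_formulation} from Proposition \ref{prop:lagrange}: it suffices to exhibit, for each $(t,x)\in\mathcal{U}\cap\mathcal{S}_T$, an admissible stopping time $\tau\in\mathcal{T}_{T-t}$ with
\[
\mathbb{E}\left[\int_0^\tau \big(c(t+u)+\mathcal{L}g(X_u^x)\big)\,du\right]<0 .
\]
This gives $W_T(t,x)<0$, i.e.\ $V_T(t,x)<g(x)$, so $(t,x)\in\mathcal{C}_T$. The finite-horizon analogue of \eqref{eq:lagrange_formulation} holds because Proposition \ref{prop:lagrange} applies to any $\mathbb{F}$--stopping time, in particular to bounded ones.

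The construction is local. Since $(t,x)\in\mathcal{U}$, the quantity $\mathcal{L}g(x)+c(t)$ is strictly negative. Both $\mathcal{L}g$ (continuous on $(0,1)$, as $g\in C^2((0,1))$ and $\sigma$ is continuous) and $c$ (continuous by Assumption \ref{asmp:base_penalty:c}) are continuous. Hence there exist $\delta>0$ and $h>0$ with $[x-\delta,x+\delta]\subset(0,1)$, $t+h<T$ (possible since $t<T$), and
\[
c(t+u)+\mathcal{L}g(y)\le -\eta<0\qquad\text{for all } u\in[0,h],\ |y-x|\le\delta ,
\]
for some $\eta>0$. I would then set
\[
\tau:=\inf\{u\ge0:|X_u^x-x|\ge\delta\}\wedge h .
\]
This is a bounded stopping time with $\tau\le T-t$. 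It is admissible because $\mathbb{E}\int_0^\tau c(t+u)\,du\le h\sup_{[t,t+h]}c<\infty$.

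On $[0,\tau)$ the integrand is at most $-\eta$, so the expectation above is at most $-\eta\,\mathbb{E}[\tau]$. It remains to check that $\mathbb{E}[\tau]>0$. By continuity of paths, $X^x$ starting at $x$ needs strictly positive time to move distance $\delta$, so $\tau>0$ almost surely and therefore $\mathbb{E}[\tau]>0$. This yields strict negativity, and the inclusion follows.

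The only mildly delicate point is the justification of Dynkin's formula at the stopped time. Here it is easy, because the process stays in the compact interval $[x-\delta,x+\delta]\subset(0,1)$ up to $\tau$, where $g$ is $C^2$ with bounded derivatives; alternatively one can simply cite Proposition \ref{prop:lagrange}. I therefore expect no substantive obstacle. The main care is choosing $h$ so that $t+h<T$ when $T<\infty$, and noting that for $T=\infty$ any $h>0$ works.
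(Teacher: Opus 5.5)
Your proposal is correct and takes essentially the same approach as the paper. The paper also uses openness of $\mathcal{U}$ to place a small space-time rectangle inside $\mathcal{U}\cap\mathcal{S}_T$, takes the (bounded, hence admissible) exit time from it, and concludes $W_T(t,x)<0$ via the Lagrange formulation. Your uniform bound $-\eta$ on a closed neighbourhood simply makes the strict inequality $\mathbb{E}\big[\int_0^\tau (c(t+u)+\mathcal{L}g(X_u^x))\,du\big]\le -\eta\,\mathbb{E}[\tau]<0$ a bit more explicit than the paper does.
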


\begin{proof}
Take any $(t,x)\in\mathcal{U}\cap\mathcal{S}_T$. Since $\mathcal{L}g$ and $c$ are continuous, $\mathcal{U}$ is open and thus $\mathcal{U}\cap\mathcal{S}_T$ is open. Thus, there exists $\varepsilon>0$ such that
\begin{equation*}
    R_\varepsilon(t,x):=[t,t+\varepsilon)\times(x-\varepsilon,x+\varepsilon)\subset\mathcal{U}\cap\mathcal{S}_T.    
\end{equation*}
Let
\begin{equation*}
    \tau_\varepsilon:=\inf\{u\ge0:(t+u,X_u^x)\notin R_\varepsilon(t,x)\}
\end{equation*}
be the first exit time from $R_\varepsilon(t,x)$, which we note is admissible even if $T<\infty$. Furthermore, $\tau_\varepsilon>0$ almost surely. Then
\begin{equation*}
    W_T(t,x)\le \mathbb{E}\left[\int_0^{\tau_\varepsilon}\big(c(t+u)+\mathcal{L}g(X_u^x)\big)\,du\right]<0.
\end{equation*}
Therefore $V_T(t,x)<g(x)$, and hence $(t,x)\in\mathcal{C}_T$.
\end{proof}

Assumption \ref{asmp:Lg:goodset_nonempty} guarantees a nonempty continuation region ex ante, since it implies that the $t$--slice of $\mathcal{U}$ is nonempty for all $t\ge0$. Assumption \ref{asmp:Lg:unimodal} then gives the basic interval structure of the $t$--slice.

\begin{lemma}\label{lem:Lg0}
Under Assumption \ref{asmp:baseline} it holds that
\begin{equation*}
    \mathcal{L}g(0+)=\mathcal{L}g(1-)=0.
\end{equation*}
Consequently, $\mathcal{U}(t)\subset(0,1)$ and $\mathcal{U}(t)$ is an interval for every $t\ge0$, where $\mathcal{U}(t):=\{x:(t,x)\in\mathcal{U}\}$.
\end{lemma}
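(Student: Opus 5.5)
The plan is to identify the boundary limits of $\mathcal{L}g=\tfrac12\sigma^2 g''$ from monotonicity and then rule out a nonzero limit using the Feller test \eqref{eq:feller_test} together with the boundedness of $g$. I treat the endpoint $0$; the endpoint $1$ is handled the same way, using that $\mathcal{L}g$ is increasing on $(x_0,1)$.

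\emph{Step 1: the limit exists.} By Assumption \ref{asmp:base_penalty:g}, $g$ is concave and $C^2$ on $(0,1)$, so $g''\le 0$ and hence $\mathcal{L}g\le 0$. By Assumption \ref{asmp:Lg:unimodal}, $\mathcal{L}g$ is strictly decreasing on $(0,x_0)$. A decreasing function that is bounded above by $0$ has a limit as $x\searrow0$, so
\[
\alpha:=\mathcal{L}g(0+)=\sup_{(0,x_0)}\mathcal{L}g\in(-\infty,0].
\]
Moreover, $\mathcal{L}g(x)\le\alpha$ for all $x\in(0,x_0)$.

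\emph{Step 2: the limit is zero.} Suppose, for contradiction, that $\alpha<0$. Then on $(0,x_0)$ we have $g''(z)\le 2\alpha\,\sigma^{-2}(z)$. Fix $a\in(0,x_0)$ and let $x<a$. Taylor's formula with integral remainder gives
\[
g(x)=g(a)+g'(a)(x-a)+\int_x^a\!\int_y^a g''(z)\,dz\,dy .
\]
The two sign flips cancel, so the double integral equals $\int_a^x\int_a^y g''(z)\,dz\,dy$. This is therefore bounded above by
\[
2\alpha\int_a^x\!\int_a^y \sigma^{-2}(z)\,dz\,dy .
\]
By \eqref{eq:feller_test}, this bound tends to $-\infty$ as $x\to0$. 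The linear terms $g(a)+g'(a)(x-a)$ stay bounded, so $g(x)\to-\infty$. This contradicts $g\ge0$. Hence $\alpha=0$. The only delicate point in the whole argument is this sign bookkeeping in the double integral when $x<a$; the rest is routine.

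\emph{Step 3: the consequences.} By definition,
\[
\mathcal{U}(t)=\{x\in(0,1):\mathcal{L}g(x)<-c(t)\},
\]
so $\mathcal{U}(t)\subset(0,1)$ holds trivially. Since $c(t)>0$ and $\mathcal{L}g(0+)=\mathcal{L}g(1-)=0$, the set $\mathcal{U}(t)$ in fact stays away from neighbourhoods of $0$ and $1$. It is nonempty by Assumption \ref{asmp:Lg:goodset_nonempty}, since $x_0\in\mathcal{U}(t)$.

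To show that $\mathcal{U}(t)$ is an interval, take $x<y<z$ with $x,z\in\mathcal{U}(t)$. If $y\le x_0$, then strict decrease of $\mathcal{L}g$ on $(0,x_0)$, extended to $x_0$ by continuity, gives $\mathcal{L}g(y)<\mathcal{L}g(x)<-c(t)$. If $y>x_0$, then strict increase on $(x_0,1)$ gives $\mathcal{L}g(y)<\mathcal{L}g(z)<-c(t)$. In either case $y\in\mathcal{U}(t)$, so $\mathcal{U}(t)$ is an interval.
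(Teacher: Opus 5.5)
Your proposal is correct and follows the paper's own proof: the monotone limit exists, a negative limit is ruled out by integrating $g''\le 2\alpha\sigma^{-2}$ twice against the Feller condition \eqref{eq:feller_test}, and the interval structure then comes from the unimodality of $\mathcal{L}g$. The differences are cosmetic. You work at $0$ rather than $1$, you contradict $g\ge 0$ rather than $g(1)=0$, and you verify the interval property by a direct three-point check rather than by counting the solutions of $\mathcal{L}g(x)=-c(t)$.
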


\begin{proof}
We only treat the case $x\to1$ since the other endpoint is analogous. By Assumption \ref{asmp:Lg:unimodal}, the limit $\mathcal{L}g(1-)$ exists. Since $g$ is concave, $\mathcal{L}g\le0$. Suppose, for contradiction, that $\mathcal{L}g(1-)\le -\varepsilon$ for some $\varepsilon>0$. By monotonicity we then have $\mathcal{L}g(x)\le-\varepsilon$ for all $x\in(x_0,1)$, so that
\[
    g''(x)\le -\frac{2\varepsilon}{\sigma^2(x)},\qquad x\in(x_0,1).
\]
Integrating twice yields
\begin{align*}
    g(x)
    &= g(x_0)+\int_{x_0}^x \left(g'(x_0)+\int_{x_0}^y g''(z)\,dz\right)\,dy \\
    &\le g(x_0)+(x-x_0)g'(x_0)-2\varepsilon\int_{x_0}^x \int_{x_0}^y \frac{1}{\sigma^2(z)}\,dz\,dy.
\end{align*}
Letting $x\to1$ and using the Feller condition \eqref{eq:feller_test} gives $0=g(1)\le-\infty$, a contradiction.

For the second part, Assumption \ref{asmp:Lg:unimodal} implies that for each $t\ge0$, the equation
\[
    \mathcal{L}g(x)=-c(t)
\]
has at most two solutions, which lie inside $(0,1)$. Hence $\mathcal{U}(t)$ is either empty or an open interval strictly contained in $(0,1)$. By Assumption \ref{asmp:Lg:goodset_nonempty}, $\mathcal{U}(t)$ is nonempty and so we are done.
\end{proof}

We denote the upper boundary of $\mathcal{U}$ by $\gamma$, that is,
\begin{equation*}
    \gamma(t):=\sup\{x\in(x_0,1):\mathcal{L}g(x)+c(t)=0\}.
\end{equation*}
By Lemma \ref{lem:Lg0}, we have $\gamma(t)\in(x_0,1)$ for all $t\ge0$. Furthermore, under the extended Assumption \ref{asmp:extended}, it holds that $(t,x)\mapsto \mathcal{L}g(x)+c(t)$ is $C^1([0,\infty)\times(0,1))$, and so the implicit function theorem yields that $t\mapsto \gamma(t)$ is $C^1([0,\infty))$ since Assumption \ref{asmp:dxLg} ensures 
\begin{equation*}
    \partial_x(\mathcal{L}g(x)+c(t))\Big\vert_{x=\gamma(t)}>0,
\end{equation*}
for $t\geq0$.

\section{Main Results}\label{sec:main_results}

We now state the main results of the paper. 
The proofs are postponed to the following sections and will generally be proved first for the homogeneous infinite-horizon problem, then for the inhomogeneous finite-horizon, and finally for the inhomogeneous infinite-horizon problem. In all statements of the current section we assume Assumption \ref{asmp:extended} and $T\leq \infty$.
The first main structural result shows that the continuation set inherits the same one-interval geometry as $\mathcal{U}$, namely, by Assumption \ref{asmp:sym},
\begin{equation*}
    \mathcal{U}=\{(t,x)\in\mathcal{S}:1-\gamma(t)<x<\gamma(t)\}.
\end{equation*}

\begin{theorem}[Structure of the continuation region]\label{thm:main_structure}
There exists a lower semicontinuous boundary $b_T:[0,T)\to(1/2,1)$
such that
\begin{equation}\label{eq:main_continuation_region}
    \mathcal{C}_T
    =
    \{(t,x)\in\mathcal{S}_T:1-b_T(t)<x<b_T(t)\}.
\end{equation}
The first hitting time in \eqref{eq:definition_optimal_stopping_time} is given by
\begin{equation*}
    \tau^*_T(t,x)=\inf\{0\le u<T-t:X_u^x\not\in(1-b_T(t+u), b_T(t+u))\}\wedge(T-t)
\end{equation*}
and is an optimal stopping time for \eqref{eq:OSP_T} if $T<\infty$ and \eqref{eq:inf_horizon_osp} if $T=\infty$. 
Furthermore,
\begin{equation*}
    \gamma(t)<b_T(t)<1,\qquad t<T.
\end{equation*}
\end{theorem}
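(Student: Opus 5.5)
We work throughout with the Lagrange formulation $W_T=V_T-g$ from Proposition \ref{prop:lagrange}, so that $\mathcal{C}_T=\{W_T<0\}$ with running reward $H(t,x):=c(t)+\mathcal{L}g(x)$. By Assumptions \ref{asmp:Lg:unimodal}, \ref{asmp:dxLg} and \ref{asmp:sym}, $H(t,\cdot)$ is symmetric about $1/2=x_0$, strictly decreasing on $(0,1/2)$ and strictly increasing on $(1/2,1)$, with $H(t,0+)=H(t,1-)=c(t)>0$ by Lemma \ref{lem:Lg0}.

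\textbf{Symmetry.} The first step is to note that $X^{1-x}\overset{d}{=}1-X^x$, because $\sigma$ is symmetric. Hence $W_T(t,x)=W_T(t,1-x)$ and $\mathcal{C}_T$ is symmetric about $1/2$. It then suffices to show that each slice $\mathcal{C}_T(t)\cap[1/2,1)$ is an interval $[1/2,b_T(t))$.

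\textbf{Interval structure.} The key claim is that $x\mapsto W_T(t,x)$ is nondecreasing on $[1/2,1)$. Two routes are available.

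\emph{Coupling route.} Work in Lamperti coordinates, or directly with the ordered flow $X^x\le X^y$ for $1/2\le x<y$. The difficulty is that $H$ is not monotone across $1/2$, so paths of $X^x$ can drop below $1/2$. I would therefore compare $X^y$ with the reflection $1/2+|X^x-1/2|$ rather than with $X^x$ itself. Since $H$ is symmetric and increasing in $|x-1/2|$, it suffices to couple so that $|X^y_u-1/2|\ge|X^x_u-1/2|$ up to a stopping time. I would use the reflection coupling: run $X^y$ and $X^x$ independently, or reflected, until they meet or $X^x$ hits $1/2$, and after that point exploit the symmetric strong Markov property. Under this coupling, the integrand for $y$ dominates that for $x$ at every time. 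Taking the $\varepsilon$-optimal $\tau$ for $y$ (which is admissible for $x$ by Propositions \ref{prop:filtration} and \ref{prop:Stopping_set}) then gives $W_T(t,x)\le W_T(t,y)$.

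\emph{Geometric route.} If the coupling proves delicate, I would instead argue by exclusion of a ``hole.'' Suppose $(t,y)\in\mathcal{D}_T$ while some $x\in(1/2,y)$ lies in $\mathcal{C}_T$ with $(t,x')\in\mathcal{D}_T$ for some $x'<x$, so that a component of $\mathcal{C}_T(t)$ is separated from the central region $\mathcal{U}(t)$. On that component, $\mathcal{U}$ is absent (since $\mathcal{U}(t)$ is the central interval, cf. Proposition \ref{prop:U_subset_C}), hence $H>0$ there. Starting inside it, the process under $\tau^*_T$ must accumulate a positive cost before exiting into $\mathcal{D}_T$ or reaching time $T$, which gives $W_T\ge 0$, a contradiction. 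The time-inhomogeneity makes the ``component'' a space-time set, so I would run this argument on the space-time connected component and use that $H>0$ on its whole closure outside $\mathcal{U}$.

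The main obstacle is this monotonicity/no-hole step: $c$ is nonmonotone in time, so the continuation set can bulge in time, and the space-time argument must handle this. I would try the space-time version of the geometric route first.

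\textbf{Boundary and properties.} With the interval structure in hand, define $b_T(t):=\sup\mathcal{C}_T(t)$. Its properties follow as below.
\begin{itemize}
\item Lower semicontinuity follows because $\mathcal{C}_T$ is open (Proposition \ref{prop:semicontinuity}).
\item $b_T(t)>\gamma(t)$: the point $(t,\gamma(t))$ is a limit of points of $\mathcal{U}$, and I would show $W_T(t,\gamma(t))<0$ by stopping at a small exit time from a box straddling $\gamma(t)$. There the positive part of $H$ is second order, so a Taylor estimate gives a negative total.
\item $b_T(t)<1$: near $x=1$, $H\ge c(t)/2>0$ on $(1-\delta,1)$. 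For $x$ close to $1$, any strategy reaches $\mathcal{U}$ only with probability $O(1-x)$, by the martingale property as in Proposition \ref{prop:continuity_fixed_stopping}. The gain from entering $\mathcal{U}$ is bounded by $\|g\|_\infty\,\mathbb{P}(\text{reach})$, while the cost is at least of order the expected exit time. I would compare these using the Feller integral \eqref{eq:feller_test}, and under Assumption \ref{asmp:bounded} bound the gain more sharply by $|g(x)|\le\|g'\|_\infty(1-x)$ or use $c\ge m$. This gives $W_T=0$ near $1$.
\end{itemize}

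\textbf{Optimality.} The hitting-time formula for $\tau^*_T$ is then just a rewriting of \eqref{eq:definition_optimal_stopping_time}. Optimality for $T<\infty$ is Proposition \ref{prop:semicontinuity}. For $T=\infty$, let $T\to\infty$: $V_T\downarrow V$, the boundaries $b_T$ increase to $b_\infty$, and since $X$ exits $(1-b,b)$ with $b<1$ in finite time, a limiting argument with the monotone-convergence version of \eqref{eq:dynkin} yields optimality of $\tau^*_\infty$.
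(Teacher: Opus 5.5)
\paragraph{The gap: interval structure of the slices.} The step that fails is the one you flag yourself: the interval structure of the slices of $\mathcal{C}_T$, i.e.\ monotonicity of $W_T(t,\cdot)$ on $[1/2,1)$.

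Your preferred geometric route cannot deliver it. The sign argument shows only that every space-time connected component of $\mathcal{C}_T$ meets $\mathcal{U}$: a path started in a component that avoids $\mathcal{U}$ accrues $H\geq0$ until $\tau^*_T$. Since $\mathcal{U}$ is connected and contained in $\mathcal{C}_T$, this says only that $\mathcal{C}_T$ is connected. It does not exclude a stopping ``island'': a point $(t,x')\in\mathcal{D}_T$ with $\gamma(t)\leq x'<x$ and $(t,x)\in\mathcal{C}_T$, where the part of $\mathcal{C}_T$ above the island merges with $\mathcal{U}$ at later times. Started at $(t,x)$, the process can go around the island forward in time and enter $\mathcal{U}$ with positive probability, so no sign contradiction arises. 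Because $c$ is nonmonotone, this is exactly the configuration that has to be ruled out.

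The coupling route, as written, does not close the gap either.
\begin{itemize}
\item Under the synchronous coupling, $\tau$ is admissible for $x$, but once $X^x$ drops below $1/2$ the domination $|X^y_u-1/2|\geq|X^x_u-1/2|$ is lost. Splitting at the hitting time $\alpha$ of $1/2$ and restarting by the strong Markov property then requires $W_T(s,1/2)\leq W_T(s,X^y_\alpha)$, which is the claim itself.
\item Under a non-synchronous coupling (e.g.\ independent until meeting), the domination can be arranged. But then your $\varepsilon$-optimal $\tau$ is a stopping time of the noise driving $X^y$, not of $\mathbb{F}^{X^x}$, so Proposition \ref{prop:filtration} does not give admissibility. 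You would additionally need that stopping times of the enlarged filtration cannot beat $V_T(t,x)$, which you do not address.
\end{itemize}

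\paragraph{How the paper breaks the circularity, and a coupling fix.} The paper differentiates. By \eqref{eq:space_derivative_lamperti_w},
$\partial_l\tilde W_T(t,l)=\mathbb{E}\bigl[\int_0^{\tilde\tau^*}\partial_lL^l_u\,\partial_l(\tilde{\mathcal L}\tilde g)(L^l_u)\,du\bigr]$.
Split this at the hitting time $\alpha(l)$ of the symmetry axis $l=0$. The post-$\alpha$ contribution is
$\mathbb{E}\bigl[\ind{\tilde\tau^*>\alpha(l)}\,\partial_lL^l_{\alpha(l)}\,\partial_l\tilde W_T(t+\alpha(l),0)\bigr]=0$,
because $\partial_l\tilde W_T(\cdot,0)=0$ by symmetry; the derivative exists there since $(\cdot,0)\in\tilde{\mathcal C}_T$. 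What remains has a nonnegative integrand for $l\geq0$, so $\partial_l\tilde W_T\geq0$ there (Proposition \ref{prop:two_boundaries}).

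If you want to keep a coupling proof, your reflection idea has to be applied to both processes. Pass to $R=|X-1/2|$:
\begin{itemize}
\item by symmetry of $\sigma$, $R$ is Markov with respect to $\mathbb{F}$;
\item the reward $H(t,\cdot)$ is a function of $R$ and is increasing in $R$;
\item two copies of the reflected SDE for $R$ driven by the same Brownian motion stay ordered, so the $\varepsilon$-optimal time for the larger starting point is admissible for the smaller one.
\end{itemize}

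\paragraph{Two smaller points.}
\begin{itemize}
\item Your proof of $b_T>\gamma$ would be more elementary than the paper's, which uses smooth fit plus the parabolic Hopf lemma. However, it needs an asymmetric box, such as exit from $(\gamma(t)-\varepsilon,\gamma(t)+\varepsilon^2)$ with a short time cap. In a symmetric box, the positive and negative parts of $H\approx\partial_x\mathcal{L}g(\gamma(t))(x-\gamma(t))$ are of the same order and cancel at leading order, so the sign is decided by terms such as $c'(t)$.
\item For $T=\infty$ you cannot use that $X$ exits $(1-b_\infty,b_\infty)$ in finite time, since $b_\infty$ need not stay away from $1$ as $t\to\infty$. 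One needs the identification $\tau^*_T\uparrow\tau^*_\infty$, with $\tau^*_\infty$ possibly infinite, as in Proposition \ref{prop:finite_to_infinite_1}.
\end{itemize}
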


For the inhomogeneous case, we establish Theorem \ref{thm:main_structure} in four steps. First, we extend the conclusion of Proposition \ref{prop:semicontinuity} to the infinite-horizon problem by a limiting argument.
Second, we prove existence of a boundary function satisfying
\begin{equation*}
    \gamma(t)\le b_T(t)\le 1,
    \qquad t<T.
\end{equation*}
Third, we prove $b_T(t)<1$. Finally, using the smooth fit obtained in Theorem \ref{thm:main_regular_smooth} below, we prove $b_T(t)>\gamma(t)$.
This result is the main qualitative description of the optimal rule. 
The decision maker continues only while the posterior remains sufficiently uncertain. 
The boundary $b_T(t)$ determines how close the posterior must be to certainty before stopping becomes optimal. In particular, the strict upper boundary $b_T(t)<1$ for $t<T$ implies that there always is a level of uncertainty at which it is optimal to make the decision.

The next result records the regularity of the value function and establishes the well-known smooth-fit principle.
\begin{theorem}[Regularity and smooth fit]\label{thm:main_regular_smooth}
The value function is $V_T\in C^{1,2}(\mathcal{S}_T\setminus\partial \mathcal{D}_T)\cap C^1(\mathcal{S}_T)$. 
\end{theorem}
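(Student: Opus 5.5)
We plan to work throughout in the Lamperti coordinates $(t,l)$, where $L^l$ has unit diffusion coefficient and the drift $\mu$ is $C^1$ by Assumption \ref{asmp:sigma_g}. Since $\Psi$ is a $C^3$ diffeomorphism, regularity of $\tilde V_T$ transfers back to $V_T$. The proof has three parts. First, interior regularity away from the boundary. Second, spatial smooth fit together with local Lipschitz continuity. Third, continuity of $\partial_t V_T$ across the boundary, which is the main obstacle. We treat $T<\infty$ first and obtain the infinite-horizon case from estimates that are uniform in $T$.

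\emph{Interior regularity.} In $\mathcal{D}_T^\circ$ we have $V_T=g\in C^2$, so nothing needs to be shown there. For the continuation region we use the structure from Theorem \ref{thm:main_structure}, namely $\mathcal{C}_T=\{1-b_T<x<b_T\}$ with $b_T$ lower semicontinuous; those parts of that theorem do not rely on smooth fit. Fix a small parabolic rectangle $R\subset\mathcal{C}_T$. The standard argument applies: $V_T$ is continuous (we establish this in the next step), so we solve the Cauchy–Dirichlet problem $\partial_t u+\mathcal{L}u+c=0$ on $R$ with boundary data $V_T$. This problem has a unique classical $C^{1,2}$ solution because $\sigma>0$ is smooth on compacts of $(0,1)$. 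The strong Markov property, applied up to the exit time from $R$, then identifies $u=V_T$. This gives $V_T\in C^{1,2}(\mathcal{S}_T\setminus\partial\mathcal{D}_T)$ with $\partial_tV_T+\mathcal{L}V_T=-c$ in $\mathcal{C}_T$.

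\emph{Lipschitz bounds and spatial smooth fit.} Following De Angelis and Stabile \cite{de2019lipschitz}, we fix $(t,x)$ and use the optimal time $\tau^*=\tau_T^*(t,x)$, which is suboptimal for nearby points.

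For the spatial derivative, the flow derivative \eqref{eq:sde_partialx} and the true-martingale property in Assumption \ref{asmp:martingale} give
\[
\partial_x^{\pm}V_T(t,x)\ \text{bounded by}\ \mathbb{E}\big[g'(X^x_{\tau^*})\partial_xX^x_{\tau^*}\big].
\]
The left derivative is bounded from below and the right derivative from above by this quantity, so the one-sided spatial derivatives coincide. Finiteness of this bound uses Assumption \ref{asmp:bounded}: either $g'$ is bounded, or a localization argument applies together with $\mathbb{E}[\tau^*]\le C/m$ when $c\ge m$.

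For the time variable, we compare $\tau^*\wedge(T-t-h)$ with $\tau^*$. The bound \eqref{eq:cprime_helper} controls $c(t+h+u)-c(t+u)$ by $hK'c(t+u)$. The truncation error is controlled by $\mathbb{P}(\tau^*>T-t-h)$ together with the Lipschitz continuity of $g$ on compacts.

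Together these give local Lipschitz continuity of $V_T$ on $\mathcal{S}_T$, and in particular continuity. At a boundary point $(t,b_T(t))$ the argument above shows that $\partial_xV_T$ is continuous. Concretely, $\partial_x V_T(t,x_n)\to g'(b_T(t))$ along $x_n\uparrow b_T(t)$, because $\tau^*(t,x_n)\to0$. This regularity of the boundary points holds because $X$ is nondegenerate, so the process immediately enters the stopping set, using the monotone structure of the slices.

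\emph{Time derivative across the boundary.} This step is the hardest. On $\mathcal{D}_T^\circ$ we have $\partial_tV_T=0$. We must therefore show that $\partial_tV_T(t_n,x_n)\to0$ as $(t_n,x_n)\to(t,b_T(t))$ from inside $\mathcal{C}_T$. We expect this to hold because the spatial smooth fit is uniform on compacts. For that uniformity we will use continuity of $\tau^*$ in its initial point, which follows from the regularity of boundary points in the sense of De Angelis and Peskir \cite{de2020global}.

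We would first try a probabilistic representation of $\partial_tV_T$ in $\mathcal{C}_T$, in the spirit of Jaillet–Lamberton–Lapeyre \cite{jaillet1990variational}:
\[
\partial_t V_T(t,x)=\mathbb{E}\Big[\int_0^{\tau^*}c'(t+u)\,du\Big]-(\text{terminal-horizon term}).
\]
The terminal-horizon term is supported on $\{\tau^*=T-t\}$. Both terms vanish as $\tau^*\to0$ in probability, using \eqref{eq:cprime_helper} and the fact that $\mathbb{P}(\tau^*=T-t)\to0$ away from $t=T$.

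Combining the three parts gives $V_T\in C^1(\mathcal{S}_T)$. For $T=\infty$, all of the bounds above are uniform in $T$. We will show $V_T\to V$ locally uniformly and $\partial V_T\to\partial V$ locally uniformly, using monotone convergence of $b_T$ in $T$ together with equicontinuity from the Lipschitz estimates. This passes the conclusion to the limit.
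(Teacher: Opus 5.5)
There is a genuine gap, and it is exactly where the paper does most of its work. Both of your smooth-fit arguments reduce to $\tau^*_T(t_n,x_n)\to0$ as $(t_n,x_n)\to(t,b_T(t))$, that is, to probabilistic regularity of the boundary points. The spatial one uses $\partial_xV_T(t_n,x_n)=\mathbb{E}[g'(X^{x_n}_{\tau^*_n})\partial_xX^{x_n}_{\tau^*_n}]\to g'(b_T(t))$ with $\tau^*_n:=\tau^*_T(t_n,x_n)$, and the temporal one uses the bounds \eqref{eq:time_deriv_bound}. You justify regularity by nondegeneracy of $X$ and the interval structure of the $t$-slices. In this time-inhomogeneous problem that is not enough. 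The slice structure only describes the stopping set in the $x$-direction, whereas regularity of $(t,b_T(t))$ depends on how fast $s\mapsto b_T(s)$ rises for $s>t$. A priori $b_T$ is only lower semicontinuous, which allows an upward jump immediately after $t$, or a rise faster than the iterated-logarithm envelope of the Lamperti process. In either case the process started at the boundary stays in $\mathcal{C}_T$ for a positive time. Since $c$ is not monotone, no temporal monotonicity of $b_T$ is available to rule this out.

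The missing ingredient is the proof that $b_T$ is locally Lipschitz with constants uniform in $T$ (Proposition \ref{prop:bound_lip}). The paper approximates $\tilde b_T$ by the $C^1$ level curves $a_T^\delta$ of $\{\tilde W_T=-\delta\}$. This needs three inputs:
\begin{itemize}
\item strict positivity of $\partial_l\tilde W_T$ on $\tilde{\mathcal{C}}_T\cap\{l>0\}$, with a horizon-uniform lower bound at a reference level (Lemma \ref{lem:pos_deriv});
\item a bound $b_T\le a<1$ on compact time intervals, uniform in $T$ (Lemma \ref{lem:b_uniform_upper_bound});
\item the ratio estimate $|\partial_t\tilde W_T|/\partial_l\tilde W_T\le K_\varepsilon$ along those curves (Lemma \ref{lem:derivative_bound}).
\end{itemize}
Only then does a Girsanov change of measure making $L_s-K_\varepsilon s$ a Brownian motion give $\mathbb{P}(\sigma^l=0)=1$. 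After that, \cite[Corollaries 5 and 6]{de2020global} yield Green regularity and $\tau^*_T(t_n,x_n)\to0$ almost surely (Lemma \ref{lem:regular_boundary}). Citing De Angelis--Peskir cannot replace this step, because their result assumes probabilistic regularity of the boundary as a hypothesis.

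Three further steps do not work as written.
\begin{itemize}
\item The sandwich $\partial_x^+V_T\le\mathbb{E}[g'(X^x_{\tau^*})\partial_xX^x_{\tau^*}]\le\partial_x^-V_T$ does not show that the one-sided derivatives coincide. Concavity of $V_T(t,\cdot)$ already gives $\partial_x^+V_T\le\partial_x^-V_T$, so the sandwich only identifies $\partial_xV_T$ where it exists, in particular on $\mathcal{C}_T$. At a boundary point, where $\tau^*_T(t,b_T(t))=0$, it reads $g'(b_T(t))\le g'(b_T(t))\le\partial_x^-V_T(t,b_T(t))$, which is precisely the inequality smooth fit must sharpen.
\item For $T=\infty$, Lipschitz bounds on $V_T$ give equicontinuity of $V_T$ but not of $\partial_xV_T$ or $\partial_tV_T$, so they do not yield locally uniform convergence of the derivatives. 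The paper bounds $\partial_{xx}V_T$ uniformly in $T$ via the PDE and the $T$-independent bound $|\partial_tV_T|\le K_{\text{time}}$. Combined with finite-horizon smooth fit, this makes $x\mapsto\partial_xV_T(t,x)$ Lipschitz across the boundary uniformly in $T$ (Proposition \ref{prop:uniform_conv_dx}). Temporal smooth fit at $T=\infty$ is proved directly from $\partial_tV_\infty=\mathbb{E}[c(t+\tau^*_\infty)]-c(t)$ and regularity of the Lipschitz limit boundary $b_\infty$ (Proposition \ref{prop:temp_smooth}).
\item In your time-Lipschitz estimate, the truncation error must be bounded via Dynkin's formula and $\|\mathcal{L}g\|_\infty$, as in $-h\|\mathcal{L}g\|_\infty\le W_T(T-h,\cdot)\le0$. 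Lipschitz continuity of $g$ alone only gives $O(\sqrt h)$.
\end{itemize}
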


The proof of this theorem is one of the main technical parts of the paper. 
For finite horizons, one can show probabilistic regularity of the optimal stopping boundary by showing that the lower semicontinuity of $b_T$ can be extended to Lipschitz continuity. This yields the spatial and temporal smooth fit. However, the arguments do not extend directly to the infinite-horizon case, due to integrability issues on an infinite horizon. Instead, we show the spatial smooth fit by a simple argument based on uniform convergence.

We can also show that the regularity found in Theorem \ref{thm:main_regular_smooth} is in some sense maximal.
\begin{corollary}\label{cor:main_discont_dxx}
    The second spatial derivative of the value function admits the representation
    \begin{equation*}
        \partial_{xx} V_T(t,x)=\begin{cases}
            -2\frac{c(t)+\partial_tV_T(t,x)}{\sigma^2(x)} & \text{for }(t,x)\in\mathcal{C}_T\\
            g''(x)&\text{for }(t,x)\in(\mathcal{D}_T)^\circ.
        \end{cases}
    \end{equation*}
    Specifically, it is not continuous across the boundary.
\end{corollary}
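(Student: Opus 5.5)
The plan has two parts: first derive the two formulas for $\partial_{xx}V_T$, then use them to rule out continuity across the boundary.

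In the continuation region $\mathcal{C}_T$, I will use the standard fact that $V_T$ solves the linear parabolic equation $\partial_t V_T+\mathcal{L}V_T+c=0$ classically. This follows from Theorem \ref{thm:main_regular_smooth}, which gives $V_T\in C^{1,2}$ away from $\partial\mathcal{D}_T$. The equation itself comes from the usual martingale argument. The process $\int_0^{s\wedge\tau^*}c(t+u)\,du+V_T(t+s\wedge\tau^*,X^x_{s\wedge\tau^*})$ is a martingale, by the strong Markov property and the optimality of $\tau^*_T$ from Theorem \ref{thm:main_structure}. Applying It\^o's formula on a small rectangle inside $\mathcal{C}_T$, where $V_T\in C^{1,2}$, forces the drift to vanish. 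Dividing by $\tfrac12\sigma^2(x)>0$ then gives the first formula. In the interior of the stopping set, $V_T=g$ on an open set, so $\partial_{xx}V_T=g''$ there trivially.

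For the discontinuity, I fix $t<T$ and look at the upper boundary point $(t,b_T(t))$; the lower boundary follows by the symmetry in Assumption \ref{asmp:sym}. Suppose $\partial_{xx}V_T$ extended continuously across the boundary at this point. By Theorem \ref{thm:main_regular_smooth}, $V_T\in C^1(\mathcal{S}_T)$, so $\partial_tV_T$ is continuous at $(t,b_T(t))$. Approaching from the stopping side, where $V_T=g$ is independent of $t$, gives $\partial_tV_T(t,b_T(t))=0$. Approaching from $\mathcal{C}_T$ then yields the limit $-2c(t)/\sigma^2(b_T(t))$. From $(\mathcal{D}_T)^\circ$ the limit is $g''(b_T(t))$. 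Equality of the two limits would mean $\mathcal{L}g(b_T(t))=-c(t)$, that is, $b_T(t)\in\{\gamma(t),1-\gamma(t)\}$. But Theorem \ref{thm:main_structure} gives $b_T(t)>\gamma(t)>1/2$, so by Assumption \ref{asmp:Lg:unimodal} we have $\mathcal{L}g(b_T(t))>-c(t)$. This contradiction shows the jump is strictly nonzero, with size $2(\mathcal{L}g(b_T(t))+c(t))/\sigma^2(b_T(t))$.

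The main obstacle is making sense of the one-sided limit from $(\mathcal{D}_T)^\circ$, since the stopping region might have empty interior near some boundary points. I will try to show that $\{(s,x): x> b_T(s)\}$ contains points arbitrarily close to $(t,b_T(t))$. This follows because $b_T(t)<1$ and $\mathcal{D}_T$ contains $[b_T(t),1)$ in its $t$-slice. Combined with lower semicontinuity of $b_T$ (and its continuity from the later $C^1$ result before $T$, if needed), this gives a neighbourhood strip $\{x>b_T(s)+\delta\}$ lying in $(\mathcal{D}_T)^\circ$. If this fails, I will argue with one-sided limits along horizontal slices, where $V_T(t,\cdot)=g$ on $[b_T(t),1)$ already holds. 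In that case $\partial_{xx}V_T(t,\cdot)$ from the right equals $g''$, and the comparison of limits uses only $\partial_t V_T(t,b_T(t))=0$ from the $C^1$ property.
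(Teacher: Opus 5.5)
Your proposal is correct and takes essentially the paper's route. The formula in $\mathcal{C}_T$ comes from the PDE of Proposition \ref{prop:PDE}. For the discontinuity, the paper compares the limit $-2c(t)/\sigma^2(b_T(t))$ from $\mathcal{C}_T$ with $g''(b_T(t))$ and uses $b_T(t)>\gamma(t)$ for strictness. It obtains that limit from the temporal smooth fit (Proposition \ref{prop:temp_smooth}), which is exactly what your argument via $V_T\in C^1(\mathcal{S}_T)$ and $\partial_tV_T=0$ on $\mathcal{D}_T$ supplies.

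One small correction: lower semicontinuity points the wrong way here. The open region $\{x>b_T(s)\}\subset(\mathcal{D}_T)^\circ$ accumulating at $(t,b_T(t))$ comes from the continuity of $b_T$ (Proposition \ref{prop:bound_lip}).
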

The specific representation of the temporal derivative will be given in \eqref{eq:time_derivative_finite_horizon_org} and \eqref{eq:time_deriv_inf} below for finite and infinite horizons, respectively.
The smooth-fit principle also enables us to upgrade the Lipschitz continuity of $b_T$ to $C^1([0,T))$ by direct application of \cite{de2024probabilistic}.
\begin{corollary}\label{cor:main_c1}
    The boundary satisfies $b_T\in C^1([0,T))$.
\end{corollary}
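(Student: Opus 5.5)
The plan is to apply the abstract $C^1$-regularity theorem of De Angelis and Lamberton \cite{de2024probabilistic}, which is stated for one-sided boundaries in a time-inhomogeneous setting. I would work throughout in Lamperti coordinates and with the Lagrange formulation $\tilde W_T=\tilde V_T-\tilde g$. In these coordinates the state process $L$ has unit diffusion coefficient and drift $\mu\in C^1$, by Assumption \ref{asmp:sigma_g}. The obstacle function for $\tilde W_T$ is $0$, and the running cost is $H(t,l):=c(t)+\tilde{\mathcal L}\tilde g(l)$, which is $C^1$ by Assumption \ref{asmp:sigma_g}. Since $\Psi$ is a $C^2$ diffeomorphism, $C^1$-regularity of $\tilde b_T:=\Psi(b_T)$ is equivalent to that of $b_T$.

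By the symmetry in Assumption \ref{asmp:sym} and the structure in Theorem \ref{thm:main_structure}, it suffices to study the upper boundary near a point $(t_0,\tilde b_T(t_0))$ with $t_0<T$. Locally, the stopping set lies to the right of the boundary: $\{l\ge \tilde b_T(t)\}$.

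The hypotheses of \cite{de2024probabilistic} to be checked are the following.
\begin{enumerate}[label=(\roman*)]
\item The boundary is continuous, in fact locally Lipschitz. This is obtained as part of the proof of Theorem \ref{thm:main_regular_smooth} for finite $T$. For $T=\infty$, I would use the estimates that are uniform in the horizon together with monotone convergence $b_T\to b_\infty$ to get local Lipschitz continuity on compacts.
\item The value function is $C^1$ across the boundary, i.e.\ smooth fit. This is Theorem \ref{thm:main_regular_smooth}.
\item The running cost is strictly signed near the boundary. Since $\gamma(t)<b_T(t)$ by Theorem \ref{thm:main_structure}, we have $H>0$ on a neighbourhood of the boundary point in the stopping side. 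Moreover, $\partial_l H\ne0$ there, by Assumption \ref{asmp:dxLg} together with the chain rule.
\item The coefficients are locally Lipschitz in $(t,l)$, and the free-boundary problem $\partial_t\tilde W+\tilde{\mathcal L}\tilde W=-H$ holds in $\tilde{\mathcal C}_T$.
\end{enumerate}

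Given these, the theorem in \cite{de2024probabilistic} gives continuity of $\partial_t\tilde W_T$ and $\partial_l\tilde W_T$ up to the boundary. It also shows, through a probabilistic argument, that $\partial_{ll}\tilde W_T$ extends continuously to the boundary from the continuation side. Its limit there equals $2H$ by Corollary \ref{cor:main_discont_dxx}, and this limit is nonzero. Differentiating the identity $\partial_l\tilde W_T(t,\tilde b_T(t))=0$ via the implicit function theorem then gives
\begin{equation*}
\tilde b_T'(t)=-\frac{\partial_{tl}\tilde W_T}{\partial_{ll}\tilde W_T}\Big|_{l=\tilde b_T(t)^-},
\end{equation*}
and this expression is continuous.

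I expect the main obstacle to be verifying the local hypotheses uniformly near $t_0$ when $T=\infty$ or as $t\to T$. The issue is to ensure that the Lipschitz constants and the lower bound on $H$ are locally uniform. I would restrict to compact time intervals $[0,T_1]\subset[0,T)$, where $c$ is bounded above and below. On such intervals the localized estimates of \cite{de2019lipschitz,de2020global} apply after stopping $L$ at the exit from a compact spatial interval. This sidesteps the possible unboundedness of $\Psi((0,1))$ and of $\mu$.
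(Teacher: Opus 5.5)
Your strategy is the paper's. At a boundary point $(t_0,b_T(t_0))$ you check the local hypotheses of \cite[Assumption 2.3]{de2024probabilistic}: $C^1$ regularity of the value function from smooth fit (Theorem~\ref{thm:main_regular_smooth}), strict positivity of $c+\mathcal{L}g$ on a small closed rectangle around the point because $\gamma<b_T<1$ (Theorem~\ref{thm:main_structure}), and regularity of the coefficients. You then apply \cite[Theorem 2.4]{de2024probabilistic}, using the local Lipschitz continuity of $b_T$ from Proposition~\ref{prop:bound_lip}, which already covers $T=\infty$. The endpoint $t=0$ is handled by the shift in \eqref{eq:extend_0}, which you do not mention. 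Because these hypotheses are purely local, the uniformity in $t$ and $T$ that you expect to be the main obstacle is not needed.

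The one real difference is the choice of coordinates. The paper verifies the hypotheses for $V_T$ in the original variables, with obstacle $g$ and cost $c$. There is no drift and no discounting there, so the corresponding condition of \cite{de2024probabilistic} is trivial, and $\sigma\in C^2$ directly gives $\sigma'$ locally Lipschitz. In Lamperti coordinates the diffusion coefficient is trivial, but the regularity of $\sigma$ is pushed into the drift $\mu=-\tfrac12\sigma'\circ\Psi^{-1}$. Its derivative $-\tfrac12(\sigma\sigma'')\circ\Psi^{-1}$ is only continuous under Assumption~\ref{asmp:sigma_g}. You would have to confirm that this meets whatever the cited theorem requires of the drift; the original coordinates avoid the question altogether.

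The step that does not hold up is your final paragraph. In \cite{de2024probabilistic}, $C^1$ regularity of the value function is an \emph{input}, which is exactly what smooth fit provides. $C^1$ regularity of the boundary is the \emph{conclusion} of their Theorem 2.4, and you should simply take that conclusion.

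Your own derivation via the implicit function theorem does not go through. Since $\partial_l\tilde W_T\equiv0$ on the stopping side, it is not $C^1$ across the boundary: its $l$-derivative jumps from $-2H$ to $0$. Its zero set near the point is therefore the whole stopping region, not a curve. Applying the implicit function theorem to an extension from $\tilde{\mathcal C}_T$ would require $\partial_{tl}\tilde W_T$ to extend continuously to the boundary. You have not established this, and it is essentially the statement being proved, since along the boundary it would have to coincide with $2H\,\tilde b_T'$.

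Note also the sign. By the PDE $\partial_t\tilde W_T+\mu\,\partial_l\tilde W_T+\tfrac12\partial_{ll}\tilde W_T=-H$ together with $\partial_t\tilde W_T,\partial_l\tilde W_T\to0$, the boundary limit of $\partial_{ll}\tilde W_T$ is $-2H$, not $2H$. Only its nonvanishing matters.
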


The regularity in Theorem \ref{thm:main_regular_smooth} leads to an equation for the unknown boundary itself. 
The integral equation is particularly useful for numerical schemes based on Picard iteration.

\begin{theorem}[Free-boundary integral equation]\label{thm:main_integral_equation}
The optimal boundary $b_T$ solves
\begin{equation}\label{eq:main_boundary_integral}
    \begin{aligned}
        g(x)=\mathbb{E}\Bigg[g(X^x_{T-t})&+\int_0^{T-t}c(t+u)\ind{X^x_{u}\in(1-b_T(t+u),b_T(t+u))}\,du\\
        &-\int_0^{T-t}\mathcal{L}g(X^x_{u})\ind{X^x_{u}\not\in(1-b_T(t+u),b_T(t+u))}\,du\Bigg],
    \end{aligned}
\end{equation}
for $x=b_T(t)$ and $x=1-b_T(t)$ with $t<T$. For $T=\infty$, the integral is interpreted as the monotone limit as the integration domain expands and $g(X^x_{T-t})=0$ since $X^x_{\infty}\in\{0,1\}$, almost surely.

Moreover, this equation characterizes the optimal boundary uniquely in the following sense. 
If $\beta:[0,T)\to(1/2,1)$ is a continuous function satisfying
\[
    \gamma(t)<\beta(t)<1
\]
and solves \eqref{eq:main_boundary_integral} with $b_T(t)$ replaced by $\beta(t)$ for all $t<T$, then $\beta(t)=b_T(t)$ for all $t<T$.
\end{theorem}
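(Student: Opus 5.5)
The plan has two parts: derive the equation from a change-of-variable (Itô) formula applied to $V_T$, and prove uniqueness by the standard Peskir four-step comparison adapted to a two-sided symmetric boundary.

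\emph{Derivation.} For $T<\infty$ we use Theorem \ref{thm:main_regular_smooth}: $V_T\in C^1(\mathcal{S}_T)\cap C^{1,2}$ off $\partial\mathcal{D}_T$. By Corollary \ref{cor:main_c1} the boundary $b_T$ is $C^1$, so $\partial\mathcal{D}_T$ consists of two $C^1$ curves. We then apply the local-time-on-curves change-of-variable formula of Peskir to $V_T(t+u,X^x_u)$ on $[0,T-t]$, localized by exit times from $(1/m,1-1/m)$.
\begin{itemize}
\item By smooth fit, the local-time terms at $b_T$ and $1-b_T$ vanish.
\item In $\mathcal{C}_T$ we have $\partial_tV_T+\mathcal{L}V_T=-c$, while in $\mathcal{D}_T^\circ$ we have $V_T=g$, so the drift there is $\mathcal{L}g$.
\item The boundary curves have zero Lebesgue time occupation, since $X$ is nondegenerate.
\end{itemize}
The stochastic integral is a martingale after localization. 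Removing the localization uses boundedness of $g$ together with monotone convergence, since $\mathcal{L}g\le0$ and $c>0$, exactly as in Proposition \ref{prop:lagrange}.

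Taking expectations and using $V_T(T,\cdot)=g$ gives
\[
V_T(t,x)=\mathbb{E}\Big[g(X^x_{T-t})+\int_0^{T-t} c(t+u)\mathbf{1}_{\mathcal{C}}\,du-\int_0^{T-t}\mathcal{L}g(X^x_u)\mathbf{1}_{\mathcal{D}}\,du\Big]
\]
for all $(t,x)$. Setting $x=b_T(t)$ or $x=1-b_T(t)$, where $V_T=g$ by Theorem \ref{thm:main_structure}, yields \eqref{eq:main_boundary_integral}.

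For $T=\infty$, we apply the same formula on $[0,N]$ and let $N\to\infty$. We have $g(X_N^x)\to0$ by Proposition \ref{prop:state_process} and bounded convergence, and $V_\infty(t+N,X_N)\to0$ since $0\le V\le g$. The integrals converge monotonically because both integrands are nonnegative. If regularity for $T=\infty$ is only available as a limit, an alternative route is to pass $T\to\infty$ in the finite-horizon identity, using the uniform estimates mentioned after Theorem \ref{thm:main_regular_smooth}.

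\emph{Uniqueness.} Given $\beta$, define
\[
U^\beta(t,x):=\mathbb{E}\Big[g(X^x_{T-t})+\int_0^{T-t} c\,\mathbf 1_{\{X\in(1-\beta,\beta)\}}\,du-\int_0^{T-t}\mathcal{L}g\,\mathbf 1_{\{X\notin(1-\beta,\beta)\}}\,du\Big].
\]
The Markov property shows that
\[
U^\beta(t+s,X_s)+\int_0^s\big(c\,\mathbf 1_{\{X\in(1-\beta,\beta)\}}-\mathcal{L}g\,\mathbf 1_{\{X\notin(1-\beta,\beta)\}}\big)\,du
\]
is a martingale. By symmetry (Assumption \ref{asmp:sym}) it suffices to work on the upper side. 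The steps, in order, are:
\begin{enumerate}
\item[(i)] $U^\beta=g$ outside $(1-\beta,\beta)$. We stop at the first hitting time $\sigma_\beta$ of $\beta$ (or $1-\beta$). Since $U^\beta=g$ on the curves by hypothesis and Dynkin gives $g$ the same drift $\mathcal{L}g$ there, the difference $U^\beta-g$ is a martingale up to $\sigma_\beta$ with value zero at $\sigma_\beta$ and at the terminal time.
\item[(ii)] $U^\beta\ge V_T$. Inside $(1-\beta,\beta)$, $U^\beta$ is the expected cost of the rule ``stop at the first exit''.
\item[(iii)] $\beta\ge b_T$. Suppose $\beta(t)<b_T(t)$ at some $t$. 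Start from $x=\beta(t)$ (or from a point just above it, on $\{x\ge\beta\}\cap\mathcal{C}_T$), and run until the process hits $b_T$. Comparing the martingale identities for $V_T$ and $U^\beta$ gives
\[
0\le\mathbb{E}\int\big(U^\beta-V_T\big)\ldots,
\]
and the $\mathcal{L}g+c<0$ contribution on the set $\{\beta<X<b_T\}$ forces a strict contradiction. This uses $\beta>\gamma$ only where needed.
\item[(iv)] $\beta\le b_T$. Suppose $\beta(t)>b_T(t)$. Start at $x\in(b_T(t),\beta(t))$ and run until $X$ enters $(1-\beta,\beta)$'s complement, i.e. hits $\beta$. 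There $U^\beta=g=V_T$ by (i). The identity then yields
\[
\mathbb{E}\int\big(\mathcal{L}g+c\big)\mathbf{1}_{\{b_T<X<\beta\}}\,du\ge0,
\]
which contradicts $\mathcal{L}g+c<0$ on this region only if the region lies in $\mathcal{U}$. That is false in general, so instead we use that $\mathcal{L}g+c>0$ beyond $\gamma$ when $\beta>\gamma$ is combined with $b_T>\gamma$. The sign is chosen so that the region $\{b_T<X<\beta\}\subset\{x>\gamma\}$ gives strict positivity of $\mathcal{L}g+c$, producing the contradiction. Continuity of $\beta$ and $b_T$ ensures the set is visited with positive expected time.
\end{enumerate}

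\emph{Main obstacle.} I expect the infinite-horizon case to be the hardest part. The martingale arguments in (i)–(iv) need uniform integrability of $\int_0^\infty c\,du$ over the relevant stopping times. I would handle this by localizing to $\tau\wedge N$, using $V_T,U^\beta\le g$ bounded, and letting $N\to\infty$ with monotone convergence. I also expect to need to check carefully that $U^\beta\to0$ as $t\to\infty$ along paths.
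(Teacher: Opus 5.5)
Your derivation of \eqref{eq:main_boundary_integral} follows the paper's route. You apply Peskir's change-of-variable formula on the two curves, remove the local-time terms by smooth fit, undo the localization by bounded and monotone convergence, and let $r\uparrow T-t$. One thing is missing there. For $T<\infty$ you cannot run the formula on all of $[0,T-t]$, because $b_T$ is only regular on $[0,T)$. The final limit therefore needs $V_T(T-h,\cdot)\to g$ uniformly, which follows from $-h\|\mathcal{L}g\|_\infty\le W_T(T-h,\cdot)\le 0$.

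\textbf{The genuine gap is in uniqueness steps (iii) and (iv).} Write $H(u,y):=c(u)+\mathcal{L}g(y)$, $W^\beta:=U^\beta-g$ and $\mathcal{C}^f_u:=(1-f(u),f(u))$. By (i)--(ii), $W^\beta\ge W_T$ everywhere. Let $\kappa$ be the first time $X^x$ hits $b_T$ or $1-b_T$.

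In (iii) the sign is wrong. Since $\beta>\gamma$, we have $H>0$, not $<0$, on $\{\beta<X<b_T\}$. With your start $x=\beta(t)$, the two identities up to $\kappa$ combine to
\[
W_T(t,x)=\mathbb{E}\Big[\int_0^{\kappa}H(t+u,X^x_u)\ind{X^x_u\notin\mathcal{C}^\beta_{t+u}}\,du\Big]-\mathbb{E}\big[W^\beta(t+\kappa,X^x_\kappa)\big].
\]
This says a negative number equals a nonnegative term minus a nonnegative term, which is no contradiction.

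In (iv), start strictly inside $(b_T(t),\beta(t))$ and run to $\beta$. This gives $W^\beta(t,x)-W_T(t,x)=\mathbb{E}\int H\,\ind{X\in\mathcal{C}^\beta\setminus\mathcal{C}^{b_T}}\,du+(\text{a terminal term}\ge0)$. The left side is $\ge0$ and the integral is $>0$, so again the identity is consistent. The sentence ``the sign is chosen so that\dots'' does not close it.

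The underlying issue is that $H(\mathbf{1}_{\mathcal{C}^{b_T}}-\mathbf{1}_{\mathcal{C}^\beta})$ has no sign until $\beta\le b_T$ is known globally. So the two directions must be done in a definite order, from definite starting points.

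\textbf{The argument that works (the paper's).} First exclude $\beta(t)>b_T(t)$. Start exactly at $x=\beta(t)$, where the integral equation gives $W^\beta(t,x)=0$. Stop at $\kappa\wedge\delta$, with $\delta$ chosen so that $\beta>b_T$ on $[t,t+\delta]$. Then
\[
0=\mathbb{E}\Big[\int_0^{\kappa\wedge\delta}H(t+u,X^x_u)\ind{X^x_u\in\mathcal{C}^\beta_{t+u}}\,du\Big]+\mathbb{E}\big[W^\beta(t+\kappa\wedge\delta,X^x_{\kappa\wedge\delta})\big].
\]
The last term is $\ge W_T=0$ at that point, because the point lies in $\mathcal{D}_T$. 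The integral is strictly positive, because $X^x_u>b_T(t+u)>\gamma(t+u)$ for $u<\kappa$. Note that this step uses $b_T>\gamma$ from Theorem~\ref{thm:main_structure}, not $\beta>\gamma$.

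Now $\mathcal{C}^\beta_u\subseteq\mathcal{C}^{b_T}_u$ is known for all $u$. Exclude $\beta(t)<b_T(t)$ by evaluating at $x=b_T(t)$, where $W_T$ and $W^\beta$ both vanish, and subtracting the full-horizon representations:
\[
0=W_T(t,x)-W^\beta(t,x)=\mathbb{E}\Big[\int_0^{T-t}H(t+u,X^x_u)\ind{X^x_u\in\mathcal{C}^{b_T}_{t+u}\setminus\mathcal{C}^\beta_{t+u}}\,du\Big]>0.
\]
Here the positivity uses $\beta>\gamma$.

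This also settles your infinite-horizon concern. $U^\beta$ may equal $+\infty$, so use the strong Markov decomposition with nonnegative integrands (Tonelli) rather than a martingale. Step (ii) is trivial when the exit time is not in $\mathcal{T}$. The subtraction above is legitimate because $\mathbb{E}\int_0^\infty(-\mathcal{L}g)(X^x_u)\,du\le g(x)<\infty$.
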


Before proving these statements under Assumption \ref{asmp:extended}, we will first take a detour into the homogeneous infinite-horizon problem, which admits constant optimal stopping boundaries, when they exist. The proof techniques are different and allow for less regularity in the assumptions.

\section{Homogeneous Infinite-Horizon Problems}\label{sec:homogeneous}
In this section we assume Assumption \ref{asmp:baseline} with $T=\infty$ and the time cost $c$ being constant.
Then the infinite-horizon optimal stopping problem, \eqref{eq:inf_horizon_osp}, becomes time-homogeneous. That is, we are to solve
\begin{equation}\label{eq:InfHorizonProblem_hom}
    V(x)=\inf_{\tau\in\mathcal{T}}\mathbb{E}\left[\int_0^\tau c\,du + g(X^x_\tau)\right],
\end{equation}
for a process $X^x$ given by \eqref{eq:general_X_SDE}. We note that $\tau\in\mathcal{T}$ is equivalent to $\mathbb{E}[\tau]<\infty$ for such a constant time cost. As usual in homogeneous one-dimensional optimal stopping problems, we formulate a free-boundary problem and verify that it indeed solves the optimal stopping problem. The natural free-boundary problem is as follows:
\begin{align}
    \mathcal{L}\widehat V(x)&=-c && x\in(A,B) \label{ode}\\
    \widehat V(x)&=g(x) && x\in\{A,B\}\label{cont}\\
    \widehat V'(x)&=g'(x) && x\in\{A,B\}\label{smooth}\\
    \widehat V(x)&< g(x)&&x\in(A,B)\label{eq:minorant}\\
    \widehat V(x)&= g(x)&&x\in[0,1]\setminus(A,B)\label{eq:value_mmatch}\\
    0<A &< B<1.\label{eq:A<B}
\end{align}
Note that since we only work under Assumption \ref{asmp:baseline}, we have formulated the problem to allow for asymmetric boundaries.
The following is an extension of \cite{campbell2025softclass} for general win-martingales.
\begin{theorem}\label{thm:FBP_homogeneous}
    Under Assumption \ref{asmp:baseline}, the free-boundary problem admits a solution $\widehat V\in C^2((0,1)\setminus\{A,B\})\cap C^1((0,1))$ which is unique. If Assumptions \ref{asmp:state_dynamics}-\ref{asmp:Lg:unimodal} and $\mathcal{L}g(x_0)\geq -c$ hold, then \eqref{ode}--\eqref{eq:value_mmatch} can only hold if $A=B$, that is, $\widehat V=g$.
\end{theorem}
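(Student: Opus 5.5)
Since $\mathcal{L}=\tfrac12\sigma^2\partial_{xx}$ has no drift, the free-boundary problem reduces to matching second antiderivatives. I will work in the Lagrange variable $w:=\widehat V-g$. Then \eqref{ode}--\eqref{eq:A<B} become
\[
w''=-\frac{2(c+\mathcal{L}g)}{\sigma^2} \text{ on }(A,B),\qquad w(A)=w(B)=w'(A)=w'(B)=0,\qquad w<0 \text{ on }(A,B),
\]
together with $w=0$ outside $(A,B)$. Define $H(x):=2(c+\mathcal{L}g(x))/\sigma^2(x)$. By Assumption \ref{asmp:Lg:unimodal} and Lemma \ref{lem:Lg0}, $c+\mathcal{L}g$ is negative exactly on an interval $(a_c,b_c)\ni x_0$, where $\mathcal{L}g(a_c)=\mathcal{L}g(b_c)=-c$, and it is positive outside this interval.

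Integrating from $A$ with the conditions at $A$ gives $w'(x)=-\int_A^x H$ and $w(x)=-\int_A^x\int_A^y H\,dz\,dy$. The two conditions at $B$ are therefore
\[
\int_A^B H(z)\,dz=0,\qquad \int_A^B (B-z)H(z)\,dz=0.
\]
Combining them, the second condition is equivalent to $\int_A^B zH(z)\,dz=0$.

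\emph{Existence.} For each $A\in(0,a_c]$, I define $B(A)\ge b_c$ as the unique point with $\int_A^{B(A)}H=0$, provided it exists. Uniqueness of $B(A)$ holds because $H>0$ on $(b_c,1)$. Existence requires $\int_{b_c}^1 H=+\infty$. This follows from the Feller condition \eqref{eq:feller_test}: near $1$ we have $c+\mathcal{L}g\to c>0$ by Lemma \ref{lem:Lg0}, and $\sigma^{-2}$ is non-integrable there. Indeed, if it were integrable, the double integral in \eqref{eq:feller_test} would stay finite.

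Next consider $F(A):=\int_A^{B(A)}(B(A)-z)H(z)\,dz=\int_A^{B(A)}\int_A^y H\,dz\,dy$. Differentiating gives $F'(A)=-H(A)(B(A)-A)$, using $\int_A^{B(A)}H=0$. Hence $F$ is strictly decreasing on $(0,a_c)$.

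At $A=a_c$ the inner integral $\int_{a_c}^y H$ is negative for all $y\in(a_c,B)$, so $F(a_c)<0$. As $A\to0$, I expect $F(A)\to+\infty$, again from the Feller condition at $0$: the term $\int_A^{a_c}\int_A^y H$ blows up. This limit is the \textbf{main obstacle}, because $B(A)$ moves as $A$ changes and the negative mass on $(a_c,b_c)$ must be controlled. I would bound it using $\int_A^y H\ge -\int_{a_c}^{b_c}|H|$ for every $y$, so that the negative contribution is at most $\int_{a_c}^{b_c}|H|$ times $B(A)-A\le 1$. The intermediate value theorem then gives a unique $A^*$ with $F(A^*)=0$; set $B^*:=B(A^*)$.

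\emph{Strict minorant and regularity.} With $A^*,B^*$ fixed, the function $w'(x)=-\int_{A^*}^x H$ is negative on $(A^*,x_1)$ and positive on $(x_1,B^*)$, where $x_1$ is the point at which $\int_{A^*}^{x_1} H$ attains its maximum. Together with $w(A^*)=w(B^*)=0$, this gives $w<0$ on $(A^*,B^*)$, which is \eqref{eq:minorant}. The resulting $\widehat V$ is $C^2$ off $\{A,B\}$ and $C^1$ across them by \eqref{smooth}.

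\emph{Uniqueness.} Any solution must have $A\le a_c$ and $B\ge b_c$. Otherwise, for instance if $A>a_c$, then $H>0$ near $A$ and $w''<0$ there, so $w<0$ forces $w'(A)<0$, contradicting \eqref{smooth}. The argument at $B$ is analogous. Given $A\le a_c$, the condition $\int_A^B H=0$ forces $B=B(A)$, and strict monotonicity of $F$ makes $A$ unique.

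\emph{Degenerate case.} If $\mathcal{L}g(x_0)\ge -c$, then $H\ge0$ everywhere, with strict inequality off the single point $x_0$. Hence $\int_A^B H>0$ whenever $A<B$, which contradicts the condition at $B$. Therefore $A=B$ and $\widehat V=g$.
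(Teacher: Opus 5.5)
Your approach differs from the paper's: you fix $A$, determine $B(A)$ from $\int_A^B H=0$, and then study $F(A)$, whereas the paper uses a common-tangent argument. Your route is legitimate, and $F'(A)=-H(A)(B(A)-A)$ is correct. But the existence step fails as written. For $y\ge b_c$ the map $y\mapsto\int_A^y H$ is strictly increasing, so a root $B(A)\in[b_c,1)$ exists if and only if $\int_A^{b_c}H\le 0$. By the Feller argument you use at $1$, also $\int_0^{a_c}H=+\infty$. Let $\underline{x}\in(0,a_c)$ be the unique point with $\int_{\underline{x}}^{b_c}H=0$. For every $A<\underline{x}$ one then has $\int_A^y H>0$ for \emph{all} $y\in(A,1)$, so no $B$ satisfies $w'(B)=0$. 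Consequently $B(\cdot)$ and $F$ exist only on $[\underline{x},a_c]$. The limit $A\to0$ that you call the main obstacle is vacuous, and as written the intermediate value theorem has no point where $F>0$.

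The repair is to evaluate $F$ at the left endpoint. There $B(\underline{x})=b_c$, and $G(y):=\int_{\underline{x}}^y H$ increases on $(\underline{x},a_c)$ and then decreases to $G(b_c)=0$. Hence $G>0$ on $(\underline{x},b_c)$ and $F(\underline{x})=\int_{\underline{x}}^{b_c}G(y)\,dy>0$. On $(\underline{x},a_c)$ one has $B(A)>b_c$, hence $H(B(A))>0$, so the implicit function theorem makes $B$ of class $C^1$ there; $B$ is continuous on $[\underline{x},a_c]$. Then $F(a_c)<0$ and $F'<0$ give a unique root $A^*\in(\underline{x},a_c)$. Your $\underline{x}$ and $b_c$ are exactly the paper's $\underline{x}$ and $x^*$. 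The paper's confinement of $A$ to $[\underline{x},x_*]$ is precisely the localization your argument is missing.

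Two smaller slips also need fixing.
\begin{itemize}
\item In the uniqueness step, if $A\in(a_c,b_c)$ then $H<0$ near $A$, not $H>0$. So $w''>0$ there, and $w(A)=w'(A)=0$ forces $w>0$ just right of $A$, contradicting \eqref{eq:minorant}. If $A\ge b_c$, then $H>0$ on $(A,B)$, and $\int_A^B H>0$ contradicts $w'(B)=0$. Your sentence ``$w''<0$ there, so $w<0$ forces $w'(A)<0$'' produces no contradiction. The case at $B$ is analogous.
\item In the minorant step, $x_1$ must be the unique zero of $x\mapsto\int_{A^*}^x H$ in $(a_c,b_c)$, not its maximizer (which is $a_c$).
\end{itemize}

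With these corrections your proof is complete and somewhat more elementary than the paper's. Working with $w$ and $H=2(c+\mathcal{L}g)/\sigma^2$ replaces the paper's L'H\^opital argument for $(g-\psi)'(0+)=-\infty$ with the direct observation $c+\mathcal{L}g\to c>0$. Uniqueness then comes from the sign of $F'$, instead of from Bisztriczky's theorem plus the case analysis excluding tangency at $\underline{x},x_*,x^*,\overline{x}$. Your degenerate case is equivalent to the paper's; you place the contradiction in \eqref{smooth} at $B$ rather than in \eqref{cont}.
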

\begin{proof}
    We rewrite the ODE in \eqref{ode} as 
    \begin{equation}\label{ode2}
        \widehat V''(x)=-\frac{2c}{\sigma^2(x)}.
    \end{equation}
    Let $\psi$ be any particular solution to the inhomogeneous ODE \eqref{ode2}. Then the general solution is $\psi(x)+c_1 x+c_2$ for constants $c_1$ and $c_2$. Using the boundary conditions for smooth and continuous fit, \eqref{cont} and \eqref{smooth}, at $A$, we get the representation for the candidate solution
    \begin{equation}\label{ValueGivenA}
        \widehat V_A(x) := (x-A)(g'(A)-\psi'(A))+g(A)-\psi(A)+\psi(x)
    \end{equation}
    for $x\in(A,B)$.

    Define $\widehat W_A(x)=\widehat V_A(x)-g(x)$ and $H(x)=g(x)-\psi(x)$. We can then write
    \begin{equation*}
        \widehat W_A(x)=(x-A)H'(A)+H(A)-H(x).
    \end{equation*}
    The smooth and continuous fit at $B$ can be written in terms of $H$ as 
    \begin{align}
        H'(A)&=H'(B)\label{hprime}\\
        H'(B)&=\frac{H(B)-H(A)}{B-A}\label{hsecant}
    \end{align}
    given $A\neq B$, as will be shown later. That is, we want to show that there exists a unique pair of points such that the tangents of $H$ at these points are equal and the slope of the secant line connecting the function values coincides with the tangents. To this end, we will need to inspect the $H$ function and thus the $g$ and $\psi$ functions.

    By Lemma \ref{lem:Lg0}, we have $\mathcal{L}g(0+)=\mathcal{L}g(1-)=0$ and the equation $\mathcal{L}g(x)=-c$ has two unique solutions $0<x_*<x^*<1$. Thus,
    \begin{equation*}
        H''(x)=g''(x)-\psi''(x)=\frac{2}{\sigma^2(x)}\left(\mathcal{L}g(x)+c\right),
    \end{equation*}
    which leads us to conclude $H'$ is strictly increasing on $(0,x_*)$ and $(x^*,1)$ while strictly decreasing on $(x_*,x^*)$. Furthermore, for any fixed $y\in(0,1)$, we have
    \begin{equation*}
        \psi'(x)=\int_y^x\psi''(z)dz+\psi'(y)=-\int_y^x\frac{2c}{\sigma^2(z)}dz+\psi'(y),
    \end{equation*}
    which diverges by \eqref{eq:feller_test} since for $x>y$ we have
    \begin{equation*}
        \int_y^x\frac{1}{\sigma^2(z)}\,dz\geq\int_y^x\frac{x-z}{\sigma^2(z)}\,dz=\int_y^x\int_z^x\,du\,\frac{1}{\sigma^2(z)}\,dz=\int_y^x\int_y^u\frac{1}{\sigma^2(z)}\,dz \,du\to\infty
    \end{equation*}
    for $x\to 1$ by Fubini's theorem. Hence, we have $\psi'(1-)=-\infty$ and a similar argument shows $\psi'(0+)=\infty$. As for
    $H'$ we can write
    \begin{equation*}
        H'(x)=g'(x)-\psi'(x) = \psi'(x)\left(\frac{g'(x)}{\psi'(x)}-1\right)
    \end{equation*}
    and we want to show $H'(0+)=-\infty$ and $H'(1-)=\infty$. We only consider the $x\to0$ case, the case $x\to1$ being similar. Since $g$ is concave with $g(0)=0$ and $g(x)>0$ for $x\in(0,1)$, $g'(x)>0$ for sufficiently small $x>0$, and $\lim_{x\searrow0}g'(x)$ exists. Suppose $\lim_{x\searrow0}g'(x)<\infty$. Then $H'(0+)=-\infty$ follows from $\psi'(x)\to\infty$. Conversely, if $\lim_{x\searrow0}g'(x)=\infty$, we consider
    \begin{equation*}
        \frac{g''(x)}{\psi''(x)} = \frac{\mathcal{L}g(x)}{-c}\to0
    \end{equation*}
    for $x\to0$ since $\mathcal{L}g(0+) = 0$, which eventually converges monotonically by Assumption \ref{asmp:Lg:unimodal}. Thus, by L'H\^opital's rule, $\left(\frac{g'(x)}{\psi'(x)}-1\right)\to-1$ eventually bounded away from 0 and thus, $H'(0+)=-\infty$ and similarly $H'(1-)=\infty$.

    By the preceding facts we get that there exists a unique solution $\underline{x}\in(0,x_*)$ to the equation $H'(\underline{x})=H'(x^*)$, and likewise a unique $\overline{x}\in(x^*,1)$ such that $H'(\overline{x})=H'(x_*)$. For $\tilde{x}\in(x_*,x^*)$ there exist two solutions to $H'(x)=H'(\tilde{x})$, one in $(\underline{x},x_*)$ and one in $(x^*,\overline{x})$. Finally, a solution to \eqref{hprime} cannot exist in $[0,\underline{x})\cup(\overline{x},1]$. We note that the solution to \eqref{hprime} must have $A\in[\underline{x},x_*]$ and $B\in[x^*,\overline{x}]$ since if, say, $A\in(x_*,x^*)$ then
    \begin{equation*}
        \widehat W_A(x)=(x-A)H'(A)+H(A)-H(x)=\int_A^x H'(A)-H'(y) \,dy >0
    \end{equation*}
    for $x\in(A,x^*)$, which contradicts $\widehat V_A(x)\leq g(x)$. A similar argument holds for $B$. In particular we must have $A<B$. 

    To summarize our findings, we have that $A\in[\underline{x},x_*]$ and $B\in [x^*,\overline{x}]$ which are disjoint sets. 
    Defining the restrictions $\underline{H}:=H\vert_{[\underline{x},x_*]}$ and $\overline{H}:=H\vert_{[x^*,\overline{x}]}$ we have two continuous convex functions on disjoint compact domains.
    Thus, by Bisztriczky's theorem, see \cite{campbell2025softclass}, there exists exactly one common supporting tangent line $\ell(x):=sx+r$ where
    \begin{equation*}
        s = \frac{H(B)-H(A)}{B-A}.
    \end{equation*}
    A sufficient condition for this tangent line of the restricted functions to be a tangent of the function $H$, and hence be equal to the slope of $H$ at $A$ and $B$, is that $A\in(\underline{x},x_*)$ and $B\in(x^*,\overline{x})$.
    Assume for contradiction that $\ell$ supports $\underline{H}$ at $\underline{x}$. Then
    \begin{equation}\label{eq:support_underline_x}
        s\leq H'(\underline{x}),
    \end{equation}
    by convexity.
    If $\ell$ supports $\overline{H}$ at $B\in(x^*,\overline{x})$, then
    \begin{equation*}
        s=H'(B)>H'(x^*)= H'(\underline{x}),
    \end{equation*}
    and if $\ell$ supports $\overline{H}$ at $B=\overline{x}$, then
    \begin{equation*}
        s\geq H'(\overline{x})>H'(\underline{x}),
    \end{equation*}
    and both cases lead to a contradiction. Hence $\ell$ must support $\overline{H}$ at $x^*$. However, this yields
    \begin{equation*}
        s=\frac{H(x^*)-H(\underline{x})}{x^*-\underline{x}}=\frac{1}{x^*-\underline{x}}\int_{\underline{x}}^{x^*}H'(y)\,dy>\frac{1}{x^*-\underline{x}}\int_{\underline{x}}^{x^*}H'(\underline{x})\,dy=H'(\underline{x}),
    \end{equation*}
    which also contradicts \eqref{eq:support_underline_x}. Analogous arguments rule out support at the endpoints $x_*,x^*$, and $\overline{x}$. Hence there exists a unique solution to \eqref{hprime} and \eqref{hsecant} with $A\in(\underline{x},x_*)$ and $B\in(x^*,\overline{x})$.

    Finally, we can also check the inequality constraint \eqref{eq:minorant}. Recall $H'(A)-H'(x)$ has roots at $A$ and $B$, and some point $\tilde{x}\in(x_*,x^*)$. Furthermore, it is strictly negative for $x\in(A,\tilde{x})$ and strictly positive for $x\in(\tilde{x},B)$. Hence,
    \begin{equation*}
        \widehat W_A(x)=\int_A^xH'(A)-H'(y)\,dy <0
    \end{equation*}
    for $x\in(A,\tilde{x}]$, and similarly, for $x\in[\tilde{x},B)$
    \begin{equation*}
        \widehat W_A(x)=\widehat W_A(x)-\widehat W_A(B)=-\int_x^B H'(A)-H'(y)\,dy <0.
    \end{equation*}
    By pasting to obey \eqref{eq:value_mmatch}, we conclude $\widehat V_A(\cdot)$ solves the free-boundary problem uniquely, and hence we are done with the first claim.

    For the latter claim, assume $\mathcal{L}g(x_0)\geq-c$ and \eqref{ode}--\eqref{eq:A<B}. Then, by Assumption \ref{asmp:Lg:unimodal} and the ODE \eqref{ode}, we have $\mathcal{L}g(x)\geq-c=\mathcal{L}\widehat V_A(x)$ for $x\in(A,B)$. Specifically, Assumption \ref{asmp:Lg:unimodal} yields that the inequality is strict for all $x\in(A,B)\setminus\{x_0\}$, that is, we have $g''(x)>\widehat V_A''(x)$ for a set with positive measure. Thus,
    \begin{align*}
        \widehat V_A(B)&=\widehat V_A(A)+\int_A^B\left(\widehat V_A'(A)+\int_A^y\widehat V_A''(z)\,dz\right)\,dy\\
        &<g(A)+\int_A^B \left(g'(A) +\int_A^y g''(z) \,dz\right)\,dy=g(B),
    \end{align*}
    and hence \eqref{cont} cannot hold at $B$, so we reach a contradiction.
\end{proof}
We can now verify that the solution to the free-boundary problem solves the optimal stopping problem. The verification follows from the generalized It\^o formula and a standard martingale argument, see e.g. \cite{peskir2006optimal}. We provide the main steps below.

\begin{theorem}\label{VerificationInfConstant}
    The following holds for the optimal stopping problem in \eqref{eq:InfHorizonProblem_hom}: 
    \begin{enumerate}
        \item[(i)] If Assumptions \ref{asmp:state_dynamics}-\ref{asmp:Lg:unimodal} and $\mathcal{L}g(x_0)\geq -c$ hold, then $V\equiv g$.
        \item[(ii)] Under Assumption \ref{asmp:baseline}, the optimal solution is given by
        \begin{equation*}
            V(x)=\begin{cases}
                \widehat V_{A^*}(x),   & x\in(A^*,B^*)\\
                g(x),       & x\in[0,A^*]\cup[B^*,1],
            \end{cases}
        \end{equation*}
        where $\widehat V_{A^*}(x)$ is given by \eqref{ValueGivenA}, and $A^*$ and $B^*$ form the unique solution pair to the equations \eqref{hprime} and \eqref{hsecant}.
    \end{enumerate}
\end{theorem}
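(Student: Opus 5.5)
Part (i) and part (ii) both follow the same pattern: first show the candidate is a lower bound for every admissible strategy, then exhibit a strategy that attains it. Throughout I work with the Lagrange formulation \eqref{eq:lagrange_formulation}, which here reads $V(x)-g(x)=\inf_\tau\mathbb{E}[\int_0^\tau (c+\mathcal{L}g(X^x_u))\,du]$ by Proposition \ref{prop:lagrange}.

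\emph{Part (i).} Assume $\mathcal{L}g(x_0)\geq -c$. By Assumption \ref{asmp:Lg:unimodal}, $x_0$ is the unique minimiser of $\mathcal{L}g$, so $c+\mathcal{L}g(x)\geq0$ for every $x\in(0,1)$. Every integrand in the Lagrange formulation is therefore nonnegative, and $V-g\geq0$. The choice $\tau=0$ gives $V\le g$, hence $V\equiv g$. This is consistent with the second claim of Theorem \ref{thm:FBP_homogeneous}.

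\emph{Part (ii), lower bound.} Let $\widehat V$ be the pasted function from Theorem \ref{thm:FBP_homogeneous}, with boundaries $A^*<B^*$. It is $C^1((0,1))$, it is $C^2$ off $\{A^*,B^*\}$, and $\widehat V''$ is locally bounded. This is enough for the Itô--Tanaka (or local time on curves) formula: there is no local-time term because of smooth fit.

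Off the two points $A^*,B^*$, which $X$ occupies on a Lebesgue-null set of times, one has $\mathcal{L}\widehat V=-c$ on $(A^*,B^*)$ and $\mathcal{L}\widehat V=\mathcal{L}g$ outside. I claim $\mathcal{L}g\geq -c$ on $(0,A^*]\cup[B^*,1)$. From the proof of Theorem \ref{thm:FBP_homogeneous} we have $A^*<x_*$ and $B^*>x^*$, where $x_*<x^*$ are the roots of $\mathcal{L}g=-c$, and $\mathcal{L}g+c>0$ outside $[x_*,x^*]$. Hence $\mathcal{L}\widehat V+c\geq0$ everywhere off $\{A^*,B^*\}$.

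Fix $\tau\in\mathcal{T}$ and localise with $\tau\wedge n\wedge\eta_m$, exactly as in Proposition \ref{prop:lagrange}. Itô's formula then gives
\[
    \mathbb{E}\big[\widehat V(X_{\tau\wedge n\wedge\eta_m})\big]+\mathbb{E}\big[c\,(\tau\wedge n\wedge\eta_m)\big]\ge\widehat V(x),
\]
since the stochastic integral is a true martingale up to $\eta_m$, where $\widehat V'$ is bounded. Now let $m,n\to\infty$. The first term converges by bounded convergence, because $\widehat V\le g$ is bounded and continuous on $[0,1]$ with $\widehat V(0)=\widehat V(1)=0$. The second converges by monotone convergence, using $\mathbb{E}[\tau]<\infty$. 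Since $\widehat V\le g$ by \eqref{eq:minorant}--\eqref{eq:value_mmatch}, this yields $\mathbb{E}[c\tau+g(X_\tau)]\ge\widehat V(x)$, so $V\ge\widehat V$.

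\emph{Part (ii), attainment.} Take $\tau^*=\inf\{u:X_u^x\notin(A^*,B^*)\}$. Because $0<A^*<B^*<1$ and $X$ is a nondegenerate diffusion on $[A^*,B^*]$, the exit time has finite mean: $\mathbb{E}[\tau^*]$ is given explicitly by the bounded solution of $\mathcal{L}u=-1$ with zero boundary values, which is finite since $\sigma^{-2}$ is integrable on $[A^*,B^*]$. Hence $\tau^*\in\mathcal{T}$. On $[0,\tau^*]$, Itô's formula holds with equality because $\mathcal{L}\widehat V=-c$ there, and $\widehat V(X_{\tau^*})=g(X_{\tau^*})$ by \eqref{cont}. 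Therefore $\mathbb{E}[c\tau^*+g(X_{\tau^*})]=\widehat V(x)$. For $x$ outside $(A^*,B^*)$ we have $\tau^*=0$, and the identity is trivial.

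The main obstacle is the justification of Itô's formula for a function that is only $C^1$, combined with the localisation. Near $0$ and $1$, $\widehat V'=g'$ may blow up, so the limits in $m$ and $n$ have to be handled exactly as in Proposition \ref{prop:lagrange}, relying on the boundedness of $g$ and on $\mathbb{E}[\tau]<\infty$.
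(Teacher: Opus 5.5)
Your proposal is correct and follows essentially the same verification route as the paper: part (i) via the Lagrange formulation, and part (ii) via the change-of-variable formula with no local-time term by smooth fit, the sign $c+\mathcal{L}g\ge 0$ off $(A^*,B^*)$, and attainment by the exit time from $(A^*,B^*)$. The only difference is that you apply the formula to $\widehat V$ itself, which forces the $\eta_m$ localization because $g'$ may blow up. The paper instead applies it to the Lagrange candidate $W_{A^*}=\widehat V_{A^*}-g$ (set to zero outside $(A^*,B^*)$), whose derivative vanishes off $[A^*,B^*]$, so the martingale term is globally bounded and no spatial localization is needed.
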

\begin{proof}
    For the case $(i)$, we get $\mathcal{L}g(x)\geq -c$ for all $x$. Since $c+\mathcal{L}g\geq0$, the Lagrange formulation gives $W\geq0$. Immediate stopping gives $W\leq0$, hence $W=0$ and $V=g$.
    For the case $(ii)$, instead of working with $V$, we will verify using the Lagrange formulation of the problem. Let
    \begin{equation*}
        W(x):=V(x)-g(x)=\inf_\tau \mathbb{E}\left[\int_0^\tau \left(c+\mathcal{L}g(X^x_u)\right)\,du\right],
    \end{equation*}
    where the equality follows from \eqref{eq:lagrange_formulation}. The candidate solution is $W_{A^*}(x)=\widehat V_{A^*}(x)-g(x)$ for $x\in(A^*,B^*)$ and $W_{A^*}(x)=0$ otherwise.
    By the change-of-variable formula with local time on curves \cite{peskir2005change}, and by the smooth fit, we have
    \begin{equation}\label{eq:cov_constant}
        W_{A^*}(X^x_t)=W_{A^*}(x)+\int_0^t\mathcal{L}W_{A^*}(X^x_s)\ind{X^x_s\not\in\{A^*,B^*\}} \,ds +\int_0^t\sigma(X^x_s)W_{A^*}'(X^x_s)dB_s.
    \end{equation}
    Note that $\sigma(x)W_{A^*}'(x)$ is globally bounded since $A^*$ and $B^*$ are bounded away from $\{0,1\}$, so the local martingale term, $M_t=\int_0^t\sigma(X^x_s)W_{A^*}'(X^x_s)dB_s$, is a square-integrable martingale. Since we consider stopping times with $\mathbb{E}[\tau]<\infty$, we get $\mathbb{E}[M_\tau]=0$. Thus, by using \eqref{ode} in $(A^*,B^*)$ and $W_{A^*}(x)=0$ together with $c+\mathcal{L}g(x)\geq0$ for $x\not\in(A^*,B^*)$ in \eqref{eq:cov_constant}, we get 
    \begin{equation*}
        \mathbb{E}\left[\int_0^\tau \left(c+\mathcal{L}g(X^x_u)\right)\,du\right]\geq W_{A^*}(x)
    \end{equation*}
    for any $\tau\in\mathcal{T}$ by taking expectation. Taking infimum, we arrive at $W(x)\geq W_{A^*}(x)$. Conversely, plugging $\tau^*(x)=\inf\{t\geq0 : X^x_t\not\in(A^*,B^*)\}$, which satisfies $\mathbb{E}[\tau^*(x)]<\infty$, into \eqref{eq:cov_constant}, we have 
    \begin{equation*}
        W_{A^*}(x)=\mathbb{E}\left[\int_0^{\tau^*(x)} \left( c+\mathcal{L}g(X^x_u)\right)\,du\right]\geq W(x),
    \end{equation*}
    by taking expectation.
\end{proof}
Thus, one-dimensional asymmetric versions of Theorems \ref{thm:main_structure} and \ref{thm:main_regular_smooth} also hold under the baseline Assumption \ref{asmp:baseline} given a constant running cost $c$. Similarly, an asymmetric version of Corollary \ref{cor:main_c1} holds trivially while asymmetric versions of Corollary \ref{cor:main_discont_dxx} and Theorem \ref{thm:main_integral_equation} can be shown by similar arguments as for the inhomogeneous case below. The nonconstant running-cost case requires more work, but the homogeneous infinite-horizon problem will be useful for comparison in multiple results below.

\section{Inhomogeneous Problems}\label{sec:inhomogeneous}
In this section we will work under the extended Assumption \ref{asmp:extended}.
We first tackle initial regularity results for the value function to obtain estimates on the derivatives. Then we turn to regularity of the optimal stopping boundary and finally we move on to the smooth fit and upgrade the regularity of the optimal stopping boundary. 

It should first be noted that many regularity results have a local nature which in the temporal coordinate is located in the open set $(0,T)$. To extend the result to hold on $[0,T)$ one may consider an extension of the running cost $c$. Namely, it is not hard to see that for every $\varepsilon>0$ there exists an extension $\bar{c}:[0,\infty)\to(0,\infty)$ such that $\bar{c}(t+\varepsilon)=c(t)$ and the problem with the $\bar{c}$ running cost satisfies Assumption \ref{asmp:extended}. If $V_T$ denotes the original value function and $\bar{V}_{T+\varepsilon}$ denotes the value function with the running cost $\bar{c}$ on an extended horizon, then
\begin{equation}\label{eq:extend_0}
    V_T(t,x)=\bar{V}_{T+\varepsilon}(t+\varepsilon,x).
\end{equation}
This means that if a regularity property can be shown on $(0,T+\varepsilon)$ for general problems such as $\bar{V}_{T+\varepsilon}$ then the property specifically holds on $[0,T)$ for $V_T$.

\subsection{Properties of the value function}
We first show that the infinite-horizon problem can be realized as the limit of the finite-horizon problems, which in turn yields existence of an optimal stopping time in the infinite-horizon problem.
\begin{proposition}\label{prop:finite_to_infinite_1}
    For every $(t,x)\in\mathcal{S}$ it holds that $V_T(t,x)\searrow V_\infty(t,x)$ and $\tau^*_T(t,x)\nearrow\tau^*_\infty(t,x)$ as $T\nearrow\infty$ with $T>t$. Furthermore, $\tau^*_\infty$ is an optimal stopping time for \eqref{eq:inf_horizon_osp}.
\end{proposition}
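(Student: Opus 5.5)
Monotonicity in $T$ comes first. For $t<T_1<T_2$ we have $\mathcal{T}_{T_1-t}\subseteq\mathcal{T}_{T_2-t}\subseteq\mathcal{T}$. Hence $V_{T_1}(t,x)\ge V_{T_2}(t,x)\ge V_\infty(t,x)$, so the limit $\lim_{T\to\infty}V_T(t,x)=:V^\lim(t,x)\ge V_\infty(t,x)$ exists. For the reverse inequality, fix $\varepsilon>0$ and an $\varepsilon$-optimal $\tau\in\mathcal{T}$ for $V_\infty(t,x)$, and use the truncations $\tau\wedge(T-t)\in\mathcal{T}_{T-t}$. As $T\to\infty$, the running-cost integral increases to $\int_0^\tau c(t+u)\,du$, which has finite mean because $\tau\in\mathcal{T}=\mathcal{T}^{(t)}$ (Proposition \ref{prop:Stopping_set}); monotone convergence applies. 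Moreover $g(X^x_{\tau\wedge(T-t)})\to g(X^x_\tau)$ almost surely, using continuity of $g$ and $X^x_t\to X^x_\infty$ on $\{\tau=\infty\}$ (Proposition \ref{prop:state_process}), and bounded convergence applies because $g$ is bounded. Hence $V^\lim\le V_\infty+\varepsilon$, and therefore $V_T\searrow V_\infty$.

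Next come the stopping regions. Since $V_T\ge V_{T'}\ge g$ for $T\le T'$ on common domains, we get $\mathcal{C}_T\subseteq\mathcal{C}_{T'}\subseteq\mathcal{C}_\infty$, or equivalently $\mathcal{D}_\infty\subseteq\mathcal{D}_{T'}\subseteq\mathcal{D}_T$ on $[0,T)\times(0,1)$. Moreover, $(t,x)\in\mathcal{C}_\infty$ means $V_\infty<g$, so $V_T<g$ for $T$ large, giving $\mathcal{C}_\infty=\bigcup_T\mathcal{C}_T$. By \eqref{eq:definition_optimal_stopping_time}, $\tau^*_T(t,x)$ is nondecreasing in $T$, so $\tau^\lim:=\lim_T\tau^*_T\le\tau^*_\infty$. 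For the reverse inequality, argue pathwise on $\{\tau^\lim<\infty\}$. For large $T$ we have $\tau^*_T<T-t$, so $(t+\tau^*_T,X_{\tau^*_T})\in\mathcal{D}_T$. By continuity of paths, $(t+\tau^*_T,X_{\tau^*_T})\to(t+\tau^\lim,X_{\tau^\lim})$, where $X_{\tau^\lim}\in(0,1)$ by inaccessibility. Suppose this limit point were in $\mathcal{C}_\infty$. It would then lie in some $\mathcal{C}_{T_0}$, which is open by Proposition \ref{prop:semicontinuity}, and this open set is contained in $\mathcal{C}_T$ for all $T\ge T_0$. That contradicts the points $(t+\tau^*_T,X_{\tau^*_T})\in\mathcal{D}_T$ converging to it. So the limit lies in $\mathcal{D}_\infty$, giving $\tau^*_\infty\le\tau^\lim$. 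Before this step I need that $\mathcal{D}_\infty$ is closed; this follows from upper semicontinuity of $V_\infty$, which Proposition \ref{prop:semicontinuity} provides for $T=\infty$.

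For optimality, write $\tau_T:=\tau^*_T(t,x)$. Then
\[V_T(t,x)=\mathbb{E}\Big[\int_0^{\tau_T}c(t+u)\,du+g(X^x_{\tau_T})\Big].\]
Let $T\to\infty$. The integral term increases to $\mathbb{E}\int_0^{\tau^*_\infty}c(t+u)\,du$ by monotone convergence. The terminal term converges by bounded convergence, since $X^x_{\tau_T}\to X^x_{\tau^*_\infty}$ almost surely, including on $\{\tau^*_\infty=\infty\}$, where the limit lies in $\{0,1\}$. The left side tends to $V_\infty(t,x)<\infty$. The resulting limit identity therefore shows both that $\tau^*_\infty\in\mathcal{T}$ (finite expected cost) and that it attains $V_\infty(t,x)$.

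The main obstacle is the identification $\lim_T\tau^*_T=\tau^*_\infty$, specifically the closedness and the openness/union arguments. The delicate point is that the hitting time in \eqref{eq:definition_optimal_stopping_time} is an infimum, so the hitting point itself lies in the closed set $\mathcal{D}_T$. I would handle this exactly via path continuity together with openness of each $\mathcal{C}_{T_0}$, as sketched above.
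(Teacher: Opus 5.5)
Your proposal is correct and follows essentially the paper's route: the monotone limit of $V_T$ via truncated $\varepsilon$-optimal times, monotone hitting times identified through the increasing union of the open sets $\mathcal{C}_T$, and optimality of $\tau^*_\infty$ by monotone and bounded convergence. The only differences are cosmetic, and there are two small slips: you identify $\mathcal{C}_\infty=\bigcup_T\mathcal{C}_T$ directly from the value convergence, whereas the paper introduces $\hat{\mathcal{D}}_T$ and identifies its limit with $\mathcal{D}$ afterwards, and you pass to the limit of the hitting points instead of using the paper's compactness argument on the path segment $K_r$; the chain should read $g\ge V_T\ge V_{T'}$ rather than $V_{T'}\ge g$, and closedness of $\mathcal{D}_\infty$ is not actually needed, since you already show that the limit hitting point lies in $\mathcal{S}\setminus\mathcal{C}_\infty=\mathcal{D}_\infty$.
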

\begin{proof}
    It holds that $T\mapsto V_T(t,x)$ is decreasing since the set of admissible stopping times is increasing and similarly $V_T(t,x)\geq V_\infty(t,x)$ for all $T>t$. Furthermore, $V_T(t,x)\geq0$ so a limit exists, say $V^\infty(t,x)$, thus $V^\infty(t,x)\geq V_\infty(t,x)$. Now fix an $\varepsilon>0$ and take $\tau\in\mathcal{T}$ such that
    \begin{equation*}
        \mathbb{E}\left[\int_0^\tau c(t+u)\,du+g(X^x_\tau)\right]\leq V_\infty(t,x)+\varepsilon.
    \end{equation*}
    For each $T$, $\tau\wedge (T-t)$ is admissible for the $V_T$ problem. Hence, 
    \begin{equation*}
        V_T(t,x)\leq \mathbb{E}\left[\int_0^{\tau\wedge (T-t)} c(t+u)\,du+g(X^x_{\tau\wedge (T-t)})\right].
    \end{equation*}
    Since $\tau$ and $\tau\wedge (T-t)$ are in $\mathcal{T}$, monotone convergence yields the limit of the integral term, while dominated convergence yields the limit of the $g$ term by the path continuity of $X^x$ and $g(X^x_{\tau})=0$ on the event $\{\tau=\infty\}$. Hence, taking limsup, we get
    \begin{equation*}
        V^\infty(t,x)=\limsup_{T\to\infty}V_T(t,x)\leq V_\infty(t,x)+\varepsilon,
    \end{equation*}
    and letting $\varepsilon\to0$, we arrive at $V_T\searrow V_\infty$ pointwise. Define the extended stopping and continuation sets
    \begin{equation*}
        \hat{\mathcal{D}}_T:=\mathcal{D}_T\cup([T,\infty)\times(0,1)),\qquad \hat{\mathcal{C}}_T:=\mathcal{S}\setminus\hat{\mathcal{D}}_T.
    \end{equation*}
    From the monotonicity it is clear that a limit exists, $\hat{\mathcal{D}}_T\searrow\mathcal{D}^\infty$, and $\hat{\mathcal{C}}_T\nearrow\mathcal{C}^\infty$. Furthermore, the limits are closed and open, respectively. The monotonicity also yields a limit $\tau_T^*(t,x)\nearrow \tau^\infty(t,x)$ since
    \begin{align*}
        T\mapsto\tau_T^*(t,x)&= \inf\{s\geq0:(t+s,X^x_s)\in \mathcal{D}_T\}\wedge(T-t)\\
        &=\inf\{s\geq0:(t+s,X^x_s)\in \widehat{\mathcal{D}}_T\}
    \end{align*}
    is increasing pathwise. It holds that 
    \begin{equation*}
        \tau^\infty(t,x)=\lim_{T\to\infty}\tau_T^*(t,x)\leq \tau_{\mathcal{D}^\infty}(t,x):=\inf\{s\geq0: (t+s,X^x_s)\in\mathcal{D}^\infty\}.
    \end{equation*}
    For the converse, we make the following pathwise argument. Take any $r<\tau_{\mathcal{D}^\infty}$. Then
    \begin{equation*}
        K_r:=\{(t+s,X^x_s):0\leq s\leq r\}
    \end{equation*}
    is a compact subset of $\mathcal{C}^\infty$ since $X^x_s\in(0,1)$ for finite times and the paths are continuous. Since $\mathcal{C}^\infty$ is a union of increasing open sets,
    \begin{equation*}
        \mathcal{C}^\infty=\bigcup_{T>t}\hat{\mathcal{C}}_T,
    \end{equation*}
    there exists some $T_0>t+r$ such that for all $T>T_0$ it holds that $K_r\subseteq \hat{\mathcal{C}}_T$. From this, we conclude that $\tau^*_T(t,x)>r$ for all $T>T_0$. Since $r$ was arbitrary, we can arrive at 
    \begin{equation*}
        \tau^\infty(t,x)=\lim_{T\nearrow\infty}\tau^*_T(t,x)\geq \tau_{\mathcal{D}^\infty}(t,x).
    \end{equation*}
    In total, we have $\tau^\infty=\tau_{\mathcal{D}^\infty}$.
    
    We need to identify the limit of the stopping times with the proposed optimal stopping time for the infinite-horizon problem given in \eqref{eq:definition_optimal_stopping_time}. To this end, note that since $V_\infty\leq V_T$, $\mathcal{D}\subseteq \widehat{\mathcal{D}}_T$ for all $T$, and so $\mathcal{D}\subseteq \mathcal{D}^\infty$. For the converse, if $(t,x)\in\mathcal{D}^\infty$, then $(t,x)\in\widehat{\mathcal{D}}_T$ for all $T>t$, i.e. $(t,x)\in\mathcal{D}_T$, thus $V_T(t,x)=g(x)$. But then, by the convergence of the value function, $g(x)=V_T(t,x)\to V_\infty(t,x)$. Thus $(t,x)\in \mathcal{D}$ and so in total $\mathcal{D}=\mathcal{D}^\infty$ which yields $\tau^\infty(t,x)=\tau^*_\infty(t,x)$. 

    Lastly, to show that $\tau^*_\infty(t,x)$ is an optimal stopping time, we first show that $\tau^*_\infty(t,x)\in\mathcal{T}$.
    For finite $T>t$, we have 
    \begin{equation*}
        \mathbb{E}\left[\int_0
        ^{\tau^*_T(t,x)}c(t+u)\,du\right]\leq V_T(t,x)\leq\|g\|_\infty.
    \end{equation*}
    Using monotone convergence, we have 
    \begin{equation*}
    \mathbb{E}\left[\int_0
        ^{\tau^*_\infty(t,x)}c(t+u)\,du\right]\leq\|g\|_\infty,
    \end{equation*}
    and so $\tau^*_\infty(t,x)\in\mathcal{T}$ by Proposition \ref{prop:Stopping_set}.
    To show that the stopping time achieves optimality, we utilize monotone convergence as above and dominated convergence on the bounded $g$ to get
    \begin{align*}
        V_\infty(t,x)=\lim_{T\to\infty}V_T(t,x)=\mathbb{E}\left[\int_0^{\tau^*_\infty(t,x)}c(t+u)\,du+g(X^x_{\tau^*_\infty(t,x)})\right].
    \end{align*}
    Thus $\tau^*_\infty(t,x)$ is admissible and attains the infinite-horizon value as claimed.
\end{proof}

Next, we show a technical lemma for the spatial concavity of the value function in $(t,x)$ coordinates.

\begin{lemma}\label{lem:concave_V}
    For every $t\in[0,T)$ the map $x\mapsto V_T(t,x)$ is concave on $[0,1]$ for $T\leq\infty$.
\end{lemma}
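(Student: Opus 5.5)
The plan is to prove concavity for finite $T$ first and then pass to $T=\infty$ using Proposition \ref{prop:finite_to_infinite_1}. Since $V_T(t,\cdot)\searrow V_\infty(t,\cdot)$ pointwise, and a pointwise limit of concave functions is concave, this final step is immediate. For finite $T$ I would use the classical martingale-coupling (``Jensen'') argument for one-dimensional diffusions in natural scale. Because $X$ is a martingale with no drift, concavity of the payoff should propagate to the value function.

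First I fix $t<T$ and points $0\le x_1<x<x_2\le 1$ with $x=\lambda x_1+(1-\lambda)x_2$. Set $\tau_0:=\inf\{u\ge0: X^x_u\notin(x_1,x_2)\}\wedge(T-t)$. The aim is to show $V_T(t,x)\ge \lambda V_T(t,x_1)+(1-\lambda)V_T(t,x_2)$.

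The cleanest route avoids couplings by using a discrete dynamic-programming approximation that preserves concavity. Restrict stopping to a grid $t+k\delta$, $k\le N$, to obtain the Bermudan value $V^\delta$. Its backward recursion is
\begin{equation*}
V^\delta(s,x)=\min\Big\{g(x),\ \mathbb{E}\Big[\int_0^\delta c(s+u)\,du+V^\delta(s+\delta,X^x_\delta)\Big]\Big\}.
\end{equation*}
The running-cost term does not depend on $x$, and $g$ is concave. So it suffices that $x\mapsto \mathbb{E}[\phi(X^x_\delta)]$ is concave whenever $\phi$ is concave, bounded and continuous on $[0,1]$, since a minimum of concave functions is concave.

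That preservation is the key lemma, and I expect it to be the main obstacle. I would prove it by comparison. For $\phi\in C^2$ concave, $u(r,x):=\mathbb{E}[\phi(X^x_r)]$ solves $\partial_r u=\tfrac12\sigma^2\partial_{xx}u$. Differentiating twice, $w=\partial_{xx}u$ satisfies a linear parabolic equation with coefficients built from $\sigma^2$ and its derivatives, which are available by Assumption \ref{asmp:sigma_g}, and has no zeroth-order source term. The maximum principle then keeps $w\le0$.

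A probabilistic alternative that sidesteps the degeneracy at $0,1$ is the following. Write $\mathbb{E}[\phi(X^x_r)]=\mathbb{E}[\phi(\beta_{\langle X\rangle_r})]$ through Dambis--Dubins--Schwarz. Then use the known fact (Hobson, or El Karoui--Jeanblanc--Shreve) that European prices under a local-volatility martingale model with convex payoff are convex in the spot. Applied to $-\phi$, this gives concavity. The proof of that fact uses exactly the flow derivative $\partial_xX^x$ of \eqref{eq:sde_partialx}: $\partial_x u=\mathbb{E}[\phi'(X^x_r)\partial_xX^x_r]$. Changing measure by the martingale $\partial_xX^x_r$ (Assumption \ref{asmp:martingale}) shows $\partial_x u$ is decreasing, since $\phi'$ is decreasing and the flow is order-preserving under the new measure. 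I would adopt this change-of-measure argument, approximating a general concave $\phi$ by smooth concave ones on compacts of $(0,1)$.

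Finally, let $\delta\to0$ along dyadic grids. Then $V^\delta\searrow V_T$ pointwise: the Bermudan classes increase, and any $\tau$ can be approximated from above by grid stopping times, using the continuity in Proposition \ref{prop:continuity_fixed_stopping} and dominated convergence on the finite horizon. Concavity passes to the limit. The endpoints are handled by $V_T(t,0)=V_T(t,1)=0$ together with continuity from Proposition \ref{prop:continuity_fixed_stopping}. Alternatively, note that $V_T\ge0$ and $V_T(t,\cdot)$ is concave on $(0,1)$, so concavity on $[0,1]$ follows.
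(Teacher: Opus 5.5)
Your proposal is correct and follows essentially the same route as the paper: a dyadic Bermudan approximation whose backward recursion preserves concavity because the semigroup does, with the semigroup step proved via $\partial_x P_r\phi(x)=\mathbb{E}[\phi'(X^x_r)\partial_xX^x_r]$, a Girsanov change of measure by the martingale $\partial_xX^x$ and comparison, followed by $V^\delta\searrow V_T$ and Proposition \ref{prop:finite_to_infinite_1} for $T=\infty$. One point to state more carefully: the new measure depends on $x$, so the monotonicity should be read off, as the paper does, from the solution $Y^x$ of $dY=\sigma'(Y)\sigma(Y)\,dt+\sigma(Y)\,dW$ on a common space, which is increasing in $x$ by the comparison theorem.
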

\begin{proof}
    Consider first $T<\infty$. If $t=T$, then $V_T(t,x)=g(x)$, hence concave. Fix a $t\in[0,T)$. We first give a probabilistic proof that for deterministic times the transition semigroup operator on $X^x$ preserves concavity. Let $(P_s)_{s\geq0}$ denote the transition semigroup of the Markov process $X^x$, defined by 
    \begin{equation*}
        P_sf(x)=\mathbb{E}[f(X^x_{s})]
    \end{equation*}
    for appropriate test functions $f$. 

    Let $Z^x_s:=\partial_xX^x_s$. By Assumption \ref{asmp:martingale}, $(Z^x_s)_{s\in[0,T]}$ is a true exponential martingale and $\mathbb{E}[Z^x_s]=1$ for all $x\in(0,1)$ and $s\in[0,T]$.
    Assume $f\in C^1_b([0,1])$.
    Let $x\in(0,1)$ and take $h\in[0,1-x)$. The integral remainder formula yields
    \begin{equation*}
        f(X^{x+h}_t)-f(X^x_t)=h\int_0^1 f'(X^{x+r h}_t)Z^{x+r h}_t dr
    \end{equation*}
    and so the difference quotient of the semigroup operator is given by
    \begin{equation}\label{eq:integralremainder_semi}
        \frac{P_tf(x+h)-P_tf(x)}{h}= \int_0^1\mathbb{E}[f'(X^{x+r h}_t)Z^{x+r h}_t]dr,
    \end{equation}
    by Fubini since $f'$ is bounded and $Z^y_t$ is positive and has expectation 1. Fix an $r\in[0,1]$. Then 
    \begin{equation}\label{eq:L1conv_semi}
        \mathbb{E}|f'(X^{x+r h}_t) Z^{x+r h}_t-f'(X^{x}_t) Z^{x}_t|\leq \|f'\|_\infty \mathbb{E}|Z^{x+r h}_t- Z^{x}_t|+\mathbb{E}[|f'(X^{x+r h}_t)-f'(X^x_t)|Z^x_t].
    \end{equation}
    The first term goes to $0$ by Scheffé's lemma since $y\mapsto Z^y_t$ is almost surely continuous, $Z^y$ is positive, and $\mathbb{E}[Z^{x+r h}_t]=\mathbb{E}[Z^{x}_t]=1$. The second term goes to $0$ by dominated convergence since $f'$ is bounded and $y\mapsto f'(X^y_t)$ is almost surely continuous. Hence
    \begin{equation*}
        \mathbb{E}[f'(X^{x+r h}_t) Z^{x+r h}_t]\to\mathbb{E}[f'(X^{x}_t) Z^{x}_t],
    \end{equation*}
    for any $r\in[0,1]$. Then, by another application of dominated convergence, and a symmetric argument for the left difference quotient, we get
    \begin{equation}\label{eq:derivative_before_mc}
        \partial_x(P_tf)(x)=\mathbb{E}[f'(X^x_t)Z^x_t].
    \end{equation}
    
    As argued above, $Z^x$ is a likelihood process and thus Girsanov's theorem induces a new measure $\mathbb{Q}^x$ on $\mathcal{F}_t$ by 
    \begin{equation*}
        \frac{d\mathbb{Q}^x}{d\mathbb{P}}\Big\vert_{\mathcal{F}_t}=Z^x_t,
    \end{equation*}
    where 
    \begin{equation*}
        W^x_s=B_s-\int_0^s\sigma'(X^x_u)\,du
    \end{equation*}
    for $s\in[0,t]$ is a $\mathbb{Q}^x$--Brownian motion. The process $X^x$ satisfies $X^x_0=x$ and 
    \begin{equation*}
        dX^x_s=\sigma'(X^x_s)\sigma(X^x_s)\,ds+\sigma(X^x_s)\,dW^x_s,
    \end{equation*}
    and \eqref{eq:derivative_before_mc} becomes
    \begin{equation}\label{eq:derivative_Qx}
        \partial_x P_tf(x) = \mathbb{E}^{\mathbb{Q}^x}[f'(X^x_t)].
    \end{equation}
    Let $Y^x=(Y^x_t)_{t\in[0,T]}$ be the unique strong solution, on some Wiener space $(\Omega,\mathcal{G},\mathbb{G},\mathbb{Q})$ carrying a Brownian motion $W=(W_t)_{t\in[0,T]}$, of
    \begin{equation*}
        dY^x_t=\sigma'(Y^x_t)\sigma(Y^x_t)dt+\sigma(Y^x_t)dW_t,
    \end{equation*}
    with $Y^x_0=x$. Then, since the law of $X^x$ under $\mathbb{Q}^x$ is equal to the law of $Y^x$ under $\mathbb{Q}$, we can write \eqref{eq:derivative_Qx} as
    \begin{equation*}
        \partial_x P_tf(x) = \mathbb{E}^{\mathbb{Q}}[f'(Y^x_t)].
    \end{equation*}
    By the comparison principle, see e.g. \cite{ikeda1977comparison}, we have that $x\mapsto Y^x_t$ is increasing almost surely. If $f$ is concave, then $x\mapsto f'(Y^x_t)$ is almost surely decreasing, and hence $x\mapsto\partial_x P_tf(x)$ is decreasing. Therefore, we conclude $x\mapsto P_t f(x)$ is concave. 
    
    Next, assume $f$ is concave with $0\leq f(x)\leq g(x)$ for $x\in[0,1]$. There exists a sequence $(f_n)_{n\in\mathbb{N}}\subset C^1_b([0,1])$ of concave functions such that $f_n\to f$ uniformly. By the triangle inequality, we have
    \begin{equation*}
        \vert\vert P_tf_n-P_tf\vert\vert_\infty\leq \vert\vert f_n - f\vert\vert_\infty,
    \end{equation*}
    so $P_tf_n\to P_tf$ uniformly. Noting that uniform convergence respects concavity, we have $x\mapsto P_tf(x)$ is concave on $(0,1)$. Since $f$ is bounded between $0$ and $g$, and $g(x)=0$ if and only if $x\in\{0,1\}$, it holds that $x\mapsto P_tf(x)$ is concave on $[0,1]$.

    Next, we will upgrade this to hold for stopping times. Specifically, we first prove this for discrete stopping times in the Bermudan approximation. Define for $n\in\mathbb{N}$ the dyadic Bermudan version of \eqref{eq:OSP_T}, namely, $V_T^n(t,x):=v^n_T(0,x)$ where
    \begin{equation*}
        v^n_T(k,x):=\inf_{\tau\in\mathcal{T}^{n,k}_{T-t}}\mathbb{E}\left[\int_0^\tau c(t+k\Delta+u)\,du+g(X^x_\tau)\right]
    \end{equation*}
    for $k=0,\ldots,2^n$, where $\Delta=(T-t)2^{-n}$ and $\mathcal{T}^{n,k}_{T-t}$ is the set of stopping times taking values in $\{0,\Delta,\dots,(2^n-k)\Delta\}$.
    We will denote $\mathcal{T}^{n}_{T-t}:=\mathcal{T}^{n,0}_{T-t}$ for simplicity.
    By the strong Markov property and the fact that the running cost is deterministic, the dynamic programming principle implies
    \begin{equation*}
        v^n_T(k,x)=\min\left\{g(x),\int_0^\Delta c(t+k\Delta+u)\,du+ P_{\Delta}v^n_T(k+1,\cdot)(x)\right\}
    \end{equation*}
    for $k=0,\dots,2^n-1$ with $v^n_T(2^n,x)=g(x)$. We proceed to prove concavity of $x\mapsto v^n_T(k,x)$ by backward induction. At $k=2^n$ concavity holds by concavity of $g$. For $k<2^n$, assume $x\mapsto v^n_T(k+1,x)$ is concave. Then $P_{\Delta}v^n_T(k+1,\cdot)(x)$ is concave and adding the running cost preserves the concavity. The pointwise minimum of two concave functions is concave. Hence $x\mapsto v^n_T(k,x)$ is concave. By backward induction, it then holds that $x\mapsto v^n_T(k,x)$ is concave for any $k\in\{0,1,\dots,2^n\}$, and so $x\mapsto V^n_T(t,x)$ is concave.

    Next, we pass from the Bermudan approximation to the continuous-time problem. Since $\mathcal{T}^{n}_{T-t}\subseteq\mathcal{T}^{n+1}_{T-t}\subseteq\mathcal{T}_{T-t}$, we have 
    \begin{equation*}
        V^n_T(t,x)\geq V^{n+1}_T(t,x)\geq V_T(t,x)
    \end{equation*}
    for $x\in[0,1]$. Hence, for each $x$, the limit 
    \begin{equation*}
        \overline{V}_T(t,x):=\lim_{n\to\infty}V^n_T(t,x)
    \end{equation*}
    exists and satisfies $\overline{V}_T(t,x)\geq V_T(t,x)$. To prove the converse, fix $\tau\in\mathcal{T}_{T-t}$ and define the dyadic approximation $\tau_n:=\inf\{k\Delta_n:k\Delta_n\geq \tau\}\wedge(T-t)$ where $\Delta_n:=(T-t)2^{-n}$. Then $\tau_n\in\mathcal{T}^n_{T-t}$ gives $\tau_n\searrow \tau$ almost surely. Since $X^x$ has continuous paths and the integrated cost, $u\mapsto F_t(u):=\int_0^uc(t+r)dr$, is continuous, we have 
    \begin{equation*}
        F_t(\tau_n)+g(X^x_{\tau_n})\to F_t(\tau) + g(X^x_\tau),
    \end{equation*}
    and dominated convergence yields
    \begin{equation*}
        \overline{V}_T(t,x)=\lim_{n\to\infty}V^n_T(t,x)\leq\lim_{n\to\infty}\mathbb{E}\left[F_t(\tau_n)+g(X^x_{\tau_n})\right]=\mathbb{E}\left[F_t(\tau)+g(X^x_{\tau})\right].
    \end{equation*}
    Since $\tau\in\mathcal{T}_{T-t}$ was arbitrary, taking the infimum over $\tau$ yields
    \begin{equation*}
        \overline{V}_T(t,x)\leq V_T(t,x).
    \end{equation*}
    Thus $V^n_T(t,x)\searrow V_T(t,x)$ pointwise as $n\to\infty$. Pointwise limits of concave functions preserve concavity, and hence $x\mapsto V_T(t,x)$ is concave.

    For $T=\infty$, we use the pointwise convergence from Proposition \ref{prop:finite_to_infinite_1} and conclude $x\mapsto V_\infty(t,x)$ is concave.
\end{proof}
From concavity, we get local Lipschitz regularity. However, we will need explicit Lipschitz bounds and a probabilistic representation of the derivative in finite horizon, when it exists. When $g'$ is bounded, under Assumption \ref{asmp:bounded:g'}, the argument in the proof of Lemma \ref{lem:concave_V} yielding \eqref{eq:derivative_before_mc} gives the representation by exchanging $t$ with a bounded stopping time. In the case of unbounded $g'$ the argument is less clear. To this end, we prove the following lemma.
\begin{lemma}\label{lem:compare}
    Let $[A,B]\subset(0,1)$ and $T_1<\infty$ be given. For any $\varepsilon>0$ such that $[A-\varepsilon,B+\varepsilon]\subset(0,1)$, there exists a $\delta>0$ such that for all $x,y\in[A,B]$ with $|x-y|<\delta$ it holds that
    \begin{equation*}
        |X^x_t-X^y_t|\leq \varepsilon
    \end{equation*}
    for $t\leq\inf\{s\geq0:X^x_s\not\in(A,B)\}\wedge T_1$.
\end{lemma}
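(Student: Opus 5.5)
The idea is to pass to Lamperti coordinates, where the two processes are driven by the \emph{same} Brownian motion with \emph{unit} diffusion coefficient. The difference $L^{l}_t-L^{l'}_t$ is then a pathwise absolutely continuous function of time, with no martingale part, and a deterministic Gronwall estimate applies on every path. This is what allows a deterministic $\delta$, uniform in $\omega$, as the statement requires. Throughout I use Assumption \ref{asmp:sigma_g} ($\sigma\in C^2((0,1))$), so that the Lamperti drift $\mu=-\tfrac12\sigma'\circ\Psi^{-1}$ in \eqref{eq:lamperti_drift} is $C^1$ on $\Psi((0,1))$.

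First I fix the compacts and constants. Set $I_\varepsilon:=[A-\varepsilon,B+\varepsilon]\subset(0,1)$ and $J_\varepsilon:=\Psi(I_\varepsilon)$, a compact subinterval of $\Psi((0,1))$. On $J_\varepsilon$, $\mu$ is Lipschitz with some constant $M<\infty$. On $I_\varepsilon$, $\sigma$ is continuous and strictly positive by Assumption \ref{asmp:state_dynamics}, so $0<\underline\sigma\le\sigma\le\overline\sigma$ there. Hence $\Psi$ is $\underline\sigma^{-1}$-Lipschitz on $I_\varepsilon$, and $\Psi^{-1}$ is $\overline\sigma$-Lipschitz on $J_\varepsilon$. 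I then choose
\[
\delta:=\frac{\varepsilon\,\underline\sigma}{2\,\overline\sigma}\,e^{-MT_1}.
\]

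Next comes the pathwise estimate. Fix $x,y\in[A,B]$ with $|x-y|<\delta$, and write $l=\Psi(x)$, $l'=\Psi(y)$. By \eqref{eq:dynamics_L}, and since both processes are driven by the same $B$ (pathwise uniqueness, Proposition \ref{prop:state_process}), we have on every path
\[
L^{l}_t-L^{l'}_t=l-l'+\int_0^t\bigl(\mu(L^{l}_u)-\mu(L^{l'}_u)\bigr)\,du .
\]
Define $\eta:=\inf\{s\ge0:X^x_s\notin(A,B)\}\wedge T_1$ and $\rho:=\inf\{s\ge0:|X^x_s-X^y_s|\ge\varepsilon\}$. For $t\le\eta\wedge\rho$, both $X^x_t$ and $X^y_t$ lie in $I_\varepsilon$, so $L^{l}_t,L^{l'}_t\in J_\varepsilon$. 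Gronwall's inequality then gives $|L^{l}_t-L^{l'}_t|\le|l-l'|e^{Mt}$. Applying the two Lipschitz bounds,
\[
|X^x_t-X^y_t|\le\overline\sigma\,|l-l'|\,e^{MT_1}\le\frac{\overline\sigma}{\underline\sigma}\,|x-y|\,e^{MT_1}<\varepsilon/2
\qquad\text{for } t\le\eta\wedge\rho.
\]

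Finally I close with a continuity argument. Suppose $\rho\le\eta$ on some path. Path continuity gives $|X^x_\rho-X^y_\rho|=\varepsilon$, which contradicts the bound $<\varepsilon/2$ valid at $t=\rho$. Hence $\rho>\eta$ on every path, and the estimate holds for all $t\le\eta$, which proves the lemma.

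The one point that needs care is this bootstrapping: the Lipschitz constants are only valid while both processes remain in $I_\varepsilon$. That is why $\rho$ is introduced and then shown never to occur before $\eta$. Everything else is deterministic, and no expectation or Itô-isometry step is needed.
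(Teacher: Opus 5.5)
Your proposal is correct and follows the paper's route: pass to Lamperti coordinates, apply a pathwise Gr\"onwall estimate to the drift-only difference $L^{l}_t-L^{l'}_t$, and take a deterministic $\delta$ from the Lipschitz constants of $\mu$, $\Psi$ and $\Psi^{-1}$ on a compact. Your explicit bootstrap through $\rho$ makes rigorous a step the paper leaves implicit, namely that $X^y$ cannot leave the enlarged interval before $X^x$ exits $(A,B)$.
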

\begin{proof}
    We write 
    \[ 
    \tilde x:=\Psi(x),\qquad \tilde y:=\Psi(y),\qquad \tilde A:=\Psi(A),\qquad \tilde B:=\Psi(B). 
    \]
    Define 
    \begin{equation*}
        D_t := L^{\tilde{x}}_t-L^{\tilde{y}}_t={\tilde{x}}-{\tilde{y}}+\int_0^t(\mu(L^{\tilde{x}}_s)-\mu(L^{\tilde{y}}_s))\, ds.
    \end{equation*}
    Let $\tilde\varepsilon>0$ be such that $[\tilde{A}-\tilde{\varepsilon},\tilde{B}+\tilde{\varepsilon}]\subset \Psi((0,1))$ and consider $\tilde{\rho}=\inf\{s\geq0:L^{\tilde{x}}_s\not\in[\tilde{A},\tilde{B}]\text{ or }L^{\tilde{y}}_s\not\in[\tilde{A}-\tilde{\varepsilon},\tilde{B}+\tilde{\varepsilon}]\}\wedge T_1$. Then, for $t\leq \tilde\rho$, we have
    \begin{equation*}
        |D_t|\leq |\tilde{x}-{\tilde{y}}|+\int_0^t|\mu(L^{\tilde{x}}_s)-\mu(L^{\tilde{y}}_s)|\,ds\leq|{\tilde{x}}-{\tilde{y}}|+K_{\tilde \varepsilon}\int_0^t |D_s|\,ds,
    \end{equation*}
    where $K_{\tilde \varepsilon}$ is a uniform Lipschitz constant for $\mu$ on $[\tilde{A}-\tilde{\varepsilon},\tilde{B}+\tilde{\varepsilon}]$. By Grönwall's inequality, we have
    \begin{equation*}
        |D_t|\leq |{\tilde{x}}-{\tilde{y}}|e^{tK_{\tilde\varepsilon}}\leq |{\tilde{x}}-{\tilde{y}}|e^{T_1K_{\tilde \varepsilon}}.
    \end{equation*}
    So in total, we get 
    \begin{equation*}
        |X^x_t-X^y_t|\leq K_{\Psi^{-1}}|L_t^{\tilde x}-L_t^{\tilde y}|\leq K_{\Psi^{-1}} |{\tilde{x}}-{\tilde{y}}|e^{T_1K_{\tilde \varepsilon}}\leq K_{\Psi^{-1}}K_{\Psi}e^{T_1K_{\tilde \varepsilon}}|x-y|
    \end{equation*}
    for $t\leq \tilde\rho$, where $K_\Psi$ and $K_{\Psi^{-1}}$ are Lipschitz constants for $\Psi$ and $\Psi^{-1}$. Taking
    \begin{equation*}
        \delta\leq \min \left\{\tilde\varepsilon,\frac{\varepsilon}{K_{\Psi^{-1}}}\right\}\frac{1}{K_\Psi}e^{-T_1K_{\tilde\varepsilon}}
    \end{equation*}
    proves the claim.
\end{proof}

Importantly, for a given
$\varepsilon>0$, the constant $\delta>0$ appearing in Lemma \ref{lem:compare} can be chosen \emph{independently} of the sample path.

\begin{proposition}\label{prop:space_lip}
    Let $T<\infty$. For every $t\in[0,T]$ the map $x\mapsto V_T(t,x)$ is $\llip{(0,1)}$. Specifically, for the optimal stopping time $\tau^* :=\tau^*_T(t,x)$, 
    \begin{equation}\label{eq:space_derivative}
        \partial_x V_T(t,x)=\mathbb{E}\left[g'(X^x_{\tau^*})\partial_x X^x_{\tau^*}\right],
    \end{equation}
    for almost every $x\in(0,1)$. In particular, for $x,y\in[a,b]\subset(0,1)$,
    \begin{equation}\label{eq:space_derivative_lipbound}
        |V_T(t,x)-V_T(t,y)|\leq K_\text{space}|x-y|,
    \end{equation}
    where $K_\text{space}:=\|g'\|_\infty$ under Assumption \ref{asmp:bounded:g'}, and there exists an interval $[A,B]\subset(0,1)$ independent of $t$ and $T$ such that $K_\text{space}:=\|g'\|_{L^\infty([a,b]\cup[A,B])}<\infty$ if Assumption \ref{asmp:bounded:c} holds.
\end{proposition}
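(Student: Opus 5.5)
We use the standard "freeze the optimal stopping time" argument. Concavity (Lemma \ref{lem:concave_V}) already gives local Lipschitz continuity and a.e.\ differentiability of $x\mapsto V_T(t,x)$. What remains is to identify the derivative and obtain a bound that is uniform in $t$ and $T$.

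\textbf{One-sided bounds.} Fix $t$ and $x$, and let $\tau^*=\tau^*_T(t,x)$, which is optimal by Proposition \ref{prop:semicontinuity}. Since $\tau^*$ is admissible at the nearby point $x+h$ and the running cost does not depend on $x$, we have
\begin{equation*}
V_T(t,x+h)-V_T(t,x)\le \mathbb{E}\big[g(X^{x+h}_{\tau^*})-g(X^x_{\tau^*})\big]=h\int_0^1\mathbb{E}\big[g'(X^{x+rh}_{\tau^*})\,\partial_xX^{x+rh}_{\tau^*}\big]\,dr.
\end{equation*}
Dividing by $h>0$ and by $h<0$ gives opposite one-sided inequalities. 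At a point of differentiability, letting $h\to0$ then yields \eqref{eq:space_derivative}, provided the limit can be passed inside the expectation.

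\textbf{Case \ref{asmp:bounded:g'}.} Here the limit passage repeats the argument behind \eqref{eq:derivative_before_mc}, with $\tau^*\le T-t$ bounded. The process $\partial_xX^y$ is a positive martingale (Assumption \ref{asmp:martingale}), so optional sampling gives $\mathbb{E}[\partial_xX^y_{\tau^*}]=1$. Scheffé's lemma, combined with bounded convergence for $g'(X^{y}_{\tau^*})$, lets us pass to the limit. The same bound gives $|\partial_xV_T|\le\|g'\|_\infty$, and hence \eqref{eq:space_derivative_lipbound}.

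\textbf{Case \ref{asmp:bounded:c}.} This is the main obstacle, because $g'$ is unbounded near $\{0,1\}$. First I would show that the stopping set contains a neighbourhood of the endpoints uniformly in $t$ and $T$: there is an interval $[A,B]\subset(0,1)$ such that $\mathcal{C}_T\subseteq[0,T)\times(A,B)$.

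To get this, compare with the homogeneous problem with constant cost $m\le c$. Its value satisfies $V^{(m)}\le V_T$. Its stopping region is $[0,A^*]\cup[B^*,1]$ from Theorem \ref{VerificationInfConstant}, or all of $[0,1]$ if $\mathcal{L}g(x_0)\ge -m$. Since $V_T\le g$, wherever $V^{(m)}=g$ we also get $V_T=g$. So we may take $[A,B]=[A^*,B^*]$, independent of $t$ and $T$.

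Consequently, for $x\in[a,b]$ we have $X^x_{\tau^*}\in[a,b]\cup[A,B]$ on $\{\tau^*<T-t\}$: either $x$ is already in the stopping set, so $\tau^*=0$, or the path exits $(A,B)$ exactly at the boundary of the continuation set. On $\{\tau^*=T-t\}$ the path may be anywhere, however. To handle this I would localize: replace $\tau^*$ by $\tau^*\wedge\eta$, where $\eta$ is the exit time from a slightly larger interval $[A-\varepsilon',B+\varepsilon']$ (or one containing $[a,b]$). Then I would use Lemma \ref{lem:compare} to ensure that the perturbed paths $X^{x+rh}$ remain in a compact set on which $g'$ is bounded, uniformly in $\omega$ for small $h$.

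The remaining concern is that the event $\{\tau^*=T-t\}$ with $X^x_{T-t}$ far out contributes nothing. On the terminal event, however, stopping at $T-t$ is forced, and $V_T(T,\cdot)=g$, so this event is handled by the same expansion provided the perturbed terminal values lie in a compact subinterval of $(0,1)$. I expect to bound its contribution using $\|g'\|$ on the enlarged compact together with the martingale property of $\partial_xX$ at bounded times.

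With $g'$ bounded on the relevant compact set, the Scheffé and dominated-convergence argument from the previous case goes through. It yields \eqref{eq:space_derivative} and $|\partial_xV_T|\le\|g'\|_{L^\infty([a,b]\cup[A,B])}$ after shrinking the enlargement to zero, which gives \eqref{eq:space_derivative_lipbound}. Finally, integrating the a.e.\ derivative of the concave function $x\mapsto V_T(t,x)$ gives the Lipschitz estimate.
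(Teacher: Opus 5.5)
Your overall route is the paper's. You freeze $\tau^*=\tau^*_T(t,x)$ as an admissible time at $x\pm h$ to get $\partial_x^+V_T\le\mathbb{E}[g'(X^x_{\tau^*})\partial_xX^x_{\tau^*}]\le\partial_x^-V_T$. You then pass to the limit with Scheff\'e's lemma and optional sampling for the positive martingale $\partial_xX$ at the bounded time $\tau^*$. In case \ref{asmp:bounded:c} you compare with the constant-cost problem to get a horizon-independent $[A,B]$ with $\mathcal{C}_T\subseteq[0,T)\times(A,B)$, and use Lemma \ref{lem:compare} to keep the perturbed paths in a compact set.

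The step that does not hold up is the terminal event. You claim that on $\{\tau^*=T-t\}$ the path ``may be anywhere''. That is false, and the remedy you sketch would not work even if it were true, for three reasons:
\begin{enumerate}
\item Replacing $\tau^*$ by $\tau^*\wedge\eta$ destroys the identity $V_T(t,x)=\mathbb{E}[\int_0^{\tau^*}c(t+u)\,du+g(X^x_{\tau^*})]$ on which your one-sided inequality rests. For a strictly suboptimal time you only get $V_T(t,x)\le J(t,x;\tau^*\wedge\eta)$, which is the wrong direction for bounding $V_T(t,x+h)-V_T(t,x)$ from above.
\item Lemma \ref{lem:compare} only controls $X^{x+rh}$ up to the exit time of $X^x$ from the interval.
\item If $X^x_{T-t}$ could approach $\{0,1\}$, no bound by $g'$ on a fixed compact would be available, since $g'$ is unbounded there in this case.
\end{enumerate}
Your sentence ``I expect to bound its contribution using $\|g'\|$ on the enlarged compact'' is therefore inconsistent with your own premise and does not close the argument.

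The missing observation is elementary. Since $[0,T)\times\bigl((0,A]\cup[B,1)\bigr)\subseteq\mathcal{D}_T$, the first hitting time satisfies $\tau^*\le\inf\{u\ge0:X^x_u\notin(A,B)\}\wedge(T-t)$. On $\{\tau^*=T-t\}$ one has $(t+u,X^x_u)\in\mathcal{C}_T$, hence $X^x_u\in(A,B)$, for every $u<T-t$. By path continuity, $X^x_{T-t}\in[A,B]$. So $X^x_{\tau^*}\in[A,B]$ for every $x\in(A,B)$. Your truncation is then vacuous, and Lemma \ref{lem:compare} applies directly at time $\tau^*$.

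This is exactly the paper's reduction: ``it is enough to consider stopping times $\tau\le\inf\{u\ge0:X^x_u\notin(A^*,B^*)\}\wedge(T-t)$''. With it in place, the rest of your argument goes through. For $x\in[a,b]\setminus(A,B)$ one has $\tau^*=0$, which is why $[a,b]$ enters $K_{\text{space}}$. Your bound on the a.e.\ derivative, integrated using absolute continuity of the concave map, then gives \eqref{eq:space_derivative_lipbound}.
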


\begin{proof}
    By Lemma \ref{lem:concave_V}, we get that $x\mapsto V_T(t,x)$ is $\llip{(0,1)}$ since it is concave. We would like to repeat the arguments above from the proof of \eqref{eq:derivative_before_mc}.
    If Assumption \ref{asmp:bounded:g'} holds, then the argument leading to
    \eqref{eq:derivative_before_mc} extends to every bounded stopping time
    \(\tau\in\mathcal{T}_{T-t}\). Indeed, by joint continuity of the stochastic flow,
    the integral remainder formula gives the stopping time equivalent of \eqref{eq:integralremainder_semi},
    \begin{equation*}
        \frac{1}{h}\mathbb E\left[g(X^{x+h}_\tau)-g(X^x_\tau)\right]
        =
        \int_0^1
        \mathbb E\left[
            g'(X^{x+rh}_\tau)Z^{x+rh}_\tau
        \right]dr.
    \end{equation*}
    Arguing as in \eqref{eq:L1conv_semi}, where $\mathbb{E}[Z^{x+r h}_\tau]=\mathbb{E}[Z^{x}_\tau]=1$ by optional sampling on bounded stopping times, we arrive at
    \begin{equation}\label{eq:Eg_deriv_stopping}
        \partial_x \mathbb E[g(X^x_\tau)]
        =
        \mathbb E\left[g'(X^x_\tau)Z^x_\tau\right],        
    \end{equation}
    given that $g'$ is bounded as assumed in Assumption \ref{asmp:bounded:g'}.
    
    Thus, if $(t,x)\in\mathcal{S}_T$ and $x+h\in[0,1]$,
    \begin{equation*}
        V_T(t,x+h)-V_T(t,x)\leq \mathbb{E}\left[g(X^{x+h}_{\tau^*})-g(X^x_{\tau^*})\right]
    \end{equation*}
    where $\tau^*:=\tau^*_T(t,x)$ since $\tau^*$ is admissible for the problem with initial data $(t,x+h)$. Similarly, if $x-h\in[0,1]$, then
    \begin{equation*}
        V_T(t,x)-V_T(t,x-h)\geq \mathbb{E}\left[g(X^{x}_{\tau^*})-g(X^{x-h}_{\tau^*})\right]
    \end{equation*}
    since $\tau^*$ is admissible for the problem with initial data $(t,x-h)$. Dividing by $h$ and taking limits, utilizing \eqref{eq:Eg_deriv_stopping}, we get
    \begin{equation}\label{eq:admiss}
        \partial^+_xV_T(t,x)\leq \mathbb{E}[g'(X^x_{\tau^*})\partial_xX^x_{\tau^*}]\leq \partial^-_xV_T(t,x)
    \end{equation}
    since the left and right derivatives exist by concavity. This in turn yields \eqref{eq:space_derivative} when the derivative exists.    
    
    Now assume Assumption \ref{asmp:bounded:c} and $g'$ is unbounded. Then, letting $\underline{c}:=\inf_{t\geq0}c(t)>0$ and considering the homogeneous infinite-horizon problem
    \begin{equation*}
        \underline{V}(x):=\inf_\tau\mathbb{E}\left[\int_0^\tau \underline{c}\, du+g(X^x_\tau)\right]
    \end{equation*}
    which has the continuation set $(A^*,B^*)$ given by Theorem \ref{VerificationInfConstant}, it holds that $\underline{V}(x)\leq V_T(t,x)$ for all $(t,x)\in\mathcal{S}_T$ so $V_T(t,x)=g(x)$ for all $t$ and $T$ if $x\not\in(A^*,B^*)$. Specifically, in \eqref{eq:OSP_T} it is enough to consider stopping times where $\tau\in\mathcal{T}_{T-t}$ with
    \begin{equation*}
        \tau\leq \inf\{u\geq 0:X^x_u\not\in(A^*,B^*)\}\wedge(T-t),
    \end{equation*}
    and hence, by Lemma \ref{lem:compare}, for small enough $h>0$ and all $u\leq\tau$, we have $X^{x+r h}_u\in[A^*-\varepsilon,B^*+\varepsilon]\subset(0,1)$ for some $\varepsilon>0$ and so $|g'(X^{x+rh}_\tau)|<\|g'\|_{L^{\infty}([A^*-\varepsilon,B^*+\varepsilon])}$
    for all $r\in[0,1]$ if $x\in(A^*,B^*)$.
    Thus, by boundedness, and since we consider bounded stopping times, we can also apply the admissibility arguments from \eqref{eq:admiss} and from the proof of \eqref{eq:derivative_before_mc} to this case and get \eqref{eq:space_derivative} for points of differentiability.

    For the Lipschitz bound, we first take $t\leq T$, $x$ and $x+h$ in $(A^*,B^*)$ for some $h>0$, and let $\tau^*:=\tau^*_T(t,x)$. Since $\tau^*$ is admissible for initial data $(t,x+h)$, we have 
    \begin{equation*}
        \frac{V_T(t,x+h)-V_T(t,x)}{h}\leq \frac{1}{h}\mathbb{E}\left[g(X^{x+h}_{\tau^*})-g(X^x_{\tau^*})\right]=\int_0^1\mathbb{E}\left[g'(X^{x+rh}_{\tau^*})\partial_xX^{x+rh}_{\tau^*}\right]dr\leq K_\text{space},
    \end{equation*}
    by Fubini and since $|g'(X^{x+rh}_{\tau^*})|<K_\text{space}$, $\partial_xX^{x+rh}_{\tau^*}>0$, and $\mathbb{E}[\partial_xX^{x+rh}_{\tau^*}]=1$ by Assumption \ref{asmp:martingale} since $\tau^*\leq T-t$. Similarly $\tau^*_h:=\tau^*_T(t,x+h)$ is admissible for initial data $(t,x)$ so
    \begin{equation*}
        \frac{V_T(t,x+h)-V_T(t,x)}{h}\geq \frac{1}{h}\mathbb{E}\left[g(X^{x+h}_{\tau^*_h})-g(X^x_{\tau^*_h})\right]=\int_0^1\mathbb{E}\left[g'(X^{x+rh}_{\tau^*_h})\partial_xX^{x+rh}_{\tau^*_h}\right]dr\geq -K_\text{space}.
    \end{equation*}
    Taking limsup and liminf, respectively, we get 
    \begin{equation*}
        |\partial_x^+V_T(t,x)|\leq K_\text{space},
    \end{equation*}
    where $\partial_x^+$ denotes the right derivative.
    This bound indeed holds for all $x\in[a,b]$ by the definition of $K_\text{space}$ and $V_T(t,x)=g(x)$ if $x\not\in(A^*,B^*)$ given Assumption \ref{asmp:bounded:c}. A similar argument for the left derivative gives \eqref{eq:space_derivative_lipbound} as claimed. By the same method, we can achieve the bound $K_\text{space}=\|g'\|_\infty$ under Assumption \ref{asmp:bounded:g'}.
\end{proof}
In the proof, we had to split the cases of Assumption \ref{asmp:bounded:g'} and \ref{asmp:bounded:c} since we wanted the Lipschitz constant to be independent of the time horizon. But we may use the same trick to bound by a homogeneous infinite-horizon problem under either of the two assumptions to get a horizon-dependent estimate of the problem. That is, for finite $T$, define $\underline{c}_T:=\inf_{t\leq T}c(t)$ and consider the associated homogeneous infinite-horizon problem denoted by $\underline{V}(\cdot;T)$. It holds that 
\begin{equation}\label{eq:hom_bound_T_value}
    \underline{V}(x;T)\leq V_T(t,x),
\end{equation}
and so the stopping set of $\underline{V}(\cdot;T)$ is contained in the stopping set of $V_T$. This in turn implies that 
\begin{equation}\label{eq:hom_bound_T}
    X^x_s\in[A^*(T),B^*(T)]\subset(0,1),
\end{equation}
    provided $x\in[A^*(T),B^*(T)]$, for all $s\leq\tau^*_T(t,x)$, when $A^*(T)$ and $B^*(T)$ denote the optimal stopping boundaries for $\underline{V}(x;T)$ given in Theorem \ref{VerificationInfConstant}, and $\tau^*_T(t,x)$ denotes the optimal stopping time of $V_T$.

Using this, we can translate Proposition \ref{prop:space_lip} into the Lagrange-formulated problem. 
\begin{corollary}\label{cor:space_lip_W}
    Let $T<\infty$. For every $t\in[0,T]$ the map $x\mapsto W_T(t,x)$ is $\llip{(0,1)}$. Specifically, for the optimal stopping time $\tau^* :=\tau^*_T(t,x)$, 
    \begin{equation*}%
        \partial_x W_T(t,x)=\mathbb{E}\left[\int_0^{\tau^*}\partial_x X^x_{u}\partial_x(\mathcal{L}g)(X^x_u)\,du\right],
    \end{equation*}
    for almost every $x\in(0,1)$. In particular, for $x,y\in[a,b]\subset(0,1)$,
    \begin{equation}\label{eq:space_derivative_lipbound_W}
        |W_T(t,x)-W_T(t,y)|\leq 2K_\text{space}|x-y|,
    \end{equation}
    where $K_\text{space}$ is defined in Proposition \ref{prop:space_lip}.
\end{corollary}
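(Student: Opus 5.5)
Since $W_T(t,x)=V_T(t,x)-g(x)$, the local Lipschitz property comes directly from Proposition \ref{prop:space_lip} and $g\in C^2((0,1))$. For the bound \eqref{eq:space_derivative_lipbound_W}, I would use the triangle inequality:
\begin{equation*}
|W_T(t,x)-W_T(t,y)|\le |V_T(t,x)-V_T(t,y)|+|g(x)-g(y)|.
\end{equation*}
The first term is at most $K_\text{space}|x-y|$ by \eqref{eq:space_derivative_lipbound}. The second is at most $\|g'\|_{L^\infty([a,b])}|x-y|\le K_\text{space}|x-y|$, because $K_\text{space}$ is a sup of $|g'|$ over a set containing $[a,b]$ under Assumption \ref{asmp:bounded:c}, and equals $\|g'\|_\infty$ under Assumption \ref{asmp:bounded:g'}. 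Together these give the factor $2K_\text{space}$.

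For the derivative representation, at points $x$ where $V_T(t,\cdot)$ is differentiable (almost everywhere, by concavity), $\partial_xW_T=\partial_xV_T-g'(x)$. By \eqref{eq:space_derivative} this equals $\mathbb{E}[g'(X^x_{\tau^*})\partial_xX^x_{\tau^*}]-g'(x)$. I would then rewrite this as a time integral by applying It\^o's formula to $g'(X^x_u)\partial_xX^x_u$. Using \eqref{eq:sde_partialx}, the drift of this product is
\begin{equation*}
\tfrac12\sigma^2 g'''\,\partial_xX+\sigma g''\sigma'\,\partial_xX=\partial_x\big(\tfrac12\sigma^2g''\big)(X)\,\partial_xX=\partial_x(\mathcal{L}g)(X)\,\partial_xX,
\end{equation*}
and the martingale part is $(\sigma g''+g'\sigma')(X)\,\partial_xX\,dB$.

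It then remains to check that the stochastic integral has zero expectation at $\tau^*$. If $x\notin(A^*(T),B^*(T))$, then by \eqref{eq:hom_bound_T} we have $x\in\mathcal{D}_T$, so $\tau^*=0$ and both sides vanish. Otherwise, \eqref{eq:hom_bound_T} confines $X^x_u$ to the compact set $[A^*(T),B^*(T)]\subset(0,1)$ for $u\le\tau^*\le T-t$, and Assumption \ref{asmp:sigma_g} makes $\sigma,\sigma',g',g''$ bounded there. The integrand is then bounded by a constant times $\partial_xX^x_u$. I expect this integrability step to be the only real obstacle. I would handle it by localizing with $\tau^*\wedge\eta_n$, where $\eta_n:=\inf\{u:\partial_xX^x_u\ge n\}$, so that the stochastic integral is a true martingale and has zero mean. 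Then I let $n\to\infty$: on the left, $\partial_xX^x$ is a nonnegative martingale on $[0,T]$ (Assumption \ref{asmp:martingale}), and the stopped values $\partial_xX^x_{\tau^*\wedge\eta_n}$ are uniformly integrable by optional sampling, so dominated convergence applies to $g'(X)\partial_xX$. On the right, $\partial_x\mathcal{L}g$ is bounded on the compact set and $\mathbb{E}\int_0^{T}\partial_xX^x_u\,du=T$, which justifies dominated convergence there. This yields the stated formula.
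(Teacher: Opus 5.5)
Your proposal is correct and follows the paper's proof. The Lipschitz bound comes from the triangle inequality, with $K_\text{space}$ controlling both $V_T$ and $g$, and the representation comes from It\^o/Dynkin applied to $g'(X^x_u)\partial_xX^x_u$ under the confinement \eqref{eq:hom_bound_T}; your drift computation is exactly $\mathcal{A}f$ for $f(x,z)=g'(x)z$. Your localization by $\eta_n$, combined with uniform integrability of the stopped values of $\partial_xX^x$ (strictly a Vitali-type step rather than dominated convergence on the left-hand side), makes rigorous what the paper compresses into the remark that $f$ restricted to $[A^*(T),B^*(T)]$ lies in the domain of $\mathcal{A}$.
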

\begin{proof}
    Since $W_T(t,x)=V_T(t,x)-g(x)$, \eqref{eq:space_derivative_lipbound_W} follows from \eqref{eq:space_derivative_lipbound} by the triangle inequality in a neighborhood as in Proposition \ref{prop:space_lip}. Assume $(t,x)$ is a point of differentiability for $V_T$. Then 
    \begin{equation*}
        \partial_xW_T(t,x)=\mathbb{E}\left[ g'(X^x_{\tau^*})\partial_x X^x_{\tau^*}\right]-g'(x).
    \end{equation*}
    Let $f(x,z):=g'(x)z$, and define
    \begin{equation*}
        \mathcal{A}:=\sigma'(x)\sigma(x)z\partial_{xz}+\frac{\sigma^2(x)}{2}\partial_{xx}+\frac{(\sigma'(x)z)^2}{2}\partial_{zz}.
    \end{equation*}
    Using  \eqref{eq:hom_bound_T}, we see that $f$ restricted to $[A^*(T),B^*(T)]$ is in the domain of $\mathcal{A}$. Since $\tau^*$ is finite, we may apply Dynkin's formula to obtain
    \begin{equation*}
        \partial_xW_T(t,x)=\mathbb{E}\left[\int_0^{\tau^*}\mathcal{A}\big(g'(X^x_u)\partial_xX^x_u\big) \,du\right]=\mathbb{E}\left[\int_0^{\tau^*}\partial_x X^x_{u}\partial_x(\mathcal{L}g)(X^x_u)\,du\right],
    \end{equation*}
    where the second equality follows by direct computation.
\end{proof}

We now turn to the time derivative, where we also want a probabilistic expression for the derivative in the infinite-horizon case, when it exists. Recall the exponential growth condition on the running cost from Assumption \ref{asmp:base_penalty:c}.

\begin{proposition}\label{prop:time_lip}
    For any $T\leq\infty$ and $x\in[0,1]$, it holds that $t\mapsto V_T(t,x)$ is $\llip{[0,T)}$. Specifically, at points of differentiability it holds that
    \begin{equation}\label{eq:time_deriv_bound}
        \mathbb{E}[c(t+\tau^*)]-c(t)\leq \partial_tV_T(t,x)\leq \mathbb{E}[c(t+\tau^*)]-c(t) + K''\mathbb{P}(\tau^*=T-t),
    \end{equation}
     for $T<\infty$, where $\tau^*:=\tau^*_T(t,x)$ is the optimal stopping time and $K'':=\|\mathcal{L}g\|_\infty$, and 
     \begin{equation}\label{eq:time_deriv_inf}
        \partial_tV_\infty(t,x)=\mathbb{E}[c(t+\tau^*_\infty(t,x))]-c(t),
    \end{equation}
    when $T=\infty$, with the convention $c(t+\infty):=0$, which is well-defined. In particular, for $T\leq\infty$ and $|s-t|\leq 1$,
     \begin{equation}\label{eq:time_derivative_lipbound}
        |V_T(t,x)-V_T(s,x)|\leq K_{\text{time}}|s-t|,
    \end{equation}
    where $K_{\text{time}}:=K'\|g\|_\infty+K''$.
\end{proposition}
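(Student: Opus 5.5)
The plan is a direct comparison argument. For two initial times we reuse the optimal stopping time of one problem in the other, truncating it at the horizon where necessary. All time-dependence then sits in the running cost and, for $T<\infty$, in the horizon constraint. For $x\in\{0,1\}$ we have $V_T\equiv0$, so take $x\in(0,1)$. To work on $[0,T)$ including $t=0$, we use the extension \eqref{eq:extend_0} and argue at interior times.

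\emph{Upper estimate (right increments).} Fix $h>0$ small, let $\tau^*=\tau^*_T(t,x)$, and use $\tau^*\wedge(T-t-h)$ in the problem started at $(t+h,x)$. This gives
\begin{equation*}
V_T(t+h,x)-V_T(t,x)\le \mathbb{E}\Big[\int_0^{\tau^*}\big(c(t+h+u)-c(t+u)\big)du\Big]+\mathbb{E}\big[g(X^x_{\tau^*\wedge(T-t-h)})-g(X^x_{\tau^*})\big],
\end{equation*}
where we have dropped the nonpositive cost saving from truncation. The first term equals $\mathbb{E}\int_{\tau^*}^{\tau^*+h}c(t+u)\,du-\int_0^hc(t+u)\,du$. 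For the second term we apply Proposition \ref{prop:lagrange} (Dynkin) between the two stopping times. It vanishes off $\{\tau^*>T-t-h\}$ and is bounded by $K''h\,\mathbb{P}(\tau^*>T-t-h)$. Since $\tau^*\le T-t$, the events $\{\tau^*>T-t-h\}$ decrease to $\{\tau^*=T-t\}$ as $h\downarrow0$. Dividing by $h$ and letting $h\downarrow0$ then gives the upper bound in \eqref{eq:time_deriv_bound}. Here dominated convergence is justified by \eqref{eq:expGrowth}: the averages $h^{-1}\int_{\tau^*}^{\tau^*+h}c(t+u)\,du$ are dominated by $e^{K}c(t+\tau^*)$, and this should be integrable because $\mathbb{E}[c(t+\tau^*)]\le c(t)+K\,\mathbb{E}\int_0^{\tau^*}c(t+u)\,du\le c(t)+K\|g\|_\infty$.

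\emph{Lower estimate (left increments).} The stopping time $\tau^*$ is admissible for the problem at $(t-h,x)$, since $\tau^*\le T-t\le T-t+h$, and no truncation is needed. This gives
\begin{equation*}
V_T(t-h,x)-V_T(t,x)\le \int_{-h}^0c(t+v)\,dv-\mathbb{E}\int_{\tau^*-h}^{\tau^*}c(t+v)\,dv .
\end{equation*}
Dividing by $h$ yields $\partial_tV_T(t,x)\ge\mathbb{E}[c(t+\tau^*)]-c(t)$ at points of differentiability.

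\emph{Lipschitz bound.} The same two comparisons, written with $\int_0^{\tau}(c(t+h+u)-c(t+u))\,du=\int_0^\tau\int_0^h c'(t+u+r)\,dr\,du$, give increments bounded by $hK'\,\mathbb{E}\int_0^{\tau}c(t+u)\,du$, using \eqref{eq:cprime_helper}. Here $\tau$ is the optimal time of whichever problem is being compared, and $\mathbb{E}\int_0^{\tau}c(t+u)\,du\le V_T\le\|g\|_\infty$ because $g\ge0$. The truncation term contributes at most $K''h$. This yields \eqref{eq:time_derivative_lipbound} with $K_{\text{time}}=K'\|g\|_\infty+K''$ for $|s-t|\le1$, hence local Lipschitz continuity.

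\emph{Infinite horizon.} For $T=\infty$ there is no truncation, so the two one-sided bounds coincide and give \eqref{eq:time_deriv_inf}. Optimality of $\tau^*_\infty$ is supplied by Proposition \ref{prop:finite_to_infinite_1}. The Lipschitz bound can either be repeated verbatim or inherited from the finite-$T$ bounds through the pointwise limit $V_T\searrow V_\infty$, since $K_{\text{time}}$ is uniform in $T$.

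\emph{Main obstacle.} I expect the hardest step to be making sense of $c(t+\tau^*)$ on $\{\tau^*=\infty\}$, together with the corresponding dominated-convergence step. On that event finiteness of $\mathbb{E}\int_0^{\tau^*}c$ forces $\int_0^\infty c(t+u)\,du<\infty$ whenever $\mathbb{P}(\tau^*=\infty)>0$. The log-Lipschitz bound \eqref{eq:expGrowth} should then imply $c(s)\to0$ as $s\to\infty$. This justifies the convention $c(t+\infty):=0$ and the convergence $h^{-1}\int_{\tau^*}^{\tau^*+h}c(t+u)\,du\to0$ on that event.
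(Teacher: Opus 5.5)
Your proposal is correct and follows essentially the paper's route: you reuse the optimal time of one problem in the other and truncate at the horizon. You control the truncation error by $K''h$, use \eqref{eq:cprime_helper} with $\mathbb{E}\int_0^{\tau^*}c(t+u)\,du\le\|g\|_\infty$ for dominated convergence and the $T$-uniform constant, and handle $\{\tau^*_\infty=\infty\}$ via $c(s)\to0$, just as the paper does. The only difference is cosmetic: you bound the truncation error by Dynkin's formula between $\tau^*\wedge(T-t-h)$ and $\tau^*$, whereas the paper uses the dynamic programming principle with $0\le -W_T(T-h,\cdot)\le K''h$, and both give the same estimate.
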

\begin{proof}
    We first take $T<\infty$. If $x\in\{0,1\}$, then the statement holds since $\tau^*_T(t,x)=0$ almost surely, so fix any $x\in(0,1)$. We will show the result through the Lagrange-formulated problem $W_T$ defined in \eqref{eq:lagrange_formulation}.
    We will first prove \eqref{eq:time_deriv_bound} by bounding the left and right derivatives from below and above, respectively. 

    Let $t\in(0,T)$ and $h>0$ such that $t-h\geq0$, and assume, for simplicity, that $h\leq1$. Denote the optimal stopping time by $\tau^*:=\tau^*_T(t,x)$. Then $\tau^*$ is admissible for the problem with initial data $(t-h,x)$. We have 
    \begin{equation*}
        \frac{W_T(t,x)-W_T(t-h,x)}{h}\geq \mathbb{E}\left[\int_0^{\tau^*}\frac{c(t+u)-c(t-h+u)}{h} \,du\right]=\mathbb{E}\left[\int_0^{\tau^*}\int_0^1 c'(t+u-rh)\,dr\, du\right],
    \end{equation*}
    by the fundamental theorem of calculus. By \eqref{eq:cprime_helper}, we have 
    \begin{equation*}
        |c'(t+u-rh)|\leq K'c(t+u),
    \end{equation*}
    and since $\tau^*\in\mathcal{T}_{T-t}\subset\mathcal{T}$, we have $\mathbb{E}[\int_0^{\tau^*}c(t+u)\,du]<\infty$, and
    \begin{equation*}
        \int_0^1c'(t+u-rh)\,dr\to c'(t+u)
    \end{equation*}
    as $h\searrow0$,
    we can apply dominated convergence theorem to get 
    \begin{equation}\label{eq:dt_left_lower}
        \liminf_{h\searrow0}\frac{W_T(t,x)-W_T(t-h,x)}{h}\geq 
        \mathbb{E}\left[\int_0^{\tau^*} c'(t+u) \,du\right]=\mathbb{E}[c(t+\tau^*)]-c(t).
    \end{equation}
    For the upper bound on the right derivative we let $t\in(0,T)$ and $h\in(0,1]$ such that $t+h\leq T$. We note that $\tau^*$ may be inadmissible for the problem with initial data $(t+h,x)$. Thus let $u_h=T-(t+h)$ and consider the stopping time $\tau^*\wedge u_h$. First, by the strong Markov property, the dynamic programming principle for the Lagrange-formulated problem yields
    \begin{equation*}
        W_T(t,x)=\mathbb{E}\left[\int_0^{\tau^*\wedge u_h}\bigl(c(t+u)+\mathcal{L}g(X^x_u)\bigr)\,du+\ind{\tau^*>u_h}W_T(T-h,X^x_{u_h})\right],
    \end{equation*}
    and hence
    \begin{equation*}
        W_T(t+h,x)-W_T(t,x)\leq \mathbb{E}\left[\int_0^{\tau^*\wedge u_h}\big(c(t+h+u)-c(t+u) \big)\,du-\ind{\tau^*>u_h}W_T(T-h,X^x_{u_h})\right].
    \end{equation*}
    For any $y\in(0,1)$, it holds that 
    \begin{equation*}
        0\leq -\frac{1}{h}W_T(T-h,y)\leq \frac{1}{h}\mathbb{E}\left[\int_0^h \|\mathcal{L}g\|_\infty \,du\right]=K''
    \end{equation*}
    since $-c(u)\leq 0$ for all $u\geq0$ and thus
    \begin{equation}\label{eq:W_tderiv_truncated}
        \frac{W_T(t+h,x)-W_T(t,x)}{h}\leq \mathbb{E}\left[\int_0^{\tau^*\wedge u_h} \int_0^1c'(t+u+rh) \,dr\,du\right]+K''\mathbb{P}(\tau^*>u_h).
    \end{equation}
    Since $u_h\nearrow T-t$ and
    \begin{equation*}
        \int_0^1\ind{u\leq u_h}c'(t+u+rh)dr\to \ind{u\leq T-t}c'(t+u),
    \end{equation*}
    by dominated convergence, we get
    \begin{equation}\label{eq:dt_right_upper}
        \limsup_{h\searrow0}\frac{W_T(t+h,x)-W_T(t,x)}{h}\leq \mathbb{E}\left[\int_0^{\tau^*} c'(t+u)\,du\right]+K''\mathbb{P}(\tau^*=T-t).
    \end{equation}
    Thus, if $(t,x)$ is a point of differentiability, we have \eqref{eq:time_deriv_bound} by \eqref{eq:lagrange_formulation} since $g(x)$ is independent of $t$.

    To prove the local Lipschitz property we need to bound the right and left derivatives from above and below, respectively. For the left derivative let again $t\in(0,T)$ and $h\in(0,1]$ such that $t-h\geq0$. Let $\tau^*_h:=\tau^*_T(t-h,x)$, then $\tau^*_h\wedge (T-t)$ is admissible for the problem with initial data $(t,x)$. Hence, similar to \eqref{eq:W_tderiv_truncated}, we get 
    \begin{equation*}
        \frac{W_T(t,x)-W_T(t-h,x)}{h}\leq \mathbb{E}\left[\int_0^{\tau^*_h\wedge (T-t)} \int_0^1c'(t+u-rh) \,dr\,du\right]+K''\mathbb{P}(\tau^*_h>T-t).
    \end{equation*}
    By \eqref{eq:cprime_helper}, we get
    \begin{align*}
        &\left|\mathbb{E}\left[\int_0^{\tau^*_h\wedge (T-t)} \int_0^1c'(t+u-rh) \,dr\,du\right]\right|\leq\mathbb{E}\left[\int_0^{\tau^*_h\wedge (T-t)} \int_0^1\left|c'((t-h+u)+(1-r)h)\right| \,dr\,du\right]\\
        \leq&K'\mathbb{E}\left[\int_0^{\tau^*_h\wedge (T-t)} \int_0^1c(t-h+u) \,dr\,du\right]\leq K'\mathbb{E}\left[\int_0^{\tau^*_h}c(t-h+u)\,du\right]\leq K'\|g\|_\infty
    \end{align*}
    since $(1-r)h\leq 1$ and 
    \begin{equation*}
        0\leq \mathbb{E}\left[\int_0^{\tau^*_T(s,y)}c(s+u)\,du\right]\leq \sup_{(s,y)\in\mathcal{S}_T}V_T(s,y)\leq \|g\|_\infty
    \end{equation*}
    for any $s\in[0,T)$.
    Thus, since $\mathbb{P}(\tau^*_h>T-t)\leq 1$, and recalling $K_\text{time}=K'\|g\|_\infty+K''$, we have
    \begin{equation}\label{eq:dt_left_upper}
        \left|\mathbb{E}\left[\int_0^{\tau^*_h\wedge (T-t)} \int_0^1c'(t+u-rh) \,dr\,du\right]+K''\mathbb{P}(\tau^*_h>T-t)\right|\leq K_\text{time}.
    \end{equation}
    Similarly, for the right derivative, letting $\tau^*_h:=\tau^*_T(t+h,x)$, we have 
    \begin{equation}\label{eq:dt_right_lower}
        \frac{W_T(t+h,x)-W_T(t,x)}{h}\geq \mathbb{E}\left[\int_0^{\tau^*_h}\int_0^1 c'(t+u+rh)\,dr\, du\right]\geq -K'\|g\|_\infty,
    \end{equation}
    by \eqref{eq:cprime_helper}.
    Combining \eqref{eq:dt_left_lower},\eqref{eq:dt_right_upper}, \eqref{eq:dt_left_upper}, and \eqref{eq:dt_right_lower}, we get 
    \begin{equation*}
        |W_T(t,x)-W_T(s,x)|\leq K_\text{time}|s-t|,
    \end{equation*}
    for $|s-t|\leq 1$, which again is equivalent to \eqref{eq:time_derivative_lipbound} by \eqref{eq:lagrange_formulation} since $g(x)$ is independent of $t$. Using the argument around \eqref{eq:extend_0}, we can extend the conclusion to $t=0$.

    If $T=\infty$, then the inadmissibility corrections can be ignored in the proof above. Hence for the infinite-horizon problem, we get \eqref{eq:time_deriv_inf} almost everywhere and \eqref{eq:time_derivative_lipbound}. Note that this derivative is well-defined since if $\tau_\infty^*(t,x)=\infty$ with positive probability, then $c(s)\to0$ as $s\to\infty$ because $\tau_\infty^*(t,x)\in\mathcal{T}$. 
\end{proof}

Combining Propositions \ref{prop:space_lip} and \ref{prop:time_lip}, we get that $(t,x)\mapsto V_T(t,x)$ is locally Lipschitz.
\begin{theorem}\label{thm:lip_value}
    The function $(t,x)\mapsto V_T(t,x)$ is $\llip{\mathcal{S}_T}$ for $T\leq \infty$.
\end{theorem}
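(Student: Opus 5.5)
The plan is to combine the separate Lipschitz estimates of Proposition \ref{prop:space_lip} (spatial) and Proposition \ref{prop:time_lip} (temporal) by a triangle inequality on compact subsets of $\mathcal{S}_T$. Fix a compact set $K\subset\mathcal{S}_T$. Since $K$ is compact in $[0,T)\times(0,1)$, it is contained in a rectangle $[t_1,t_2]\times[a,b]$ with $[a,b]\subset(0,1)$ and $t_2<T$. For $(t,x),(s,y)\in[t_1,t_2]\times[a,b]$ I will write
\begin{equation*}
    |V_T(t,x)-V_T(s,y)|\leq |V_T(t,x)-V_T(t,y)|+|V_T(t,y)-V_T(s,y)|.
\end{equation*}
The first term is bounded by $K_\text{space}|x-y|$ using \eqref{eq:space_derivative_lipbound}. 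The essential point is that $K_\text{space}$ depends only on $[a,b]$, and possibly on the interval $[A,B]$ from Proposition \ref{prop:space_lip}, but not on $t$. The second term is bounded by $K_\text{time}|s-t|$ via \eqref{eq:time_derivative_lipbound} when $|s-t|\leq1$. For $|s-t|>1$, I will either chain through at most $\lceil t_2-t_1\rceil$ intermediate times, or simply use $|V_T|\leq\|g\|_\infty$, which gives $2\|g\|_\infty\leq 2\|g\|_\infty|s-t|$. Either way the result is a single constant on the rectangle.

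The step needing care is that Proposition \ref{prop:space_lip} is stated only for $T<\infty$, while the theorem also covers $T=\infty$. Here I will use that the spatial constant $K_\text{space}$ is uniform in the horizon: it equals $\|g'\|_\infty$ under Assumption \ref{asmp:bounded:g'}, and under Assumption \ref{asmp:bounded:c} the interval $[A,B]$ is independent of $t$ and $T$. Proposition \ref{prop:finite_to_infinite_1} gives $V_{T'}(t,x)\to V_\infty(t,x)$ pointwise as $T'\to\infty$. So I fix $t$ and $x,y\in[a,b]$, apply \eqref{eq:space_derivative_lipbound} to $V_{T'}$ for $T'>t$, and let $T'\to\infty$; this gives the same spatial bound for $V_\infty$. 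The temporal estimate \eqref{eq:time_derivative_lipbound} already holds for $T=\infty$ by Proposition \ref{prop:time_lip}.

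Finally, the endpoint $t=0$ is included in $\mathcal{S}_T$. Proposition \ref{prop:time_lip} already covers $[0,T)$ through the extension \eqref{eq:extend_0}, so no extra work is needed there. The main obstacle is only the horizon-uniformity of the spatial constant when passing to $T=\infty$. This is exactly why Proposition \ref{prop:space_lip} was arranged to produce horizon-independent constants, so I expect the argument to be short once that point is checked.
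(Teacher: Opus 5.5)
Your proposal is correct and follows essentially the paper's argument: split by the triangle inequality, bound the pieces with the horizon-uniform constants $K_\text{space}$ and $K_\text{time}$, and reach $T=\infty$ through the pointwise convergence of Proposition \ref{prop:finite_to_infinite_1}. The only differences are cosmetic. The paper passes the combined two-variable estimate to the limit, whereas you pass only the spatial bound and take the temporal one directly from Proposition \ref{prop:time_lip}; the paper also restricts to $|t_n-t|<1$, whereas you handle $|s-t|>1$ explicitly.
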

\begin{proof}
    First, let $T<\infty$ and consider $(t_n,x_n)\to(t,x)\in[0,T)\times(0,1)$ such that $\lvert t_n-t\rvert<1$ and $x_n\in[a,b]\subset(0,1)$ for all $n\in\mathbb{N}$. By \eqref{eq:time_derivative_lipbound} and \eqref{eq:space_derivative_lipbound}, we have
    \begin{align*}
        \lvert V_T(t_n,x_n)-V_T(t,x)\rvert&\leq \lvert V_T(t_n,x_n)-V_T(t,x_n)\rvert+\lvert V_T(t,x_n)-V_T(t,x)\rvert\\
        &\leq K_\text{time}|t_n-t|+K_\text{space}|x_n-x|\leq 2(K_\text{time}+K_\text{space})\|(t_n,x_n)-(t,x)\|_2,
    \end{align*}
    which, we note, is uniform in $T$. Thus, for the infinite-horizon problem we have
    \begin{equation*}
        |V_\infty (t,x)-V_\infty(s,y)|=\lim_{T\to\infty}|V_T (t,x)-V_T(s,y)|\leq 2(K_{\text{time}}+K_{\text{space}})\|(t,x)-(s,y)\|_2,
    \end{equation*}
    by Proposition \ref{prop:finite_to_infinite_1} and so we are done.
\end{proof}

With the Lipschitz regularity of the value function obtained in this section we are able to strengthen the pointwise convergence result of Proposition \ref{prop:finite_to_infinite_1}.
\begin{corollary}\label{cor:unif_conv_T}
   For every $\mathcal{I}=[t_1,t_2]\subset[0,\infty)$ and $\mathcal{J}=[\underline{a},\overline{a}]\subset(0,1)$ it holds that
    \begin{equation*}
        V_T\to V_\infty
    \end{equation*}
    uniformly on $\mathcal{I}\times\mathcal{J}$ for $T\to\infty$.
\end{corollary}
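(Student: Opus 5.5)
The proof will be a Dini-type argument combined with the equi-Lipschitz bound from Theorem \ref{thm:lip_value}. By Proposition \ref{prop:finite_to_infinite_1}, $V_T(t,x)\searrow V_\infty(t,x)$ pointwise on $\mathcal{S}$ as $T\nearrow\infty$. Fix $\mathcal{I}\times\mathcal{J}$ and restrict to horizons $T>t_2+1$, so that every point of $\mathcal{I}\times\mathcal{J}$ lies in $\mathcal{S}_T$ with some room to spare. The key observation is that the Lipschitz constant $2(K_{\text{time}}+K_{\text{space}})$ obtained in the proof of Theorem \ref{thm:lip_value} does not depend on $T$. Indeed, $K_{\text{time}}=K'\|g\|_\infty+K''$ is horizon-free. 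Likewise $K_{\text{space}}$ is either $\|g'\|_\infty$, or $\|g'\|_{L^\infty([\underline a,\overline a]\cup[A,B])}$ with $[A,B]$ independent of $t$ and $T$, by Proposition \ref{prop:space_lip}. Hence, with $L$ denoting this constant, the family $\{V_T\}_{T>t_2+1}\cup\{V_\infty\}$ is uniformly $L$-Lipschitz on $\mathcal{I}\times\mathcal{J}$, at least for pairs of points within distance $1$ in time. Since $\mathcal{I}$ is compact, this restriction is harmless after covering by finitely many pieces.

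Given $\varepsilon>0$, we pick a finite $\delta$-net $\{(t_i,x_i)\}_{i=1}^N\subset\mathcal{I}\times\mathcal{J}$ with $\delta=\varepsilon/(3L)$. By pointwise convergence at these finitely many points, there is $T_0$ such that $0\le V_T(t_i,x_i)-V_\infty(t_i,x_i)\le\varepsilon/3$ for all $i$ and all $T\ge T_0$. For an arbitrary $(t,x)\in\mathcal{I}\times\mathcal{J}$, choose $i$ with $\|(t,x)-(t_i,x_i)\|_2<\delta$. The triangle inequality then gives
\[
0\le V_T(t,x)-V_\infty(t,x)\le |V_T(t,x)-V_T(t_i,x_i)|+\varepsilon/3+|V_\infty(t_i,x_i)-V_\infty(t,x)|\le\varepsilon .
\]
This is exactly the uniform convergence claimed.

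As an alternative route, one can bypass the net. Each $V_T$ is continuous (Theorem \ref{thm:lip_value}), the limit $V_\infty$ is continuous, and the convergence is monotone on a compact set, so Dini's theorem applies directly. I would present the net argument, however, since it makes the $T$-uniformity explicit.

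The main point to check carefully is the $T$-independence of $K_{\text{space}}$. Under Assumption \ref{asmp:bounded:c}, this requires that the interval $(A^*,B^*)$ come from the homogeneous problem with cost $\underline c=\inf_{t\ge0}c(t)$, not from the horizon-dependent $\underline c_T$ of \eqref{eq:hom_bound_T_value}. Proposition \ref{prop:space_lip} provides exactly this. A second point is that the time-Lipschitz estimate \eqref{eq:time_derivative_lipbound} near the terminal time uses $h\le1$, which is why we work with $T>t_2+1$.
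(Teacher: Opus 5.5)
Your proposal is correct, and the route you list as an alternative is exactly the paper's proof: pointwise monotone convergence from Proposition \ref{prop:finite_to_infinite_1}, continuity of $V_\infty$ from Theorem \ref{thm:lip_value}, and then Dini's theorem.

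Your primary argument is genuinely different. It replaces monotonicity by equicontinuity: you use the horizon-independent Lipschitz constant $2(K_{\text{time}}+K_{\text{space}})$ together with pointwise convergence on a finite $\delta$-net, in the style of Arzel\`a--Ascoli.

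\begin{itemize}
\item The Dini route is a one-liner. At this step it needs only continuity of each $V_T$ and of the limit, plus monotonicity in $T$.
\item Your route does not use monotonicity at all, so it would survive non-monotone approximations. It also makes the $T$-uniformity explicit. The price is checking that $K_{\text{space}}$ and $K_{\text{time}}$ are independent of $T$, which you do correctly, including the point that under Assumption \ref{asmp:bounded:c} the interval must come from the cost $\inf_{t\ge0}c(t)$ rather than $\underline c_T$.
\end{itemize}

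The two routes are closer than they look. The continuity of $V_\infty$ that Dini requires is itself obtained in Theorem \ref{thm:lip_value} by passing the $T$-uniform Lipschitz bound to the limit. So both ultimately rest on the same uniform estimate.

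One small correction. The restriction $T>t_2+1$ is unnecessary; any $T>t_2$ suffices. The condition $h\le1$ behind \eqref{eq:time_derivative_lipbound} limits only the size of the increment, and comes from \eqref{eq:cprime_helper}. The one-sided estimates in the proof of Proposition \ref{prop:time_lip} give $|V_T(t,x)-V_T(s,x)|\le K_{\text{time}}|t-s|$ for all $s,t\in[0,T]$ with $|s-t|\le1$, with the inadmissibility near $T$ absorbed by the $K''$ term. Likewise, no covering by finitely many pieces is needed, since $\delta=\varepsilon/(3L)<1$ for small $\varepsilon$.
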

\begin{proof}
    Proposition \ref{prop:finite_to_infinite_1} yields pointwise monotone convergence, so Dini's theorem yields the result since $(t,x)\mapsto V_\infty (t,x)$ is continuous by Theorem \ref{thm:lip_value}.
\end{proof}

The conclusions of Proposition \ref{prop:space_lip} and \ref{prop:time_lip}, and Theorem \ref{thm:lip_value} can be directly translated into the Lamperti-transformed problem since $\Psi^{-1}$ is locally regular. We present the result needed for the Lagrange formulation of the Lamperti-transformed problem on finite horizon.

\begin{corollary}\label{cor:W_T_lips}
   Let $T<\infty$. The function $(t,l)\mapsto \tilde W_T(t,l)$ is $\llip{\tilde{\mathcal{S}}_T}$. Specifically, at points of differentiability it holds that
    \begin{equation}\label{eq:space_derivative_lamperti_w}
        \partial_l \tilde{W}_T(t,l)=\mathbb{E}\left[\int_{0}^{\tilde\tau^*}\partial_l L^l_{u}\partial_l(\tilde{\mathcal{L}}\tilde{g})(L^l_{u})\,du\right]
    \end{equation}
    where $\tilde\tau^*:=\tilde\tau^*_T(t,l)$ is the optimal stopping time, and 
    \begin{equation}\label{eq:time_lamp_bound}
        \mathbb{E}[c(t+\tilde\tau^*)]-c(t)\leq \partial_t\tilde{W}_T(t,l)\leq  \mathbb{E}[c(t+\tilde\tau^*)]-c(t) + K''\mathbb{P}(\tilde\tau^*=T-t),
    \end{equation}
     where $K''$ is defined in Proposition \ref{prop:time_lip}.
\end{corollary}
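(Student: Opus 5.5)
The plan is to transport Corollary \ref{cor:space_lip_W}, Proposition \ref{prop:time_lip} and Theorem \ref{thm:lip_value} through the identity $\tilde W_T(t,l)=W_T(t,\Psi^{-1}(l))$, using that $\Psi$ is a $C^1$ (indeed $C^3$ under Assumption \ref{asmp:sigma_g}) diffeomorphism of $(0,1)$ onto $\Psi((0,1))$ with $\Psi'=1/\sigma>0$. In particular $\Psi$ and $\Psi^{-1}$ are bi-Lipschitz on compacts. Local Lipschitz continuity of $\tilde W_T$ on $\tilde{\mathcal{S}}_T$ then follows at once: a compact $K\subset\tilde{\mathcal S}_T$ is the image of the compact set $(\mathrm{id},\Psi)^{-1}(K)\subset\mathcal S_T$, on which $W_T=V_T-g$ is Lipschitz by Theorem \ref{thm:lip_value} and $g\in C^2$. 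Composing with the Lipschitz map $(t,l)\mapsto(t,\Psi^{-1}(l))$ gives the claim. Also, because $X^x_u=\Psi^{-1}(L^l_u)$ pathwise, the two optimal stopping times coincide, $\tilde\tau^*_T(t,l)=\tau^*_T(t,\Psi^{-1}(l))$, and points of differentiability correspond under the diffeomorphism.

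For the time derivative, $t$ is untouched by the change of variables. Hence $\partial_t\tilde W_T(t,l)=\partial_t W_T(t,x)=\partial_tV_T(t,x)$ with $x=\Psi^{-1}(l)$. The bounds \eqref{eq:time_lamp_bound} are then exactly \eqref{eq:time_deriv_bound} rewritten with $\tilde\tau^*$ in place of $\tau^*$. Alternatively, one can rerun the argument of Proposition \ref{prop:time_lip} directly in $l$-coordinates, since it only used the Lagrange form and the admissibility of shifted or truncated stopping times.

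For the spatial derivative, the chain rule gives
\begin{equation*}
\partial_l\tilde W_T(t,l)=\partial_xW_T(t,x)\,\sigma(x),
\end{equation*}
so I would insert the representation from Corollary \ref{cor:space_lip_W} and convert each factor. We have $\partial_lL^l_u=\partial_xX^x_u\,\sigma(x)/\sigma(X^x_u)$, by differentiating $L^l_u=\Psi(X^{\Psi^{-1}(l)}_u)$ (consistent with \eqref{eq:partialx_explicit}). Moreover $\tilde{\mathcal L}\tilde g=(\mathcal Lg)\circ\Psi^{-1}$, because the generator commutes with the change of variables. Therefore $\partial_l(\tilde{\mathcal L}\tilde g)(L^l_u)=\partial_x(\mathcal Lg)(X^x_u)\,\sigma(X^x_u)$. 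Multiplying, the factors $\sigma(X^x_u)$ cancel and
\begin{equation*}
\partial_lL^l_u\,\partial_l(\tilde{\mathcal L}\tilde g)(L^l_u)=\sigma(x)\,\partial_xX^x_u\,\partial_x(\mathcal Lg)(X^x_u).
\end{equation*}
This is precisely $\sigma(x)$ times the integrand of Corollary \ref{cor:space_lip_W}, and \eqref{eq:space_derivative_lamperti_w} follows.

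The only step needing care is justifying the identity $\tilde{\mathcal L}\tilde g=(\mathcal L g)\circ\Psi^{-1}$ pointwise, together with the corresponding derivative. This is a direct Itô or chain-rule computation using $\mu=-\sigma'\circ\Psi^{-1}/2$ and the $C^3$ regularity of $g$ and $C^2$ regularity of $\sigma$. It should also be checked that the almost-everywhere qualifier is preserved, which holds because a diffeomorphism maps null sets to null sets. I do not expect a genuine obstacle: integrability is inherited from Corollary \ref{cor:space_lip_W} via \eqref{eq:hom_bound_T}, since the integrands are identical up to the constant factor $\sigma(x)$.
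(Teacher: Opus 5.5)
Your proposal is correct and takes the same route as the paper. The paper's proof simply applies the chain rule through $\tilde W_T(t,l)=W_T(t,\Psi^{-1}(l))$ to Theorem \ref{thm:lip_value} and Corollary \ref{cor:space_lip_W}, and reads off the time bounds from Proposition \ref{prop:time_lip}. Your explicit check that $\partial_l L^l_u\,\partial_l(\tilde{\mathcal L}\tilde g)(L^l_u)=\sigma(x)\,\partial_xX^x_u\,\partial_x(\mathcal Lg)(X^x_u)$ fills in the computation the paper leaves implicit.
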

\begin{proof}
    The first two claims hold by applying the chain rule to Theorem \ref{thm:lip_value} and Corollary \ref{cor:space_lip_W}. The latter holds by Proposition \ref{prop:time_lip}.
\end{proof}

The bounds on the time derivative in \eqref{eq:time_deriv_bound} and \eqref{eq:time_lamp_bound} are useful for the analysis below. However, we can in fact get an exact stochastic representation for the time derivative in the finite-horizon case as well, by a technique similar to \cite{jaillet1990variational}.
\begin{theorem}\label{thm:time_deriv_explicit}
    Let $T<\infty$. It holds that
    \begin{align}\label{eq:time_derivative_finite_horizon_org}
        \partial_t V_T(t,x)=&\mathbb{E}[c(t+\tau^*)]-c(t)\\&-\frac{1}{T-t}\mathbb{E}\left[c(t+\tau^*)\tau^*-\frac{1}{2}g'(X^x_{\tau^*})\partial_xX^x_{\tau^*}\left\{\int_0^{\tau^*} \tfrac{\sigma'(X^x_s)\sigma(X^x_s)}{\partial_xX^x_s}\,ds-\int_0^{\tau^*}\tfrac{\sigma(X^x_s)}{\partial_xX^x_s}\,dB_s\right\}\right],\nonumber
    \end{align}
    for almost every $(t,x)\in\mathcal{S}_T$ where $\tau^*:=\tau^*_T(t,x)$.
\end{theorem}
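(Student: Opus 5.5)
The plan is to adapt the time-scaling argument of Jaillet, Lamberton and Lapeyre \cite{jaillet1990variational}. We trade the dependence of $V_T$ on the starting time $t$ for a dependence on the size of the remaining horizon $s:=T-t$. We then rescale time so that the horizon becomes fixed and the scaling parameter moves into the diffusion coefficient.

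Fix $x\in(0,1)$ and $t<T$, and let $\lambda>0$ be close to $1$. Writing $T-\lambda s$ for the starting time, we substitute $\tau=\lambda\varsigma$. By Brownian scaling, $(X^x_{\lambda v})_{v\ge0}$ has the same law as the solution $X^{x,\lambda}$ of
\[
dX^{x,\lambda}_v=\sqrt{\lambda}\,\sigma(X^{x,\lambda}_v)\,dB_v,\qquad X^{x,\lambda}_0=x,
\]
on the same filtered space. Since $\mathbb{F}$ is generated by $B$, stopping times transform accordingly. This yields
\[
V_T(T-\lambda s,x)=\inf_{\varsigma\le s}\mathbb{E}\Bigl[\int_0^{\varsigma}\lambda\,c\bigl(T-\lambda s+\lambda v\bigr)\,dv+g\bigl(X^{x,\lambda}_\varsigma\bigr)\Bigr].
\]
In this form the admissible class no longer depends on $\lambda$. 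The function $\lambda\mapsto V_T(T-\lambda s,x)$ is Lipschitz by Proposition~\ref{prop:time_lip}, and its derivative at $\lambda=1$ equals $-s\,\partial_tV_T(t,x)$ wherever the latter exists.

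Next comes an envelope-type argument. The optimal time $\tau^*=\tau^*_T(t,x)$ from Proposition~\ref{prop:semicontinuity} is admissible for every $\lambda$. Comparing the functional at $\lambda=1\pm h$ with the optimum at $\lambda=1$ gives one-sided difference-quotient inequalities, exactly as in the proofs of Propositions~\ref{prop:space_lip} and~\ref{prop:time_lip}. At points of differentiability this forces $-s\,\partial_tV_T(t,x)$ to equal the $\lambda$-derivative at $\lambda=1$ of the functional evaluated at the frozen time $\tau^*$.

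For the running-cost part, we differentiate under the integral using the bound~\eqref{eq:cprime_helper} and dominated convergence. This gives
\[
\int_0^{\tau^*}\bigl[c(t+v)+c'(t+v)(v-s)\bigr]dv=\tau^*c(t+\tau^*)-s\bigl(c(t+\tau^*)-c(t)\bigr).
\]
For the terminal part, we differentiate the flow in $\lambda$. Formally, $Y_v:=\partial_\lambda X^{x,\lambda}_v|_{\lambda=1}$ solves the linear SDE
\[
dY_v=\sigma'(X^x_v)Y_v\,dB_v+\tfrac12\sigma(X^x_v)\,dB_v,\qquad Y_0=0.
\]
Variation of constants with the fundamental solution $\partial_xX^x$ from~\eqref{eq:sde_partialx} gives
\[
Y_v=\tfrac12\,\partial_xX^x_v\Bigl\{\int_0^v\tfrac{\sigma(X^x_r)}{\partial_xX^x_r}\,dB_r-\int_0^v\tfrac{\sigma'(X^x_r)\sigma(X^x_r)}{\partial_xX^x_r}\,dr\Bigr\}.
\]
We would verify this by applying It\^o's formula to $Y/\partial_xX^x$. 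Hence the terminal derivative is $\mathbb{E}[g'(X^x_{\tau^*})Y_{\tau^*}]$. Dividing the resulting identity for $-s\,\partial_tV_T$ by $-s=-(T-t)$ and rearranging produces exactly~\eqref{eq:time_derivative_finite_horizon_org}.

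The main obstacle is to justify $\partial_\lambda\mathbb{E}[g(X^{x,\lambda}_{\tau^*})]=\mathbb{E}[g'(X^x_{\tau^*})Y_{\tau^*}]$ at a bounded stopping time when $g'$ may be unbounded.
\begin{itemize}
\item First, I would localize. By~\eqref{eq:hom_bound_T}, $X^x$ stays in $[A^*(T),B^*(T)]\subset(0,1)$ up to $\tau^*$. An analogue of Lemma~\ref{lem:compare} for the $\lambda$-perturbation, proved via the Lamperti transform and Gr\"onwall, keeps $X^{x,\lambda}$ in a slightly larger compact set for $\lambda$ near $1$.
\item Second, with $\sigma\in C^2$ on that compact set, standard $L^2$ estimates for SDEs with locally Lipschitz coefficients, stopped at the exit time, give $L^2$ convergence of the difference quotients $(X^{x,\lambda}-X^x)/(\lambda-1)\to Y$ uniformly on $[0,T-t]$.
\item Third, the integral remainder formula together with dominated convergence gives the derivative, as in~\eqref{eq:integralremainder_semi}.
\end{itemize}
A secondary subtlety is making the scaling identity rigorous at the level of stopping times rather than only in law; I expect this to go through using the strong solution $X^{x,\lambda}$ and Proposition~\ref{prop:filtration}. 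Finally, \eqref{eq:extend_0} extends the conclusion to $t=0$.
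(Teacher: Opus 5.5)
Your route is correct, and the computations check out. For the cost term, the $\lambda$-derivative at $\lambda=1$ integrates by parts to $\tau^*c(t+\tau^*)-(T-t)\bigl(c(t+\tau^*)-c(t)\bigr)$. Variation of constants against $\partial_xX^x$ gives exactly the bracket in \eqref{eq:time_derivative_finite_horizon_org}. You and the paper share the Jaillet--Lamberton--Lapeyre idea: rescale time so the horizon no longer moves, then freeze $\tau^*$ in an envelope argument. The parametrizations differ, however. The paper works in Lamperti coordinates, normalizes the remaining horizon to $[0,1]$ and rescales space by $\sqrt{T-t}$. As a result $t$ enters only through the drift $\sqrt{T-t}\,\mu(\sqrt{T-t}\,\cdot)$, the cost and $\tilde g(\sqrt{T-t}\,\cdot)$, while the initial point $i=l/\sqrt{T-t}$ moves with $t$. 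The paper then needs pathwise random-ODE derivatives $\partial_tI$ and $\partial_iI$, a chain rule combining $\partial_tV^I$ and $\partial_iV^I$ (hence joint differentiability), and an It\^o rewriting of $\Xi$ to produce the $dB$ integral. Your single parameter $\lambda$ in the volatility, with $x$ fixed, avoids all of that. There is no chain rule, only the existence of $\partial_tV_T(t,x)$ is used, and the stochastic integral falls out of $\partial_\lambda(\sqrt{\lambda}\sigma)$. The price is that $\lambda$ sits in the martingale part. The flow derivative is therefore a mean-square one, and the path-independent localization of Lemma~\ref{lem:compare} is lost. Gr\"onwall applied to $\Psi(X^{x,\lambda})-\Psi(X^x)$ leaves the term $(\sqrt{\lambda}-1)B_v$. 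Hence, for every $\lambda\neq1$, the perturbed path leaves any compact before $\tau^*$ with positive probability, and there $g'$ may be unbounded under Assumption~\ref{asmp:bounded:c}. Your first bullet therefore cannot hold as stated, but the repair is routine. On the confinement event, use the stopped $L^2$ estimate. On its complement, bound the difference quotient by $2\|g\|_\infty/|\lambda-1|$ and note that $\mathbb{P}(\text{exit before }\tau^*)\le C\varepsilon^{-2}|\lambda-1|^2$ by the same stopped estimate; uniform integrability then yields the derivative. The paper's drift-only parametrization is exactly what lets it localize deterministically instead.
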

\begin{proof}
    Take any $(t,l)\in\tilde{\mathcal{S}}_T$. Let $\mathcal{T}^{(T-t)}_{[0,1]}$ denote the set of stopping times with respect to the filtration $(\mathcal{F}_{s(T-t)})_{s\in[0,1]}$. Then
    \begin{align}
        \tilde{V}_T(t,l)&=\inf_{\tau\leq T-t}\mathbb{E}\left[\int_0^\tau c(t+u)\,du + \tilde{g}(L_\tau^{l})\right]\label{eq:first_OSP_I}\\
        &=\inf_{\theta\in\mathcal{T}^{(T-t)}_{[0,1]}}\mathbb{E}\left[\int_0^{\theta(T-t)} c(t+u)\,du + \tilde{g}(L_{\theta(T-t)}^{l})\right]\\
        &=\inf_{\theta\in\mathcal{T}^{(T-t)}_{[0,1]}}\mathbb{E}\left[\int_0^\theta (T-t)c(t+(T-t)u)\,du + \tilde{g}(\sqrt{T-t}Y_{\theta}^{t,l/\sqrt{T-t}})\right]\label{eq:second_OSP_I}
    \end{align}
    and $Y_{s}^{t,i}:=L_{s(T-t)}^{i\sqrt{T-t}}/\sqrt{T-t}$, which solves
    \begin{equation*}
        Y_{s}^{t,i}=i+\int_0^s\sqrt{T-t}\mu(\sqrt{T-t}Y_{u}^{t,i})\,du+W^t_s
    \end{equation*}
    where $W^t_s:=B_{s(T-t)}/\sqrt{T-t}$, so $(W^t_s)_{s\in[0,1]}$ is a Brownian motion.
    There is a one-to-one correspondence between the optimal stopping times of \eqref{eq:first_OSP_I} and \eqref{eq:second_OSP_I}.

    Consider a new Wiener space $(\Omega,\mathcal{G},\mathbb{G},\mathbb{Q})$ carrying a Brownian motion $W=(W_t)_{t\geq0}$, and define the process $I^{t,i}$ as the unique strong solution to 
    \begin{equation*}
        I_{s}^{t,i}=i+\int_0^s\sqrt{T-t}\mu(\sqrt{T-t}I_{u}^{t,i})\,du+W_s.
    \end{equation*}
    Letting $\mathcal{T}_{[0,1]}$ be the stopping times with respect to $\mathbb{G}$ restricted to $[0,1]$, we can consider the auxiliary optimal stopping problem
    \begin{equation}\label{eq:osp_I}
        V^I(t,i):=\inf_{\theta\in\mathcal{T}_{[0,1]}}\mathbb{E}^{\mathbb{Q}}\left[\int_0^\theta (T-t)c(t+(T-t)u)\,du + \tilde{g}(\sqrt{T-t}I_{\theta}^{t,i})\right].
    \end{equation}
    By a similar argument to Proposition \ref{prop:semicontinuity} there exists a smallest optimal stopping time $\theta^*$ that achieves the infimum in \eqref{eq:osp_I}, which is a first hitting time. Note that distributional equivalence yields $V^I(t,i)=\tilde{V}_T(t,i\sqrt{T-t})$, and so the optimal stopping time is given by
    \begin{equation*}
        \theta^*(t,i)=\inf\{s\in[0,1):(t+(T-t)s,\sqrt{T-t}I_s^{t,i})\in\tilde{\mathcal{D}}_T\}\wedge1.
    \end{equation*}
    Thus again by the distributional equivalence of paths, we may conclude that $(I^{t,i},\theta^*)$ is equal in law to $(Y^{t,i},\tilde{\tau}^*/(T-t))$ for $\tilde{\tau}^*:=\tilde{\tau}^*_T(t,i\sqrt{T-t})$. Furthermore, by the argument around \eqref{eq:hom_bound_T}, we have 
    \begin{equation*}
        I_s^{t,i}\in\frac{\Psi([A^*(T),B^*(T)])}{\sqrt{T-t}}\subset\frac{\Psi((0,1))}{\sqrt{T-t}}
    \end{equation*}
    for all $s\leq \theta^*(t,i)$ when $i$ is within the interval and similarly in the Lamperti and original coordinates. In the following analysis we restrict to such an interval, which ensures boundedness by Assumption \ref{asmp:sigma_g}.
    
    Differentiating the flow $t\mapsto I_s^{t,i}$ as in \eqref{eq:partiall_explicit} away from $T$, we get
    \begin{align*}
        \partial_tI^{t,i}_s=\int_0^s&\quad\overbrace{-\frac{1}{2\sqrt{T-t}}\mu(\sqrt{T-t}I^{t,i}_u)-\frac{1}{2}\mu'(\sqrt{T-t}I^{t,i}_u)I^{t,i}_u}^{\eta(t,I^{t,i}_u)}\\
        &+(T-t)\mu'(\sqrt{T-t}I^{t,i}_u)\partial_tI^{t,i}_u \,du.
    \end{align*}
    Defining $Z(s)=\partial_tI^{t,i}_s$, we get the equivalent differential equation $Z(0)=0$ and
    \begin{equation*}
        Z'(s)=\eta(t,I^{t,i}_s)+(T-t)\mu'(\sqrt{T-t}I^{t,i}_s)Z(s)
    \end{equation*}
    i.e.
    \begin{equation*}
        \partial_tI^{t,i}_s=\int_0^s\eta(t,I^{t,i}_u)e^{\int_u^s(T-t)\mu'(\sqrt{T-t}I^{t,i}_v)dv}\,du.
    \end{equation*}
    Similarly, for the flow $i\mapsto I^{t,i}_s$ we get
    \begin{equation*}
        \partial_i I^{t,i}_s=\exp\left\{\int_0^s(T-t)\mu'(\sqrt{T-t}I^{t,i}_v)dv\right\}.
    \end{equation*}
    The function $(t,i)\mapsto V^I(t,i)=\tilde{V}_T(t,i\sqrt{T-t})$ is locally Lipschitz by an argument similar to Theorem \ref{thm:lip_value}. Let $(t,i)$ be a point of differentiability. For fixed $\theta$, let $J(t,i;\theta)$ denote the objective in \eqref{eq:osp_I}. If $\theta^*$ is optimal at $(t,i)$, then localization to a compact interval strictly containing the homogeneous bound and dominated convergence show that $J(\cdot,\cdot;\theta^*)$ is differentiable at $(t,i)$. For $h>0$,
    \[
        \frac{V^I(t+h,i)-V^I(t,i)}{h}
        \leq
        \frac{J(t+h,i;\theta^*)-J(t,i;\theta^*)}{h},
    \]
    while division by $h<0$ reverses the inequality. At a differentiability point the two one-sided limits agree, so $\partial_tV^I(t,i)=\partial_tJ(t,i;\theta^*)$. The same argument applies to the spatial derivative. Applying these identities and differentiating the objective in \eqref{eq:osp_I}, we get
    \begin{equation*}
        \begin{aligned}
            \partial_tV^I(t,i)=\mathbb{E}^{\mathbb{Q}}\Big[&\int_0^{\theta^*}(T-t)c'(t+(T-t)u)(1-u)-c(t+(T-t)u) \,du\\
            &+\tilde{g}'(\sqrt{T-t}I^{t,i}_{\theta^*})\left(\sqrt{T-t}\partial_tI^{t,i}_{\theta^*}-\frac{I^{t,i}_{\theta^*}}{2\sqrt{T-t}}\right)\Big]
        \end{aligned}    
    \end{equation*}
    where $\theta^*$ is the optimal stopping time for initial data $(t,i)$, and we note that the integral term can be simplified using integration by parts,
    \begin{equation*}
        \int_0^s(T-t)c'(t+(T-t)u)(1-u)\,du=\int_0^{s(T-t)}c'(t+u)\,du-c(t+s(T-t))s+\int_0^sc(t+(T-t)u)\,du.
    \end{equation*}
    Similarly, for the spatial derivative of $V^I$ we get 
    \begin{equation*}
        \partial_iV^I(t,i)=\mathbb{E}^{\mathbb{Q}}\left[\tilde{g}'(\sqrt{T-t}I^{t,i}_{\theta^*})\left(\sqrt{T-t}\partial_iI^{t,i}_{\theta^*}\right)\right].
    \end{equation*}
    To revert the transformation,
    we first note that by the chain rule away from $T$, we have
    \begin{align*}
        &\partial_t\tilde V_T(t,l)=\partial_tV^I\left(t,\frac{l}{\sqrt{T-t}}\right)+\frac{l}{2(T-t)^{3/2}}\partial_iV^I\left(t,\frac{l}{\sqrt{T-t}}\right)\\
        =&\mathbb{E}^{\mathbb{Q}}\Bigg[\tilde g'(\sqrt{T-t}I^{t,i}_{\theta^*})\left(\sqrt{T-t}\partial_tI^{t,i}_{\theta^*}-\frac{I^{t,i}_{\theta^*}}{2\sqrt{T-t}}+\frac{l}{2(T-t)}\partial_i I^{t,i}_{\theta^*}\right)-c(t+\theta^*(T-t))\theta^*\\&\qquad+\int_0^{\theta^*(T-t)}c'(t+u)\,du\Bigg],
    \end{align*}
    where $i=l/\sqrt{T-t}$.
    Furthermore, recalling \eqref{eq:partiall_explicit}, we note that for fixed $t$ the quantity $\partial_i I^{t,l/\sqrt{T-t}}_{s}$ has the same law under $\mathbb{Q}$ as $\partial_l L^{l}_{s(T-t)}$ has under $\mathbb{P}$. Similarly, $\partial_t I^{t,l/\sqrt{T-t}}_s$ has the same law under $\mathbb{Q}$ as
    \begin{equation*}
        Z^l_{s(T-t)}:=-\frac{1}{2(T-t)^{3/2}}\int_0^{s(T-t)}\frac{\partial_lL^{l}_{s(T-t)}}{\partial_lL^{l}_{u}}\left(\mu(L^{l}_{u})+L^l_{u}\mu'(L^{l}_{u})\right)\,du
    \end{equation*}
    has under $\mathbb{P}$. This can indeed be extended to path laws, jointly with the optimal stopping time. Namely, $(I^{t,i},\partial_iI^{t,i},\partial_tI^{t,i},\theta^*)$ under $\mathbb{Q}$ has the same law as 
    \begin{equation*}
        \left(\frac{L^l_{\cdot(T-t)}}{\sqrt{T-t}},\partial_l L^l_{\cdot(T-t)},Z^l_{\cdot(T-t)},\frac{\tilde{\tau}^*}{T-t}\right)
    \end{equation*}
    has under $\mathbb{P}$.
    Thus, we can express $\partial_t\tilde V_T(t,l)$ by
    \begin{equation*}
        \partial_t \tilde V_T(t,l)=\mathbb{E}[c(t+\tau^*)]-c(t)-\frac{1}{T-t}\mathbb{E}\left[c(t+\tau^*)\tau^*+\frac{1}{2}\tilde g'(L^l_{\tau^*})\partial_lL^l_{\tau^*}\Xi_{\tau^*}\right],
    \end{equation*}
    where $\Xi$ is given by 
    \begin{equation*}
        \Xi_t = \frac{L^l_t}{\partial_lL^l_t}-l+\int_0^t\frac{\mu(L^l_u)+L^l_u\mu'(L^l_u)}{\partial_l L^l_u}\,du=2\int_0^t\frac{\mu(L^l_u)}{\partial_lL^l_u}\,du+\int_0^t \frac{1}{\partial_lL^l_u}dB_u,
    \end{equation*}
    and we again note that the preceding inclusion implies bounded coefficients for $\Xi$ on $s\leq \tilde{\tau}^*$.
    Rewriting in terms of $\partial_t V_T(t,x)=\partial_t \tilde{V}_T(t,\Psi(x))$, we get the claim.
\end{proof}
The probabilistic representation of the temporal derivative is nice but does not play a role in the following analysis of the optimal stopping problem, since the bound in Corollary \ref{cor:W_T_lips} is easier to control. However, it does enter into the probabilistic representation of the second spatial derivative as seen in Corollary \ref{cor:main_discont_dxx}.

Before going on to the regularity of the optimal stopping boundary, we conclude this section with a classical result on the regularity of the value function.
\begin{proposition}\label{prop:PDE}
The value function $V_T$ satisfies
\begin{align}
    (\partial_t+\mathcal{L})V_T(t,x)=-c(t),\qquad &(t,x)\in\mathcal{C}_T,\label{eq:PDE}\\
    V_T(t,x)=g(x),\qquad &(t,x)\in\mathcal{D}_T,\label{eq:inst_stop}
\end{align}
for any $T\leq\infty$.
Specifically, the functions $\partial_tV_T, \partial_xV_T$, and $\partial_{xx}V_T$ are jointly continuous in $(t,x)\in\mathcal{C}_T$.
\end{proposition}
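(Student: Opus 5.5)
We argue along the classical route: the value function is continuous, it solves a Dirichlet problem locally inside the open continuation set, and it is trivially equal to $g$ on $\mathcal{D}_T$.

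The identity \eqref{eq:inst_stop} is immediate from the definition \eqref{eq:stop_and_cont_sets} of $\mathcal{D}_T$ (and, for the terminal slice when $T<\infty$, from the convention $V_T=g$ there). For \eqref{eq:PDE} we work locally.

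Fix $(t_0,x_0)\in\mathcal{C}_T$. By Proposition \ref{prop:semicontinuity} (and its infinite-horizon extension via Proposition \ref{prop:finite_to_infinite_1}), $\mathcal{C}_T$ is open. We therefore choose a small open rectangle $R=(t_1,t_2)\times(x_1,x_2)$ with $\overline R\subset\mathcal{C}_T$, $\overline{R}\subset[0,T)\times(0,1)$, and $t_0\in(t_1,t_2)$. On $\overline R$ the operator $\partial_t+\frac12\sigma^2(x)\partial_{xx}$ is uniformly parabolic: $\sigma$ is continuous and bounded away from zero on $[x_1,x_2]$, and it is $C^2$ by Assumption \ref{asmp:sigma_g}, so in particular Hölder. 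The source term $-c(t)$ is $C^1$. By Theorem \ref{thm:lip_value}, $V_T$ is continuous on $\partial R$.

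Classical theory for the Cauchy--Dirichlet problem on a rectangle (e.g.\ Friedman, or Lieberman for nonsmooth corners, used through interior Schauder estimates) then gives a unique solution
\[
u\in C^{1,2}(R)\cap C(\overline R)
\]
of $(\partial_t+\mathcal{L})u=-c$ in $R$ with $u=V_T$ on the parabolic boundary. Since the problem runs backward in time, this boundary is the right edge $\{t_2\}\times[x_1,x_2]$ together with the lateral sides $[t_1,t_2]\times\{x_1,x_2\}$.

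To identify $u$ with $V_T$, start from $(t,x)\in R$ and let $\tau_R$ be the exit time of $(t+s,X^x_s)$ from $R$. This time satisfies $\tau_R\le t_2-t$ and $\tau_R\le\tau^*_T(t,x)$, because the process is still in $\mathcal{C}_T$ before $\tau_R$. Applying Itô's formula to $u$ up to $\tau_R$, after the usual localization on slightly smaller rectangles (the coefficients are bounded there, so the stochastic integral is a true martingale), gives
\[
u(t,x)=\mathbb{E}\left[\int_0^{\tau_R}c(t+s)\,ds+V_T(t+\tau_R,X^x_{\tau_R})\right].
\]
On the other hand, the strong Markov property and the optimality of $\tau^*_T$ (Proposition \ref{prop:semicontinuity}, Theorem \ref{thm:main_structure} and Proposition \ref{prop:finite_to_infinite_1}) give the martingale property of
\[
\int_0^{s\wedge\tau^*}c(t+u)\,du+V_T\bigl(t+s\wedge\tau^*,X^x_{s\wedge\tau^*}\bigr),
\]
and this applies in particular when stopped at $\tau_R\le\tau^*$. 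Hence the right-hand side above equals $V_T(t,x)$, so $u=V_T$ on $R$. Thus $V_T\in C^{1,2}(R)$ and satisfies \eqref{eq:PDE}; joint continuity of $\partial_tV_T$, $\partial_xV_T$ and $\partial_{xx}V_T$ on $\mathcal{C}_T$ follows since $R$ was arbitrary.

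For $T=\infty$ the same argument applies verbatim, since $R$ is bounded and only the local martingale property up to $\tau^*_\infty$ is used; this property holds by Proposition \ref{prop:finite_to_infinite_1}. One caveat is that Theorem \ref{thm:main_structure} is stated but proved later, so the martingale property should instead be taken from the general optimal stopping theory cited in Proposition \ref{prop:semicontinuity} (\cite[Corollary 2.9]{peskir2006optimal}), which does not depend on the boundary structure.

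The main obstacle is the justification of the martingale (dynamic programming) identity for $V_T$ stopped at $\tau_R$ without circular use of later results. We would derive it from the Snell envelope characterization: $V_T$ is continuous by Theorem \ref{thm:lip_value}, and the value process is a martingale up to the first entry into $\mathcal{D}_T$. Only routine points remain after that. Existence for the rectangle Dirichlet problem with merely continuous boundary data is classical, because the corners satisfy the exterior cone condition. The integrability of $c$ is trivial on the bounded time interval $[t_1,t_2]$.
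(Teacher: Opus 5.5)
Your proposal is correct and follows the paper's route. The paper also combines openness of $\mathcal{C}_T$ (Proposition~\ref{prop:semicontinuity}) and continuity of $V_T$ (Theorem~\ref{thm:lip_value}) with the standard local Dirichlet-problem and value-process martingale argument, which it simply cites from \cite[Theorem 2.7.7]{karatzas1998methods}, and reads off \eqref{eq:inst_stop} from the definition of $\mathcal{D}_T$. You spell out what that citation contains and rightly avoid Theorem~\ref{thm:main_structure}; the one case you gloss over is $t_0=0$, where no rectangle with $t_0\in(t_1,t_2)$ exists, and it is handled by the time shift \eqref{eq:extend_0} or by regularity of the backward Cauchy--Dirichlet solution up to its initial face.
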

\begin{proof}
Since $\mathcal{C}_T$ is open by Proposition \ref{prop:semicontinuity} and $(t,x)\mapsto V_T(t,x)$ is continuous by Theorem \ref{thm:lip_value}, the PDE \eqref{eq:PDE} follows from standard Markovian optimal stopping arguments applied locally; see for example the methodology in \cite[Theorem 2.7.7]{karatzas1998methods}. The condition \eqref{eq:inst_stop} follows from the definition of $\mathcal{D}_T$.
\end{proof}

Equipped with Proposition \ref{prop:PDE} we can specify the points of differentiability of $V_T$, namely, $V_T$ is differentiable away from the boundary. This indeed also yields the first part of Corollary \ref{cor:main_discont_dxx}.
\begin{corollary}\label{cor:main_discont_dxx_pt1}
    The second spatial derivative of the value function admits the representation
    \begin{equation*}
        \partial_{xx} V_T(t,x)=\begin{cases}
            -2\frac{c(t)+\partial_tV_T(t,x)}{\sigma^2(x)} & \text{for }(t,x)\in\mathcal{C}_T\\
            g''(x)&\text{for }(t,x)\in(\mathcal{D}_T)^\circ.
        \end{cases}
    \end{equation*}
\end{corollary}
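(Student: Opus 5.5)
The continuation-region case follows directly from Proposition \ref{prop:PDE}. On the open set $\mathcal{C}_T$ the value function is $C^{1,2}$, with $\partial_t V_T$, $\partial_x V_T$ and $\partial_{xx}V_T$ jointly continuous. It satisfies $\partial_t V_T+\tfrac12\sigma^2(x)\partial_{xx}V_T=-c(t)$ there. Since $\mathcal{C}_T\subseteq\mathcal{S}_T$ and $\sigma(x)>0$ for $x\in(0,1)$ by Assumption \ref{asmp:state_dynamics}, we can divide by $\tfrac12\sigma^2(x)$. This gives
\[
\partial_{xx}V_T(t,x)=-2\,\frac{c(t)+\partial_tV_T(t,x)}{\sigma^2(x)}
\]
pointwise on $\mathcal{C}_T$. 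For $T=\infty$ the same reasoning applies, because Proposition \ref{prop:PDE} is stated for all $T\le\infty$.

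For the interior of the stopping region, fix $(t,x)\in(\mathcal{D}_T)^\circ$ and choose an open rectangle $R=(t-\delta,t+\delta)\times(x-\delta,x+\delta)$ contained in $\mathcal{D}_T$. Near $t=0$, use a half-open rectangle, or use the extension argument around \eqref{eq:extend_0}. On $R$ we have $V_T=g$ by \eqref{eq:inst_stop}. Hence $x\mapsto V_T(s,x)$ coincides with $g$ on $(x-\delta,x+\delta)$ for each $s$. Since $g\in C^2((0,1))$ by Assumption \ref{asmp:base_penalty:g}, $V_T$ is twice differentiable in $x$ there with $\partial_{xx}V_T=g''$. In addition, $\partial_tV_T=0$ on $R$.

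No step is a real obstacle; the only care needed is that differentiability is local. In $\mathcal{C}_T$ this is supplied by the openness of $\mathcal{C}_T$ (Proposition \ref{prop:semicontinuity}), and in $(\mathcal{D}_T)^\circ$ it holds by definition of the interior. Points of $\partial\mathcal{D}_T$ are excluded from the statement, so nothing needs to be said about them here.
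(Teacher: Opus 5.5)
Your proposal is correct and matches the paper's proof: in $\mathcal{C}_T$ you rearrange the PDE of Proposition \ref{prop:PDE}, dividing by $\tfrac12\sigma^2(x)>0$. In $(\mathcal{D}_T)^\circ$ you use that $V_T=g$ on a neighbourhood, with $g\in C^2((0,1))$, which is exactly the argument the paper calls obvious.
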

\begin{proof}
    The expression for $\partial_{xx} V_T(t,x)$ in $(\mathcal{D}_T)^\circ$ is obvious.
    In the continuation region the value function $V_T$ obeys the partial differential equation \eqref{eq:PDE}, and thus the expression for $\partial_{xx} V_T$ in $\mathcal{C}_T$ can easily be obtained.
\end{proof}

\subsection{Properties of the optimal stopping boundary}

Next, we investigate the structure of the continuation set. We first show that the continuation set, and thus the optimal stopping boundary, can be described by at most two functions, partially showing Theorem \ref{thm:main_structure}.
\begin{proposition}\label{prop:two_boundaries}
    Let $T\leq\infty$. There exists a lower semicontinuous function $b_T:[0,T)\to[1/2,1]$ such that the continuation region is given by
    \begin{equation}\label{eq:continuation_set_b}
        \mathcal{C}_T=\{(t,x)\in\mathcal{S}_T : 1-b_T(t)<x<b_T(t)\}.
    \end{equation}
    Furthermore, $b_T(t)\geq\gamma(t)$ for any $t\in[0,T)$.
\end{proposition}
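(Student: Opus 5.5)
We plan to use concavity, symmetry, and the inclusion $\mathcal{U}\subseteq\mathcal{C}_T$, working throughout with the Lagrange function $W_T=V_T-g\le 0$, so that $\mathcal{C}_T=\{W_T<0\}$.

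\textbf{Step 1 (symmetry).} By Assumption \ref{asmp:sym}, $\sigma(1-x)=\sigma(x)$ and $g(1-x)=g(x)$. Hence $1-X^x$ solves \eqref{eq:general_X_SDE} driven by $-B$, which is again a Brownian motion generating the same filtration. Pathwise uniqueness then gives that $1-X^{x}$ has the law of $X^{1-x}$ jointly with the filtration, so $V_T(t,x)=V_T(t,1-x)$ and every $t$-slice $\mathcal{C}_T(t):=\{x:(t,x)\in\mathcal{C}_T\}$ is symmetric about $1/2$.

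\textbf{Step 2 (interval structure).} Fix $t<T$. The slice $\mathcal{C}_T(t)$ is open by Proposition \ref{prop:semicontinuity}, and it contains the nonempty interval $\mathcal{U}(t)=(1-\gamma(t),\gamma(t))\ni 1/2$ by Proposition \ref{prop:U_subset_C} and Lemma \ref{lem:Lg0}. We must show that $\mathcal{C}_T(t)$ is connected. Suppose not, so there is some $y\in(\gamma(t),1)$ with $(t,y)\in\mathcal{D}_T$ and some $z>y$ with $(t,z)\in\mathcal{C}_T$; the case below $1/2$ follows by symmetry.

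Our first attempt at a contradiction uses concavity. By Lemma \ref{lem:concave_V}, $x\mapsto V_T(t,x)$ is concave, and $V_T\le g$ with equality at $y$. This alone does not rule out the gap, since the gap of $g-V_T$ can reopen.

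So instead we plan a direct probabilistic argument with $W_T$. Start from $(t,z)$ and use the optimal time $\tau^*=\tau^*_T(t,z)$, which never exits through the stopping point $y$ below before stopping. Since $(t,z)\notin\mathcal{U}$ and $\mathcal{L}g$ is increasing on $(x_0,1)$, we have $c(t+u)+\mathcal{L}g(x)\ge c(t+u)+\mathcal{L}g(y)$ for $x\ge y$. The difficulty is that $c$ varies in time, so $\mathcal{U}(t+u)$ moves.

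The cleaner route is a comparison. Let $\sigma_y$ be the first time $X^z$ hits $y$. On $[0,\tau^*\wedge\sigma_y]$, the process stays in $(y,1)$. We then want the integrand $c(t+u)+\mathcal{L}g(X_u)$ to be nonnegative there, which would force $W_T(t,z)\ge 0$ and contradict $(t,z)\in\mathcal{C}_T$. This needs $y\ge\gamma(t+u)$ for the relevant $u$, which is not automatic.

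We therefore expect to argue at the level of slices, defining
\[
b_T(t):=\sup\{x\ge 1/2:(t,x')\in\mathcal{C}_T\ \text{for all}\ x'\in[1/2,x)\}.
\]
We then show that $\mathcal{C}_T$ coincides with $\{1-b_T<x<b_T\}$ via a continuation-value argument. A point $(t,z)$ with $z>b_T(t)$ lies in $\mathcal{C}_T$ only if its continuation component $\mathcal{K}$ does not meet $\{x=1/2\}$. On such a component, along the slice-wise lower edge and its time boundaries, $W_T=0$. By the maximum principle for the PDE $(\partial_t+\mathcal{L})W_T=-(c+\mathcal{L}g)$ on $\mathcal{K}$, obtained from Proposition \ref{prop:PDE}, optional sampling at the exit time of $\mathcal{K}$ gives $W_T(t,z)=\mathbb{E}[\int(c+\mathcal{L}g)]$, and this must be negative. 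Hence $\mathcal{K}$ must intersect $\mathcal{U}$.

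This is the main obstacle: showing that a component of $\mathcal{C}_T$ that meets $\mathcal{U}$ but is disconnected from the centre in its own time slice is impossible. Our first attempt uses the spatial concavity of $V_T$ combined with the fact that $g-V_T$ vanishes at $y$. Write $W_T(t,\cdot)=V_T(t,\cdot)-g$. On $(x_0,1)$ the derivative $g''$ is controlled by the monotonicity of $\mathcal{L}g$, and together with Proposition \ref{prop:space_lip} we aim to show that $\partial_x W_T(t,\cdot)\ge 0$ on $(x_0,1)$. The tool for this is Corollary \ref{cor:space_lip_W}: we have $\partial_xX^x>0$ and $\partial_x\mathcal{L}g>0$ on $(x_0,1)$, so the integrand is positive whenever $X$ stays above $x_0$. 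Excursions below $x_0$ then need control, which we would handle through symmetry, pairing $x$ with $1-x$.

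Monotonicity of $W_T(t,\cdot)$ on $[1/2,1)$, with $W_T\le0$, then gives that $\{W_T(t,\cdot)<0\}\cap[1/2,1)$ is an interval $[1/2,b_T(t))$.

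\textbf{Step 3 (lower semicontinuity and the bound $\gamma\le b_T$).} Lower semicontinuity of $b_T$ follows from the openness of $\mathcal{C}_T$: if $b_T(t)>a$, then $\{t\}\times[1/2,a]$ is a compact subset of $\mathcal{C}_T$, so a neighbourhood of it also lies in $\mathcal{C}_T$. Finally, $b_T\ge\gamma$ is immediate from $\mathcal{U}\subseteq\mathcal{C}_T$. For $T=\infty$, we pass to the limit using Proposition \ref{prop:finite_to_infinite_1}.
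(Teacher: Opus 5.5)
\textbf{Verdict: genuine gap, though fixable.} Your final route is the paper's route, which works in Lamperti coordinates: show $\partial_xW_T(t,\cdot)\ge0$ on $(1/2,1)$ from the representation in Corollary \ref{cor:space_lip_W}, then read the slice structure off $W_T\le0$. Steps 1 and 3 are fine. However, the step that carries the whole proof is only announced, not done.

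Pairing the starting points $x$ and $1-x$ gives only $\partial_xW_T(t,1-x)=-\partial_xW_T(t,x)$, which says nothing about the sign for $x>1/2$. Meanwhile the integrand $\partial_xX^x_u\,\partial_x(\mathcal{L}g)(X^x_u)$ is genuinely negative whenever $X^x_u<1/2$. You also need $x_0=1/2$, which follows from Assumption \ref{asmp:sym} together with the strict unimodality in Assumption \ref{asmp:Lg:unimodal}. The detours in Step 2 do not fill this in. The maximum-principle argument only shows that every space-time component of $\mathcal{C}_T$ meets $\mathcal{U}$. The claim that $(t,z)$ with $z>b_T(t)$ lies in $\mathcal{C}_T$ only if its component misses $\{x=1/2\}$ is false, since a component can reach the axis at other times.

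The missing idea is to cut the path at the symmetry axis. For $x>1/2$, let $\alpha:=\inf\{u\ge0:X^x_u\le1/2\}$ and split the integral in Corollary \ref{cor:space_lip_W} at $\tau^*\wedge\alpha$.
\begin{enumerate}
\item $\tau^*$ is a first hitting time of $\mathcal{D}_T$. By \eqref{eq:sde_partialx}, for $u\ge\alpha$ we have $\partial_xX^x_u=\partial_xX^x_\alpha\exp\bigl(\int_\alpha^u\sigma'(X^x_v)\,dB_v-\tfrac12\int_\alpha^u\sigma'(X^x_v)^2\,dv\bigr)$. The strong Markov property therefore turns the post-$\alpha$ part into
\[
\mathbb{E}\bigl[\mathbf{1}_{\{\tau^*>\alpha\}}\,\partial_xX^x_\alpha\,\partial_xW_T(t+\alpha,1/2)\bigr].
\]
\item $\partial_xW_T(s,1/2)=0$ for every $s<T$. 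Indeed, $W_T(s,\cdot)$ is even about $1/2$ by your Step 1. It is differentiable there because $(s,1/2)\in\mathcal{U}\subseteq\mathcal{C}_T$, where $V_T$ is $C^{1,2}$ by Proposition \ref{prop:PDE}. So the post-$\alpha$ part vanishes.
\item Equivalently, reflect only the post-$\alpha$ path via $X\mapsto1-X$, $B\mapsto-B$. This leaves $\tau^*$ and $\partial_xX^x_u/\partial_xX^x_\alpha$ invariant but flips the sign of $\partial_x(\mathcal{L}g)$, so that contribution has zero conditional mean. This is the correct version of your ``pairing''.
\item What remains integrates a nonnegative integrand over $[0,\tau^*\wedge\alpha)$, where $X^x>1/2$. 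Hence $\partial_xW_T(t,\cdot)\ge0$ a.e.\ on $(1/2,1)$.
\item Local Lipschitz continuity then gives monotonicity. Consequently $W_T(t,y)=0$ and $y'>y\ge1/2$ force $W_T(t,y')=0$.
\end{enumerate}
With this step supplied, your Step 3 and the limit $T\to\infty$ via Proposition \ref{prop:finite_to_infinite_1} complete the argument, as in the paper.
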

\begin{proof}
    First, let $T<\infty$. We prove the equivalent statement in the Lamperti coordinates.
    Note first that $\tilde{\mathcal{L}}\tilde{g}(l)=\mathcal{L}g(\Psi^{-1}(l))$, and hence, by Assumption \ref{asmp:Lg:unimodal}, Assumption \ref{asmp:dxLg}, and Assumption \ref{asmp:sym}, it holds that $\partial_l\tilde{\mathcal{L}}\tilde{g}(l)\geq0$ for $l\geq0$ and $\partial_l\tilde{\mathcal{L}}\tilde{g}(l)\leq0$ for $l\leq0$. Furthermore, by symmetry we must have $\partial_l\tilde{W}_T(t,0)=0$ and the derivative exists since $(t,0)\in\tilde{\mathcal{C}}_T$. Consider initial data $(t,l)$ with $l>0$ and any fixed $t\in[0,T)$. Let 
    \begin{equation*}
        \alpha(l):=\inf\{u\geq0 : L^l_u\leq 0\},
    \end{equation*}
    and $\tilde{\tau}^*:=\tilde{\tau}^*_T(t,l)$.
    Rewriting \eqref{eq:space_derivative_lamperti_w} by the tower property, we get
    \begin{align*}
        &\partial_l \tilde{W}_T(t,l)\\
        =&\mathbb{E}\left[\int_{0}^{\tilde\tau^*\wedge\alpha(l)}\exp\left\{\int_0^u\mu'(L^l_v)\,dv\right\}\partial_l(\tilde{\mathcal{L}}\tilde{g})(L^l_{u})\,du\right]\\
        &+\mathbb{E}\left[\ind{\tilde\tau^*>\alpha(l)}\exp\left\{\int_0^{\alpha(l)}\mu'(L^l_v)\,dv\right\}\mathbb{E}\left[\int_{\alpha(l)}^{\tilde\tau^*}\exp\left\{\int_{\alpha(l)}^u\mu'(L^l_v)\,dv\right\}\partial_l(\tilde{\mathcal{L}}\tilde{g})(L^l_{u})\,du\Bigg \vert\mathcal{F}^{L^l}_{\alpha(l)} \right]\right]
    \end{align*}
    and by the strong Markov property, we get 
    \begin{align*}
        \partial_l \tilde{W}_T(t,l)=&\mathbb{E}\left[\int_{0}^{\tilde\tau^*\wedge\alpha(l)}\partial_lL^l_u\partial_l(\tilde{\mathcal{L}}\tilde{g})(L^l_{u})\,du\right]+\mathbb{E}\left[\ind{\tilde\tau^*>\alpha(l)}\partial_lL^l_{\alpha(l)}\partial_l\tilde{W}_T(t+\alpha(l),L^l_{\alpha(l)})\right],
    \end{align*}
    at points of differentiability.
    But now $L^l_{\alpha(l)}=0$ on $\{\tilde\tau^*>\alpha(l)\}$, thus, by the initial observations that $\partial_l\tilde{W}_T(t,0)=0$ and $\partial_l(\tilde{\mathcal{L}}\tilde{g})(l)\geq0$, we have 
    \begin{equation*}
        \partial_l \tilde{W}_T(t,l)=\mathbb{E}\left[\int_{0}^{\tilde\tau^*\wedge\alpha(l)}\partial_lL^l_u\partial_l(\tilde{\mathcal{L}}\tilde{g})(L^l_{u})\,du\right]\geq0
    \end{equation*}
    for every $t\in[0,T)$ and almost all $l\geq0$ such that $(t,l)\in\tilde{\mathcal{S}}_T$.
    Since $l\mapsto\tilde{W}_T(t,l)$ is locally Lipschitz, and hence absolutely continuous, we get
    \begin{equation*}
        \tilde{W}_T(t,l_2)- \tilde{W}_T(t,l_1)=\int_{l_1}^{l_2}\partial_l \tilde{W}_T(t,l)\, dl\geq0
    \end{equation*}
    for all $0\leq l_1\leq l_2<\Psi(1)$.
    
    If $(t,l)\in \tilde{\mathcal{D}}_T$ for $l>0$, then for any $\varepsilon>0$ such that $(t,l+\varepsilon)\in\tilde{\mathcal{S}}_T$ it holds that 
    \begin{equation*}
        0\geq\tilde{W}_T(t,l+\varepsilon)\geq \tilde{W}_T(t,l)=0.
    \end{equation*}
    Hence $(t,l+\varepsilon)\in\tilde{\mathcal{D}}_T$. By a completely analogous argument on $l\leq0$ we can conclude that there exists a function $\tilde{b}_T:[0,T)\to [0,\Psi(1)]$ such that 
    \begin{equation*}
        \tilde{\mathcal{C}}_T=\{(t,l)\in\tilde{\mathcal{S}}_T : -\tilde{b}_T(t)< l< \tilde{b}_T(t)\}.
    \end{equation*}
    Since $\tilde{\mathcal{C}}_T$ is open, it holds that $\tilde{b}_T(t)$ is lower semicontinuous. Translating back to $\mathcal{S}_T$ space, we get the first claim where $b_T(t):=\Psi^{-1}(\tilde{b}_T(t))$. The second claim follows from $\mathcal{U}\cap\mathcal{S}_T\subseteq\mathcal{C}_T$ by Proposition \ref{prop:U_subset_C}.

    For infinite horizon we use the pointwise monotone convergence of Proposition \ref{prop:finite_to_infinite_1}. Since $T\mapsto V_T$ is decreasing, the $t$--slices of $\mathcal{D}_T$ decrease, i.e. $T\mapsto b_T$ increases. Furthermore, $b_T(t)\in[\gamma(t),1]$, so there exists a limit 
    \begin{equation}\label{eq:limit_b_T}
        b_T(t)\nearrow b_\infty(t):=\sup_{T>t}b_T(t)   
    \end{equation}
    where $b_\infty(t)\in[\gamma(t),1]$. Recall that $b_T$ is lower semicontinuous for any $T<\infty$. Since $b_\infty$ is the pointwise supremum of lower semicontinuous functions, it is lower semicontinuous itself. Recall the definition of the extended continuation set $\hat{\mathcal{C}}_T$. The monotonicity of $T\mapsto \hat{\mathcal{C}}_T$ combined with \eqref{eq:continuation_set_b} yields
    \begin{equation*}
        \hat{\mathcal{C}}_T\nearrow \{(t,x)\in\mathcal{S}:x\in(1-b_\infty(t),b_\infty(t))\}
    \end{equation*}
    while from Proposition \ref{prop:finite_to_infinite_1} we had $\hat{\mathcal{C}}_T\nearrow \mathcal{C}_\infty$, whence the desired conclusion.
\end{proof}
\begin{remark}
    We note that this argument is specialized to smooth symmetric costs $g$. For the classical hard-classification cost $g(x)=x\wedge(1-x)$, the existence of a boundary function instead follows directly from the concavity of $V_T$, since $g$ is affine on each side of $1/2$ and $V_T(t,\cdot)\leq g$ with equality at the endpoints..
\end{remark}

Proposition \ref{prop:two_boundaries} also showed $\partial_l \tilde{W}_T(t,l)\geq0$ for $l\geq0$ and $T<\infty$. We need a stronger result for the subsequent analysis.
\begin{lemma}\label{lem:pos_deriv}
    Let $T<\infty$. If $(t,l)\in\tilde{\mathcal{C}}_T$ with $l>0$, then $\partial_l \tilde{W}_T(t,l)>0$. Furthermore, if $\mathcal{I}=(t_1,t_2)$ with $0< t_1<t_2$ and $\underline{a}\in(0,\inf_{t\in\mathcal{I}}\tilde\gamma(t))$, then $\inf_{t\in\mathcal{I}}\partial_l\tilde{W}_T(t,\underline{a})\geq k>0$ uniformly in $T\in(T_1,\infty)$ for some $T_1>t_2$.
\end{lemma}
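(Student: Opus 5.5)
The plan is to start from the reduced representation obtained in the proof of Proposition \ref{prop:two_boundaries}. For $l>0$ and $(t,l)$ a point of differentiability, that proof gives
\begin{equation*}
    \partial_l \tilde{W}_T(t,l)=\mathbb{E}\left[\int_{0}^{\tilde\tau^*\wedge\alpha(l)}\partial_lL^l_u\,\partial_l(\tilde{\mathcal{L}}\tilde{g})(L^l_{u})\,du\right].
\end{equation*}
We first need this formula at every $(t,l)\in\tilde{\mathcal{C}}_T$, not only almost everywhere. This holds because $\tilde W_T$ is $C^{1}$ in $l$ inside the continuation set by Proposition \ref{prop:PDE}. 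We then pass to the limit in the representation along differentiability points, using the continuity of $y\mapsto(L^y,\partial_lL^y)$ (Proposition \ref{prop:joint_continuity}) and the localisation \eqref{eq:hom_bound_T}.

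For strict positivity, note three facts. The integrand is nonnegative on $[0,\alpha(l)]$. We have $\partial_lL^l_u=\exp(\int_0^u\mu'(L^l_v)dv)>0$. By Assumption \ref{asmp:dxLg}, $\partial_l(\tilde{\mathcal L}\tilde g)(y)>0$ for every $y>0$. Since $(t,l)\in\tilde{\mathcal C}_T$ is an interior point of an open set, the exit time $\tilde\tau^*$ is strictly positive almost surely, and so is $\alpha(l)$ because $l>0$. Continuity of paths then makes the integrand strictly positive on a random interval $[0,\varepsilon(\omega))$. The expectation is therefore strictly positive.

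For the uniform bound, fix $\underline a\in(0,\inf_{\mathcal I}\tilde\gamma)$. Using the first part of Proposition \ref{prop:finite_to_infinite_1} (monotonicity in $T$), we have $\tilde{\mathcal U}\subseteq\tilde{\mathcal C}_T$ for every $T$. We choose $\delta>0$ and a margin $\eta>0$ with three properties: the rectangle $R:=[t_1,t_2+\eta]\times[\underline a-\delta,\underline a+\delta]$ lies in $\tilde{\mathcal U}$ (by continuity of $\tilde\gamma$ and compactness); $\underline a-\delta>0$; and $T_1:=t_2+\eta$. For $T>T_1$ and $t\in\mathcal I$, let $\rho$ be the exit time of $(t+u,L^{\underline a}_u)$ from $R$. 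Then $\rho\le\tilde\tau^*\wedge\alpha(\underline a)$, since $R\subset\tilde{\mathcal C}_T$ and $R$ stays in $\{l>0\}$. On $[0,\rho]$ the integrand is bounded below by $\kappa:=e^{-M\eta}\min_{[\underline a-\delta,\underline a+\delta]}\partial_l(\tilde{\mathcal L}\tilde g)>0$, where $M$ bounds $|\mu'|$ on that compact interval. This gives
\begin{equation*}
\partial_l\tilde W_T(t,\underline a)\ge\kappa\,\mathbb E[\rho],
\end{equation*}
where $\rho$ has the law of $\min\{\eta,\ \text{exit time of } L^{\underline a}\text{ from }[\underline a-\delta,\underline a+\delta]\}$ (up to shortening the time side to $t_2+\eta-t\ge\eta$). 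This law does not depend on $T$ or $t$. Hence $k:=\kappa\,\mathbb E[\rho]>0$ works uniformly.

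The main obstacle is justifying that the representation holds pointwise at $\underline a$ rather than almost everywhere. If continuity of $\partial_l\tilde W_T$ from Proposition \ref{prop:PDE} is not enough, the fallback is to establish the lower bound $\kappa\mathbb E[\rho]$ for a.e.\ $l$ near $\underline a$ and pass to the limit using continuity of $\partial_l\tilde W_T$ in $\tilde{\mathcal C}_T$. We also need to check that $\alpha$ and $\rho$ are measured with respect to the same path, which holds since both are defined from $L^{\underline a}$.
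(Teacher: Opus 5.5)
Your proposal is correct and is essentially the paper's argument. Strict positivity comes from $\partial_l(\tilde{\mathcal L}\tilde g)>0$ on $\{l>0\}$ integrated up to an almost surely positive random time. You use the already-reduced $\alpha(l)$-representation from Proposition \ref{prop:two_boundaries}, whereas the paper uses an exit time from a small rectangle together with the nonnegative remainder $\partial_l L^l_\rho\,\partial_l\tilde W_T(t+\rho,L^l_\rho)$.

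The uniform bound comes, as in the paper, from an exit time from a rectangle inside $\tilde{\mathcal U}\cap\tilde{\mathcal S}_T\subseteq\tilde{\mathcal C}_T$. That inclusion is Proposition \ref{prop:U_subset_C}, not monotonicity in $T$. Your cap at $\eta$ on the time side is a tidy improvement: it makes the law independent of $t$ and $T$ and is what justifies the factor $e^{-M\eta}$.

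The limit passage in your first step is unnecessary. It would also need continuity of $y\mapsto\tilde\tau^*_T(t,y)$, which is not available at this stage. Instead, the sandwich \eqref{eq:admiss} yields the representation at every point of differentiability, and Proposition \ref{prop:PDE} makes every point of $\tilde{\mathcal C}_T$ such a point.
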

\begin{proof}
    Take $(t,l)\in\tilde{\mathcal{C}}_T$ such that $l>0$. Since $\tilde{\mathcal{C}}_T$ is open, we can find an open rectangle $R$ with $(t,l)\in R$ and $\overline{R}\subset\tilde{\mathcal{C}}_T$ such that for every $(t',l')\in R$ it holds that $l'>0$. Let $\rho:=\inf\{s\geq0:(t+s,L^l_s)\not\in R\}$, and note that $L^l_\rho\geq0$ and $\tilde\tau^*_T(t,l)\geq \rho$ almost surely. Thus using the tower property and strong Markov property as above, we get
    \begin{equation}
        \begin{aligned}\label{eq:positive_derivative}
            \partial_l\tilde{W}_T(t,l)&=\mathbb{E}\left[\int_0^\rho \partial_lL^l_u\partial_l(\tilde{\mathcal{L}}\tilde{g})(L^l_u)\,du\right]+\mathbb{E}\left[\partial_l L^l_\rho\partial_l\tilde{W}_T(t+\rho,L^l_\rho)\right]\\
            &\geq\mathbb{E}\left[\int_0^\rho \partial_lL^l_u\partial_l(\tilde{\mathcal{L}}\tilde{g})(L^l_u)\,du\right]>0,
        \end{aligned}
    \end{equation}
    where we used $\partial_l\tilde{W}_T(t+\rho,L^l_\rho)\geq0$ by Proposition \ref{prop:two_boundaries}, $\partial_l(\tilde{\mathcal{L}}\tilde{g})(l)>0$ on $l>0$ by Assumption \ref{asmp:dxLg}, and $\rho>0$ almost surely.

    For the second part take any $T\geq T_1$ and note that we can take $\varepsilon,\delta>0$ with $\delta<T_1-t_2$ and define the open rectangle $R:=\mathcal{I}_\delta\times\mathcal{J}_\varepsilon$ where $\mathcal{I}_\delta:=(t_1-\delta,t_2+\delta)$ and $\mathcal{J}_\varepsilon:=(\underline{a}-\varepsilon,\underline{a}+\varepsilon)$ such that, for any $(t,l)\in \overline{R}$, it holds that $0<l<\inf_{t\in\mathcal{I}} \tilde\gamma(t)$ since $\tilde\gamma\in C^1([0,\infty))$. In particular, $\varepsilon$ and $\delta$ can be chosen uniformly in $T$ for $T>t_2$ since $\tilde\gamma$ is independent of $T$. The functions $l\mapsto \mu'(l)$ and $l\mapsto \partial_l(\tilde{\mathcal{L}}\tilde{g})(l)$ are continuous with $\partial_l(\tilde{\mathcal{L}}\tilde{g})(l)>0$ on $\overline{\mathcal{J}_\varepsilon}$. Hence, there exist constants $k_1,k_2>0$ such that 
    \begin{equation}\label{eq:k1_and_k2}
        \partial_l(\tilde{\mathcal{L}}\tilde{g})(l)\geq k_1, \qquad |\mu'(l)|\leq k_2,
    \end{equation}
    on $\overline{\mathcal{J}_\varepsilon}$. Define the stopping time $\rho(t):=\inf\{s\geq0:(t+s,L^{\underline{a}}_s)\not\in R\}\leq \tilde\tau^*_T(t,\underline{a})$, and note in particular that $\mathbb{E}[\rho(t)]\geq k_3$ for all $t\in\mathcal{I}$ for some $k_3>0$. Thus, by arguing as in \eqref{eq:positive_derivative}, we get
    \begin{align*}
        \partial_l\tilde{W}_T(t,\underline{a})\geq\mathbb{E}\left[\int_0^{\rho(t)} \partial_lL^{\underline{a}}_u\partial_l(\tilde{\mathcal{L}}\tilde{g})(L^{\underline{a}}_u)\,du\right]\geq k_1e^{-k_2 t_2}\mathbb{E}[\rho(t)]\geq k_1k_3e^{-k_2 t_2}=:k,
    \end{align*}
    and $k>0$. Since $T>t_2$ was arbitrary, we arrive at the result. 
\end{proof}
\begin{remark}
For each fixed $T$, a positive lower bound on $\partial_l\tilde W_T(t,\underline a)$ over a compact time interval would follow directly from pointwise positivity and continuity of $(t,l)\mapsto\partial_l\tilde W_T(t,l)$. The argument above is needed
to obtain a lower bound that is uniform in the horizon $T$.
\end{remark}

In Proposition \ref{prop:two_boundaries} we obtained $\gamma(t)\leq b_T(t)\leq 1$. To make the first inequality strict we will use the smooth-fit principle. To this end, we need sufficient regularity of the optimal stopping boundary. In the case of $t\mapsto c(t)$ increasing it is easy to show that $t\mapsto b_T(t)$ is decreasing, and so starting the process $X^x$ at the boundary will result in the process hitting the interior of the stopping region immediately. However, since $t\mapsto c(t)$ is not assumed to be increasing, $t\mapsto b_T(t)$ may be increasing. We need to show that $t\mapsto b_T(t)$ does not increase too fast. To this end, we will show that $b_T$ is locally Lipschitz continuous by a method similar to \cite{de2019lipschitz}.

On the other hand, we can easily get $b_T(t)<1$ on finite horizon by arguing as in \eqref{eq:hom_bound_T_value}--\eqref{eq:hom_bound_T}. However, this argument does not work for infinite horizon if Assumption \ref{asmp:bounded:c} is not satisfied. In addition, $b_T$ is lower semicontinuous, and thus we may get arbitrarily close to $1$.
To find a uniform upper bound away from $1$ on any compact interval we will utilize a bound obtained from a particularly nice infinite-horizon problem in Lemma \ref{lem:b_uniform_upper_bound}.

\begin{lemma}\label{lem:b_uniform_upper_bound}
    For any $\mathcal{I}=(t_1,t_2)$ with $0\leq t_1<t_2$, there exists a constant $a<1$ such that $\sup _{t\in \mathcal{I}}b_T(t)\leq a$ uniformly in $T\in(t_2,\infty]$.
\end{lemma}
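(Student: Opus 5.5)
The plan is a comparison argument. We bound $V_T$ from below on $\mathcal{I}\times(0,1)$ by a value function whose stopping region is known explicitly and stays away from $1$, uniformly in $T$. The natural candidate is a homogeneous infinite-horizon problem with a constant running cost, whose continuation set $(A^*,B^*)$ is given by Theorem \ref{VerificationInfConstant}. Since $c$ need not be bounded below globally, the constant must be taken only over a bounded time window. This is where the extra time needed to get from $\mathcal{I}$ to the natural stopping region has to be controlled.

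First, fix $\Delta>0$ (say $\Delta=1$) and set
\[
\underline{c}:=\min_{s\in[t_1,t_2+\Delta]}c(s)>0,
\]
which is positive by continuity of $c$. Consider an auxiliary problem started at $(t,x)$ with $t\in\mathcal{I}$. The running cost is $\underline{c}$ on $[0,\Delta]$ and $0$ afterwards, and the terminal cost is $g$. Its value $\underline{V}(t,x)$ satisfies $\underline{V}\le V_T(t,x)$ for every $T\in(t_2,\infty]$. Indeed, $c(t+u)\ge \underline{c}\,\ind{u\le\Delta}$, and the admissible class for $V_T$ is contained in the unrestricted class.

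With zero cost after time $\Delta$, the continuation value after $\Delta$ is $0$, since we can wait forever and $g(X_\infty)=0$. The auxiliary problem is therefore a finite-horizon problem on $[0,\Delta]$ with terminal payoff $0$ at time $\Delta$. This has the drawback that it continues everywhere near the end, so it is too weak on its own.

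The refined choice is a lower comparison cost $\underline{c}\,e^{-K u}$ in operational time. By \eqref{eq:expGrowth}, $c(t+u)\ge c(t)e^{-Ku}\ge \underline{c}_0 e^{-Ku}$, with $\underline{c}_0:=\min_{\overline{\mathcal{I}}}c$, uniformly for $t\in\mathcal{I}$. This gives
\[
V_T(t,x)\ge \inf_\tau \mathbb{E}\Big[\int_0^\tau \underline{c}_0e^{-Ku}\,du+g(X^x_\tau)\Big]=:U(x).
\]
Here $U$ is independent of $t$ and $T$, and it is a discounted-cost-type homogeneous problem. It then suffices to show that $U(x)=g(x)$ for $x$ near $1$. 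That gives $b_T(t)\le a$ for some $a<1$, because $V_T\le g$ forces $V_T=g$ wherever $U=g$.

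To prove $U=g$ near $1$, use the Lagrange form: $U(x)-g(x)=\inf_\tau\mathbb{E}\big[\int_0^\tau(\underline{c}_0e^{-Ku}+\mathcal{L}g(X_u^x))\,du\big]$. By Lemma \ref{lem:Lg0}, $\mathcal{L}g(y)\to0$ as $y\to1$, so for $y\ge a_0$ the integrand is positive during the initial period $u\le u_0$. The difficulty, and the main obstacle, is that once $e^{-Ku}$ becomes small, negative contributions from $\mathcal{L}g$ in the middle region could outweigh the small gains.

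To handle this, bound the benefit of continuing from $x$ close to $1$. The process must travel from $x$ down to the region $\{\mathcal{L}g<-\eta\}$, and the total possible gain is at most $g(x)$, since $\mathbb{E}[\int_0^\tau-\mathcal{L}g\,du]=g(x)-\mathbb{E}[g(X_\tau)]\le g(x)\to0$. Meanwhile, on the event that $X$ stays above $a_1$ over $[0,u_0]$, the cost incurred is at least of order $\underline{c}_0u_0e^{-Ku_0}$. Making this quantitative — the probability of leaving $[a_1,1)$ before $u_0$ is small for $x$ close to $1$, because $\sigma$ is small there and the process moves slowly — should show that $U(x)-g(x)\ge0$ for $x\ge a$.

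I would attempt this by splitting any $\tau$ at $\tau\wedge\rho\wedge u_0$, where $\rho$ is the exit time from $(a_1,1)$. On $[0,\tau\wedge\rho\wedge u_0]$ the integrand is positive. After that point, the remaining gain is bounded by $\|g\|_\infty\mathbb{P}(\rho<u_0)+g$-values near $1$. This remaining gain is then compared with $\underline{c}_0e^{-Ku_0}\mathbb{E}[\tau\wedge\rho\wedge u_0]$, using that $\mathbb{P}(\rho<u_0)$ is small for $x$ close to $1$, which follows from continuity of the flow (Proposition \ref{prop:joint_continuity}).
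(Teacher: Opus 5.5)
Your reduction is sound and is the paper's idea. By \eqref{eq:expGrowth}, $c(t+u)\ge\underline c_0e^{-Ku}$ for $t\in\mathcal I$, so $U\le V_T(t,\cdot)\le g$ for every $T\in(t_2,\infty]$. A single point $y\in(1/2,1)$ with $U(y)=g(y)$ then gives $b_T(t)\le y$ by Proposition \ref{prop:two_boundaries}.

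The gap is the step you leave at ``should show'', namely $U=g$ near $1$, and the comparison you sketch cannot close it. Put $\theta:=\tau\wedge\rho\wedge u_0$ and let $\epsilon>0$ be a lower bound for the integrand on $[0,\theta]$. Your bound on the remaining gain, $\|g\|_\infty\mathbb P(\rho<u_0)+\sup_{y\ge a_1}g(y)$, is a fixed positive number independent of $\tau$. The cost you compare it with, $\epsilon\,\mathbb E[\theta]$, tends to $0$ for stopping times that stop quickly on most paths and continue only on a small set. Since $U(x)=g(x)$ requires gain $\le$ cost for every $\tau$, you would need $\mathbb E[g(X_\theta);\tau>\theta]\le C\,\mathbb E[\theta]$ with $C<\epsilon$, uniformly in $\tau$.
\begin{itemize}
\item On $\{\theta=u_0<\tau\}$ this is easy: $\mathbb P(\theta=u_0)\le\mathbb E[\theta]/u_0$, so it suffices that $g(a_1)\le\epsilon u_0/2$.
\item On $\{\rho<u_0,\ \tau>\rho\}$ the stopper chooses whether to continue using $\mathcal F_\rho$, so the smallness of $\mathbb P(\rho<u_0)$ is beside the point. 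You would need $g(a_1)\,\mathbb P(\tau>\rho,\ \rho<u_0)\le\tfrac{\epsilon}{2}\mathbb E[\theta]$ for all $\tau$, and your sketch provides no such estimate.
\end{itemize}
Separately, Proposition \ref{prop:joint_continuity} concerns compacts of $\mathcal S$ and says nothing as $x\to1$. The right tool for that limit would be the martingale bound $\mathbb P(\rho<\infty)\le(1-x)/(1-a_1)$.

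No such estimate is needed. The paper's qualitative argument applies verbatim to your comparison cost.
\begin{itemize}
\item Embed $U$ in the family $U(s,x):=\inf_\tau\mathbb E[\int_0^\tau\underline c_0e^{-K(s+u)}\,du+g(X^x_\tau)]$.
\item This cost is decreasing in $s$. Inserting the optimal time from $(s-\varepsilon,x)$ at $(s,x)$, as in \eqref{eq:monotone_W_lower}, shows that $s\mapsto U(s,x)-g(x)$ is nonincreasing. So stopping sets shrink over time.
\item Suppose $U(0,x)<g(x)$ for every $x\in(0,1)$. Then the same holds at every $s\ge0$, so the first entry time into the stopping set is $+\infty$. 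This time is optimal, as in Proposition \ref{prop:finite_to_infinite_1}.
\item Hence $U(0,x)=\underline c_0/K>0$ for every $x$. This contradicts $U(0,x)\le g(x)\to0$ as $x\to1$.
\item So $U(0,y)=g(y)$ for some $y$. By symmetry you may take $y\ge1/2$, and $y\ne1/2$ because $(t,1/2)\in\mathcal U\subseteq\mathcal C_T$. Take $a:=y$.
\end{itemize}

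The paper runs this argument with $\underline c(t)=e^{-\lambda t}c(t)$, $\lambda>K$, when $g'$ is bounded, and uses a constant cost under Assumption \ref{asmp:bounded:c}. Your $t$- and $T$-independent comparison, combined with this argument, handles both cases at once. Only one stopping point of the comparison problem is needed, not its full boundary.
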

\begin{proof}
    If Assumption \ref{asmp:bounded:c} holds, then $b_T(t)\leq B^*<1$ where $B^*$ is given in Theorem \ref{VerificationInfConstant} using the constant time cost $\underline{c}=\inf_{t\geq0}c(t)>0$. If Assumption \ref{asmp:bounded:g'} holds, we introduce a new running cost defined by $\underline{c}(t)=e^{-\lambda t}c(t)$ for $\lambda>K$, where $K$ is the constant in Assumption \ref{asmp:base_penalty:c}. Then $\underline{c}\in C^1([0,\infty)), \underline{c}(t)>0$, and
    \begin{equation}\label{eq:underline_c}
        \underline{c}'(t)=\underline{c}(t)\left(\frac{c'(t)}{c(t)}-\lambda\right),
    \end{equation}
    hence $|\underline{c}'(t)|/\underline{c}(t)\leq \lambda+K<\infty$, and so Assumption \ref{asmp:base_penalty:c} holds for $\underline{c}$. From \eqref{eq:underline_c} we get 
    \begin{equation*}
        \underline{c}'(t)\leq \underline{c}(t)(K-\lambda)<0,
    \end{equation*}
    so $\underline{c}$ is decreasing. Hence, we can consider the infinite-horizon optimal stopping problem with running cost $\underline{c}$ and value function $\underline{V}$.
    
    Take $t\in(0,\infty)$ and $\varepsilon>0$ such that $t\geq\varepsilon$. Then
    \begin{equation}\label{eq:monotone_W_lower}
        \underline{W}(t-\varepsilon,x)-\underline{W}(t,x)\geq \mathbb{E}\left[\int_0^{\underline{\tau}^*(t-\varepsilon,x)}\big(\underline{c}(t-\varepsilon+u)-\underline{c}(t+u)\big)\,du\right]\geq0,
    \end{equation}
    where $\underline{\tau}^*$ denotes the optimal stopping time for $\underline{V}$.
    Thus, if $(t,x)$ is in the stopping set for $\underline{W}$, denoted by $\underline{\mathcal{D}}$, then $0\geq\underline{W}(t-\varepsilon,x)\geq \underline{W}(t,x)=0$ and so $(t-\varepsilon,x)\in\underline{\mathcal{D}}$. 
    Since $\underline{V}$ admits an optimal stopping boundary, say $\underline{b}$, it holds that $t\mapsto\underline{b}(t)$ is increasing. Furthermore, since $c\geq \underline{c}$ for all $t\geq0$ it holds that $\underline{V}\leq V_T$, and hence $b_T\leq \underline{b}$ for any $T\leq\infty$.
    
    Assume for contradiction that there exists a point $t'\geq0$ such that $\underline{b}(t')=1$. Since $t\mapsto\underline{b}(t)$ is increasing, we have $\underline{b}(t'+s)=1$ for $s\geq0$. Thus, the first hitting time to the stopping set for the $\underline{V}$ problem with initial data $(t',x)$ is almost surely infinite, for any $x\in(0,1)$, since $\{0,1\}$ are inaccessible in finite time. By the choice of $\underline{c}$, we also have that stopping at infinity is admissible. Indeed, by applying \eqref{eq:expGrowth} to $c$, we have 
    \begin{equation*}
        \underline{c}(t'+u)\leq e^{-\lambda t'}c(t')e^{(K-\lambda)u},
    \end{equation*}
    so $\int_0^\infty \underline{c}(t'+u)\,du<\infty$.
    Hence
    \begin{equation*}
         \underline{V}(t',x)=\mathbb{E}\left[\int_0^{\infty}\underline{c}(t'+u)\,du+g(X^x_\infty)\right]=\int_0^\infty \underline{c}(t'+u)\,du>0,
    \end{equation*}
    using $g(X^x_\infty)=0$ almost surely.
    However, $0\leq \underline{V}(t',x)\leq g(x)$ and $g$ is continuous with $g(1)=0$, so taking the limit as $x\to 1$ we get $\lim_{x\to1}\underline{V}(t',x) = 0$ and thus, we reach a contradiction.

    Since $t\mapsto \underline{b}(t)$ is increasing and pointwise $\underline{b}(t)<1$, there exists a constant $a<1$ such that for any bounded interval $\mathcal{I}:=(t_1,t_2)$ it holds that $\sup_{t\in\mathcal{I}}\underline{b}(t)\leq a<1$. Since $b_T\leq \underline{b}$, we have the desired result when $T\in(t_2,\infty]$.
\end{proof}

We are now ready to show that the optimal stopping boundary is locally Lipschitz continuous. We will do so through the Lamperti-transformed problem. Take any $0< t_1<t_2<\infty$ and define $\mathcal{I}:=(t_1,t_2)$.
Since $t\mapsto\tilde\gamma(t)$ is continuous and $\tilde\gamma(t)>0$ for all $t\geq0$, there exists a $\underline{a}\in(0,\inf_{t\in\mathcal{I}}\tilde\gamma(t))$ as in Lemma \ref{lem:pos_deriv}. Furthermore, for such $\underline{a}$ it holds that $\sup_{t\in\mathcal{I}}\tilde{W}_T(t,\underline{a})<0$ for any $T\leq\infty$. Similarly, take $\overline{a}\in(\sup_{t\in\mathcal{I}}\tilde b_T(t),\Psi(1))$ which is possible by Lemma \ref{lem:b_uniform_upper_bound}. Defining $\mathcal{J}:=(\underline{a},\overline{a})$ it holds that
\begin{equation*}
    0<\underline{a}<\inf_{t\in \mathcal{I}}\tilde\gamma(t)\leq \tilde b_T(t)\leq \sup_{t\in\mathcal{I}}\tilde b_T(t)<\overline{a}<\Psi(1),
\end{equation*}
uniformly in $T\in(t_2,\infty]$.

Take any $T<\infty$. On $\mathcal{I}\times\mathcal{J}$ we have $\partial_l\tilde{W}_T\geq 0$, $\tilde{W}_T$ is continuous, $\tilde{W}_T(t,\overline{a})=0$, and $\sup_{t\in\mathcal{I}}\tilde{W}_T(t,\underline{a})<0$, so the intermediate value theorem yields existence of a solution to $\tilde{W}_T(t,l)=-\delta$ in $\mathcal{I}\times\mathcal{J}$ for small $\delta>0$, say $\delta<-\sup_{t\in\mathcal{I}}\tilde{W}_T(t,\underline{a})/2$. Since $\tilde{b}_T(t)$ is the $0$ level set of $\tilde{W}_T(t,l)$, the $-\delta$ level set is in the interior of $(\mathcal{I}\times\mathcal{J})\cap\tilde{\mathcal{C}}_T$ and so $\partial_l\tilde{W}_T>0$ by Lemma \ref{lem:pos_deriv}. Thus, the $-\delta$ level set is given by a function $t\mapsto a^\delta_T(t)$. Furthermore, by Proposition \ref{prop:PDE}, $\tilde{W}_T$ is $C^1$ here, and hence $t\mapsto a^\delta_T(t)$ is $C^1$ by the implicit function theorem. In fact, it holds that
\begin{equation*}
    \partial_t a_T^\delta(t)=-\frac{\partial_t \tilde{W}_T(t,a^\delta_T(t))}{\partial_l \tilde{W}_T(t,a^\delta_T(t))},
\end{equation*}
for $t\in\mathcal{I}$.

\begin{lemma}\label{lem:derivative_bound}
    Let $T<\infty$. For any $\varepsilon>0$ such that $\mathcal{I}_\varepsilon:=[t_1+\varepsilon,t_2-\varepsilon]$ has nonempty interior, there exists a $K_\varepsilon>0$ uniformly for all sufficiently large $T$ such that
    \begin{equation*}
        \lvert \partial_t a_T^\delta(t)\rvert\leq K_\varepsilon
    \end{equation*}
    for any $t\in\mathcal{I}_\varepsilon$ and $\delta>0$ sufficiently small.
\end{lemma}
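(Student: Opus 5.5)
From the implicit function formula, $|\partial_t a_T^\delta(t)| = |\partial_t\tilde W_T|/\partial_l\tilde W_T$ evaluated at $(t,a^\delta_T(t))$. I will bound the numerator from above and the denominator from below, both uniformly in $t\in\mathcal{I}_\varepsilon$, in small $\delta$, and in large $T$.

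\textbf{Numerator.} The bound (\ref{eq:time_derivative_lipbound}) gives $|\partial_t\tilde W_T|\le K_\text{time}=K'\|g\|_\infty+K''$ wherever the derivative exists. Since $(t,a^\delta_T(t))\in\tilde{\mathcal C}_T$, Proposition \ref{prop:PDE} shows the derivative exists there. The constant $K_\text{time}$ does not depend on $T$, so this step is routine.

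\textbf{Denominator.} This is the main obstacle. Lemma \ref{lem:pos_deriv} gives $\partial_l\tilde W_T(t,\underline a)\ge k>0$ uniformly in $T$ on $\mathcal{I}$, but only at the fixed level $l=\underline a$. I need a lower bound at the moving point $a^\delta_T(t)$, which lies between $\underline a$ and $\overline a$. I would use the strong Markov decomposition (\ref{eq:positive_derivative}) started from $(t,l)$ with $l\in[\underline a,a^\delta_T(t)]$. Let $\rho$ be the exit time of $(t+s,L^l_s)$ from the region $\{(s',l')\in\mathcal{I}\times(\underline a,\infty):\tilde W_T(s',l')<-\delta/2\}$, or alternatively from a small rectangle around $(t,l)$ inside $\tilde{\mathcal C}_T$. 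The decomposition then gives
\[
\partial_l\tilde W_T(t,l)\ge \mathbb{E}\big[\partial_lL^l_\rho\,\partial_l\tilde W_T(t+\rho,L^l_\rho)\ind{L^l_\rho=\underline a}\big].
\]
This holds because $\partial_l\tilde{\mathcal L}\tilde g\ge0$ on $l>0$, and because $\partial_l\tilde W_T\ge0$ by Proposition \ref{prop:two_boundaries} on the other exit pieces.

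On the event $\{L^l_\rho=\underline a\}$ with $t+\rho\in\mathcal{I}$, Lemma \ref{lem:pos_deriv} gives the factor $\partial_l\tilde W_T\ge k$. The factor $\partial_lL_\rho\ge e^{-k_2(t_2-t_1)}$ by (\ref{eq:partiall_explicit}), since $\mu'$ is bounded on $[\underline a,\overline a]$. It then remains to bound $\mathbb{P}(L^l_\rho=\underline a,\ \rho<\varepsilon)$ from below uniformly. Using $\varepsilon$ of time room is exactly why $\mathcal I_\varepsilon$ is needed: $t+\rho$ stays inside $\mathcal{I}$.

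Getting this probability bound is the crux. The difficulty is that the region's upper edge $a^{\delta/2}_T$ lies close to the boundary, so the path may exit upward quickly. My plan is as follows.

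\begin{itemize}
\item Replace the exit to $\underline a$ by a more careful argument. Use the spatial Lipschitz bound (\ref{eq:space_derivative_lipbound_W}): since $\tilde W_T(t,a^\delta_T(t))=-\delta$ and $\tilde W_T(t,\underline a)\le -2\delta$, we get $a^\delta_T(t)-\underline a\ge \delta/(2K_\text{space})$, so the start is not pinned near $\underline a$.
\item Instead of needing a positive probability of hitting $\underline a$, exploit the fact that $\tilde W_T(\cdot,\cdot)$ itself gives an a priori bound. By the PDE, $\tilde W_T(t,L_s)$ plus the running integral is a martingale up to exit. Hence the drop in $\tilde W_T$ from $-\delta$ toward $\tilde W_T(\cdot,\underline a)\le -\sup$ must be achieved with probability controlled by a ratio involving $\delta$.
\end{itemize}

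If this direct route becomes too delicate, I would fall back on the approach of \cite{de2019lipschitz}. There, the ratio $\partial_t\tilde W_T/\partial_l\tilde W_T$ is estimated directly by coupling the two probabilistic representations (\ref{eq:time_lamp_bound}) and (\ref{eq:space_derivative_lamperti_w}) along the same optimal stopping time $\tilde\tau^*$. Both numerator and denominator are then integrals up to $\tilde\tau^*$. The time derivative is of order $\mathbb{E}[\tilde\tau^*\wedge\varepsilon]$ plus a term supported on $\{\tilde\tau^*>\varepsilon\}$, and the latter is controlled by the uniform lower bound from Lemma \ref{lem:pos_deriv} via the Markov decomposition. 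The space derivative is bounded below by $k_1e^{-k_2t_2}\mathbb{E}[\tilde\tau^*\wedge\rho]$, as in the proof of Lemma \ref{lem:pos_deriv}. Comparing these two expressions would give a bound $K_\varepsilon$ depending only on $k,k_1,k_2,K_\text{time},\varepsilon$, and hence uniform in large $T$ and small $\delta$.
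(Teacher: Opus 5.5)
\textbf{Your primary route has a genuine gap.} You bound the numerator by the constant $K_{\text{time}}$. That forces you to find a lower bound $\partial_l\tilde W_T(t,a^\delta_T(t))\ge c>0$ that is uniform as $\delta\downarrow0$, and no such bound exists. Indeed $a^\delta_T(t)\uparrow\tilde b_T(t)$, and the smooth fit proved later in Theorem \ref{thm:main_regular_smooth} makes $\partial_l\tilde W_T$ continuous with $\partial_l\tilde W_T(t,\tilde b_T(t))=0$. Hence $\partial_l\tilde W_T(t,a^\delta_T(t))\to0$.

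Probabilistically, the quantity you call the crux is the chance that $L^l$ reaches $\underline a$ before being stopped. This chance tends to zero as $l$ approaches a regular boundary. The martingale identity in your second bullet ties that probability to $\delta$, so it cannot give a $\delta$-independent lower bound either. Your first bullet controls the distance from $a^\delta_T(t)$ down to $\underline a$, but the real problem is the distance up to $\tilde b_T(t)$, which goes to zero.

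The numerator also vanishes as $\delta\downarrow0$, by temporal smooth fit. So you face a $0/0$ ratio. Any argument that bounds numerator and denominator separately by constants fails. You have to compare the two probabilistic representations path by path.

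\textbf{Your fallback is the right idea and is essentially the paper's proof, but your sketch misses the mechanism that closes the comparison.} A lower bound $k_1e^{-k_2t_2}\mathbb{E}[\tilde\tau^*\wedge\rho]$, with $\rho$ the exit time from a small rectangle as in Lemma \ref{lem:pos_deriv}, is too weak. Near the boundary that rectangle must shrink, and $\mathbb{E}[\tilde\tau^*\wedge\rho]$ becomes much smaller than the numerator.

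The paper instead takes $\tau_r$ to be the exit time of $L^l$ from the whole window $\mathcal{J}=(\underline a,\overline a)$, capped at $t_2-t$. Since $\overline a$ lies in the stopping region for all $t\in\mathcal{I}$ (Lemma \ref{lem:b_uniform_upper_bound}), on $\{\tilde\tau^*>\tau_r\}$ exactly one of two things happens: either $L^l_{\tau_r}=\underline a$ with $\tau_r<t_2-t$, or $\tau_r=t_2-t$. Corollary \ref{cor:W_T_lips}, for $T>t_2$, gives
\[
|\partial_t\tilde W_T(t,l)|\le K_1\bigl(\mathbb{E}[\tilde\tau^*\wedge\tau_r]+\mathbb{P}(\tilde\tau^*>\tau_r)\bigr).
\]
The strong Markov decomposition at $\tilde\tau^*\wedge\tau_r$, together with $\partial_l\tilde W_T(\cdot,\underline a)\ge m_3$ from Lemma \ref{lem:pos_deriv}, gives
\[
\partial_l\tilde W_T(t,l)\ge K_2\,\mathbb{E}\bigl[\tilde\tau^*\wedge\tau_r+\ind{\tilde\tau^*>\tau_r,\ \tau_r<t_2-t}\bigr].
\]
The downward-exit part of $\mathbb{P}(\tilde\tau^*>\tau_r)$ is matched exactly by the indicator term in the denominator. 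This is where Lemma \ref{lem:pos_deriv} really enters. The time-out part satisfies $\mathbb{P}(\tilde\tau^*>\tau_r=t_2-t)\le\varepsilon^{-1}\mathbb{E}[\tilde\tau^*\wedge\tau_r]$, because $t_2-t\ge\varepsilon$ on $\mathcal{I}_\varepsilon$. That, not keeping $t+\rho$ inside $\mathcal{I}$, is what the $\varepsilon$-margin is for. Together these give $|\partial_ta^\delta_T(t)|\le (K_1/K_2)(2+\varepsilon^{-1})$, uniformly in small $\delta$ and large $T$.
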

\begin{proof}
Fix any $\varepsilon>0$ such that $\mathcal{I}_\varepsilon$ has nonempty interior. Consider $(t,l)\in \mathcal{I}\times\mathcal{J}$
on which we have $m_1,m_2,m_3>0$ such that
\begin{align}
    \partial_l(\tilde{\mathcal{L}}\tilde{g})(l)&\geq m_1,\label{eq:m1}\\
    |\mu'(l)|&\leq m_2\label{eq:m2},
\end{align}
and 
\begin{equation}\label{eq:m3}
    \inf_{t\in\mathcal{I}}\partial_l\tilde{W}_T(t,\underline{a})>m_3,
\end{equation}
uniformly in $T$, where \eqref{eq:m3} is given by $k$ in the second claim of Lemma \ref{lem:pos_deriv} while \eqref{eq:m1} and \eqref{eq:m2} can be found on $\mathcal{I}\times\mathcal{J}$ by an argument similar to \eqref{eq:k1_and_k2}.

Fix any $t\in \mathcal{I}_\varepsilon$, let $l=a^\delta_T(t)$, $\tilde\tau^*:=\tilde\tau^*_T(t,l)$
and define the stopping time
\begin{equation*}
    \tau_r(t,l)=\inf\{u\geq0 : L^l_u\not\in \mathcal{J}\}\wedge(t_2-t)
\end{equation*}
for $t\in\mathcal{I}_\varepsilon$.
From Corollary \ref{cor:W_T_lips}, we have 
\begin{align*}
    \lvert\partial_t\tilde{W}_T(t,l)\rvert&\leq \mathbb{E}\left[\lvert c(t+\tilde\tau^*)-c(t)\rvert +\ind{\tilde\tau^*=T-t}K''\right]\\
    &\leq\mathbb{E}\left[\int_0^{\tilde\tau^*\wedge\tau_r}\lvert c'(t+u)\rvert \,du+\ind{\tilde\tau^*>\tau_r}\left(\lvert c(t+\tilde\tau^*)-c(t+\tau_r)\rvert+K''\right)\right]\\
    &\leq K\|\mathcal{L}g\|_\infty\mathbb{E}[\tilde\tau^*\wedge\tau_r]+(2\|\mathcal{L}g\|_\infty+K'')\mathbb{E}\left[\ind{\tilde\tau^*>\tau_r}\right]\\
    &\leq K_1(\mathbb{E}[\tilde\tau^*\wedge\tau_r]+\mathbb{E}\left[\ind{\tilde\tau^*>\tau_r}\right])
\end{align*}
for $K_1=(K+4)\|\mathcal{L}g\|_\infty$ by Assumption \ref{asmp:base_penalty:c}, $\|c\|_\infty\leq\|\mathcal{L}g\|_\infty$, and since $K''\leq 2\|\mathcal{L}g\|_\infty$.
Recall $\partial_l\tilde{W}_T(t,l')\geq 0$ and $\partial_l(\tilde{\mathcal{L}}\tilde{g})(l')\geq0$ for $l'\geq0$. By the strong Markov property on \eqref{eq:space_derivative_lamperti_w}, we have 
\begin{align*}
    \partial_l\tilde{W}_T(t,l)=&\mathbb{E}\left[\int_0^{\tilde\tau^*\wedge\tau_r}\partial_lL^l_u\partial_l(\tilde{\mathcal{L}}\tilde{g})(L^{l}_u)\,du+\ind{\tilde\tau^*>\tau_r}\partial_lL^l_{\tau_r}\partial_l\tilde{W}_T(t+\tau_r,L^{l}_{\tau_r})\right]\\
    \geq&\mathbb{E}\left[\int_0^{\tilde\tau^*\wedge\tau_r}\partial_lL^l_u\partial_l(\tilde{\mathcal{L}}\tilde{g})(L^{l}_u)\,du+\ind{\tilde\tau^*>\tau_r,\tau_r<t_2-t}\partial_lL^l_{\tau_r}\partial_l\tilde{W}_T(t+\tau_r,\underline{a})\right],
\end{align*}
since the process will hit the $\underline{a}$--level before it hits the $\overline{a}$--level given that $\tau_r$ is less than $t_2-t$ and $\tilde\tau^*$.
Thus, by the previous considerations and recalling \eqref{eq:partiall_explicit}, we have
\begin{equation*}
    \partial_l \tilde{W}_T(t,l)\geq e^{-m_2(t_2-t)}\mathbb{E}\left[m_1(\tilde\tau^*\wedge\tau_r)+m_3\ind{\tilde\tau^*>\tau_r,\tau_r<t_2-t}\right]\geq K_2\mathbb{E}\left[\tilde\tau^*\wedge\tau_r+\ind{\tilde\tau^*>\tau_r,\tau_r<t_2-t}\right]>0
\end{equation*}
for $K_2=e^{-m_2(t_2-t_1)}(m_1\wedge m_3)$.
Thus 
\begin{align*}
    \lvert \partial_ta_T^\delta(t)\rvert&\leq\frac{K_1}{K_2}\frac{\mathbb{E}[\tilde\tau^*\wedge\tau_r+\ind{\tilde\tau^*>\tau_r}]}{\mathbb{E}[\tilde\tau^*\wedge\tau_r+\ind{\tilde\tau^*>\tau_r,\tau_r<t_2-t}]}\leq\frac{K_1}{K_2}\left(1+\frac{\mathbb{P}(\tilde\tau^*>\tau_r)}{\mathbb{E}[\tilde\tau^*\wedge\tau_r+\ind{\tilde\tau^*>\tau_r,\tau_r<t_2-t}]}\right).
\end{align*}
Splitting the numerator into the space and time component of $\tau_r$, we note
\begin{equation*}
    \frac{\mathbb{P}(\tilde\tau^*>\tau_r,\tau_r<t_2-t)}{\mathbb{E}[\tilde\tau^*\wedge\tau_r+\ind{\tilde\tau^*>\tau_r,\tau_r<t_2-t}]}\leq1
\end{equation*}
and 
\begin{equation*}
    \frac{\mathbb{P}(\tilde\tau^*>\tau_r,\tau_r=t_2-t)}{\mathbb{E}[\tilde\tau^*\wedge\tau_r+\ind{\tilde\tau^*>\tau_r,\tau_r<t_2-t}]}\leq\frac{\mathbb{P}(\tilde\tau^*>\tau_r,\tau_r=t_2-t)}{\mathbb{E}[\ind{\tilde\tau^*>\tau_r,\tau_r=t_2-t}(t_2-t)]} \leq \varepsilon^{-1}
\end{equation*}
since $t\in\mathcal{I}_\varepsilon$.
Hence, we have that $\partial_ta_T^\delta(t)$ is bounded for $t\in\mathcal{I}_\varepsilon$ by some $K_\varepsilon$ that is uniform for $(t,l)\in\mathcal{I}_\varepsilon\times \mathcal{J}$ and eventually uniformly in $\delta$.
\end{proof}
\begin{proposition}\label{prop:bound_lip}
    The optimal stopping boundary $t\mapsto b_T(t)$ is $\llip{[0,T)}$ for $T\leq \infty$.
\end{proposition}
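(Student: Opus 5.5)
We follow the scheme of De Angelis and Stabile \cite{de2019lipschitz}: pass from the smooth level curves $a^\delta_T$ to the boundary $\tilde b_T$ as $\delta\searrow0$, using the uniform derivative bound of Lemma \ref{lem:derivative_bound}. We work in Lamperti coordinates and fix $\mathcal{I}=(t_1,t_2)$ with $0<t_1<t_2$, the interval $\mathcal{J}=(\underline a,\overline a)$ constructed before Lemma \ref{lem:derivative_bound}, and $\varepsilon>0$ with $\mathcal{I}_\varepsilon$ nondegenerate. First take $T<\infty$ with $T>t_2$ large enough that Lemma \ref{lem:derivative_bound} applies. For each small $\delta>0$, the curve $a^\delta_T$ is $C^1$ on $\mathcal{I}_\varepsilon$ with $|\partial_t a^\delta_T|\le K_\varepsilon$, and it takes values in $[\underline a,\overline a]$. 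Hence $\{a^\delta_T\}_\delta$ is uniformly bounded and equi-Lipschitz on $\mathcal{I}_\varepsilon$.

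Next we identify the limit. Since $\partial_l\tilde W_T\ge0$ on $\{l\ge0\}$ and the slices of $\tilde{\mathcal{C}}_T$ are intervals, $\delta\mapsto a^\delta_T(t)$ is monotone: a larger $\delta$ means a lower level, hence a smaller $l$. The limit $a^0_T(t):=\lim_{\delta\searrow0}a^\delta_T(t)$ therefore exists pointwise. By Arzel\`a--Ascoli the convergence is uniform, and $a^0_T$ is $K_\varepsilon$-Lipschitz on $\mathcal{I}_\varepsilon$. We claim $a^0_T=\tilde b_T$ there.

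\begin{itemize}
\item Since $a^\delta_T(t)<\tilde b_T(t)$ for every $\delta$, we get $a^0_T\le\tilde b_T$.
\item By continuity of $\tilde W_T$, $\tilde W_T(t,a^0_T(t))=\lim_\delta(-\delta)=0$, so $(t,a^0_T(t))\in\tilde{\mathcal D}_T$.
\item By the interval structure of Proposition \ref{prop:two_boundaries}, the first point $\ge0$ of the stopping slice is $\tilde b_T(t)$, hence $a^0_T(t)\ge\tilde b_T(t)$.
\end{itemize}

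Thus $\tilde b_T$ is Lipschitz on $\mathcal{I}_\varepsilon$, and mapping back through $\Psi^{-1}$, which is locally Lipschitz on $\mathcal{J}$, gives $b_T\in\llip{(0,T)}$. The endpoint $t=0$ is handled by the extension argument around \eqref{eq:extend_0}.

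For $T=\infty$ we use that $K_\varepsilon$ is uniform in all sufficiently large $T$, and that $\mathcal{J}$ is uniform in $T\in(t_2,\infty]$ by Lemmas \ref{lem:pos_deriv} and \ref{lem:b_uniform_upper_bound}. The functions $\tilde b_T$ are then equi-Lipschitz on $\mathcal{I}_\varepsilon$ and increase to $\tilde b_\infty$ by \eqref{eq:limit_b_T}. A pointwise limit of $K_\varepsilon$-Lipschitz functions is $K_\varepsilon$-Lipschitz, so $b_\infty$ is locally Lipschitz on $(0,\infty)$, and again on $[0,\infty)$ after the shift in \eqref{eq:extend_0}.

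The main obstacle is the identification $a^0_T=\tilde b_T$. It needs care because $\tilde b_T$ is a priori only lower semicontinuous, so the stopping slice could in principle contain a point below $\tilde b_T$ with continuation above it. This is excluded because Proposition \ref{prop:two_boundaries} shows the continuation slice is exactly the interval $(-\tilde b_T(t),\tilde b_T(t))$. The other delicate point is that Lemma \ref{lem:derivative_bound} applies for all small $\delta$ simultaneously, which requires the level curves to stay inside $\mathcal{I}\times\mathcal{J}$; this holds by the choice $\delta<-\sup_{t\in\mathcal{I}}\tilde W_T(t,\underline a)/2$.
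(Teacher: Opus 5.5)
Your proposal is correct and follows essentially the same route as the paper. It takes the monotone limit of the $C^1$ level curves $a^\delta_T$ and identifies it with $\tilde b_T$ using continuity of $\tilde W_T$ and the interval structure of Proposition \ref{prop:two_boundaries}. It then passes the $\delta$-uniform bound of Lemma \ref{lem:derivative_bound} to the limit, lets $T\to\infty$ monotonically with the $T$-uniform constant, and handles $t=0$ via \eqref{eq:extend_0}. The only extra ingredient, Arzel\`a--Ascoli, is harmless but unnecessary, since a pointwise limit of $K_\varepsilon$-Lipschitz functions is already $K_\varepsilon$-Lipschitz, which is all the paper's triangle-inequality argument uses.
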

\begin{proof}
    First, let $T<\infty$. As noted, we have that $a_T^\delta(t)<\tilde{b}_T(t)$ for all $t\in\mathcal{I}$. Furthermore, since $l\mapsto \tilde{W}_T(t,l)$ is increasing, $(a_T^\delta)_{\delta>0}$ is increasing on $t\in\mathcal{I}$ as $\delta\searrow0$. Hence, by monotone convergence, a limit $a_T^0$ exists and satisfies $a_T^0(t)\leq \tilde{b}_T(t)$. Since $a_T^\delta$ are the level sets and $\tilde{W}_T$ is continuous, we get $\tilde{W}_T(t,a_T^0(t))=0$, and hence $a_T^0(t)\geq \tilde{b}_T(t)$. Thus, we have $a_T^\delta(t)\to \tilde{b}_T(t)$ for $t\in\mathcal{I}$. To conclude local Lipschitz continuity we use the pointwise convergence and Lemma \ref{lem:derivative_bound}. That is,
    for $t,s\in\mathcal{I}_\varepsilon$ we can write
    \begin{align*}
        \lvert \tilde{b}_T(t)-\tilde{b}_T(s)\rvert\leq& \lvert \tilde{b}_T(t)-a_T^\delta(t)\rvert+\lvert a_T^\delta(t)-a_T^\delta(s)\rvert+\lvert a_T^\delta(s)-\tilde{b}_T(s)\rvert \\
        \leq& \lvert \tilde{b}_T(t)-a_T^\delta(t)\rvert+K_\varepsilon\lvert t-s\rvert+\lvert a_T^\delta(s)-\tilde{b}_T(s)\rvert\\
        &\to K_\varepsilon\lvert t-s\rvert
    \end{align*}
    as $\delta\to 0$, and hence $\tilde{b}_T\in\llip{(0,T)}$.
    For infinite horizon, note that
    \begin{equation*}
        \lvert \tilde{b}_\infty(t)-\tilde{b}_\infty(s)\rvert\leq 
        \lvert \tilde{b}_\infty(t)-\tilde{b}_T(t)\rvert
        +K_\varepsilon\lvert t-s\rvert
        +\lvert \tilde{b}_\infty(s)-\tilde{b}_T(s)\rvert
        \to K_\varepsilon\lvert t-s\rvert,
    \end{equation*}
    as $T\to \infty$ by \eqref{eq:limit_b_T}.
    Finally, we can also conclude that $b_T$ is locally Lipschitz on $[0,T)$ since $\Psi^{-1}$ is Lipschitz on $\mathcal{J}$ and the extension to $0$ is due to \eqref{eq:extend_0}.
\end{proof}

\subsection{Regularity and the smooth fit}

Next, we use the Lipschitz boundary to show that the stopping set is probabilistically regular, which in turn gives the smooth-fit property of the value function in Theorem \ref{thm:main_regular_smooth} for finite horizon due to \cite{de2020global}. 
\begin{lemma}\label{lem:regular_boundary}
    Let $((t_n,x_n))_{n\in\mathbb{N}}\subset\mathcal{C}_T$ with $(t_n,x_n)\to(t,b_T(t))$. Then $\tau^*_T(t_n,x_n)\to 0$ almost surely for any $T\leq \infty$.
\end{lemma}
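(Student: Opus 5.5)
The plan is to prove the claim in the Lamperti coordinates of Section \ref{sec:setup}, where the state process $L$ has unit diffusion coefficient. I first show that a process started exactly on the boundary enters the interior of the stopping set immediately. I then transfer this to nearby starting points using the pathwise continuity of the flow.

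\textbf{Setup.} By symmetry (Assumption \ref{asmp:sym}) it suffices to treat the upper boundary $\tilde b_T=\Psi(b_T)$. Using the extension argument around \eqref{eq:extend_0}, I may assume $t\in(0,T)$, so $t$ lies in the interior of a compact interval $\mathcal{I}$. On $\mathcal{I}$, Proposition \ref{prop:bound_lip} makes $\tilde b_T$ Lipschitz with some constant $K_b$, and this holds for $T=\infty$ as well. On the same interval, Lemma \ref{lem:b_uniform_upper_bound} and $\tilde b_T\geq \tilde\gamma>0$ keep $\tilde b_T$ inside a compact subinterval $\mathcal{J}\subset\Psi((0,1))$. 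The drift $\mu$ is continuous by Assumption \ref{asmp:sigma_g}, so $|\mu|\le C$ on a neighbourhood $\mathcal{J}'$ of $\mathcal{J}$. Set $l=\tilde b_T(t)$ and $\varrho:=\inf\{s\ge0: L^l_s\notin\mathcal{J}'\}>0$. For $s\le\varrho$,
\begin{equation*}
    L^l_s\geq l - Cs + B_s .
\end{equation*}

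\textbf{Step 1: immediate entry with a margin.} By the law of the iterated logarithm for $B$ at $0$, almost surely there are arbitrarily small $s>0$ with $B_s\ge \tfrac12\sqrt{2s\log\log(1/s)}$. For small $s$ this dominates $(C+K_b)s$. Hence almost surely, for every $\varepsilon>0$ there are $s\in(0,\varepsilon\wedge\varrho)$ and $\eta>0$ such that
\begin{equation*}
    L^l_s \ge l+K_bs+2\eta \ge \tilde b_T(t+s)+2\eta ,
\end{equation*}
where the second inequality uses the Lipschitz bound. So $(t+s,L^l_s)$ lies strictly inside $\tilde{\mathcal D}_T$, at distance at least $2\eta$ above the boundary in the $l$ direction.

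\textbf{Step 2: transfer to nearby starting points.} Let $(t_n,l_n)\to(t,l)$ with $l_n=\Psi(x_n)$, fix $\omega$ in the full-measure set of Step 1, and fix $\varepsilon$ together with the corresponding $s$ and $\eta$. Proposition \ref{prop:joint_continuity}, transported through $\Psi$, gives $L^{l_n}_s\to L^l_s$. Continuity of $\tilde b_T$ gives $\tilde b_T(t_n+s)\to\tilde b_T(t+s)$. Therefore, for all large $n$,
\begin{equation*}
    L^{l_n}_s>\tilde b_T(t_n+s)+\eta .
\end{equation*}
If instead $t_n+s\ge T$, the horizon already forces stopping. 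In either case $\tau^*_T(t_n,x_n)\le s<\varepsilon$ for all large $n$, by the hitting-time description in Proposition \ref{prop:two_boundaries} and Theorem \ref{thm:main_structure} (the $\wedge(T-t)$ cap handles the second case). Since $\varepsilon$ is arbitrary, $\tau^*_T(t_n,x_n)\to0$ almost surely. The lower boundary $1-b_T$ is handled identically by symmetry, and the case $T=\infty$ needs no change because Proposition \ref{prop:bound_lip} covers it and only local Lipschitz continuity is used.

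\textbf{Main obstacle.} The key step is Step 1: obtaining a \emph{strict} margin $\eta$ that survives both the perturbation of the starting point and the shift in time. The paper does not assume $b_T$ monotone, so the boundary may rise, and only its local Lipschitz property prevents it from outrunning the Brownian oscillation. The LIL comparison against the line $l+(C+K_b)s$ is what I will try first. The localization via $\varrho$ is needed so that the drift bound holds uniformly near the boundary; Lemma \ref{lem:b_uniform_upper_bound} keeps the boundary away from $\Psi(1)$, where $\mu$ could blow up.
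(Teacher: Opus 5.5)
Your proof is correct. It has the same skeleton as the paper's: the Lipschitz boundary makes the boundary point regular for the interior of $\tilde{\mathcal D}_T$, and continuity of the flow then transfers this to nearby starting points. Both steps, however, are carried out differently.

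For immediate entry, the paper localizes to $\mathcal J$ and applies Girsanov's theorem, with a Novikov check, so that $L^l_s-K_\varepsilon s=l+B^{\mathcal J}_s$ is exactly a Brownian motion under $\mathbb{Q}^{\mathcal J}$. It then uses that Brownian motion immediately becomes positive and returns to $\mathbb P$ by equivalence of measures. You instead bound the drift pathwise by $|\mu|\le C$ before $\varrho$ and beat the line $(C+K_b)s$ with the law of the iterated logarithm, so no change of measure is needed. Blumenthal's $0$--$1$ law, in the form $\limsup_{s\downarrow0}B_s/\sqrt{s}=\infty$, would serve equally well.

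For the transfer step, the paper cites \cite[Corollaries 5 and 6]{de2020global} to go from probabilistic regularity to Green regularity and then to almost sure convergence of the hitting times. Your Step 2 is a self-contained version of that argument. The strict margin $\eta$ is exactly what makes it work: it puts $(t+s,L^l_s)$ in the open set $\{l'>\tilde b_T(\cdot)\}$, which is stable under $(t_n,l_n)\to(t,l)$, whereas entry into the closed set $\tilde{\mathcal D}_T$ alone would not be.

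What each route buys: the paper's is shorter on the page and packages Green regularity for $\tilde{\mathcal D}_T$ and its interior, which it quotes in Proposition \ref{prop:temp_smooth}. Yours is elementary and gives the almost sure statement directly. That already covers the later use, since $\tau^*_T(t_n,x_n)\to0$ a.s. together with $T-t_n\to T-t>0$ gives $\mathbb P(\tau^*_T(t_n,x_n)=T-t_n)\to0$.

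Two small bookkeeping points. First, take $\omega$ in the intersection of your law-of-the-iterated-logarithm event with the full-measure event of Proposition \ref{prop:joint_continuity}, since your $s$ is random. Second, for the hitting-time description cite only \eqref{eq:definition_optimal_stopping_time} and Proposition \ref{prop:two_boundaries} (plus Proposition \ref{prop:finite_to_infinite_1} when $T=\infty$). Do not cite Theorem \ref{thm:main_structure}, whose last part is proved via smooth fit and hence via this lemma.
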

\begin{proof}
    Let $t\in(0,T)$, take $\varepsilon>0$ and $t_1,t_2\in(0,T)$ such that $t$ is in the interior of $\mathcal{I}_\varepsilon=[t_1+\varepsilon,t_2-\varepsilon]$ as above, and let $K_\varepsilon$ be the upper Lipschitz constant for $\tilde{b}_T$ on $\mathcal{I}_\varepsilon$. Set $l=\tilde{b}_T(t)$ and take a compact $\mathcal{J}:=[l-r,l+r]\subset\Psi((0,1))$ for some $r>0$. Define the stopping time 
    \begin{equation*}
        \tau^\mathcal{J}:=\inf\{s\geq0: L^l_s\not\in \mathcal{J}\},
    \end{equation*}
    for which we note $\tau^\mathcal{J}>0$ almost surely since $L^l$ has continuous paths. Now consider the stopped exponential martingale
    \begin{equation*}
        Z^\mathcal{J}_s:=\mathcal{E}\left(\int_0^{s\wedge\tau^\mathcal{J}} (K_\varepsilon-\mu(L^l_u))\, dB_u\right).
    \end{equation*}
    The process $Z^\mathcal{J}$ is indeed a true martingale on, say, $s\in[0,\delta]$ for $\delta:=t_2-\varepsilon-t>0$, by Novikov's condition due to the localization on $\mathcal{J}$. By Girsanov's theorem, we can define the new measure $\mathbb{Q}^\mathcal{J}$ on $\mathcal{F}_\delta$ by
    \begin{equation*}
        \frac{d\mathbb{Q}^\mathcal{J}}{d\mathbb{P}}\Bigg\vert_{\mathcal{F}_\delta}=Z^\mathcal{J}_\delta,
    \end{equation*}
    and, up to $\tau^\mathcal{J}$, it holds that 
    \begin{equation*}
        L^l_s-K_\varepsilon s=l+B^\mathcal{J}_s,
    \end{equation*}
    where $B^\mathcal{J}$ is a Brownian motion under $\mathbb{Q}^\mathcal{J}$.

    Consider the first hitting time to the upper boundary
    \begin{equation*}
        \sigma^l := \inf\{s>0 : L^l_s> \tilde{b}_T(t+s)\}.
    \end{equation*}
    Since $\tilde{b}_T$ is Lipschitz, we have 
    \begin{equation}\label{eq:exit_time_ineq}
        \overline{\sigma}_l^\mathcal{J}:=\inf\{0<s<\delta\wedge\tau^\mathcal{J} : L^l_s-K_\varepsilon s> l\}\geq \sigma^l
    \end{equation}
    where $\inf$ of the empty set is $\infty$ by convention. Thus, by the previous considerations, we have 
    \begin{equation*}
        \overline{\sigma}_l^\mathcal{J}=\inf\{0<s<\delta\wedge\tau^\mathcal{J} : B^\mathcal{J}_s> 0\}
    \end{equation*}
    and, specifically, $\overline{\sigma}_l^\mathcal{J}=0$ holds $\mathbb{Q}^\mathcal{J}$--almost surely by standard results, see e.g. \cite[Problem 7.18]{karatzas2012brownian}.
    Since $\mathbb{P}$ is an equivalent measure, we have
    \begin{equation*}
        \mathbb{P}(\sigma^l=0)=1
    \end{equation*}
    by \eqref{eq:exit_time_ineq}.
    Thus $l=\tilde{b}_T(t)$ is probabilistically regular for $(\tilde{\mathcal{D}}_T)^\circ$, and thus also $\tilde{\mathcal{D}}_T$, with respect to $L$. Since $t$ was arbitrary, for any $t\in(0,T)$, the boundary point $(t,l)=(t,\tilde{b}_T(t))$ is probabilistically regular for $\tilde{\mathcal{D}}_T$ and $(\tilde{\mathcal{D}}_T)^\circ$. Furthermore, 
    since $l\mapsto L^l_s$ is continuous, it also holds that $(t,\tilde{b}_T(t))$ is Green regular for $\tilde{\mathcal{D}}_T$ and $(\tilde{\mathcal{D}}_T)^\circ$ by \cite[Corollary 5]{de2020global}, that is,
    \begin{equation}\label{eq:green_reg}
        \lim_{\tilde{\mathcal{C}}_T\ni (t_n,l_n)\to (t,\tilde{b}_T(t))}\mathbb{P}(\tau(t_n,l_n)\geq \delta)= 0
    \end{equation}
    for every $\delta>0$, where 
    \begin{equation*}
        \tau(t,l) = \inf\{s\geq0 : (t+s,L^l_s)\in A\}
    \end{equation*}
    for $A=\tilde{\mathcal{D}}_T$ and $A=(\tilde{\mathcal{D}}_T)^\circ$. In fact, \cite[Corollary 6]{de2020global} upgrades the convergence in probability to almost sure convergence. The same conclusions can then be translated directly to the $\mathcal{S}_T$ coordinates and extended to $t=0$ by \eqref{eq:extend_0}.

    Specifically, for $((t_n,x_n))_{n\in\mathbb{N}}\subset \mathcal{C}_T$ with $(t_n,x_n)\to(t, b_T(t))$ it holds that $\tau^*_T(t_n,x_n)\to0$ almost surely.
\end{proof}
We can now prove the temporal smooth fit. We note that since $V_T$ is $C^{1,2}$ on $\mathcal{C}_T$, the probabilistic derivative representations derived above hold at every point of $\mathcal{C}_T$, rather than merely almost everywhere.
\begin{proposition}\label{prop:temp_smooth}
    For any $T\leq\infty$, let $((t_n,x_n))_{n\in\mathbb{N}}$ be a sequence as in Lemma \ref{lem:regular_boundary}. Then 
    \begin{equation}\label{eq:time_smooth_fit}
        \lim_{n\to\infty}\partial_tV_T(t_n,x_n)=0.
    \end{equation}
\end{proposition}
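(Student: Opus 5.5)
We prove the temporal smooth fit by sandwiching $\partial_tV_T(t_n,x_n)$ between the probabilistic bounds of Proposition \ref{prop:time_lip} and letting $\tau^*_T(t_n,x_n)\to0$, which holds by Lemma \ref{lem:regular_boundary}. Since $(t_n,x_n)\in\mathcal{C}_T$ and $V_T\in C^{1,2}$ there by Proposition \ref{prop:PDE}, the time derivative exists at every $(t_n,x_n)$. For $T<\infty$, \eqref{eq:time_deriv_bound} gives
\begin{equation*}
    \mathbb{E}[c(t_n+\tau_n)]-c(t_n)\leq \partial_tV_T(t_n,x_n)\leq \mathbb{E}[c(t_n+\tau_n)]-c(t_n)+K''\mathbb{P}(\tau_n=T-t_n),
\end{equation*}
where $\tau_n:=\tau^*_T(t_n,x_n)$. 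For $T=\infty$, \eqref{eq:time_deriv_inf} gives the exact identity $\partial_tV_\infty(t_n,x_n)=\mathbb{E}[c(t_n+\tau_n)]-c(t_n)$.

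The key steps are as follows. First, invoke Lemma \ref{lem:regular_boundary} to get $\tau_n\to0$ almost surely. Second, show $\mathbb{E}[c(t_n+\tau_n)]\to c(t)$. Since $t_n\to t$, we may assume $|t_n-t|\leq1$. On $\{\tau_n\leq1\}$ the exponential growth bound \eqref{eq:expGrowth} gives $c(t_n+\tau_n)\leq c(t)e^{2K}$. On $\{\tau_n>1\}$, whose probability tends to $0$, we use that $c$ is bounded by $\|\mathcal{L}g\|_\infty$ via Assumption \ref{asmp:Lg:goodset_nonempty}; for $T=\infty$ this includes the convention $c(t+\infty)=0$. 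Hence $c(t_n+\tau_n)$ is uniformly bounded, and continuity of $c$ plus dominated convergence yield the limit. Combined with $c(t_n)\to c(t)$, both the lower bound and the main part of the upper bound tend to $0$.

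Third, for $T<\infty$, handle the correction term $K''\mathbb{P}(\tau_n=T-t_n)$. Since $t<T$, we have $T-t_n\geq (T-t)/2>0$ for large $n$. Therefore $\mathbb{P}(\tau_n=T-t_n)\leq\mathbb{P}(\tau_n\geq (T-t)/2)\to0$ by almost sure convergence of $\tau_n$. Squeezing then gives \eqref{eq:time_smooth_fit}. The lower boundary $1-b_T(t)$ follows by the symmetry in Assumption \ref{asmp:sym}, and $t=0$ is covered through \eqref{eq:extend_0}.

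The main obstacle is that Lemma \ref{lem:regular_boundary} requires the limiting point to lie on the boundary with $t<T$. Uniform integrability of $c(t_n+\tau_n)$ is also needed when $T=\infty$, and the boundedness of $c$ noted above handles this. No further difficulty is expected, because the heavy lifting (Lipschitz regularity of the boundary, hence probabilistic and Green regularity) has already been done.
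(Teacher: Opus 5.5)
Your proposal is correct and follows essentially the same route as the paper: you sandwich $\partial_tV_T(t_n,x_n)$ between the bounds \eqref{eq:time_deriv_bound} (or use the identity \eqref{eq:time_deriv_inf} when $T=\infty$), use boundedness of $c$ together with $\tau^*_T(t_n,x_n)\to0$ almost surely to make the main terms vanish, and show $\mathbb{P}(\tau^*_T(t_n,x_n)=T-t_n)\to0$. The differences are cosmetic: the paper obtains the last limit from Green regularity \eqref{eq:green_reg} rather than from almost sure convergence (equivalent here), and your split on $\{\tau_n\leq1\}$ is unnecessary because $c\leq\|\mathcal{L}g\|_\infty$ globally already provides the domination.
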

\begin{proof}
    It holds that $\mathbb{E}[c(t_n+\tau^*_T(t_n,x_n))]-c(t_n)\to 0$ since $c$ is bounded and $\tau^*_T(t_n,x_n)\to0$ almost surely. Furthermore, if $T<\infty$, then, by \eqref{eq:green_reg}, we have $\mathbb{P}(\tau^*_T(t_n,x_n)=T-t_n)\to0$. Hence \eqref{eq:time_deriv_bound} and \eqref{eq:time_deriv_inf} imply the temporal smooth fit.
\end{proof}
The spatial smooth fit requires more work in the infinite-horizon case. Indeed, we will need the finite-horizon smooth fit for the approach below.
\begin{proposition}\label{prop:spac_smooth}
    For any $T<\infty$, let $((t_n,x_n))_{n\in\mathbb{N}}$ be a sequence as in Lemma \ref{lem:regular_boundary}. Then 
    \begin{equation*}
        \lim_{n\to\infty}\partial_xV_T(t_n,x_n)=g'(b_T(t)).
    \end{equation*}
\end{proposition}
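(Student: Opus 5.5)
Since $V_T$ is $C^{1,2}$ on $\mathcal{C}_T$ (Proposition \ref{prop:PDE}), the representation \eqref{eq:space_derivative} of Proposition \ref{prop:space_lip} holds at every $(t_n,x_n)$:
\begin{equation*}
    \partial_xV_T(t_n,x_n)=\mathbb{E}\left[g'(X^{x_n}_{\tau_n})\partial_xX^{x_n}_{\tau_n}\right],\qquad \tau_n:=\tau^*_T(t_n,x_n).
\end{equation*}
The plan is to pass to the limit inside the expectation. The limiting value is $g'(b_T(t))\cdot 1$, because $\tau_n\to0$ almost surely by Lemma \ref{lem:regular_boundary}, and $X^{x_n}_{\tau_n}\to b_T(t)$ and $\partial_xX^{x_n}_{\tau_n}\to 1$ almost surely by joint continuity of the flow and of its derivative. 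For the latter we use Proposition \ref{prop:joint_continuity} together with the remark after \eqref{eq:partialx_explicit}. Finally $g'$ is continuous on $(0,1)$ and $b_T(t)\in(1/2,1)$ for $T<\infty$.

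The convergence of the integrand is therefore immediate, and the main obstacle is uniform integrability. I would first localize in space. By \eqref{eq:hom_bound_T}, with $T<\infty$, every $X^{x_n}$ stays in $[A^*(T),B^*(T)]\subset(0,1)$ up to $\tau_n$ once $x_n$ lies in that interval, which holds eventually because $b_T(t)<B^*(T)$. Hence $|g'(X^{x_n}_{\tau_n})|\le \|g'\|_{L^\infty([A^*(T),B^*(T)])}=:M$ uniformly in $n$.

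It then remains to handle $\partial_xX^{x_n}_{\tau_n}$. This is a positive random variable, and by Assumption \ref{asmp:martingale} together with optional sampling at the bounded stopping time $\tau_n\le T$, its expectation equals $1$. The same holds for the limit, since $\partial_x X^{b_T(t)}_0=1$. Scheffé's lemma therefore gives $\partial_xX^{x_n}_{\tau_n}\to1$ in $L^1$, exactly as in \eqref{eq:L1conv_semi}. I would then split
\begin{equation*}
    \left|\mathbb{E}\left[g'(X^{x_n}_{\tau_n})\partial_xX^{x_n}_{\tau_n}\right]-g'(b_T(t))\right|\le M\,\mathbb{E}\left|\partial_xX^{x_n}_{\tau_n}-1\right|+\mathbb{E}\left|g'(X^{x_n}_{\tau_n})-g'(b_T(t))\right|.
\end{equation*}
The first term vanishes by Scheffé's lemma. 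The second vanishes by bounded convergence, since the integrand is bounded by $2M$ and converges almost surely.

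The lower boundary $1-b_T(t)$ follows by symmetry (Assumption \ref{asmp:sym}). Points with $t=0$ are covered through \eqref{eq:extend_0}.

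One subtlety to check is that Scheffé's lemma needs the limit to also have mean one. This is immediate here because the limit is the constant $1$, so the only real issue is almost sure convergence. That convergence needs joint continuity of $(s,x)\mapsto \partial_xX^x_s$ near $(0,b_T(t))$, which follows from \eqref{eq:partialx_explicit} because $\sigma,\sigma''$ are continuous on the compact localizing interval.
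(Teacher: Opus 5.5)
Your proof is correct and follows the paper's argument: the probabilistic representation of $\partial_xV_T$ at points of $\mathcal{C}_T$, spatial localization so that $g'$ is bounded, Scheffé's lemma for the positive mean-one flow derivative, and $\tau^*_T(t_n,x_n)\to0$ almost surely from Lemma \ref{lem:regular_boundary}. One remark: the strict inequality $b_T(t)<B^*(T)$ you invoke is not established in the paper, but you do not need it, since $(t_n,x_n)\in\mathcal{C}_T$ already forces $x_n\in(A^*(T),B^*(T))$ for every $n$.
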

\begin{proof}
    By an application of dominated convergence theorem and Scheffé's lemma, since $g'$ is locally bounded by $K_\text{space}$, and $\partial_xX^x$ is a true martingale on $[0,T]$, as in Lemma \ref{lem:concave_V} and Proposition \ref{prop:space_lip}, we have 
    \begin{equation*}
        \lim_{n\to\infty}\partial_xV_T(t_n,x_n)= \lim_{n\to\infty}\mathbb{E}\left[g'(X^{x_n}_{\tau^*_T(t_n,x_n)})\partial_xX^{x_n}_{\tau^*_T(t_n,x_n)}\right]=g'(b_T(t)),
    \end{equation*}
    using $\tau^*_T(t_n,x_n)\to0$ almost surely.
\end{proof}

In the proof of the spatial smooth fit we made use of the probabilistic representation of the spatial derivative on finite horizon. This derivative is not directly available in the infinite-horizon case. We thus must take a different approach. We first prove the following convergence result.
\begin{proposition}\label{prop:uniform_conv_dx}
    For every $\mathcal{I}=[t_1,t_2]\subset[0,\infty)$ and $\mathcal{J}=[\underline{a},\overline{a}]\subset(0,1)$ it holds that $x\mapsto V_\infty(t,x)$ is differentiable, and
    \begin{equation*}
        \partial_xV_T\to \partial_xV_\infty
    \end{equation*}
    uniformly on $\mathcal{I}\times\mathcal{J}$ for $T\to\infty$.
\end{proposition}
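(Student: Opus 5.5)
The plan is to obtain a bound on $\partial_{xx}V_T$ that is uniform in the horizon $T$, and then convert the uniform convergence of Corollary \ref{cor:unif_conv_T} into uniform convergence of the spatial derivatives using difference quotients. Fix $\mathcal{I}\times\mathcal{J}$ and slightly enlarge it to $\mathcal{I}'\times\mathcal{J}'$, with $\mathcal{J}'=[\underline a',\overline a']\subset(0,1)$ compact. For finite $T$, Theorem \ref{thm:main_regular_smooth} is available in the finite-horizon case. This rests on Propositions \ref{prop:PDE}, \ref{prop:temp_smooth} and \ref{prop:spac_smooth}, together with the fact that $V_T=g$ on $\mathcal{D}_T$. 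Consequently $x\mapsto \partial_xV_T(t,x)$ is continuous on $(0,1)$ for each $t<T$.

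\textbf{Step 1 (equi-Lipschitz derivatives).} By Corollary \ref{cor:main_discont_dxx_pt1}, on $\mathcal{C}_T$ we have $\partial_{xx}V_T=-2(c(t)+\partial_tV_T)/\sigma^2(x)$, and on $(\mathcal{D}_T)^\circ$ we have $\partial_{xx}V_T=g''$. By Proposition \ref{prop:time_lip}, $|\partial_tV_T|\le K_{\text{time}}$, and this bound is independent of $T$. Assumption \ref{asmp:Lg:goodset_nonempty} gives $c\le\|\mathcal{L}g\|_\infty$. Hence on $\mathcal{I}'\times\mathcal{J}'$
\[
|\partial_{xx}V_T|\le M:=\max\Big\{\tfrac{2(\|\mathcal{L}g\|_\infty+K_{\text{time}})}{\min_{\mathcal{J}'}\sigma^2},\ \|g''\|_{L^\infty(\mathcal{J}')}\Big\}
\]
off the set $\{x=b_T(t)\}\cup\{x=1-b_T(t)\}$. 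For fixed $t$ this set contains at most two points in $x$. Since $\partial_xV_T(t,\cdot)$ is continuous across these points, it is absolutely continuous on $\mathcal{J}'$, and so $|\partial_xV_T(t,x)-\partial_xV_T(t,y)|\le M|x-y|$ uniformly in $t\in\mathcal{I}'$ and in all $T>t_2+1$. Similarly, Proposition \ref{prop:space_lip} gives $|\partial_xV_T|\le K_{\text{space}}$, also uniformly in $T$.

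\textbf{Step 2 (differentiability of $V_\infty$).} For $x,x+h\in\mathcal{J}'$, Taylor's formula with the bound from Step 1 gives
\[
\Big|\frac{V_T(t,x+h)-V_T(t,x)}{h}-\partial_xV_T(t,x)\Big|\le \tfrac{M}{2}|h| .
\]
Let $D_h V(t,x)$ denote the difference quotient. Letting $T\to\infty$ and using pointwise convergence (Proposition \ref{prop:finite_to_infinite_1}), we see that for fixed $h$ the family $\partial_xV_T(t,x)$ is Cauchy up to an error of order $M|h|$. Since $h$ is arbitrary, $\lim_T\partial_xV_T(t,x)=:D(t,x)$ exists. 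Moreover, $|D_hV_\infty(t,x)-D(t,x)|\le \tfrac M2|h|$, so $\partial_xV_\infty(t,x)$ exists and equals $D(t,x)$. By concavity (Lemma \ref{lem:concave_V}) the one-sided derivatives exist anyway; this argument shows that they coincide, even at $x=b_\infty(t)$.

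\textbf{Step 3 (uniformity).} For $(t,x)\in\mathcal{I}\times\mathcal{J}$ and $0<h$ small enough that $x+h\in\mathcal{J}'$,
\[
|\partial_xV_T-\partial_xV_\infty|(t,x)\le M h+\frac{2}{h}\sup_{\mathcal{I}\times\mathcal{J}'}|V_T-V_\infty|.
\]
Here the bound for $V_\infty$ uses the same constant $M$, because the Lipschitz bound passes to the limit $D$. By Corollary \ref{cor:unif_conv_T} applied on $\mathcal{I}\times\mathcal{J}'$, the last term tends to $0$ as $T\to\infty$ for each fixed $h$. Choosing $h$ small first and then $T$ large yields uniform convergence.

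The main obstacle is Step 1, specifically the uniform-in-$T$ control of $\partial_xV_T$ across the free boundary. This requires the finite-horizon $C^1$ property, so that no jump of $\partial_xV_T$ occurs at $b_T(t)$. It also requires that the bound on $\partial_tV_T$ in Proposition \ref{prop:time_lip} does not degrade as $T$ grows. If the a.e. PDE bound on $\partial_{xx}V_T$ near the boundary turns out to be delicate, I would instead work in Lamperti coordinates on $\tilde{\mathcal{C}}_T$. There I would use the same bound together with the identity $\partial_l\tilde W_T(t,\tilde b_T(t))=0$ from smooth fit to integrate inward from the boundary.
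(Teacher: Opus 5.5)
Your proposal is correct and follows essentially the same route as the paper. Both arguments use the same ingredients: a horizon-independent bound on $\partial_{xx}V_T$ obtained from the PDE, the $T$-independent constant $K_{\text{time}}$, and finite-horizon smooth fit, followed by a difference-quotient argument combined with Corollary \ref{cor:unif_conv_T}. The only cosmetic difference is how the limit is identified with $\partial_xV_\infty$. You pass the difference-quotient estimate to the limit, whereas the paper first shows that $\partial_xV_T$ is uniformly Cauchy and then uses the fundamental theorem of calculus.
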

\begin{proof}
    Let $\delta>0$ such that $\mathcal{J}_\delta=[\underline{a}-\delta,\overline{a}+\delta]\subset(0,1)$.
    We will first show that $\partial_{xx}V_T$ is uniformly bounded on $\mathcal{I}\times\mathcal{J}_\delta$ to show that $x\mapsto \partial_x V_T(t,x)$ is Lipschitz continuous. 

    Recall from Corollary \ref{cor:main_discont_dxx_pt1} that we have an expression for $\partial_{xx} V_T(t,x)$, which can be bounded by
    \begin{equation*}
        |\partial_{xx}V_T(t,x)|\leq |g''(x)|+2\frac{c(t)+|\partial_tV_T(t,x)|}{\sigma^2(x)}.
    \end{equation*}
    It holds that $\sup_{x\in\mathcal{J}_\delta}|g''(x)|<\infty$, $\inf_{x\in\mathcal{J}_\delta}\sigma^2(x)>0$, and $\sup_{t\in\mathcal{I}}c(t)<\infty$. Furthermore, $|\partial_t V_T(t,x)|\leq K_{\text{time}}$ from Proposition \ref{prop:time_lip} which is independent of $T$. Thus, there exists a uniform constant $K_\delta$ such that
    \begin{equation*}
        |\partial_{xx}V_T(t,x)|\leq K_\delta,
    \end{equation*}
    whenever it exists on $\mathcal{I}\times\mathcal{J}_\delta$.
    It then follows that 
    \begin{equation}\label{eq:lip_partialx}
        |\partial_x V_T(t,x)-\partial_x V_T(t,y)|\leq K_\delta|x-y|,
    \end{equation}
    for all $x,y\in\mathcal{J}_\delta$. Namely, we may, without loss of generality, assume $x<b_T(t)<y$. Then, by smooth fit, we have
    \begin{align*}
        |\partial_x V_T(t,x)-\partial_x V_T(t,y)|=&|\partial_x V_T(t,x)-\partial_x V_T(t,b_T(t)-)+\partial_x V_T(t,b_T(t)+)-\partial_x V_T(t,y)|\\
        =&\left|\int_x^{b_T(t)}\partial_{xx}V_T(t,z)\,dz+\int_{b_T(t)}^y\partial_{xx}V_T(t,z)\,dz\right|\leq K_\delta|x-y|.
    \end{align*}

    Next, we show that $\partial_x V_T$ is uniformly Cauchy in $T$ on $\mathcal{I}\times\mathcal{J}$. Take $h\in(0,\delta)$ and $x\in\mathcal{J}$ such that $x+h\in\mathcal{J}_\delta$. By the fundamental theorem of calculus
    \begin{equation*}
        V_T(t,x+h)-V_T(t,x)=\int_0^h\partial_xV_T(t,x+r)dr,
    \end{equation*}
    and so, by the triangle inequality,
    \begin{equation}\label{eq:dx_mixed}
        \left|\frac{V_T(t,x+h)-V_T(t,x)}{h}-\partial_xV_T(t,x)\right|\leq \frac{1}{h}\int_0^h|\partial_xV_T(t,x+r)-\partial_xV_T(t,x)|\,dr\leq \frac{hK_\delta}{2}
    \end{equation}
    by \eqref{eq:lip_partialx}. For $T,S>t_2$, we then have
    \begin{align}
        &|\partial_xV_T(t,x)-\partial_xV_S(t,x)|\label{eq:dx_cauchy_first}\\
        \leq& \left|\partial_xV_T(t,x)-\frac{V_T(t,x+h)-V_T(t,x)}{h}\right|+\left|\frac{V_T(t,x+h)-V_S(t,x+h)}{h}\right|
        \\&+\left|\frac{V_S(t,x)-V_T(t,x)}{h}\right|+\left|\frac{V_S(t,x+h)-V_S(t,x)}{h}-\partial_xV_S(t,x)\right|\\
        \leq& hK_\delta+\frac{2}{h}\|V_T-V_S\|_{L^\infty(\mathcal{I}\times\mathcal{J}_\delta)}\label{eq:dx_cauchy_last},
    \end{align}
    by \eqref{eq:dx_mixed}. Since $V_T$ converges uniformly on $\mathcal{I}\times\mathcal{J}_\delta$ as $T\to\infty$, it specifically holds that $V_T$ is uniformly Cauchy, hence, for any $\varepsilon>0$, we can choose $T,S$ large enough such that 
    $\|V_T-V_S\|_{L^\infty(\mathcal{I}\times\mathcal{J}_\delta)}<h\varepsilon/4$. Similarly, we can take $h<\varepsilon/(2K_\delta)$. Thus, taking supremum over $\mathcal{I}\times\mathcal{J}$ in \eqref{eq:dx_cauchy_first}-\eqref{eq:dx_cauchy_last}, we get
    \begin{equation*}
        \|\partial_xV_T-\partial_xV_S\|_{L^\infty(\mathcal{I}\times\mathcal{J})}<\varepsilon,
    \end{equation*}
    i.e. $\partial_x V_T$ is uniformly Cauchy in $T$ on $\mathcal{I}\times\mathcal{J}$ and so it converges uniformly to some function $(t,x)\mapsto U(t,x)$. By the fundamental theorem of calculus, we have
    \begin{equation*}
        V_\infty(t,x)-V_\infty(t,y)=\lim_{T\to\infty}\int_y^x\partial_x V_T(t,z)\,dz=\int_y^x U(t,z)\,dz,
    \end{equation*}
    so we identify $U(t,x)=\partial_xV_\infty(t,x)$.
\end{proof}

Using this, we can now proceed to the proof of the main result, Theorem \ref{thm:main_regular_smooth}.
\begin{proof}[Proof of Theorem \ref{thm:main_regular_smooth}]
    Since $V_T$ is locally Lipschitz and equals $g$ on $\mathcal{D}_T$, once
    $\partial_tV_T$ and $\partial_xV_T$ on $\mathcal{C}_T$ extend continuously
    to the boundary with limits $0$ and $g'$, respectively, a standard pasting argument yields that $V_T\in C^1(\mathcal{S}_T)$.
    By this observation it suffices to restrict to sequences approaching the boundary from the continuation region. By Propositions \ref{prop:PDE}, \ref{prop:temp_smooth}, and
    \ref{prop:spac_smooth}, we are only missing the spatial smooth fit
    for the infinite-horizon case.

    Let $((t_n,x_n))_{n\in\mathbb{N}}$ be as in Lemma \ref{lem:regular_boundary} (for $T=\infty$). 
    Since $b_\infty$ is locally Lipschitz by Proposition
    \ref{prop:bound_lip}, $b_\infty(t_n)\to b_\infty(t)$ and $|x_n-b_\infty(t_n)|\to0$.
    By restricting to sufficiently large $n$, all the points
    $(t_n,x_n)$ and $(t_n,b_\infty(t_n))$ lie in a common compact subset of $[0,\infty)\times(0,1)$. 
    
    Using the inequality
    \begin{align*} \left| \partial_xV_\infty(t_n,x_n) - \partial_xV_\infty(t,b_\infty(t)) \right|&\leq \left| \partial_xV_\infty(t_n,x_n) - \partial_xV_\infty(t_n,b_\infty(t_n)) \right|\\
    &\quad + \left| \partial_xV_\infty(t_n,b_\infty(t_n)) - \partial_xV_\infty(t,b_\infty(t)) \right|,
    \end{align*} 
    we will establish that each term vanishes as $n\to\infty$.
    By Proposition \ref{prop:uniform_conv_dx} and the estimate
    \eqref{eq:lip_partialx} (which is uniform in the horizon $T$), there is a constant $K$ depending on the compact such that
    \begin{align*}
        \left|
            \partial_xV_\infty(t_n,x_n)
            -
            \partial_xV_\infty(t_n,b_\infty(t_n))
        \right|&=
        \lim_{T\to\infty}
        \left|
            \partial_xV_T(t_n,x_n)
            -
            \partial_xV_T(t_n,b_\infty(t_n))
        \right|\\
        &\quad\leq
        K |x_n-b_\infty(t_n)|
        \to 0.
    \end{align*}
    At the same time, Proposition \ref{prop:uniform_conv_dx} implies that
    $x\mapsto V_\infty(s,x)$ is differentiable on $(0,1)$ for every
    $s\geq0$. Since $V_\infty(s,y)=g(y)$, $y\geq b_\infty(s)$, differentiability at the boundary gives $\partial_xV_\infty(s,b_\infty(s)) = g'(b_\infty(s))$, $s\geq0$. Consequently, \begin{align*} \left| \partial_xV_\infty(t_n,b_\infty(t_n)) - \partial_xV_\infty(t,b_\infty(t)) \right|= \left| g'(b_\infty(t_n)) - g'(b_\infty(t)) \right| \to 0 \end{align*} by continuity of $b_\infty$ and $g'$.
    This completes the proof for the upper boundary, while the lower boundary holds similarly by symmetry. Hence we are done.
\end{proof}

With the smooth-fit property of the value function in finite horizon we are finally able to verify that the optimal stopping boundary is away from the set $\mathcal{U}$. This in turn will give us the final part of Theorem \ref{thm:main_structure}.

\begin{proof}[Proof of Theorem \ref{thm:main_structure}]
    The optimality of the first hitting time to the stopping set was proven in Proposition \ref{prop:semicontinuity} for finite horizon and Proposition \ref{prop:finite_to_infinite_1} for infinite horizon.
    The existence of a lower semicontinuous boundary $b_T$ such that $b_T(t)\geq\gamma(t)$ and the continuation set is given by \eqref{eq:continuation_set_b} was proven in Proposition \ref{prop:two_boundaries}.
    From Lemma \ref{lem:b_uniform_upper_bound}, we have $b_T(t)<1$, and so we are only missing $b_T(t)>\gamma(t)$ for all $t\in[0,T)$ with $T\leq \infty$.

    If the conclusion holds for finite $T$, then it can be extended to infinite horizon by noting that $T\mapsto b_T$ is increasing. Let $T<\infty$ and assume for contradiction that $b_T(t_0)=\gamma(t_0)$ for some $t_0\in(0,T)$. Choose $\delta,\eta>0$, with $t_0+\delta<T$, such that $(t,\gamma(t)+z)\in\mathcal{U}$ whenever $|t-t_0|<\delta$ and $z\in(-\eta,0)$. Set
    \[
        \overline W(t,z):=W_T(t,\gamma(t)+z).
    \]
    Then $\overline W<0$ in this cylinder and $\overline W(t_0,0)=0$. Moreover,
    \[
        \partial_t\overline W
        -\gamma'(t)\partial_z\overline W
        +\frac12\sigma^2(\gamma(t)+z)\partial_{zz}\overline W
        =-\bigl(c(t)+\mathcal Lg(\gamma(t)+z)\bigr)>0.
    \]
    The operator is uniformly parabolic on the closure of a smaller cylinder. After reversing time, the parabolic Hopf lemma, see e.g. \cite[Theorem 3.2.3]{protter2012maximum}, at the flat lateral boundary $z=0$ gives
    \[
        \partial_z\overline W(t_0,0)>0.
    \]
    On the other hand, spatial smooth fit gives
    \[
        \partial_z\overline W(t_0,0)
        =\partial_xW_T(t_0,\gamma(t_0))=0,
    \]
    which is a contradiction. Hence $b_T(t)>\gamma(t)$ for $t\in(0,T)$.
    The conclusion can be extended to $t\in[0,T)$ by arguing as around \eqref{eq:extend_0}.    
\end{proof}

The separation of the optimal stopping boundary from $\mathcal{U}$ is useful since it yields a strict inequality $c(t)+\mathcal{L}g(x)>0$ close to the boundary. First, this can be utilized to conclude that the smooth fit is in some sense the maximal regularity we can achieve as stated in Corollary \ref{cor:main_discont_dxx}.

\begin{proof}[Proof of Corollary \ref{cor:main_discont_dxx}]
    The first part is given by Corollary \ref{cor:main_discont_dxx_pt1}.
    For the second part, Proposition \ref{prop:PDE} yields that $\partial_{xx} V_T$ is continuous in $\mathcal{C}_T$. We can utilize \eqref{eq:time_smooth_fit} to get
    \begin{equation*}
        \lim_{n\to\infty}\partial_{xx}V_T(t_n,x_n)=-\frac{2c(t)}{\sigma^2(b_T(t))},
    \end{equation*}
    for a sequence $(t_n,x_n)\to (t,b_T(t))$ with $(t_n,x_n)\in \mathcal{C}_T$ for all $n\in\mathbb{N}$. Now,
    combining this with the fact that $\partial\mathcal{D}_T\subset \mathcal{S}_T\setminus \overline{\mathcal{U}}$, where $\overline{\mathcal{U}}$ denotes the closure of $\mathcal{U}$ in $\mathcal{S}$, we obtain
    \begin{equation*}
        g''(b_T(t))>-\frac{2c(t)}{\sigma^2(b_T(t))}.
    \end{equation*}
    Thus, $\partial_{xx}V_T$ has a discontinuity across the boundary.
\end{proof}

With the smooth-fit property of the value function, Lipschitz stopping boundaries, and the separation of the boundary and the $\mathcal{U}$ set, we can upgrade to continuously differentiable optimal stopping boundaries by \cite[Theorem 2.4]{de2024probabilistic}, thereby proving Corollary \ref{cor:main_c1}.
\begin{proof}[Proof of Corollary \ref{cor:main_c1}]
    The result follows if we can verify \cite[Assumption 2.3]{de2024probabilistic}. Their problem is formulated as a maximization problem, and we implicitly use the standard conversion from minimization with running cost to maximization with terminal gain.
    Take any $t_0\in(0,T)$. By Theorem \ref{thm:main_structure} giving $\gamma(t)<b_T(t)<1$ for all $t<T$ it holds that there exists an open rectangle $\mathcal{R}$ with closure $\overline{\mathcal{R}}\subset [0,T)\times (0,1)$ and $(t_0,b_T(t_0))\in\mathcal{R}$ such that: (i) $V_T$ is continuously differentiable in time and space on $\overline{\mathcal{R}}$, by Theorem \ref{thm:main_regular_smooth}, (ii) with some abuse of notation, $g\in C^2(\overline{\mathcal{R}})$, $c\in C^1(\overline{\mathcal{R}})$, and $h(t,x)=c(t)+\mathcal{L}g(x)>0$, by Assumption \ref{asmp:sigma_g} on $g$, Assumption \ref{asmp:base_penalty:c}, and Theorem \ref{thm:main_structure}, (iv) $\sigma\in C^1(\overline{\mathcal{R}})$ bounded away from 0 with $\sigma'$ being Lipschitz continuous on $\overline{\mathcal{R}}$, by Assumption \ref{asmp:sigma_g} on $\sigma$, and since $\sigma(x)=0$ if and only if $x\in\{0,1\}$, which the boundary is away from. Assumption (iii) in \cite[Assumption 2.3]{de2024probabilistic} is trivially satisfied since $X$ has no drift and the optimal stopping problem is without discounting. By arguing as in \eqref{eq:extend_0} we can extend to $[0,T)$.

    Thus, since $b_T$ is Lipschitz continuous on $[0,T_1]$ for any $T_1<T$ by Proposition \ref{prop:bound_lip}, we have the claim by applying \cite[Theorem 2.4]{de2024probabilistic}.
\end{proof}

\section{Free-Boundary Integral Equation}\label{sec:integral_eqs}
To get an explicit equation for the boundary in both the finite- and infinite-horizon cases, we can utilize the previous results to obtain a nonlinear integral equation under Assumption \ref{asmp:extended}, which we will assume throughout the section. We treat the finite- and infinite-horizon cases simultaneously, and we obtain uniqueness in both cases. For any function $f:[0,T)\to (1/2,1)$, we define the family of sets 
\begin{equation*}
    \mathcal{C}^f_t:=\{x:x\in(1-f(t),f(t))\} 
\end{equation*}
for $t\in[0,T)$. In the proof below we suppress the $T$ notation for $b_T$. Note in particular that $\mathcal{C}^{b_T}_t=\{x\in(0,1) : (t,x)\in\mathcal{C}_T\}$.

\begin{proof}[Proof of Theorem \ref{thm:main_integral_equation}]
       Take any $(t,x)\in\mathcal{S}_T$ and let $r\in(0,T-t)$. Define $r_m=r\wedge\eta_m$, where $\eta_m$ is as in Proposition \ref{prop:lagrange}. For simplicity, we write $b(t):=b_T(t)$. On every compact time interval, the curves $b$ and $1-b$ are separated and of bounded variation. Moreover, $V_T$ is $C^{1,2}$ off the curves
       and the spatial concavity gives the local variation condition in \cite[Remark 3.2]{peskir2005change}. The change-of-variables formula \cite[Theorem 3.1]{peskir2005change} may therefore be applied successively to the two curves, yielding
    \begin{align*}
        V_T(t+r_m,X^x_{r_m})-V_T(t,x)=\quad&\int_0^{r_m}(\partial_tV_T+\mathcal{L}V_T)(t+u,X^x_{u})\ind{X^x_{u}\not\in\partial\mathcal{C}^b_{t+u}}\,du\\
        +&\int_0^{r_m}\sigma(X^x_{u})\partial_xV_T(t+u,X^x_{u})\ind{X^x_{u}\not\in\partial\mathcal{C}^b_{t+u}}\,dB_{u}\\
        +&\int_0^{r_m}\frac{1}{2}\Delta_x\partial_xV_T(t+u,X^x_{u})\ind{X^x_{u}\in\partial\mathcal{C}^b_{t+u}}\,d\ell_{u},
    \end{align*}
    where $\ell_u$ is the sum of the local times of $X^x$ along $1-b$ and $b$, and we have defined the operator $\Delta_x$ by 
    \begin{equation*}
        \Delta_x\partial_xV_T(t,x)=\partial_xV_T(t,x+)-\partial_xV_T(t,x-).
    \end{equation*}
    The local time integral vanishes by the spatial smooth fit, Theorem \ref{thm:main_regular_smooth}. Taking expectation, we note that the local martingale term vanishes, since $\sigma(X^x_{u})\partial_xV_T(t+u,X^x_{u})$ is bounded on $[0,r_m]$. Thus, we have
    \begin{align*}
        \mathbb{E}[V_T(t+r_m,X^x_{r_m})]-V_T(t,x)&=\mathbb{E}\Big[\int_0^{r_m}(\partial_tV_T+\mathcal{L}V_T)(t+u,X^x_{u})\ind{X^x_{u}\not\in\partial\mathcal{C}^b_{t+u}}\,du\Big]\\
        &=\mathbb{E}\left[\int_0^{r_m}\left(-c(t+u)\ind{X^x_{u}\in\mathcal{C}^b_{t+u}}+\mathcal{L}g(X^x_{u})\ind{X^x_{u}\not\in\mathcal{C}^b_{u+t}}\right)\,du\right].
    \end{align*}
    The integrand in the last expression is nonpositive. As $m\to\infty$, bounded convergence applies to the value term, while monotone convergence applies to the integral term. Hence the identity holds with $r_m$ replaced by $r$.

    Now let $r\uparrow T-t$. If $T<\infty$, the Lagrange formulation gives, for $h>0$ and $y\in[0,1]$,
    \[
        -h\|\mathcal Lg\|_\infty
        \leq W_T(T-h,y)\leq0.
    \]
    Thus $V_T(T-h,\cdot)\to g(\cdot)$ uniformly as $h\downarrow0$. Since $X^x$ has continuous paths and $g$ is continuous,
    \[
        V_T(t+r,X_r^x)\to g(X_{T-t}^x)
    \]
    almost surely and in $L^1$. If $T=\infty$, then $0\leq V_\infty(t+r,X_r^x)\leq g(X_r^x)\to0$ almost surely and in $L^1$. Letting $r\uparrow T-t$ and rearranging therefore gives
    \begin{equation}\label{eq:cov_V}
        V_T(t,x)=\mathbb{E}\left[\int_0^{T-t}c(t+u)\ind{X^x_{u}\in\mathcal{C}^b_{t+u}}\,du-\int_0^{T-t}\mathcal{L}g(X^x_{u})\ind{X^x_{u}\not\in\mathcal{C}^b_{t+u}}\,du+g(X^x_{T-t})\right],
    \end{equation}
    Evaluating \eqref{eq:cov_V} at $x=b(t)$ and $x=1-b(t)$, we obtain \eqref{eq:main_boundary_integral}.

    Next, we show uniqueness. Assume that there exists a solution $\beta$ to \eqref{eq:main_boundary_integral} such that $\beta(t)\in(\gamma(t),1)$ for all $t< T$ and $\beta\in C([0,T))$. We define the $\beta$--analogue of $V_T$ in \eqref{eq:cov_V} by
    \begin{equation*}
        V_T^\beta(t,x):=\mathbb{E}\left[\int_0^{T-t}F^\beta(t+u,X^x_u)\,du+g(X^x_{T-t})\right],
    \end{equation*}
    where
    \begin{equation*}
        F^\beta(u,x):=c(u)\ind{x\in\mathcal{C}^\beta_{u}}-\mathcal{L}g(x)\ind{x\not\in\mathcal{C}^\beta_{u}}.
    \end{equation*}
    Note in particular that since $F^\beta\geq0$, the integral is well-defined but possibly infinite when $T=\infty$.
    Define the first hitting time 
    \begin{equation*}
        \tau^f(t,x)=\inf\{0\leq s<T-t:X^x_s\in\partial \mathcal{C}^f_{t+s}\}\wedge(T-t),
    \end{equation*}
    for $f\in\{b,\beta\}$.
    
    Take any $(t,x)\in\mathcal{S}_T$ and denote $\tau^\beta:=\tau^\beta(t,x)$. Since $F^\beta\geq0$, Tonelli's theorem and the strong Markov property justify the following decomposition also when $T=\infty$:
    \begin{equation}\label{eq:U_markov}
        V^\beta_T(t,x)=\mathbb{E}\left[\int_0^{\tau^\beta}F^\beta(t+u,X^x_u)\,du+\ind{\tau^\beta<T-t}V_T^\beta(t+\tau^\beta,X^x_{\tau^\beta})+\ind{\tau^\beta=T-t}g(X^x_{T-t})\right].
    \end{equation}
    On $\{\tau^\beta<T-t\}$, we have $X^x_{\tau^\beta}=\beta(t+\tau^\beta)$ or $X^x_{\tau^\beta}=1-\beta(t+\tau^\beta)$, and since $\beta$ and $1-\beta$ solve \eqref{eq:main_boundary_integral}, we have
    \begin{equation*}
        V^\beta_T(t+\tau^\beta,\beta(t+\tau^\beta))=g(\beta(t+\tau^\beta)),
    \end{equation*}
    by symmetry. Thus \eqref{eq:U_markov} becomes
    \begin{equation}\label{eq:U_markov_2}
        V^\beta_T(t,x)=\mathbb{E}\left[\int_0^{\tau^\beta}F^\beta(t+u,X^x_u)\,du+g(X^x_{\tau^\beta})\right].
    \end{equation}
    If $x\in(0,1)\setminus\mathcal{C}^\beta_t$, then $X^x_u\not\in\mathcal{C}^\beta_{t+u}$ for all $u<\tau^\beta$, so by the definition of $F^\beta$, we have
    \begin{equation}\label{eq:U_eq_g}
        V^\beta_T(t,x)=\mathbb{E}\left[-\int_0^{\tau^\beta}\mathcal{L}g(X^x_u)\,du+g(X^x_{\tau^\beta})\right]=g(x),
    \end{equation}
    where the last equality follows from Proposition \ref{prop:lagrange}. Similarly, if $x\in\mathcal{C}^\beta_t$, then $X^x_u\in\mathcal{C}^\beta_{t+u}$ for all $u<\tau^\beta$, so by the definition of $F^\beta$, we have
    \begin{equation}\label{eq:U_geq_V}
        V^\beta_T(t,x)=\mathbb{E}\left[\int_0^{\tau^\beta}c(t+u)\,du+g(X^x_{\tau^\beta})\right]\geq V_T(t,x),
    \end{equation}
    since $\tau^\beta$ is a suboptimal stopping time. In particular, if $\tau^\beta\not\in\mathcal{T}$, then the inequality trivially holds. Now consider the Lagrange-formulated version of $V^\beta_T$ denoted by $W_T^\beta(t,x):=V_T^\beta(t,x)-g(x)$. The integral equation \eqref{eq:main_boundary_integral} yields $W_T^\beta(t,\beta(t))=0$ and $W_T^\beta$ is given by
    \begin{equation*}
        W_T^\beta(t,x)=\mathbb{E}\left[\int_0^{T-t}H(t+u,X^x_u)\ind{X^x_u\in\mathcal{C}^\beta_{t+u}}\,du\right],
    \end{equation*}
    where, utilizing Proposition \ref{prop:lagrange}, we get
    \begin{equation*}
        H(u,x):=c(u)+\mathcal{L}g(x).
    \end{equation*}
    Note in particular that this is a well-defined improper integral. By Proposition \ref{prop:lagrange},
    \[
        \mathbb{E}\left[\int_0^{T-t}-\mathcal{L}g(X_u^x)\,du\right]\leq g(x),
    \]
    so only the nonnegative $c$ term can diverge.
    Equation \eqref{eq:U_eq_g} and \eqref{eq:U_geq_V} then yield $W_T^\beta(t,x)=0$ for $x\in(0,1)\setminus\mathcal{C}^\beta_t$ and $W_T^\beta(t,x)\geq W_T(t,x)$ for $x\in\mathcal{C}^\beta_t$.

    Assume for contradiction that there exists a $t<T$ such that $\beta(t)>b(t)$. Since $\beta$ and $b$ are continuous and differ at $t$, there is a $\delta\in(0,T-t)$ such that $\beta(t+u)>b(t+u)$ for $u\in[0,\delta]$. Furthermore, one can find a compact rectangle $R$ such that 
    \begin{equation*}
        R\subset\{(t+u,y)\in\mathcal{S}_T:u\in[0,\delta],y\in(b(t+u),\beta(t+u))\}.
    \end{equation*} 
    Put $x=\beta(t)$ and $\rho:=\tau^b(t,x)\wedge\delta$. Then $W_T^\beta(t,x)=0$ and 
    the strong Markov property gives
    \begin{equation}\label{eq:integral_eq_contradiction}
        0=W_T^\beta(t,x)
        ={}\mathbb{E}\!\left[\int_0^\rho H(t+u,X_u^x)\ind{X_u^x\in\mathcal{C}_{t+u}^\beta}\,du\right]
        +\mathbb{E}\!\left[W_T^\beta(t+\rho,X_\rho^x)\right].
    \end{equation} 
    The point $(t+\rho,X_\rho^x)$ lies in the true stopping set. Indeed, if $\rho=\tau^b(t,x)$, the process is on its boundary, while if $\rho=\delta<\tau^b(t,x)$, it remains above the upper boundary. Hence $W_T(t+\rho,X_\rho^x)=0$. Since $W_T^\beta\geq W_T$ in $\mathcal{C}^\beta$ and $W_T^\beta=0$ outside $\mathcal{C}^\beta$, the terminal term above is nonnegative. 
    Thus \eqref{eq:integral_eq_contradiction} gives
    \begin{equation}\label{eq:integral_eq_contradiction_2}
        0=W_T^\beta(t,x)
        \geq\mathbb{E}\!\left[\int_0^\rho H(t+u,X_u^x)\ind{X_u^x\in\mathcal{C}_{t+u}^\beta}\,du\right]
    \end{equation}
    Moreover, for $u<\rho$, continuity of the paths gives $X_u^x>b(t+u)$. Consequently, on $\{X_u^x\in\mathcal{C}_{t+u}^\beta\}$,
    \[
        b(t+u)<X_u^x<\beta(t+u).
    \]
    Since $b(t+u)>\gamma(t+u)$, we have $H(t+u,X_u^x)>0$ on this event. Thus, the integral in \eqref{eq:integral_eq_contradiction_2} above is nonnegative. Furthermore, the local nondegeneracy gives positive probability that the process enters $R$ and spends positive Lebesgue time there. The first expectation is therefore strictly positive, which is a contradiction. Hence $\beta(t)\leq b(t)$ for all $t<T$.
    
    Now assume for contradiction that there exists a $t<T$ such that $\beta(t)< b(t)$. Taking $x=b(t)$, we thus get 
    \begin{equation*}
        0=W_T(t,x)-W_T^\beta(t,x)=\mathbb{E}\left[\int_0^{T-t}H(t+u,X^x_u)\ind{X^x_{u}\in \mathcal{C}^b_{t+u}\setminus \mathcal{C}^\beta_{t+u}}\,du\right].
    \end{equation*}
    Again, $H>0$ on $\mathcal{C}^b_u\setminus \mathcal{C}^\beta_u$ for any $u\in(t,T)$ and there is nonzero probability of staying in that set for nonzero Lebesgue time. This gives another contradiction, and we conclude that $\beta(t)=b(t)$.
\end{proof}

\section{Examples}\label{sec:examples}
We now illustrate the preceding results for the win-martingales introduced in Section \ref{sec:motivation}. The main
examples are the Aldous, Bass, and sequential testing win-martingales, whose time-changed volatility coefficients are
\begin{equation*}
    \sigma_A(x)=\frac{\sin(\pi x)}{\pi},
    \qquad
    \sigma_B(x)=\varphi(\Phi^{-1}(x)),
    \qquad 
    \sigma_I(x)=x(1-x).
\end{equation*}
We consider the terminal costs induced by optimal actions as in Section \ref{sec:motivation}:
\begin{equation*}
    g_{L^2}(x)=x(1-x),\qquad
    g_{\mathrm{CE}}(x)
    =-x\log x-(1-x)\log(1-x),
\end{equation*}
where we note that $g_{\mathrm{CE}}$ fails to obey Assumption \ref{asmp:bounded:g'}, and thus, we have to make use of Assumption \ref{asmp:bounded:c} for cross-entropy loss.

\subsection{Constant optimal stopping boundaries}

\begin{example}
We first compare the Aldous and Bass win-martingales in the homogeneous infinite-horizon setting.
The function $\rho$ is given by $\rho(s)=1/\sqrt{S-s}$
so the time transformation in \eqref{eq:A_motivation} is
\begin{equation*}
    A(s)=\int_0^s\frac{1}{S-u}\,du
    =
    -\log\left(\frac{S-s}{S}\right),
    \qquad
    \Gamma(t)=S(1-e^{-t}),
\end{equation*}
and hence $T=\infty$. If
\begin{equation*}
    f(s)=\frac{c_0}{S-s},
\end{equation*}
then $c(t)\equiv c_0$ by \eqref{eq:c_motivation}.
Thus, the time-changed problem is precisely the homogeneous infinite-horizon problem studied in
Section \ref{sec:homogeneous}. The optimal rule is therefore the first exit time from a fixed interval given by Theorem \ref{VerificationInfConstant}.

In Figure \ref{fig:constant-boundaries} we take $c_0=0.075$ and use the $L^2$--loss. 
The dotted curve corresponds to the Aldous win-martingale and the dashed curve
to the Bass win-martingale. Numerically, the Aldous value function lies above the Bass value function, meaning
that, under the same terminal loss and running cost, the Aldous game is more costly to predict
optimally. In this sense it is the more difficult, or more exciting, game to call.
\end{example}

\begin{figure}[ht]
    \centering
    \includegraphics[width=0.7\textwidth]{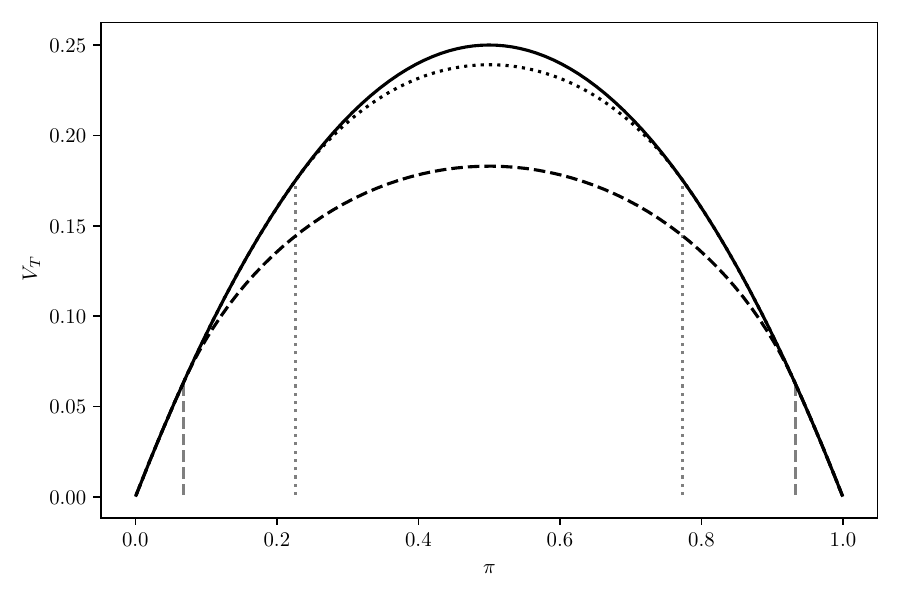}
    \caption{Numerical plot of the value function $x\mapsto V(x)$ for the Aldous (dotted) and Bass (dashed) win-martingales with
    $L^2$--loss and constant transformed running cost $c_0=0.075$. The vertical lines indicate the
    optimal stopping boundaries from Theorem \ref{VerificationInfConstant}, and the solid curve is $x\mapsto g_{L^2}(x)$.}
    \label{fig:constant-boundaries}
\end{figure}

\subsection{Nonconstant optimal stopping boundaries}

If the transformed running cost is monotone, then the optimal stopping boundary inherits a
corresponding monotonicity. This monotonicity is determined by the cost-to-noise ratio
$f/\rho^2$.

\begin{proposition}\label{prop:monotone}
Assume Assumption \ref{asmp:extended} for the time transformed problem. If $s\mapsto f(s)/\rho^2(s)$ is increasing, then the continuation region of the original problem is shrinking. Equivalently, $t\mapsto b_T(t)$ is decreasing.

If $A(s)\to\infty$ as $s\to S$ and $s\mapsto f(s)/\rho^2(s)$ is decreasing, then the continuation region of the original problem is expanding. Equivalently, $t\mapsto b(t)$ is increasing.
\end{proposition}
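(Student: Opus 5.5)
First translate the monotonicity hypothesis into a statement about the transformed running cost. By \eqref{eq:c_motivation}, $c(t)=f(\Gamma(t))\Gamma'(t)$. Since $\Gamma=A^{-1}$ and $A'=\rho^2$, we have $\Gamma'(t)=1/\rho^2(\Gamma(t))$, so
\[
    c(t)=\frac{f(\Gamma(t))}{\rho^2(\Gamma(t))}.
\]
Because $\Gamma$ is strictly increasing, $c$ is increasing (respectively decreasing) in $t$ exactly when $s\mapsto f(s)/\rho^2(s)$ is increasing (respectively decreasing). The continuation region in original time is the image of $\mathcal{C}_T$ under $(t,x)\mapsto(\Gamma(t),x)$. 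Hence monotonicity of the calendar-time boundary $s\mapsto b_T(A(s))$ is the same as monotonicity of $t\mapsto b_T(t)$. In the second part, the hypothesis $A(s)\to\infty$ gives $T=\infty$, so the boundary there is $b=b_\infty$.

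The core step is a comparison in $t$ for the Lagrange formulation $W_T$, mirroring \eqref{eq:monotone_W_lower} in the proof of Lemma \ref{lem:b_uniform_upper_bound}.

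\emph{Increasing $c$.} Fix $t<s<T$ and let $\tau^*=\tau^*_T(s,x)$. This time satisfies $\tau^*\le T-s\le T-t$, so it is admissible from $(t,x)$. Therefore
\[
    W_T(t,x)-W_T(s,x)\le \mathbb{E}\left[\int_0^{\tau^*}\big(c(t+u)-c(s+u)\big)\,du\right]\le 0,
\]
where optimality of $\tau^*$ at $(s,x)$ was used in the first inequality, and admissibility of $\tau^*$ via Proposition \ref{prop:Stopping_set}. Thus $t\mapsto W_T(t,x)$ is increasing. If $(t,x)\in\mathcal{D}_T$, then $0=W_T(t,x)\le W_T(s,x)\le 0$, so $(s,x)\in\mathcal{D}_T$. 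The stopping slices therefore grow in time. By \eqref{eq:main_continuation_region}, this means $b_T(s)\le b_T(t)$. For $T=\infty$ the same inequality follows either by the identical argument using the optimal time of Proposition \ref{prop:finite_to_infinite_1}, or by passing $b_T\nearrow b_\infty$ from \eqref{eq:limit_b_T}.

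\emph{Decreasing $c$, $T=\infty$.} Fix $t<s$ and use $\tau^*_\infty(t,x)$, which is admissible from $(s,x)$ because there is no horizon constraint. This gives
\[
    W(s,x)-W(t,x)\le\mathbb{E}\left[\int_0^{\tau^*}\big(c(s+u)-c(t+u)\big)\,du\right]\le 0.
\]
So $W(\cdot,x)$ is decreasing, $\mathcal{D}$-slices shrink in time, and $b$ is increasing.

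The one delicate point is this second case. On a finite horizon, the time $\tau^*_T(t,x)$ may exceed $T-s$, so it is not admissible from $(s,x)$. The truncation correction $K''\mathbb{P}(\cdot)$ seen in Proposition \ref{prop:time_lip} would then spoil the sign. This is exactly why the assumption $A(S)=\infty$ is imposed: with $T=\infty$ no truncation is needed. The remaining steps only require checking integrability of the chosen stopping times, which is immediate because they lie in $\mathcal{T}$ and $\mathcal{T}^{(t)}=\mathcal{T}$.
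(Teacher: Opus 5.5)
Your proposal is correct and follows essentially the same route as the paper. Both rewrite $c(t)=f(\Gamma(t))/\rho^2(\Gamma(t))$ and compare $W_T$ at two times by reusing the optimal stopping time from one starting point at the other: $\tau^*_T(t_2,x)$ for increasing $c$, and $\tau^*_\infty(t_1,x)$ for decreasing $c$, where $T=\infty$ is needed for admissibility. The monotonicity of $b$ then follows from the inclusion of stopping-set slices.
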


\begin{proof}
By the definition of the time change and \eqref{eq:c_motivation},
\begin{equation*}
    c(t)=f(\Gamma(t))\Gamma'(t)
    =
    \frac{f(\Gamma(t))}{\rho^2(\Gamma(t))}.
\end{equation*}
Since $\Gamma$ is increasing, $c$ has the same monotonicity as $f/\rho^2$.

Suppose first that $c$ is increasing and let $t_1\le t_2<T$. Using an optimal stopping time for
the problem started from $(t_2,x)$ as an admissible stopping time for the problem started from
$(t_1,x)$, we obtain $W_T(t_2,x)\geq W_T(t_1,x)$ by arguing as in \eqref{eq:monotone_W_lower}.
Thus, if $(t_1,x)\in \mathcal{D}_T$, then $0\geq W_T(t_2,x)\ge W_T(t_1,x)=0$, so it follows
that $(t_2,x)\in \mathcal{D}_T$. Therefore, the stopping set expands with time, and $t\mapsto b_T(t)$ is
decreasing.

If $T=\infty$ and $c$ is decreasing, the reverse comparison gives
\begin{equation*}
    W_\infty(t_2,x)-W_\infty(t_1,x)
    \le
    \mathbb{E}\left[
        \int_0^{\tau^*_\infty(t_1,x)}
        \big(c(t_2+u)-c(t_1+u)\big)\,du
    \right]
    \le 0.
\end{equation*}
Hence the continuation set expands, and $t\mapsto b(t)$ is increasing.
\end{proof}

\begin{remark}
    It should be noted that for the second case of Proposition \ref{prop:monotone} it
    is necessary that we assume $T=\infty$. If $T<\infty$, then $\tau^*_T(t_1,x)$ may not be admissible for the problem with initial data $(t_2,x)$.
    In fact, the set of stopping times over which the infimum is taken decreases, and so one cannot expect to get a lower value alone from the decrease in the temporal cost.
\end{remark}

Thus, the monotonicity of the continuation region is governed by $f/\rho^2$, not by $f$ alone.
In particular, an increasing running cost in the original time scale may still lead to an expanding
continuation region if the noise level increases sufficiently fast.

\begin{example}
We illustrate this point for the classical sequential testing win-martingale on the horizon $S=1$ with the same time scaling
\begin{equation*}
    d\Pi_s^I=\frac{\Pi_s^I(1-\Pi_s^I)}{\sqrt{S-s}}\,dW_s,
\end{equation*}
using the $L^2$--loss and three increasing original time costs.

The left panel of Figure \ref{fig:three-boundaries} uses
\begin{equation*}
    f(s)=\frac{k(s+\alpha)}{1-s},\qquad c(t)=k(1+\alpha-e^{-t})
\end{equation*}
with $k=0.03$ and $\alpha=1$. The transformed running cost is increasing, resulting in a shrinking continuation region. The middle panel uses
\begin{equation*}
    f(s)=k(s+\alpha),\qquad  c(t)=k(1+\alpha-e^{-t})e^{-t},
\end{equation*}
with $k=0.24$ and $\alpha=0.01$. The transformed running cost is nonmonotone, and so the resulting optimal stopping boundary is correspondingly nonmonotone. The right panel uses the constant running cost
\begin{equation*}
    f(s)=k,\qquad c(t)=ke^{-t},
\end{equation*}
with $k=0.06$. The transformed running cost is decreasing. Hence, the continuation region expands over time.

These three choices show that monotone running costs in the original time scale can produce
shrinking, nonmonotone, or expanding continuation regions after the time change.
It can be verified that all of the examples satisfy Assumption \ref{asmp:extended}. Namely, the first satisfies Assumption \ref{asmp:bounded:g'} and \ref{asmp:bounded:c} while the latter two satisfy only \ref{asmp:bounded:g'}.
\end{example}

\begin{figure}[ht]
    \centering
    \includegraphics[width=\textwidth]{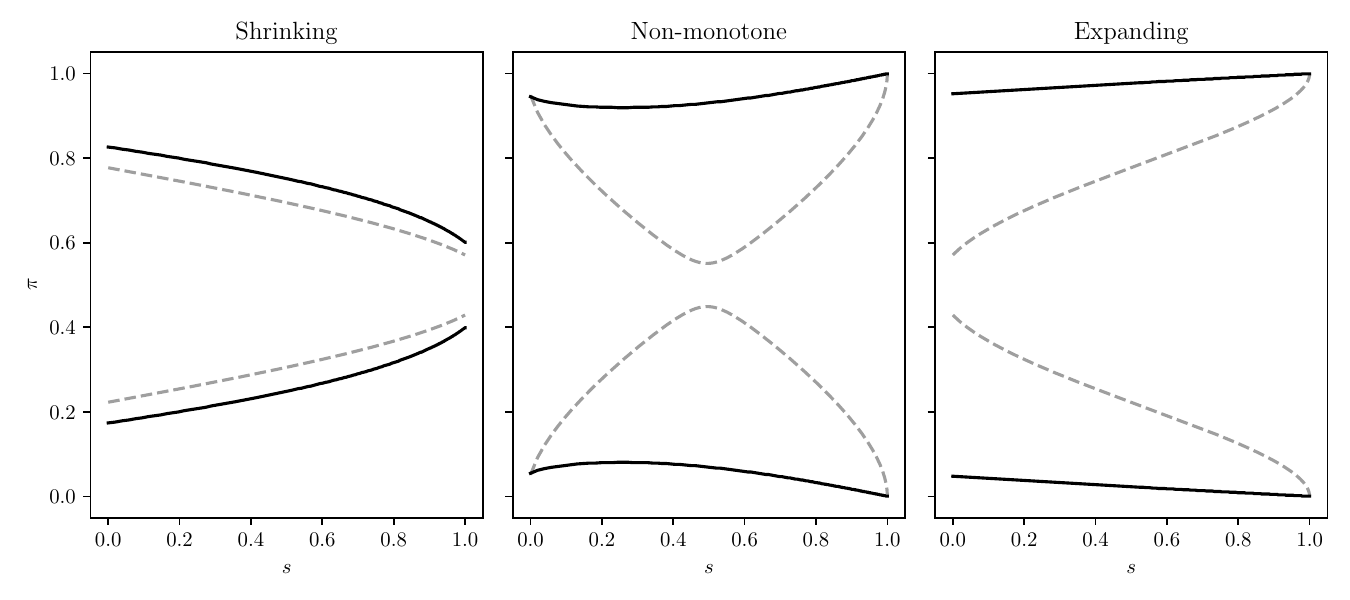}
    \caption{Numerical plot of the optimal stopping boundaries for the classical sequential testing win-martingale with $L^2$--loss and
    $S=1$. The three panels correspond to transformed running costs that are increasing,
    nonmonotone, and decreasing, respectively. Solid curves are the optimal stopping boundaries while the dashed lines are $\gamma$ and $1-\gamma$.}
    \label{fig:three-boundaries}
\end{figure}

\begin{example}\label{ex:oscillatory}
Finally, we consider an oscillatory transformed running cost for the Aldous win-martingale. We take
$S=1$, use the cross-entropy loss, and prescribe the transformed running cost directly by
\begin{equation}\label{eq:c_oscill}
    c(t)=k_1+k_2\sin(2\pi t),
\end{equation}
where $k_1=0.125$ and $k_2=0.075$. In the original time scale this
corresponds to
\begin{equation*}
    f(s)
    =
    \frac{k_1+k_2\sin(-2\pi\log(1-s))}{1-s}.
\end{equation*}
It can easily be verified that Assumption \ref{asmp:extended} holds, specifically with Assumption \ref{asmp:bounded:c} by the choice of $k_1>k_2$, while Assumption \ref{asmp:bounded:g'} does not hold since $g'_{\text{CE}}$ is unbounded.

The optimal stopping boundary is plotted in Figure \ref{fig:oscillatory-boundary}. Since the
transformed running cost is periodic in $t$, the boundary oscillates in the transformed time
variable. Consequently, when viewed in the original time variable $s$, namely, as $b(A(s))$, the
boundary need not have a limit as $s\nearrow1$. This illustrates that the local $C^1$--regularity
of the boundary in transformed time cannot be extended to the terminal time point.
We give a proof of this fact in the following proposition.
\end{example}

\begin{figure}[ht]
    \centering
    \includegraphics[width=0.7\textwidth]{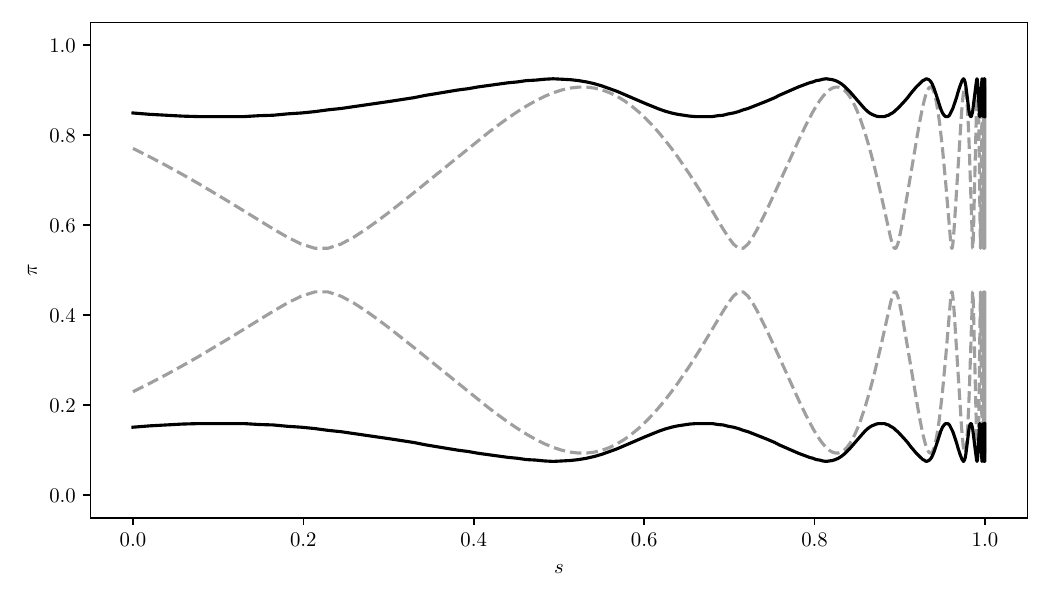}
    \caption{Numerical plot of the optimal stopping boundary for the Aldous win-martingale with cross-entropy loss and
    sinusoidal transformed running cost $c(t)=k_1+k_2\sin(2\pi t)$. The dashed curves show $\gamma$ and $1-\gamma$.}
    \label{fig:oscillatory-boundary}
\end{figure}

\begin{proposition}
    Consider an optimal stopping problem \eqref{eq:motivation_OSP_pi} such that the time transformation obeys $A(s)\to\infty$ as $s\to S$. Assume, furthermore, that the transformed optimal stopping problem satisfies Assumption \ref{asmp:extended} and the transformed running cost is given by \eqref{eq:c_oscill} for some constants $k_1> |k_2|>0$. Then the optimal stopping boundary in the original problem $s\mapsto b_\infty(A(s))$ does not converge as $s\to S$.
\end{proposition}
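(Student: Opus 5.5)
Since $A(s)\to\infty$ as $s\to S$, convergence of $s\mapsto b_\infty(A(s))$ as $s\to S$ is the same as convergence of $t\mapsto b_\infty(t)$ as $t\to\infty$. The plan is therefore to show that $b_\infty$ is $1$-periodic and non-constant; a non-constant periodic function cannot have a limit at infinity.

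\emph{Periodicity.} The running cost in \eqref{eq:c_oscill} satisfies $c(t+1)=c(t)$ for all $t\ge0$. The process $X^x$ is time-homogeneous, and the admissible class $\mathcal{T}^{(t)}=\mathcal{T}$ does not depend on $t$ by Proposition \ref{prop:Stopping_set}. Hence the objective in \eqref{eq:inf_horizon_osp} for initial data $(t+1,x)$ coincides with the objective for $(t,x)$ for every admissible $\tau$. Taking infima gives $V_\infty(t+1,x)=V_\infty(t,x)$, so $\mathcal{D}$ is invariant under the shift $t\mapsto t+1$. By Theorem \ref{thm:main_structure}, the $t$-slice of $\mathcal{C}$ is $(1-b_\infty(t),b_\infty(t))$, and therefore $b_\infty(t+1)=b_\infty(t)$.

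\emph{Non-constancy.} Suppose for contradiction that $b_\infty\equiv b$ is constant. Then $b\in(\gamma(t),1)$ for all $t$, and the continuation region is the fixed cylinder $\mathbb{R}_+\times(1-b,b)$.

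\begin{enumerate}
\item By Corollary \ref{cor:main_discont_dxx}, $\partial_{xx}V_\infty(t,x)=-2(c(t)+\partial_tV_\infty(t,x))/\sigma^2(x)$ in $\mathcal{C}$, and the temporal smooth fit (Proposition \ref{prop:temp_smooth}) gives $\partial_tV_\infty\to0$ at the boundary.
\item Use the spatial smooth fit $\partial_xV_\infty(t,b)=g'(b)$ for all $t$. Since the boundary is vertical, differentiate this identity in $t$ along the line $x=b$.
\item A cleaner route is to integrate the PDE \eqref{eq:PDE} in $x$ over $(1-b,b)$. By symmetry and smooth fit, $\partial_xV_\infty(t,b)-\partial_xV_\infty(t,1-b)=2g'(b)$ does not depend on $t$. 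Hence
\begin{equation*}
\int_{1-b}^{b}\frac{2\bigl(c(t)+\partial_tV_\infty(t,x)\bigr)}{\sigma^2(x)}\,dx=-2g'(b)
\end{equation*}
is constant in $t$.
\item Integrate this identity over one period $t\in[0,1]$. Periodicity of $V_\infty$ in $t$ kills the $\partial_tV_\infty$ term, so the constant equals $\int_{1-b}^b 2\bar c/\sigma^2\,dx$, where $\bar c=k_1$ is the mean of $c$.
\item Substituting back, $\int_{1-b}^b\partial_tV_\infty(t,x)\sigma^{-2}(x)\,dx=-(c(t)-k_1)\int_{1-b}^b\sigma^{-2}(x)\,dx$.
\end{enumerate}

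This identity alone is not contradictory. The intended contradiction instead comes from the PDE on the fixed cylinder. The Dirichlet data $V_\infty=g(b)$ and $V_\infty=g(1-b)$ are constant in time, and the Neumann data $\partial_xV_\infty=\pm g'(b)$ are also constant in time. Define $U:=\partial_tV_\infty$. Then $U$ solves $(\partial_t+\mathcal{L})U=-c'(t)$ in the cylinder, with $U=0$ and $\partial_xU=0$ on both lateral sides; the latter follows by differentiating the Neumann condition in $t$. The Cauchy data on the lateral boundary are zero.

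The natural comparison function is $U=-(c(t)-k_1)+\phi(t,x)$, which absorbs the source term. Then $\phi$ solves the homogeneous equation $(\partial_t+\mathcal{L})\phi=0$, with $\phi=c(t)-k_1$ and $\partial_x\phi=0$ on the lateral boundary.

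\emph{Main obstacle.} The hard step is making this overdetermined-boundary contradiction rigorous. This requires interior/boundary regularity of $\partial_tV_\infty$ up to the vertical boundary, which should follow from the flat boundary together with $\sigma$ being smooth and nondegenerate there.

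The simplest route I would try first is probabilistic. With a constant boundary, $\tau^*_\infty(t,x)=\inf\{u:X^x_u\notin(1-b,b)\}$ does not depend on $t$. Then
\begin{equation*}
V_\infty(t,x)=\mathbb{E}\Bigl[\int_0^{\tau^*}c(t+u)\,du\Bigr]+\mathbb{E}[g(X^x_{\tau^*})],
\end{equation*}
and the second term equals $g(b)$ by symmetry. Smooth fit requires
\begin{equation*}
\partial_x\mathbb{E}\Bigl[\int_0^{\tau^*}c(t+u)\,du\Bigr]\Big|_{x=b}=g'(b)\qquad\text{for all }t.
\end{equation*}
Expanding $c(t+u)=k_1+k_2\bigl(\sin 2\pi t\cos 2\pi u+\cos 2\pi t\sin 2\pi u\bigr)$, the left-hand side is a constant plus $k_2\bigl(\alpha\sin2\pi t+\beta\cos2\pi t\bigr)$. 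Constancy in $t$ forces $\alpha=\beta=0$, that is,
\begin{equation*}
\partial_x\mathbb{E}\bigl[e^{2\pi i\tau^*}\bigr]\Big|_{x=b}=0.
\end{equation*}
The function $u(x)=\mathbb{E}[e^{2\pi i\tau^*}]$ solves $\mathcal{L}u+2\pi i\,u=0$ on $(1-b,b)$ with $u=1$ at the endpoints. If also $u'(b)=0$, then Cauchy uniqueness for this linear ODE gives $u\equiv1$, which is impossible since $\mathcal{L}1+2\pi i\ne0$. This contradiction completes the non-constancy step.

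The remaining care is to justify differentiating the expectation in $x$ up to the boundary. This follows from Theorem \ref{thm:main_regular_smooth} and the ODE representation of $x\mapsto\mathbb{E}[\int_0^{\tau^*}c(t+u)\,du]$ as the solution of a linear boundary value problem on $(1-b,b)$.
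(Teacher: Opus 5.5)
Your reduction to $t\mapsto b_\infty(t)$, the periodicity argument, and the probabilistic route you commit to in the end all coincide with the paper's proof. That route is: a constant boundary gives a $t$-independent exit time $\tau^*$; you Fourier-split $c(t+u)$; and smooth fit then forces $u'(b-)=0$ for $u(x)=\mathbb{E}[e^{2\pi i\tau^*}]=C(x)+iS(x)$. The earlier PDE exploration (integrating over a period, the overdetermined problem for $\partial_tV_\infty$) is never completed and is not needed.

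\textbf{The gap is the final step.} Cauchy uniqueness does not give $u\equiv1$. The constant $1$ does not solve $\mathcal{L}u+2\pi i\,u=0$, so uniqueness only identifies $u$ with the non-constant solution $w$ having data $w(b)=1$, $w'(b)=0$. Your remark that $\mathcal{L}1+2\pi i\neq0$ shows exactly that $u\not\equiv1$. That is consistent with $u(b)=1$, $u'(b-)=0$, not a contradiction. Nothing you wrote rules out that $w$ also satisfies $w(1-b)=1$. The contradiction has to come from the overdetermination, so you must also use $u(1-b)=1$ or $|u|\le1$.

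The paper closes this with $q:=C^2+S^2=|u|^2$:
\begin{itemize}
\item Since $\mathrm{Re}(\bar u\,\mathcal{L}u)=\mathrm{Re}(-2\pi i|u|^2)=0$, one gets $\mathcal{L}q=\sigma^2|u'|^2\ge0$.
\item Also $q\le1$ on $(1-b,b)$ and $q=1$ at both endpoints.
\item By the maximum principle and Hopf lemma, either $q$ is constant or $q'(b-)>0$. Since $q''=2|u'|^2\ge0$, plain convexity already suffices here.
\item If $q$ is constant, then $u'\equiv0$ and $u\equiv1$, which is not a solution.
\item Otherwise $q'(b-)>0$ contradicts $q'(b-)=2\,\mathrm{Re}(\bar u(b)u'(b-))=0$.
\end{itemize}

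An equally short fix in your complex notation uses the lower boundary as well. Smooth fit there (or symmetry of $\sigma$) gives $u'((1-b)+)=0$. Multiply $u''=-4\pi i\,u/\sigma^2$ by $\bar u$ and integrate by parts over $(1-b,b)$; $u\in C^2([1-b,b])$ since $\sigma$ is continuous and positive there. This gives
\begin{equation*}
-\int_{1-b}^{b}|u'|^2\,dx=-4\pi i\int_{1-b}^{b}\frac{|u|^2}{\sigma^2}\,dx .
\end{equation*}
The left side is real and the right side purely imaginary, so both vanish. Hence $u\equiv0$, contradicting $u(b)=1$. With either argument in place of your Cauchy-uniqueness step, the proof is complete.
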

\begin{proof}
    Since $A(s)\to \infty$ for $s\to S$, we need to show $t\mapsto b_\infty(t)$ does not converge as $t\to\infty$. As noted in Example \ref{ex:oscillatory}, $c$ is periodic with period $1$, that is, $c(t+1)=c(t)$. Thus $t\mapsto V(t,x)$ is periodic, and so by the characterizations of $\mathcal{C}$ in \eqref{eq:stop_and_cont_sets} and \eqref{eq:main_continuation_region}, it holds that $t\mapsto b_\infty(t)$ is periodic. Thus it is enough to show that the optimal stopping boundary is not constant. 

    Assume for contradiction that $b_\infty(t)\equiv B$ and let 
    \begin{equation*}
        \tau(x):=\inf\{u\geq0:X^x_u\not\in(1-B,B)\}.
    \end{equation*}
    It holds that $\mathbb{E}[\tau(x)]<\infty$ since the diffusion is nondegenerate in $(1-B,B)$.
    Then, by \eqref{eq:c_oscill}, the value function is given by 
    \begin{align}
        V(t,x)&= \mathbb{E}\left[g(X^x_{\tau(x)})+k_1\tau(x)\right]+k_2\mathbb{E}\left[\int_0^{\tau(x)}\sin(2\pi(t+u))\,du\right]\\
        &=H(x)+\frac{k_2}{2\pi}(\cos(2\pi t)(1-C(x))+\sin(2\pi t)S(x)),\label{eq:V_osc_rhs}
    \end{align}
    where the second equality follows by integration and the trigonometric identity
    \begin{equation*}
        \cos(a+b)=\cos(a)\cos(b)-\sin(a)\sin(b),
    \end{equation*}
    and we have defined the functions
    \begin{equation*}
        H(x):=\mathbb{E}\left[g(X^x_{\tau(x)})+k_1\tau(x)\right],\qquad C(x):=\mathbb{E}[\cos(2\pi \tau(x))],\qquad S(x):=\mathbb{E}[\sin(2\pi \tau(x))].
    \end{equation*}
    By Theorem \ref{thm:main_regular_smooth}, smooth fit implies 
    \begin{equation*}
        g'(B)=H'(B-)+\frac{k_2}{2\pi}(-\cos(2\pi t)C'(B-)+\sin(2\pi t)S'(B-)),
    \end{equation*}
    for all $t\geq0$ where we have used that the terms in \eqref{eq:V_osc_rhs} are $C^2((1-B,B))$. Since this holds for all $t\geq0$, the last term must vanish. However, it is a linear combination of sine and cosine, so we must also have
    \begin{equation}\label{eq:C'_S'_vanish}
        C'(B-)=S'(B-)=0.
    \end{equation}
    Now, let $u(t,x):=\mathbb{E}[\cos(2\pi (t+\tau(x)))]$. Then $u$ solves the Dirichlet problem,
    \begin{align}
        \partial_t u(t,x)+\mathcal{L}u(t,x)&=0, &&x\in(1-B,B),\label{eq:oci_pde}\\
        u(t,x)&=\cos(2\pi t),&& x\in\{1-B,B\},
    \end{align}
    since the first exit time from $(1-B,B)$ is almost surely finite.
    Again
    \begin{equation*}
        u(t,x)=\cos(2\pi t)C(x)-\sin(2\pi t)S(x),
    \end{equation*}
    so \eqref{eq:oci_pde} yields
    \begin{equation*}
        \cos(2\pi t)\big(\mathcal{L}C(x)-2\pi S(x)\big)-\sin(2\pi t)\big(\mathcal{L}S(x)+2\pi C(x)\big)=0.
    \end{equation*}
    This implies 
    \begin{equation}\label{eq:oci_pdes_both}
        \mathcal{L}C(x)=2\pi S(x),\qquad \mathcal{L}S(x)=-2\pi C(x),
    \end{equation}
    for $x\in(1-B,B)$, with $C(B)=C(1-B)=1$ and $S(B)=S(1-B)=0$. 

    Note that $(C(x),S(x))$ is the mean of a unit vector. Defining
    \begin{equation*}
        q(x):=C^2(x)+S^2(x),
    \end{equation*}
    we have $q(x)\leq1$ and $q(B)=q(1-B)=1$.
    Furthermore, it holds that
    \begin{equation}\label{eq:Lq}
        \begin{aligned}
            \mathcal{L}q(x) &=\sigma^2(x)\left((C'(x))^2+(S'(x))^2+C(x)C''(x)+S(x)S''(x)\right)\\
            &=\sigma^2(x)\left((C'(x))^2+(S'(x))^2\right)\geq0.
        \end{aligned}
    \end{equation}
    By the one-dimensional maximum principle, see e.g. \cite[Theorem 1.1.2]{protter2012maximum}, $q$ is either constant or $q'(B-)>0$ since $q(x)\leq q(B)=q(1-B)=1$. If $q(x)\equiv 1$, then $q''(x)=0$, so $C$ and $S$ are constant by \eqref{eq:Lq}, and by the boundary conditions $C(x)\equiv1$ and $S(x)\equiv0$. But then, the second expression in \eqref{eq:oci_pdes_both} states $0=-2\pi$, hence $q$ cannot be constant. Calculating $q'(B-)$ we get
    \begin{equation*}
        0<q'(B-)=2C(B-)C'(B-)+2S(B-)S'(B-)=0
    \end{equation*}
    by \eqref{eq:C'_S'_vanish}.
    Hence, we reach a contradiction to the assumption of $b_\infty(t)\equiv B$, proving the claim.    
\end{proof}

The motivating games all exhibit the same temporal component of the diffusion $\rho$. However, it should be clear from the time transformation in Section \ref{sec:motivation} that the methodology can handle more general $\rho$. In fact, we did not use any monotonicity relations for $\rho$, and similarly, we did not make use of $S$ being finite. We will consider a specific example arising from sequential hypothesis testing, following \cite{ekstrom2015bayesian}.

\begin{example}\label{ex:ekstrom}
    Let $R$ be a Gaussian random variable with mean $m$ and variance $\xi^2$, and consider the observation process
    \begin{equation*}
        Y_s=Rs+\hat{W}_s
    \end{equation*}
    where $\hat{W}$ is a Brownian motion in the full model.
    The goal is to test the hypothesis $H_0:R<0$ against $H_1:R\geq0$ upon sequential observation of $Y$. The posterior probability process is 
    \begin{equation*}
        \Pi_s = \mathbb{P}(R\geq0\mid \mathcal{G}_s) 
    \end{equation*}
    where $(\mathcal{G}_s)_{s\geq0}$ is the filtration generated by $Y$, and it can be shown, see \cite{ekstrom2015bayesian}, that $\Pi_s$ solves the stochastic differential equation 
    \begin{equation}\label{eq:Pi_ekstrom}
        d\Pi_s=\frac{\xi \varphi(\Phi^{-1}(\Pi_s))}{\sqrt{1+s\xi^2}}d W_s,
    \end{equation}
    with some initial condition, where $W$ is a Brownian motion adapted to the observation filtration $(\mathcal{G}_s)_{s\geq0}$. Thus this process falls into the scope of win-martingales with separable diffusion coefficients and 
    \begin{equation*}
        \sigma(\pi)=\varphi(\Phi^{-1}(\pi)),\qquad\rho(s)= \frac{\xi}{\sqrt{1+s\xi^2}}
    \end{equation*}
    yielding the time transformation 
    \begin{equation*}
        A(s) = \log(1+s\xi^2),\qquad \Gamma(t)=\frac{\exp(t)-1}{\xi^2},
    \end{equation*}
    and specifically $T=\infty$ if $S=\infty$. A typical objective in sequential hypothesis testing is to consider 
    \begin{equation}\label{eq:ekstrom_osp_pre_tt}
        v(s,\pi):=\inf_{\nu\in\mathcal{T}^\Pi} \mathbb{E}\left[\int_0^\nu f(s+u)\,du+g(\Pi^\pi_\nu)\right],
    \end{equation}
    where the infimum is taken over stopping times with respect to the $\Pi^\pi$ filtration such that the expectation is finite. Here $\Pi^\pi$ is the flow enlargement of \eqref{eq:Pi_ekstrom}, and $g$ is the terminal penalty stemming from a loss function as in Section \ref{sec:motivation}. 

    In \cite{ekstrom2015bayesian} the authors consider problems such as \eqref{eq:ekstrom_osp_pre_tt} for more general posterior martingales, not necessarily being on separable form but having a volatility coefficient which is decreasing in time. Furthermore, they assume a constant time cost $f(s)\equiv c_0>0$ and the hard classification problem. The authors go on to study this problem in great detail to prove well-posedness and regularity of the value function. In \cite[Proposition 4.3]{ekstrom2015bayesian} the authors prove that $v$ is jointly continuous on $[0,\infty)\times[0,1]$ and remark that they expect the result to hold if $s\mapsto f(s)$ is increasing. Their proof of joint continuity relies on temporal monotonicity of the value function $v$, and so by similar arguments as in Proposition \ref{prop:monotone} one may indeed expect their proof to carry over.
    
    However, applying the time transformation to arrive at \eqref{eq:motivation_OSP_X_shifted} with 
    \begin{equation*}
        c(t)=f\left(\frac{\exp(t)-1}{\xi^2}\right)\frac{\exp(t)}{\xi^2}
    \end{equation*}
    we can conclude that the postulated result can be extended to general $f$ such that $c$ satisfies Assumption \ref{asmp:base_penalty:c}, at least in the case of the posterior probability process in \eqref{eq:Pi_ekstrom} stemming from the Gaussian prior, under a smooth terminal penalty, such as the one induced by $L^2$--loss. 
\end{example}

\begin{example}
    If $f(s)\equiv c_0>0$ in Example \ref{ex:ekstrom}, then $t\mapsto c(t)$ is unbounded, so Assumption \ref{asmp:Lg:goodset_nonempty} does not hold for large $t$, making instantaneous stopping optimal for such times. Consider the $L^2$--loss, which gives a terminal cost of $g(x)=x(1-x)$. To obey Assumption \ref{asmp:Lg:goodset_nonempty} we may consider the problem for an initial time horizon $S<S'$ where 
    \begin{equation*}
        S'=\left(\frac{1}{2\pi c_0}-\frac{1}{\xi^2}\right)^+.
    \end{equation*}
    If $s\geq S'$, instantaneous stopping is optimal in \eqref{eq:ekstrom_osp_pre_tt} by the converse argument of Proposition \ref{prop:U_subset_C}.
    If $S'>0$,
    then $0<T<T':=\log (\xi^2/(2\pi c_0))$. For every such horizon $T$, we note that Assumption \ref{asmp:extended} holds with a horizon-dependent extension of $c(t)$ for $t>T$, specifically, $t\mapsto c_T(t)$ can be made increasing on $\mathbb{R}_+$, e.g.
    \begin{equation*}
        c_T(t)=\begin{cases}
            c(t)&t\leq T\\
            M_T-(M_T-c(T))e^{\lambda_T(T-t)}&t>T,
        \end{cases}
    \end{equation*}
    where $M_T=(1/(2\pi)+c(T))/2$ and $\lambda_T=c(T)/(M_T-c(T))$.
    For every $T<T'$, it holds that $c_T(t)\geq c_0\xi^{-2}=: \underline{c}$ for all $t\geq0$. Hence the optimal stopping boundary $t\mapsto b_T(t)$ is decreasing by Proposition \ref{prop:monotone} and $b_T(t)\leq B^*<1$ where $B^*$ is given in Theorem \ref{VerificationInfConstant} with time cost $\underline{c}$. It is thus not hard to show by monotone limit arguments, for $T\to T'$, similar to Proposition \ref{prop:finite_to_infinite_1} and the end of Proposition \ref{prop:two_boundaries} that for the infinite-horizon optimal stopping problem \eqref{eq:ekstrom_osp_pre_tt} the optimal stopping time is given by 
    \begin{equation*}
        \nu^*=\inf\{s\in[0,S'): \Pi^\pi_s\not\in(1-b_{T'}(A(s)),b_{T'}(A(s)))\}\wedge S'
    \end{equation*}
    where $t\mapsto b_{T'}(t):=\lim_{T\to T'}b_T(t)$ is decreasing with $b_{T'}(A(s))\in[\gamma(A(s)),B^*]$ for $s<S'$ and 
    \begin{equation*}
        \gamma(t):=\Phi\left(\varphi^{-1}\left(\frac{\sqrt{c_0}}{\xi}e^{t/2}\right)\right),
    \end{equation*}
    where $\varphi^{-1}$ is the inverse of $\varphi$ restricted to $[0,\infty)$.
    
    To compare, consider the following sequential inference problem studied in \cite{ekstrom2022bayesian},
    \begin{equation*}
        \hat{v}:=\inf_{\nu}\mathbb{E}\left[\ell_{L^2}(R,\hat{R}_\nu)+c_0\nu\right],
    \end{equation*}
    where $\hat{R}_s=\mathbb{E}\left[R\mid\mathcal{G}_s\right]$. In \cite{ekstrom2022bayesian} the authors show that this is a particularly simple problem, yielding a deterministic optimal stopping time given by
    \begin{equation*}
        \hat{\nu}^*=\left(\frac{1}{\sqrt{c_0}}-\frac{1}{\xi^2}\right)^+.
    \end{equation*}
    That is, when seeking a point estimate of the realization of $R$ under an $L^2$--loss with a constant time cost, the optimal stopping rule is to use the point estimate at a specific deterministic time point. On the other hand, when estimating the sign of $R$ under an $L^2$--loss with a constant time cost, the optimal stopping rule is nontrivial but occurs before a different specific deterministic time point $S'$. Notably, there is no general order in the deterministic cutoffs of the decision times for the two problems, meaning that if, say, $c_0<1/(4\pi^2)$, then $S'>\hat{\nu}^*$. However, we may of course still get $\nu^*<\hat{\nu}^*$. 
\end{example}

\section{Conclusion}\label{sec:conclusion}

This paper studied when to stop observing a posterior win-martingale and make a terminal decision. Starting from a sequential ``stop-and-declare'' problem, we reduced the Bayes risk to an optimal stopping problem for the posterior belief. For win-martingales with separable volatility, a deterministic time change removes the deterministic time factor from the dynamics and transfers it to the running cost. This gives a common framework for a general class of win-martingales such as the Aldous, Bass, and binary sequential-inference martingales, while allowing general running costs and terminal penalties.

Under our standing assumptions, we characterized the optimal stopping rule by a free boundary. In the homogeneous infinite-horizon problem, the continuation region is an interval with two constant boundaries. Under the extended assumptions, the inhomogeneous finite- and infinite-horizon problems have continuation regions with time slices of the form $(1-b_T(t),b_T(t))$, and the first exit from this region is optimal. We established that the value function is $C^1$ globally and $C^{1,2}$ away from the free boundary, proved $C^1$ regularity of $b_T$ before the terminal horizon, and derived a nonlinear integral equation that characterizes the boundary uniquely. Importantly, these do not rely on monotone properties of the temporal cost.

Although the analysis captures many settings, it deliberately assumes that the time-changed win-martingale remains in $(0,1)$ at every finite time, and therefore excludes models that can reach $\{0,1\}$ in finite time after the time change. Thus, in the present setting, the game is revealed only in the limit, or through a singular deterministic time factor at the terminal time. This leaves open the early-termination formulation considered by Guo, Howison, Possama\"i, and Reisinger \cite{guo2025randomness}, where the win-probability martingale may be absorbed at $0$ or $1$ strictly before the prescribed horizon. This includes cases such as the Wright--Fisher martingale and a Brownian motion absorbed at $\{0,1\}$. In such models, there are two distinct stopping mechanisms. Namely, these are the decision maker's voluntary stopping time and the exogenous termination time
\begin{equation*}
    \zeta:=\inf\{s\ge0:\Pi_s\in\{0,1\}\}.
\end{equation*}
Extending the present theory to this setting would require incorporating absorbing endpoints that are reached in finite time and understanding how the interaction between voluntary stopping and exogenous game termination changes the geometry and regularity of the optimal boundary. We leave this case for future work.

\bibliographystyle{abbrv}
\bibliography{main.bib}

\end{document}